\newtheorem{theor}{Theorem}
\newtheorem*{claimNo}{Claim}
\theoremstyle{definition}
\newtheorem*{theorNo}{Theorem}
\newtheorem{proposition}[theor]{Proposition}
\newtheorem{lemma}[theor]{Lemma}
\newtheorem{cor}[theor]{Corollary}
\newtheorem{define}{Definition}
\newtheorem{example}{Example}
\newtheorem{implement}{Implementation}
\newtheorem{method}{Method}
\theoremstyle{remark}
\newtheorem{rem}{Remark}
\def\oldvec{\mathaccent "017E\relax }
\DeclareMathOperator{\Or}{\mathsf{O\oldvec{r}}}
\newcommand{\pinner}{\mathbin{\mathchoice
{\hbox{\vrule width0.6em depth0pt height0.4pt
	\vrule width0.4pt depth0pt height0.8ex}}
{\hbox{\vrule width0.6em depth0pt height0.4pt
	\vrule width0.4pt depth0pt height0.8ex}}
{\hbox{\kern0.14em
	\vrule width0.48em depth0pt height0.4pt
	\vrule width0.4pt depth0pt height0.6ex\kern0.14em}}
{\hbox{\kern0.1em
	\vrule width0.39em depth0pt height0.4pt
	\vrule width0.4pt depth0pt height0.5ex\kern0.1em}}}}
\newcommand{\BBR}{\mathbb{R}}
\newcommand{\BBZ}{\mathbb{Z}}
\newcommand{\BBP}{\mathbb{P}}
\newcommand{\bcP}{{\boldsymbol{\mathcal{P}}}}
\newcommand{\cP}{\mathcal{P}}
\newcommand{\bx}{{\boldsymbol{x}}}
\DeclareMathOperator{\Arg}{Arg}
\DeclareMathOperator{\Jac}{Jac}
\DeclareMathOperator{\Edge}{Edge}
\DeclareMathOperator{\Assoc}{Assoc}
\newcommand{\lshad}{[\![}
\newcommand{\rshad}{]\!]}
\newcommand{\by}[1]{\textit{{#1}}}
\newcommand{\jour}[1]{\textit{{#1}}}
\newcommand{\vol}[1]{\textbf{{#1}}}
\newcommand{\book}[1]{\textrm{{#1}}}
\newcommand*{\vcenteredhbox}[1]{\begingroup
\setbox0=\hbox{#1}\parbox{\wd0}{\box0}\endgroup}
\DeclareRobustCommand{\square}{\mathbin{\mathpalette\morphic@square\relax}}
\newcommand{\morphic@square}[2]{%
  \sbox\z@{$\m@th#1\rule{4pt}{4pt}$}%
  \vcenter{\box\z@}%
}
\title[Computer\/-\/assisted proof schemes in deformation quantization]{The expansion $\star$ mod $\bar{o}(\hbar^4)$ and\\ computer\/-\/assisted proof schemes in\\ the Kontsevich deformation quantization}
\author[R.~Buring]{R.~Buring${}^{*}$}
\thanks{${}^*$\textit{Address}: 
Institut f\"ur Mathematik, 
Johannes Gutenberg\/--\/Uni\-ver\-si\-t\"at,
Staudingerweg~9, 
\mbox{D-\/55128} Mainz, Germany.
	\quad${}^*$\textit{E-mail}: \texttt{rburing\symbol{"40}uni-mainz.de}}
\author[A.~V.~Kiselev]{A.\,V.\,Kiselev${}^{\S}$}
\thanks{${}^{\S}$\textit{Address}: Johann Ber\-nou\-lli Institute for Mathematics and Computer Science, University of Groningen,
	P.O.~Box 407, 9700~AK Groningen, The Netherlands.
	\quad${}^{\S}$\textit{E-mail}: \texttt{A.V.Kiselev\symbol{"40}rug.nl}
}
\date{14 March 2018; in original form 20 December 2017, in final form 5 October 2019}
\subjclass[2010]{
05C22, 
53D55, 
68R10, 
also
05C31, 
16Z05, 
53D17, 
81R60, 
81Q30. 
}
\keywords{Associative algebra, noncommutative geometry, deformation quantization, Kontsevich graph complex, computer\/-\/assisted proof scheme, software module,
template library%
}
\begin{document}
\begin{abstract}
The Kontsevich deformation quantization combines Poisson dynamics, noncommutative geometry, number theory, and calculus of oriented graphs.
To ma\-na\-ge the algebra and differential calculus of series of weighted graphs, we present software modules: these allow generating the 
Kontsevich graphs, expanding the noncommutative $\star$-\/product by using \textit{a priori} 
undetermined coefficients, and 
deriving 
linear relations between the weights of graphs.
Throughout this text we illustrate 
the assembly of the Kontsevich $\star$-\/product up to order~$4$ in the
deformation parameter~$\hbar$.
Already at this stage, the $\star$-\/product involves hundreds of graphs; expressing all their coefficients via~$149$ 
weights of basic graphs (of which $67$~weights are now known exactly), we express the remaining $82$~weights in terms of only $10$~parameters (more spe\-ci\-fi\-cal\-ly, in terms of only~$6$ parameters modulo gauge\/-\/equivalence). 
Finally, we out\-li\-ne a scheme for computer\/-\/assisted proof of the associativity,
modulo~$\bar{o}(\hbar^4)$, for the newly built $\star$-\/product expansion.
\end{abstract}
\maketitle

\vspace{-1.5em}

\tableofcontents

\newpage

\subsection*{Introduction}
\addcontentsline{toc}{section}{Introduction}
On every finite-dimensional affine (i.e.\ piecewise\/-\/linear) manifold~$N^n$, the Kontsevich 
star\/-\/product~$\star$ \cite{MK97} is an associative but not necessarily commutative deformation of the usual product $\times$ in the algebra of functions $C^\infty(N^n)$ towards a given Poisson bracket $\{\cdot,\cdot\}_\cP$ on $N^n$ (see also \cite{Gerstenhaber,BFFLSI,BFFLSII}).
Specifically, whenever $\star = \times + \hbar\,\{\cdot,\cdot\}_\cP + \bar{o}(\hbar)$ is an infinitesimal deformation, it can always be completed to an associative star\/-\/product $\star = \times + \hbar\,\{\cdot,\cdot\}_\cP + \sum_{k\geqslant 2}\hbar^k B_k(\cdot,\cdot)$ in the space of formal power series $C^\infty(N^n)[[\hbar]]$; this was proven in~\cite{MK97}. 
An explicit calculation of the bi-linear bi-differential terms $B_k(\cdot,\cdot)$ at high orders $\hbar^k$ is a computationally hard problem.
In this paper we reach the order $k=4$ in expansion of $\star$ by using software modules for the Kontsevich graph calculus, which we presently discuss.

Convenient in practice
, the idea from~\cite{MK97} (see also~\cite{MK93,MK94,Ascona96}) is to draw every de\-ri\-va\-tion $\partial_i \equiv \partial/\partial x^i$ (with respect to a local coordinate~$x^i$ on a chart in the Poisson ma\-ni\-fold~$N^n$ at hand) as 
decorated edge $\begin{picture}(20.0,5.0)\put(0.0, 2.5){\vector(1,0){20.0}\put(-13,2.0){\tiny$i$}}\end{picture}$, so that large differential expressions become oriented graphs.
For example, the Poisson bracket $\{f,g\}_\cP(\bx) = \sum_{i,j=1}^n (f)\overleftarrow{\partial_i}\bigr|_{\bx}\cdot\cP^{ij}(\bx)\cdot\overrightarrow{\partial_j}\bigr|_{\bx}(g)$ of two functions $f,g \in C^\infty(N^n)$ is depicted by the graph $(f)\xleftarrow{i}\cP^{ij}\xrightarrow{j}(g)$;
here $\cP^{ij}$ is the skew-symmetric matrix of Poisson bracket coefficients and the summation over $i,j$ running from~$1$ to the dimension~$n$ of~$N^n$ is implicit.
In these terms, the known --\,from~\cite{sqs15}\,-- expansion of the Kontsevich star-product\footnote{We recall that the expansion $\star$ mod $\bar{o}(\hbar^2)$ in \cite{MK97} was gauge-equivalent to the genuine one so that the two-cycle graph at $\hbar^2/6$ in the first line of above formula \eqref{EqStarOrd3} was gauged out: see Example~\ref{ExGaugedLoopImplement} on p.~\pageref{ExGaugedLoopImplement} where we explain how this is done.} looks as follows:\footnote{The indication~$L$ and~$R$ for Left $\prec$ Right, respectively, matches the indices --\,which the pairs of edges carry\,-- with the ordering of indices in the coefficients of the Poisson structure contained in the arrowtail vertex. Note that exactly \emph{two} edges are issued from every internal vertex in every graph in formula~\eqref{EqStarOrd3};
not everywhere displayed in~\eqref{EqStarOrd3}, the ordering $L\prec R$ in each term is determined from same object's expansion~\eqref{EqStarOrd3Formula}.}
\vspace{-3mm}
\begin{multline}
\text{\raisebox{-8.5pt}{
\unitlength=0.7mm
\linethickness{0.4pt}
\begin{picture}(12.67,5.67)
\put(2.00,5.00){\circle*{1.33}}
\put(12.00,5.00){\circle*{1.33}}
\put(7.00,5.00){\makebox(0,0)[cc]{$\star$}}
\put(2.00,1.33){\makebox(0,0)[cc]{$f$}}
\put(12.00,1.33){\makebox(0,0)[cc]{$g$}}
\end{picture}
}}
= 
\text{\raisebox{-8.5pt}{
\unitlength=0.7mm
\linethickness{0.4pt}
\begin{picture}(15.00,5.67)
\put(0.00,5.00){\line(1,0){15.00}}
\put(2.00,5.00){\circle*{1.33}}
\put(13.00,5.00){\circle*{1.33}}
\put(2.00,1.33){\makebox(0,0)[cc]{$f$}}
\put(13.00,1.33){\makebox(0,0)[cc]{$g$}}
\end{picture}
}}
{+}\frac{\hbar^1}{1!}
\text{\raisebox{-12pt}{
\unitlength=0.7mm
\linethickness{0.4pt}
\begin{picture}(15.00,16.67)
\put(0.00,5.00){\line(1,0){15.00}}
\put(2.00,5.00){\circle*{1.33}}
\put(13.00,5.00){\circle*{1.33}}
\put(2.00,1.33){\makebox(0,0)[cc]{$f$}}
\put(13.00,1.33){\makebox(0,0)[cc]{$g$}}
\put(7.33,16.00){\circle*{1.33}}
\put(7.33,16.00){\vector(-1,-2){5.00}}
\put(7.33,16.00){\vector(1,-2){5.00}}
\end{picture}
}}
{+}\frac{\hbar^2}{2!}
\text{\raisebox{-12pt}{
\unitlength=0.7mm
\linethickness{0.4pt}
\begin{picture}(15.00,20.67)
\put(0.00,5.00){\line(1,0){15.00}}
\put(2.00,5.00){\circle*{1.33}}
\put(13.00,5.00){\circle*{1.33}}
\put(2.00,1.33){\makebox(0,0)[cc]{$f$}}
\put(13.00,1.33){\makebox(0,0)[cc]{$g$}}
\put(7.67,11.67){\circle*{1.33}}
\put(7.67,20.00){\circle*{1.33}}
\put(7.67,20.00){\vector(-1,-3){4.67}}
\put(7.67,20.00){\vector(1,-3){4.67}}
\put(7.67,11.67){\vector(-3,-4){4.00}}
\put(7.67,11.67){\vector(3,-4){4.33}}
\end{picture}
}}
{+}\frac{\hbar^2}{3}{\Biggl(}
\text{\raisebox{-12pt}{
\unitlength=0.7mm
\linethickness{0.4pt}
\begin{picture}(15.00,17.67)
\put(0.00,5.00){\line(1,0){15.00}}
\put(2.00,5.00){\circle*{1.33}}
\put(13.00,5.00){\circle*{1.33}}
\put(2.00,1.33){\makebox(0,0)[cc]{$f$}}
\put(13.00,1.33){\makebox(0,0)[cc]{$g$}}
\put(7.33,11.33){\circle*{1.33}}
\put(2.00,17.00){\circle*{1.33}}
\put(2.00,17.00){\vector(0,-1){11.33}}
\put(2.00,17.00){\vector(1,-1){5.33}}
\put(7.33,11.33){\vector(1,-1){5.33}}
\put(7.33,11.33){\vector(-1,-1){5.33}}
\end{picture}
}}
{+}
\text{\raisebox{-12pt}{
\unitlength=0.7mm
\linethickness{0.4pt}
\begin{picture}(15.00,18.00)
\put(0.00,5.00){\line(1,0){15.00}}
\put(2.00,5.00){\circle*{1.33}}
\put(13.00,5.00){\circle*{1.33}}
\put(2.00,1.33){\makebox(0,0)[cc]{$f$}}
\put(13.00,1.33){\makebox(0,0)[cc]{$g$}}
\put(7.33,11.33){\circle*{1.33}}
\put(7.33,11.33){\vector(1,-1){5.33}}
\put(7.33,11.33){\vector(-1,-1){5.33}}
\put(13.00,17.33){\circle*{1.33}}
\put(13.00,17.33){\vector(0,-1){11.67}}
\put(13.00,17.33){\vector(-1,-1){5.33}}
\end{picture}
}}
{\Biggr)}+\frac{\hbar^2}{6}
\text{\raisebox{-12pt}{
\unitlength=0.7mm
\linethickness{0.4pt}
\begin{picture}(15.00,20.33)
\put(0.00,5.00){\line(1,0){15.00}}
\put(2.00,5.00){\circle*{1.33}}
\put(13.00,5.00){\circle*{1.33}}
\put(2.00,1.33){\makebox(0,0)[cc]{$f$}}
\put(13.00,1.33){\makebox(0,0)[cc]{$g$}}
\put(2.00,15.00){\circle*{1.33}}
\put(13.00,15.00){\circle*{1.33}}
\put(13.00,15.00){\vector(0,-1){9.33}}
\put(2.00,15.00){\vector(0,-1){9.33}}
\bezier{128}(2.00,15.00)(7.00,9.00)(12.67,15.00)
\bezier{128}(13.00,15.00)(7.00,20.33)(2.67,15.00)
\put(11.67,14.00){\vector(1,1){0.67}}
\put(3.33,16.00){\vector(-1,-1){0.67}}
\end{picture}
}}
{+}\\
{+}
\frac{\hbar^3}6{\Biggl(}
\text{\raisebox{-12pt}{
\unitlength=0.70mm
\linethickness{0.4pt}
\begin{picture}(15.00,29.00)
\put(0.00,5.00){\line(1,0){15.00}}
\put(2.00,5.00){\circle*{1.33}}
\put(13.00,5.00){\circle*{1.33}}
\put(2.00,1.33){\makebox(0,0)[cc]{$f$}}
\put(13.00,1.33){\makebox(0,0)[cc]{$g$}}
\put(7.67,11.67){\circle*{1.33}}
\put(7.67,20.00){\circle*{1.33}}
\put(7.67,20.00){\vector(-1,-3){4.67}}
\put(7.67,20.00){\vector(1,-3){4.67}}
\put(7.67,11.67){\vector(-3,-4){4.00}}
\put(7.67,11.67){\vector(3,-4){4.33}}
\put(7.67,28.33){\circle*{1.33}}
\put(7.67,28.33){\circle*{1.33}}
\put(7.67,28.33){\vector(-1,-4){5.67}}
\put(7.67,28.33){\vector(1,-4){5.67}}
\end{picture}
}}
{+}
\text{\raisebox{-12pt}{
\unitlength=0.70mm
\linethickness{0.4pt}
\begin{picture}(15.00,23.67)
\put(0.00,5.00){\line(1,0){15.00}}
\put(2.00,5.00){\circle*{1.33}}
\put(13.00,5.00){\circle*{1.33}}
\put(2.00,1.33){\makebox(0,0)[cc]{$f$}}
\put(13.00,1.33){\makebox(0,0)[cc]{$g$}}
\put(2.00,15.00){\circle*{1.33}}
\put(13.00,15.00){\circle*{1.33}}
\put(13.00,15.00){\vector(0,-1){9.33}}
\put(2.00,15.00){\vector(0,-1){9.33}}
\bezier{128}(2.00,15.00)(7.00,9.00)(12.67,15.00)
\bezier{128}(13.00,15.00)(7.00,20.33)(2.67,15.00)
\put(11.67,14.00){\vector(1,1){0.67}}
\put(3.33,16.00){\vector(-1,-1){0.67}}
\put(7.33,23.00){\circle*{1.33}}
\put(7.33,23.00){\vector(-2,-3){4.33}}
\put(7.33,23.00){\vector(2,-3){4.67}}
\end{picture}
}}
{+}
\text{\raisebox{-12pt}{
\unitlength=0.70mm
\linethickness{0.4pt}
\begin{picture}(15.00,20.33)
\put(0.00,5.00){\line(1,0){15.00}}
\put(2.00,5.00){\circle*{1.33}}
\put(13.00,5.00){\circle*{1.33}}
\put(2.00,1.33){\makebox(0,0)[cc]{$f$}}
\put(13.00,1.33){\makebox(0,0)[cc]{$g$}}
\put(2.00,15.00){\circle*{1.33}}
\put(13.00,15.00){\circle*{1.33}}
\put(13.00,15.00){\vector(0,-1){9.33}}
\put(2.00,15.00){\vector(0,-1){9.33}}
\bezier{128}(2.00,15.00)(7.00,9.00)(12.67,15.00)
\bezier{128}(13.00,15.00)(7.00,20.33)(2.67,15.00)
\put(11.67,14.00){\vector(1,1){0.67}}
\put(3.33,16.00){\vector(-1,-1){0.67}}
\put(7.00,10.33){\circle*{1.33}}
\put(7.00,10.33){\vector(-1,-1){5.33}}
\put(7.00,10.33){\vector(1,-1){5.33}}
\end{picture}
}}
{+}
\text{\raisebox{-12pt}{
\unitlength=0.70mm
\linethickness{0.4pt}
\begin{picture}(15.00,23.33)
\put(0.00,5.00){\line(1,0){15.00}}
\put(2.00,5.00){\circle*{1.33}}
\put(13.00,5.00){\circle*{1.33}}
\put(2.00,1.33){\makebox(0,0)[cc]{$f$}}
\put(13.00,1.33){\makebox(0,0)[cc]{$g$}}
\put(7.33,11.33){\circle*{1.33}}
\put(2.00,17.00){\circle*{1.33}}
\put(2.00,17.00){\vector(0,-1){11.33}}
\put(2.00,17.00){\vector(1,-1){5.33}}
\put(7.33,11.33){\vector(1,-1){5.33}}
\put(7.33,11.33){\vector(-1,-1){5.33}}
\put(7.33,22.67){\circle*{1.33}}
\put(7.33,22.67){\vector(-1,-1){5.33}}
\put(7.33,22.67){\vector(0,-1){10.67}}
\end{picture}
}}
{+}
\text{\raisebox{-12pt}{
\unitlength=0.70mm
\linethickness{0.4pt}
\begin{picture}(15.00,22.67)
\put(0.00,5.00){\line(1,0){15.00}}
\put(2.00,5.00){\circle*{1.33}}
\put(13.00,5.00){\circle*{1.33}}
\put(2.00,1.33){\makebox(0,0)[cc]{$f$}}
\put(13.00,1.33){\makebox(0,0)[cc]{$g$}}
\put(7.33,11.33){\circle*{1.33}}
\put(7.33,11.33){\vector(1,-1){5.33}}
\put(7.33,11.33){\vector(-1,-1){5.33}}
\put(13.00,17.33){\circle*{1.33}}
\put(13.00,17.33){\vector(0,-1){11.67}}
\put(13.00,17.33){\vector(-1,-1){5.33}}
\put(7.33,22.00){\circle*{1.33}}
\put(7.33,22.00){\vector(1,-1){5.33}}
\put(7.33,22.00){\vector(0,-1){9.67}}
\end{picture}
}}
{+}
\text{\raisebox{-12pt}{
\unitlength=0.70mm
\linethickness{0.4pt}
\begin{picture}(15.00,23.33)
\put(0.00,5.00){\line(1,0){15.00}}
\put(2.00,5.00){\circle*{1.33}}
\put(13.00,5.00){\circle*{1.33}}
\put(2.00,1.33){\makebox(0,0)[cc]{$f$}}
\put(13.00,1.33){\makebox(0,0)[cc]{$g$}}
\put(7.33,11.33){\circle*{1.33}}
\put(2.00,17.00){\circle*{1.33}}
\put(2.00,17.00){\vector(0,-1){11.33}}
\put(2.00,17.00){\vector(1,-1){5.33}}
\put(7.33,11.33){\vector(1,-1){5.33}}
\put(7.33,11.33){\vector(-1,-1){5.33}}
\put(2.00,22.67){\circle*{1.33}}
\bezier{128}(2.00,22.00)(-5.00,13.50)(2.33,4.75)
\put(1,6.33){\vector(1,-1){0.67}}
\put(2.00,22.67){\vector(1,-2){5.00}}
\end{picture}
}}
{+}
\text{\raisebox{-12pt}{
\unitlength=0.70mm
\linethickness{0.4pt}
\begin{picture}(20.00,24.67)
\put(0.00,5.00){\line(1,0){15.00}}
\put(2.00,5.00){\circle*{1.33}}
\put(13.00,5.00){\circle*{1.33}}
\put(2.00,1.33){\makebox(0,0)[cc]{$f$}}
\put(13.00,1.33){\makebox(0,0)[cc]{$g$}}
\put(7.33,11.33){\circle*{1.33}}
\put(7.33,11.33){\vector(1,-1){5.33}}
\put(7.33,11.33){\vector(-1,-1){5.33}}
\put(13.00,17.33){\circle*{1.33}}
\put(13.00,17.33){\vector(0,-1){11.67}}
\put(13.00,17.33){\vector(-1,-1){5.33}}
\put(13.00,24.00){\circle*{1.33}}
\put(13.00,24.00){\vector(-1,-2){6.00}}
\bezier{128}(13.00,24.00)(20.00,17.00)(13.67,6.33)
\put(14.00,7.33){\vector(-1,-3){0.33}}
\end{picture}
}}
{\Biggr)}{+}\\
{+}\frac{\hbar^3}3{\Biggl(}
\text{\raisebox{-12pt}{
\unitlength=0.70mm
\linethickness{0.4pt}
\begin{picture}(15.00,17.66)
\put(0.00,5.00){\line(1,0){15.00}}
\put(2.00,5.00){\circle*{1.33}}
\put(13.00,5.00){\circle*{1.33}}
\put(2.00,1.33){\makebox(0,0)[cc]{$f$}}
\put(13.00,1.33){\makebox(0,0)[cc]{$g$}}
\put(7.33,11.33){\circle*{1.33}}
\put(2.00,17.00){\circle*{1.33}}
\put(2.00,17.00){\vector(0,-1){11.33}}
\put(2.00,17.00){\vector(1,-1){5.33}}
\put(7.33,11.33){\vector(1,-1){5.33}}
\put(7.33,11.33){\vector(-1,-1){5.33}}
\put(7.33,8.00){\circle*{1.33}}
\put(7.33,8.00){\vector(-2,-1){5.33}}
\put(7.33,8.00){\vector(2,-1){5.33}}
\end{picture}
}}
{+}
\text{\raisebox{-12pt}{
\unitlength=0.70mm
\linethickness{0.4pt}
\begin{picture}(15.00,17.99)
\put(0.00,5.00){\line(1,0){15.00}}
\put(2.00,5.00){\circle*{1.33}}
\put(13.00,5.00){\circle*{1.33}}
\put(2.00,1.33){\makebox(0,0)[cc]{$f$}}
\put(13.00,1.33){\makebox(0,0)[cc]{$g$}}
\put(7.33,11.33){\circle*{1.33}}
\put(7.33,11.33){\vector(1,-1){5.33}}
\put(7.33,11.33){\vector(-1,-1){5.33}}
\put(13.00,17.33){\circle*{1.33}}
\put(13.00,17.33){\vector(0,-1){11.67}}
\put(13.00,17.33){\vector(-1,-1){5.33}}
\put(7.33,8.33){\circle*{1.33}}
\put(7.33,8.33){\vector(-2,-1){5.33}}
\put(7.33,8.33){\vector(2,-1){5.33}}
\end{picture}
}}
{\Biggr)}
{+}
\frac{\hbar^3}6{\Biggl(}
\text{\raisebox{-12pt}{
\unitlength=0.70mm
\linethickness{0.4pt}
\begin{picture}(15.00,23.67)
\put(0.00,5.00){\line(1,0){15.00}}
\put(2.00,5.00){\circle*{1.33}}
\put(13.00,5.00){\circle*{1.33}}
\put(2.00,1.33){\makebox(0,0)[cc]{$f$}}
\put(13.00,1.33){\makebox(0,0)[cc]{$g$}}
\put(2.00,15.00){\circle*{1.33}}
\put(13.00,15.00){\circle*{1.33}}
\put(13.00,15.00){\vector(0,-1){9.33}}
  \put(12,7.5){\llap{{\tiny R}}}
\put(2.00,15.00){\vector(0,-1){9.33}}
  \put(3,7.5){{\tiny L}}
\bezier{128}(2.00,15.00)(7.00,9.00)(12.67,15.00)
\bezier{128}(13.00,15.00)(7.00,20.33)(2.67,15.00)
\put(11.67,14.00){\vector(1,1){0.67}}
\put(3.67,15.67){\vector(-1,-1){0.67}}
\put(2.00,23.00){\circle*{1.33}}
\bezier{128}(2.00,23.00)(-5,14.00)(1.33,6.00)
\put(2.00,23.00){\vector(3,-2){10.67}}
\put(1,6.67){\vector(1,-2){0.67}}
\end{picture}
}}
{+}
\text{\raisebox{-12pt}{
\unitlength=0.70mm
\linethickness{0.4pt}
\begin{picture}(19.67,23.00)
\put(0.00,5.00){\line(1,0){15.00}}
\put(2.00,5.00){\circle*{1.33}}
\put(13.00,5.00){\circle*{1.33}}
\put(2.00,1.33){\makebox(0,0)[cc]{$f$}}
\put(13.00,1.33){\makebox(0,0)[cc]{$g$}}
\put(2.00,15.00){\circle*{1.33}}
\put(13.00,15.00){\circle*{1.33}}
\put(13.00,15.00){\vector(0,-1){9.33}}
  \put(12,7.5){\llap{{\tiny R}}}
\put(2.00,15.00){\vector(0,-1){9.33}}
  \put(3,7.5){{\tiny L}}
\bezier{128}(2.00,15.00)(7.00,9.00)(12.67,15.00)
\bezier{128}(13.00,15.00)(7.00,20.33)(2.67,15.00)
\put(11.67,14.00){\vector(1,1){0.67}}
\put(3.33,16.00){\vector(-1,-1){0.67}}
\put(13.00,22.33){\circle*{1.33}}
\put(13.00,22.33){\vector(-3,-2){10.33}}
\bezier{128}(13.00,22.33)(19.67,14.33)(13.67,5.67)
\put(14.33,6.67){\vector(-1,-3){0.33}}
\end{picture}
}}{+}
\text{\raisebox{-12pt}{
\unitlength=0.70mm
\linethickness{0.4pt}
\begin{picture}(15.00,21.33)
\put(0.00,5.00){\line(1,0){15.00}}
\put(2.00,5.00){\circle*{1.33}}
\put(13.00,5.00){\circle*{1.33}}
\put(2.00,1.33){\makebox(0,0)[cc]{$f$}}
\put(13.00,1.33){\makebox(0,0)[cc]{$g$}}
\put(7.33,11.33){\circle*{1.33}}
\put(7.33,11.33){\vector(1,-1){5.33}}
\put(7.33,11.33){\vector(-1,-1){5.33}}
\put(13.00,17.33){\circle*{1.33}}
\put(13.00,17.33){\vector(0,-1){11.67}}
\put(13.00,17.33){\vector(-1,-1){5.33}}
\put(2.00,22.67){\circle*{1.33}}
\put(2.00,22.67){\vector(0,-1){16.67}}
\put(2.00,22.67){\vector(2,-1){10.33}}
\end{picture}
}}
{+}
\text{\raisebox{-12pt}{
\unitlength=0.70mm
\linethickness{0.4pt}
\begin{picture}(15.00,23.33)
\put(0.00,5.00){\line(1,0){15.00}}
\put(2.00,5.00){\circle*{1.33}}
\put(13.00,5.00){\circle*{1.33}}
\put(2.00,1.33){\makebox(0,0)[cc]{$f$}}
\put(13.00,1.33){\makebox(0,0)[cc]{$g$}}
\put(7.33,11.33){\circle*{1.33}}
\put(2.00,17.00){\circle*{1.33}}
\put(2.00,17.00){\vector(0,-1){11.33}}
\put(2.00,17.00){\vector(1,-1){5.33}}
\put(7.33,11.33){\vector(1,-1){5.33}}
\put(7.33,11.33){\vector(-1,-1){5.33}}
\put(13.00,22.67){\circle*{1.33}}
\put(13.00,22.67){\vector(0,-1){16.67}}
\put(13.00,22.67){\vector(-2,-1){10.33}}
\end{picture}
}}
{\Biggr)}{+}\overline{o}(\hbar^3).
\label{EqStarOrd3}
\end{multline}
By construction, every oriented edge carries its own index and every {\em internal} vertex (not containing the arguments~$f$ or~$g$) is inhabited by a copy of the coefficient matrix $\cP = (\cP^{ij})$ of the Poisson bracket $\{\cdot,\cdot\}_\cP$.
This means that expansion~\eqref{EqStarOrd3} encodes the analytic formula
\begin{multline*}
f \star g = f \times g 
+\hbar
\cP^{ij} \partial_{i} f \partial_{j} g 
+\hbar^{2}\big(
\tfrac{1}{2} \cP^{ij} \cP^{k\ell} \partial_{k} \partial_{i} f \partial_{\ell} \partial_{j} g 
+\tfrac{1}{3} \partial_{\ell} \cP^{ij} \cP^{k\ell} \partial_{k} \partial_{i} f \partial_{j} g \\
-\tfrac{1}{3} \partial_{\ell} \cP^{ij} \cP^{k\ell} \partial_{i} f \partial_{k} \partial_{j} g 
-\tfrac{1}{6} \partial_{\ell} \cP^{ij} \partial_{j} \cP^{k\ell} \partial_{i} f \partial_{k} g 
\big)
+\hbar^{3}\big(
\tfrac{1}{6} \cP^{ij} \cP^{k\ell} \cP^{mn} \partial_{m} \partial_{k} \partial_{i} f \partial_{n} \partial_{\ell} \partial_{j} g 
\end{multline*}
\begin{multline}
-\tfrac{1}{6} \partial_{m} \partial_{\ell} \cP^{ij} \partial_{n} \partial_{j} \cP^{k\ell} \cP^{mn} \partial_{i} f \partial_{k} g 
-\tfrac{1}{6} \cP^{ij} \partial_{n} \cP^{k\ell} \partial_{\ell} \cP^{mn} \partial_{k} \partial_{i} f \partial_{m} \partial_{j} g \\ 
-\tfrac{1}{6} \partial_{m} \partial_{\ell} \cP^{ij} \partial_{n} \cP^{k\ell} \cP^{mn} \partial_{k} \partial_{i} f \partial_{j} g  
-\tfrac{1}{6} \partial_{m} \partial_{\ell} \cP^{ij} \partial_{n} \cP^{k\ell} \cP^{mn} \partial_{i} f \partial_{k} \partial_{j} g \\ 
+\tfrac{1}{6} \partial_{n} \partial_{\ell} \cP^{ij} \cP^{k\ell} \cP^{mn} \partial_{m} \partial_{k} \partial_{i} f \partial_{j} g  
+\tfrac{1}{6} \partial_{n} \partial_{\ell} \cP^{ij} \cP^{k\ell} \cP^{mn} \partial_{i} f \partial_{m} \partial_{k} \partial_{j} g \\ 
+\tfrac{1}{3} \partial_{n} \cP^{ij} \cP^{k\ell} \cP^{mn} \partial_{m} \partial_{k} \partial_{i} f \partial_{\ell} \partial_{j} g  
-\tfrac{1}{3} \partial_{n} \cP^{ij} \cP^{k\ell} \cP^{mn} \partial_{k} \partial_{i} f \partial_{m} \partial_{\ell} \partial_{j} g \\ 
-\tfrac{1}{6} \partial_{\ell} \cP^{ij} \partial_{n} \partial_{j} \cP^{k\ell} \cP^{mn} \partial_{m} \partial_{i} f \partial_{k} g  
+\tfrac{1}{6} \partial_{n} \partial_{\ell} \cP^{ij} \partial_{j} \cP^{k\ell} \cP^{mn} \partial_{i} f \partial_{m} \partial_{k} g \\ 
-\tfrac{1}{6} \partial_{n} \cP^{ij} \cP^{k\ell} \partial_{\ell} \cP^{mn} \partial_{k} \partial_{i} f \partial_{m} \partial_{j} g  
-\tfrac{1}{6} \partial_{\ell} \cP^{ij} \partial_{n} \cP^{k\ell} \cP^{mn} \partial_{k} \partial_{i} f \partial_{m} \partial_{j} g  
\big) + \bar{o}(\hbar^3).\label{EqStarOrd3Formula}
\end{multline}
We now see that the language of Kontsevich graphs is more intuitive and easier to percept than writing formulae.
The calculation of the associator
$\Assoc_\star(f,g,h) = (f \star g)\star h - f\star(g\star h)$ 
can also be done in a pictorial way (see section \ref{SecGraphsOnGraphs} on p. \pageref{SecGraphsOnGraphs}).
The coefficients of graphs at~$\hbar^k$ in a star-product expansion are given by the Kontsevich integrals over the configuration spaces of $k$~distinct points in the Lobachevsky plane~$\mathbb{H}$, see~\cite{MK97} and~\cite{CF2000}. 
Although proven to exist, such weights of graphs are very hard 
to obtain in practice.\footnote{In fact, 
there are many other admissible graphs, not shown in~\eqref{EqStarOrd3}, in which every internal vertex is a tail for two oriented edges, but the weights of those graphs are found 
to be zero.}
Much research has been done on deriving helpful relations between the weights in order to facilitate their calculation \cite{WillwacherFelderIrrationality, Polyak, Kathotia, FelderShoikhet, BenAmar}. 
In Example~\ref{ExOrder3} on p.~\pageref{ExOrder3} we explain how expansion \eqref{EqStarOrd3} modulo $\bar{o}(\hbar^3)$ was obtained in \cite{sqs15}. 
The techniques which were then sufficient are no longer enough
to build the Kontsevich $\star$-\/product beyond the order~$\hbar^3$; clearly, extra 
mathematical concepts and computational tools must be developed.
In this paper we present the software in which several known relations between the Kontsevich graph weights are taken into account; 
we express the weights of all 
graphs at~$\hbar^4$ in terms of 
$10$~master-parameters.
(To be more precise, the ten master-parameters are reduced to just $6$ by taking the quotient over certain four degrees of gauge freedom in the associative star-product expansions mod $\bar{o}(\hbar^4)$.)
This paper is aimed to provide much more than a reference to computer programs: it also contains a synopsis of the proofs for the ideas in the construction, as well as an explanation of the parts which require computer implementation.
Now, the values of Kontsevich graph weights and, with more input from the work in progress \cite{BanksPanzerPym, PymPrivateComm}, \emph{all} the values which specify $\star$ mod $\bar{o}(\hbar^4)$ are the main result of this paper.
These weights (as well as the ones of higher-order expansion terms) are subject to conjectures and open problems (see \cite{WillwacherFelderIrrationality, BanksPanzerPym}).

This paper contains four 
chapters.
In chapter~\ref{SecGraphs} we introduce the software to encode and generate the Kontsevich graphs 
and operate with series of such graphs. In particular, the coefficients of graphs in series can 
be undetermined variables. 
The series are then reduced modulo the skew\/-\/symmetry of graphs 
(under the swapping of Left~$\rightleftarrows$~Right in their construction).
Thirdly, a series can be evaluated at a given Poisson structure: that is,
a copy of the bracket is placed at every internal vertex.

Chapter~\ref{SecStar} is devoted to the construction of Kontsevich's $\star$-\/product: containing a given Poisson structure in its leading deformation term, this bi\/-\/linear operation is not necessarily commutative but it is required to be associative; hence the 
coefficients of a power series for~$\star$ must be specified.
For example, at order~$k=4$ of the deformation parameter~$\hbar$ there are 
$149$~parameters to be found.
(The actual number of graphs at~$\hbar^4$ is much greater; we here count the ``basic'' graphs only.)
We review a number of methods to obtain the weights of Kontsevich graphs; the spectrum of techniques employed ranges from complex analysis and direct numeric integration~\cite{Decin} to finding linear relations between such weights by using abstract geometric 
reasonings.
The associativity of Kontsevich's $\star$-\/product is a major source
of relations between the graph weights;
at~$\hbar^4$ such relations are \emph{linear} because everything is known about the weights up to order three.
We obtain these relations at order four in chapter~\ref{SecAssoc} and we solve that system of 
linear algebraic equations for $149$~unknowns. 
The solution is expressed in terms of only~$10$ mas\-ter\/-\/parameters,
see formula~\eqref{EqStar4} on pp.~\pageref{SecConcl}--\pageref{EqStar4}.%
\footnote{The values of all these ten master-parameters have recently been claimed by Panzer and Pym \cite{PymPrivateComm} as a result of implementation of another technique to calculate the Kontsevich weights: see Table~\ref{Tab10Pym} on p.~\pageref{Tab10Pym} in Appendix~\ref{App10Pym}.
In particular, the values which we conjecture in Table~\ref{TableConjectured} fully agree with the exact values suggested in \cite{PymPrivateComm}.
Based on this external input, the expansion of the Kontsevich $\star$-product becomes \eqref{EqStarWith10Pym} on pp.~\pageref{EqStarWith10PymStart}--\pageref{EqStarWith10PymEnd}.}
It is readily seen that the final formula \eqref{EqStarWith10Pym} on pp.~\pageref{EqStarWith10PymStart}--\pageref{EqStarWith10PymEnd}, in which the ten parameters are assigned specific real values so that \emph{all} the coefficients in $\star$ mod $\bar{o}(\hbar^4)$ are the values of Kontsevich's integrals, is the genuine formula of the Kontsevich $\star$-product.
Indeed, our formula $\star$ mod $\bar{o}(\hbar^4)$ is obtained according to the Proof scheme for Theorem~\ref{ThmBig} on p.~\pageref{ThmBig} below.
We compute a big system of equations which is satisfied by Kontsevich's formula: it is constrained by Lemmas~\ref{LemmaPermute}--\ref{LemmaMult} (basic identities), Proposition~\ref{PropCyclic} (cyclic weight relations), Methods 1--3 (associativity), and vanishing of some integrands (cf. Appendix~\ref{AppNumericalWeights}).
Having solved this system, we incorporate external input \cite{PymPrivateComm,BanksPanzerPym} in the form of direct calculation of Kontsevich's integrals.

The algebraic system constructed in section~\ref{SecRestrictAssoc} 
was obtained by restricting the associativity for~$\star$ to (a class of) specific Poisson structures.
We want however to prove that for the newly found collection of graph weights, the $\star$-\/product is associative for \emph{every} Poisson structure on \emph{all} finite\/-\/dimensional affine manifolds.
For that, in section~\ref{SecFactor} we design a computer\/-\/assisted proof scheme that is independent of the bracket (and of a manifold at hand). 
Specifically, in Theorem~\ref{ThMainAssocOrd4} on p.~\pageref{ThMainAssocOrd4} we reveal how the associator for Kontsevich's $\star$-\/product, taken modulo~$\bar{o}(\hbar^4)$, is factorised via the Jacobiator~$\Jac(\cP)$ or via its differential consequences that all vanish identically for Poisson structures~$\cP$ on the manifolds~$N^n$. 
We discover in particular that such factorisation,
\[
\Assoc_\star(f,g,h)=\Diamond\bigl(\cP,\Jac(\cP),\Jac(\cP)\bigr)\mod \bar{o}(\hbar^4),
\]
is quadratic and has differential order two with respect to the Jacobiator.
For all Poisson brackets~$\{\cdot,\cdot\}_{\cP}$ on finite\/-\/dimensional affine manifolds~$N^n$ our ten\/-\/parameter
expression of the $\star$-\/product does agree up to~$\bar{o}(\hbar^4)$ with 
previously known results about the values of Kontsevich graph weights at some fixed values of the ten master\/-\/parameters and about the linear relations between those weights at all values of the master\/-\/parameters.\footnote{%
From Theorem~\ref{ThMainAssocOrd4} we also assert that the associativity of Kontsevich's $\star$-\/product does not carry on 
but it can leak at orders~$\hbar^{\geqslant 4}$ of the deformation parameter, should one enlarge the construction of~$\star$ to an affine bundle set\/-\/up of $N^n$-\/valued fields over a given affine manifold~$M^m$ and of variational Poisson brackets~$\{\cdot,\cdot\}_{\bcP}$ for local functionals $F,G,H\colon C^\infty(M^m\to N^n)\to\Bbbk$, see~\cite{gvbv,sqs13,dq17,prg15} and~\cite{cycle16}.} 
In an extensive Discussion on pp.~\pageref{SecDiscussion}--\pageref{SecDiscussionEnd}, we compare (and, again, verify) our result with other work, namely by Gutt et al \cite{AmmarChloupGuttUniversal3}, Ben Amar \cite{BenAmar}, Kathotia \cite{Kathotia}, Willwacher \cite{WillwacherObstruction}, and Penkava--Vanhaecke \cite{PenkavaVanhaecke}.
Further discussion of our result is contained in section 6.2 on p.~61 in the most recent preprint \cite{BanksPanzerPym}.
The following list of insights is gained as a byproduct of our approach:
\begin{itemize}
\item Relations between the Kontsevich graph weights can be obtained by viewing 
the $\star$-product associator $\operatorname{Assoc}_\star(\mathcal{P})(f,g,h) = 0$
for a Poisson structure 
$\mathcal{P} = \mathcal{P}(\boldsymbol\psi)$
as a polydifferential operator on $f,g,h,\boldsymbol\psi$ (see \S\ref{SecRestrictAssoc}, Method 3).
This new technique (effective by virtue of computer implementation) yields many new relations. In particular:

\item All the weights of graphs at order $3$ in the $\star$-product are uniquely determined (see Example~\ref{ExOrder3} on p.~\pageref{ExOrder3}) by the associativity equation up to order $4$ for Poisson structure \eqref{Eq3DPoisson} on $\mathbb{R}^3$, the elementary Lemmas \ref{LemmaPermute}--\ref{LemmaMult}, and the cyclic weight relations up to order 4.
This is one instance of:

\item Linear relations between \emph{only} weights of graphs at order $n$ can be obtained (in an effective, predictable way) from the associativity equation at orders greater than $n$ (see Remark~\ref{RemCanUseHigherOrders} on p.~\pageref{RemCanUseHigherOrders}).
This is explained using the decomposition of a polydifferential operator into homogeneous components and the notion of ``prime'' Kontsevich graphs.

\item The proof of associativity of the $\star$-product at order $4$ must involve a \emph{second}-order differential consequence of the Jacobi identity (see the second part of Theorem~\ref{ThMainAssocOrd4} on p.~\pageref{ThMainAssocOrd4}).
In particular, a naive jet space extension of Kontsevich's star product, where derivatives are replaced by variations, is in general not associative at order $4$ (see Corollary~\ref{CorVariational} on p.~\pageref{CorVariational}).

\item The mechanism of vanishing via differential consequences of the Jacobi identity may start working for the $\star$-product expansion itself (see Theorem~\ref{ThNull} on p.~\pageref{ThNull}).
In fact, the order $4$ is the first where this may happen.
(It could have happened at order $3$, if the weights of graphs were different.)

\item So far, from the work of Willwacher (see \cite{WillwacherObstruction}) it was known that graphs with two-cycles, or loops, cannot be eliminated all at once from the star-product by using gauge transformations.
At $\hbar^2$, the only such graph can be removed indeed (see Example~\ref{ExGaugedLoop}); at $\hbar^3$ there are four loopful graphs out of $13$ graphs with nonzero coefficients.
We discovered a totally unexpected fact (see p.~\pageref{LoopsDominant} below): at $\hbar^4$, the graphs with loops are dominant: $138$ out of $247$.
\end{itemize}
\centerline{\rule{1in}{0.7pt}}

\noindent
The software implementation \cite{kgs} consists of a~\texttt{C++} library and a set of command-line programs.
The latter are specified in what follows; a full list of new~\texttt{C++} subroutines and their call syntaxis is contained in Appendix~\ref{AppCPP}.
Whenever a command-line program refers to just one particular function in~\texttt{C++}, we indicate that in the text.
The current text refers to version {\tt 0.66} of the software.
This and future versions are available from \[ \verb"https://github.com/rburing/kontsevich_graph_series-cpp" \]

\noindent
All data files constructed and referred to in this article (in plain text format, which can be appreciated independently of the software) are available in the \verb"data" subdirectory: \[ \verb"https://github.com/rburing/kontsevich_graph_series-cpp/tree/master/data" \]

\noindent$\boldsymbol{\copyright}$\qquad
The copyright for all newly designed software modules which are 
presented in this paper is retained by R.\,Buring; 
provisions of the~MIT free software license apply.

\newpage

\section{Weighted graphs}\label{SecGraphs}
\noindent%
In this section we introduce the software to operate with series of oriented graphs.
	
\subsection{Normal forms of graphs and their machine-readable format}\label{SecNormalForm}
As it was explained in the introduction, we consider graphs whose vertices contain Poisson structures and whose edges represent derivatives. To be precise, the class of graphs to deal with is as follows.

\begin{define}\label{DefKontsevichGraph}
Let us consider a class of oriented graphs on $m+n$ vertices labelled $0$,\ $\ldots$,\ $m+n-1$ such that the consecutively ordered vertices $0$,\ $\ldots$,\ $m-1$ are sinks, and each of the internal vertices $m$,\ $\ldots$,\ $m+n-1$ is a source for two edges. 
For every internal vertex, the two outgoing edges are ordered using $L \prec R$: the preceding edge is labeled $L$ (Left) and the other is $R$ (Right).
An oriented graph on $m$ sinks and $n$ internal vertices 
is a \emph{Kontsevich graph} of type $(m,n)$.
We denote by $G_{m,n}$ the set of all Kontsevich graphs of type $(m,n)$, and by $\tilde{G}_{m,n}$ the subset of $G_{m,n}$ consisting of all those graphs having neither double edges nor tadpoles.
\end{define}

\begin{example}
The star-product expansion \eqref{EqStarOrd3} contains graphs in $\tilde{G}_{2,k}$ for $0 \leqslant k \leqslant 3$.
\end{example}

\begin{rem}
The class of graphs which we consider is not the most general type 
considered by Kontsevich in~\cite{MK97}.
In the construction of the Formality morphism there also appear graphs with sources for more or fewer (than two) arrows.
However, in our approach to the problem at hand, which is the construction of a $\star$-product expansion that would be associative modulo~$\hbar^k$ for some~$k\gg0$, we shall only meet graphs from the class of Definition~\ref{DefKontsevichGraph}.
Actually, to be more accurate, the Leibniz graphs in Definition~\ref{DefLeibnizGraph} on p.~\pageref{DefLeibnizGraph} are Kontsevich graphs where some vertices have \emph{three} outgoing edges; these are expanded into ordinary Kontsevich graphs (built of wedges) by inserting the Jacobiator at the tri-valent vertex; see \cite{Kiev18} for more details.
\end{rem}

\begin{rem}
There can be tadpoles 
or cycles in a graph~$\Gamma \in G_{m,n}$,
see Fig.~\ref{FigTadpoleEye}.
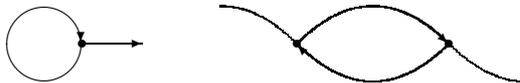
\begin{figure}[htb]
\unitlength=1mm
\special{em:linewidth 0.4pt}
\linethickness{0.4pt}
\begin{picture}(25.00,15.00)
\put(10.00,10.00){\circle{10.00}}
\put(15.00,10.00){\circle*{1}}
\put(14.85,11.15){\vector(0,-1){1.00}}
\put(15.00,10.00){\vector(1,0){8.00}}
\end{picture}
\quad
\unitlength=1mm
\special{em:linewidth 0.4pt}
\linethickness{0.4pt}
\begin{picture}(40.00,20.00)
\put(10.00,10.00){\circle*{1}}
\put(30.00,10.00){\circle*{1}}
\bezier{256}(10.00,10.00)(20.00,20.00)(30.00,10.00)
\bezier{256}(10.00,10.00)(20.00,0.00)(30.00,10.00)
\bezier{48}(0.00,15.00)(5.00,15.00)(10.00,10.00)
\bezier{48}(30.00,10.00)(35.00,5.00)(40.00,5.00)
\put(28.9,11.1){\vector(1,-1){1.00}}
\put(11.1,8.9){\vector(-1,1){1.00}}
\end{picture}
\caption{A tadpole and an ``eye''.}
\label{FigTadpoleEye}
\end{figure}
\end{rem}


A Kontsevich graph $\Gamma \in G_{m,n}$ is uniquely determined by the numbers~$n$ and~$m$ together with the list of ordered pairs of targets for the internal vertices.
For reasons which will become clear immediately below, we now consider a Kontsevich graph $\Gamma$ together with a {\em sign} $s \in \{0, \pm 1\}$, denoted by concatenation of the symbols: $s\Gamma$.

\begin{implement}[encoding]\label{DefEncoding}
The format to store a signed graph $s\Gamma$ with $\Gamma \in G_{m,n}$ is the integer number $m>0$, the integer $n \geqslant 0$, the sign $s$, followed by the (possibly empty, when $n=0$) list of $n$~ordered pairs of targets for edges issued from the internal vertices $m$,\ $\ldots$,\ $m+n-1$, respectively.
The full format is then ($m$,\ $n$, $s$; list of ordered pairs); in plain text we also write \verb"m n s   <list of ordered pairs>".
In the software, the class \texttt{KontsevichGraph} represents these signed Kontsevich graphs.
\end{implement}

\begin{example}
The graph \raisebox{-9pt}{
\unitlength=0.7mm
\linethickness{0.4pt}
\begin{picture}(15.00,22.67)
\put(2.00,5.00){\circle*{1.33}}
\put(13.00,5.00){\circle*{1.33}}
\put(7.33,11.33){\circle*{1.33}}
\put(2.00,17.00){\circle*{1.33}}
\put(2.00,17.00){\vector(0,-1){11.33}}
\put(-2, 10){\tiny $L$}
\put(2.00,17.00){\vector(1,-1){5.33}}
\put(4,16){\tiny $R$}
\put(1,18.75){\tiny 3}
\put(7.33,11.33){\vector(1,-1){5.33}}
\put(5,5.5){\tiny $L$}
\put(7.33,11.33){\vector(-1,-1){5.33}}
\put(10,9.5){\tiny $R$}
\put(8,12.5){\tiny 2}
\end{picture}
} has encoding \verb"2 2 1   0 1  0 2".
\end{example}

We recall that to every Kontsevich graph one associates a polydifferential operator by placing a copy of the Poisson bracket at each vertex.
To a signed graph one associates the polydifferential operator of the graph multiplied by the sign.
The skew\/-\/symmetry of the Poisson bracket implies that the same polydifferential operator may be represented by several different signed graphs, all having different encodings.

\begin{example}\label{ExEncodings}
Taken with the signs in the first row, the graphs in the second row all represent the same polydifferential operator:

\vskip .3em
\begin{tabular}{c | c | c | c | c | c | c | c}
{\tt +1} & {\tt -1} & {\tt -1} & {\tt +1} & {\tt +1} & {\tt -1} & {\tt -1} & {\tt +1} \\
\raisebox{-12pt}{
\unitlength=0.7mm
\linethickness{0.4pt}
\begin{picture}(15.00,22.67)
\put(2.00,5.00){\circle*{1.33}}
\put(13.00,5.00){\circle*{1.33}}
\put(7.33,11.33){\circle*{1.33}}
\put(2.00,17.00){\circle*{1.33}}
\put(2.00,17.00){\vector(0,-1){11.33}}
\put(-2, 10){\tiny $L$}
\put(2.00,17.00){\vector(1,-1){5.33}}
\put(4,16){\tiny $R$}
\put(1,18.75){\tiny 3}
\put(7.33,11.33){\vector(1,-1){5.33}}
\put(5,5.5){\tiny $L$}
\put(7.33,11.33){\vector(-1,-1){5.33}}
\put(10,9.5){\tiny $R$}
\put(8,12.5){\tiny 2}
\end{picture}
}
&
\raisebox{-12pt}{
\unitlength=0.7mm
\linethickness{0.4pt}
\begin{picture}(15.00,22.67)
\put(2.00,5.00){\circle*{1.33}}
\put(13.00,5.00){\circle*{1.33}}
\put(7.33,11.33){\circle*{1.33}}
\put(2.00,17.00){\circle*{1.33}}
\put(2.00,17.00){\vector(0,-1){11.33}}
\put(-2, 10){\tiny $R$}
\put(2.00,17.00){\vector(1,-1){5.33}}
\put(4,16){\tiny $L$}
\put(1,18.75){\tiny 3}
\put(7.33,11.33){\vector(1,-1){5.33}}
\put(5,5.5){\tiny $L$}
\put(7.33,11.33){\vector(-1,-1){5.33}}
\put(10,9.5){\tiny $R$}
\put(8,12.5){\tiny 2}
\end{picture}
}
&
\raisebox{-12pt}{
\unitlength=0.7mm
\linethickness{0.4pt}
\begin{picture}(15.00,22.67)
\put(2.00,5.00){\circle*{1.33}}
\put(13.00,5.00){\circle*{1.33}}
\put(7.33,11.33){\circle*{1.33}}
\put(2.00,17.00){\circle*{1.33}}
\put(2.00,17.00){\vector(0,-1){11.33}}
\put(-2, 10){\tiny $L$}
\put(2.00,17.00){\vector(1,-1){5.33}}
\put(4,16){\tiny $R$}
\put(1,18.75){\tiny 3}
\put(7.33,11.33){\vector(1,-1){5.33}}
\put(5,5.5){\tiny $R$}
\put(7.33,11.33){\vector(-1,-1){5.33}}
\put(10,9.5){\tiny $L$}
\put(8,12.5){\tiny 2}
\end{picture}
}
&
\raisebox{-12pt}{
\unitlength=0.7mm
\linethickness{0.4pt}
\begin{picture}(15.00,22.67)
\put(2.00,5.00){\circle*{1.33}}
\put(13.00,5.00){\circle*{1.33}}
\put(7.33,11.33){\circle*{1.33}}
\put(2.00,17.00){\circle*{1.33}}
\put(2.00,17.00){\vector(0,-1){11.33}}
\put(-2, 10){\tiny $R$}
\put(2.00,17.00){\vector(1,-1){5.33}}
\put(4,16){\tiny $L$}
\put(1,18.75){\tiny 3}
\put(7.33,11.33){\vector(1,-1){5.33}}
\put(5,5.5){\tiny $R$}
\put(7.33,11.33){\vector(-1,-1){5.33}}
\put(10,9.5){\tiny $L$}
\put(8,12.5){\tiny 2}
\end{picture}
}
&
\raisebox{-12pt}{
\unitlength=0.7mm
\linethickness{0.4pt}
\begin{picture}(15.00,22.67)
\put(2.00,5.00){\circle*{1.33}}
\put(13.00,5.00){\circle*{1.33}}
\put(7.33,11.33){\circle*{1.33}}
\put(2.00,17.00){\circle*{1.33}}
\put(2.00,17.00){\vector(0,-1){11.33}}
\put(-2, 10){\tiny $L$}
\put(2.00,17.00){\vector(1,-1){5.33}}
\put(4,16){\tiny $R$}
\put(1,18.75){\tiny 2}
\put(7.33,11.33){\vector(1,-1){5.33}}
\put(5,5.5){\tiny $L$}
\put(7.33,11.33){\vector(-1,-1){5.33}}
\put(10,9.5){\tiny $R$}
\put(8,12.5){\tiny 3}
\end{picture}
}
&
\raisebox{-12pt}{
\unitlength=0.7mm
\linethickness{0.4pt}
\begin{picture}(15.00,22.67)
\put(2.00,5.00){\circle*{1.33}}
\put(13.00,5.00){\circle*{1.33}}
\put(7.33,11.33){\circle*{1.33}}
\put(2.00,17.00){\circle*{1.33}}
\put(2.00,17.00){\vector(0,-1){11.33}}
\put(-2, 10){\tiny $L$}
\put(2.00,17.00){\vector(1,-1){5.33}}
\put(4,16){\tiny $R$}
\put(1,18.75){\tiny 2}
\put(7.33,11.33){\vector(1,-1){5.33}}
\put(5,5.5){\tiny $R$}
\put(7.33,11.33){\vector(-1,-1){5.33}}
\put(10,9.5){\tiny $L$}
\put(8,12.5){\tiny 3}
\end{picture}
}
&
\raisebox{-12pt}{
\unitlength=0.7mm
\linethickness{0.4pt}
\begin{picture}(15.00,22.67)
\put(2.00,5.00){\circle*{1.33}}
\put(13.00,5.00){\circle*{1.33}}
\put(7.33,11.33){\circle*{1.33}}
\put(2.00,17.00){\circle*{1.33}}
\put(2.00,17.00){\vector(0,-1){11.33}}
\put(-2, 10){\tiny $R$}
\put(2.00,17.00){\vector(1,-1){5.33}}
\put(4,16){\tiny $L$}
\put(1,18.75){\tiny 2}
\put(7.33,11.33){\vector(1,-1){5.33}}
\put(5,5.5){\tiny $L$}
\put(7.33,11.33){\vector(-1,-1){5.33}}
\put(10,9.5){\tiny $R$}
\put(8,12.5){\tiny 3}
\end{picture}
}
&
\raisebox{-12pt}{
\unitlength=0.7mm
\linethickness{0.4pt}
\begin{picture}(15.00,22.67)
\put(2.00,5.00){\circle*{1.33}}
\put(13.00,5.00){\circle*{1.33}}
\put(7.33,11.33){\circle*{1.33}}
\put(2.00,17.00){\circle*{1.33}}
\put(2.00,17.00){\vector(0,-1){11.33}}
\put(-2, 10){\tiny $R$}
\put(2.00,17.00){\vector(1,-1){5.33}}
\put(4,16){\tiny $L$}
\put(1,18.75){\tiny 2}
\put(7.33,11.33){\vector(1,-1){5.33}}
\put(5,5.5){\tiny $R$}
\put(7.33,11.33){\vector(-1,-1){5.33}}
\put(10,9.5){\tiny $L$}
\put(8,12.5){\tiny 3}
\end{picture}
}
\\
{\tt 0 1 0 2} &
{\tt 0 1 2 0} &
{\tt 1 0 0 2} &
{\tt 1 0 2 0} &
{\tt 0 3 0 1} &
{\tt 0 3 1 0} &
{\tt 3 0 0 1} &
{\tt 3 0 1 0}
\end{tabular}

\noindent In the third row the target list (for internal vertices $2$ and $3$, respectively) is written.
\end{example}

We would like to know whether two (encodings of) signed graphs specify the same topological portrait --- up to a permutation of internal vertices and/or a possible swap $L \rightleftarrows R$ for some pair(s) of outgoing edges.
To compare two given encodings of a signed graph, let us define its normal form.
Such normal form is a way to pick the representative modulo the action of group $S_n \times (\BBZ_2)^n$ on the space~$G_{m,n}$.

\begin{define}[normal form]\label{DefNormalForm} 
The list of targets of a graph $\Gamma \in G_{m,n}$ can be considered as a $2n$-digit integer written in base-$(n+m)$ notation.
By running over the entire group $S_n \times (\mathbb{Z}_2)^n$, and by this over all the different re-labelings of $\Gamma$, we obtain many different integers written in base-$(n+m)$.
The {\em absolute value} $|\Gamma|$ of $\Gamma$ is the re-labeling of $\Gamma$ such that its list of targets is {\em minimal} as a nonnegative base-$(n+m)$ integer.
For a signed graph $s\Gamma$, the {\em normal form} is the signed graph $t|\Gamma|$ which represents the same polydifferential operator as $s\Gamma$.
Here we let $t=0$ if the graph is zero (see Remark \ref{RemZeroGraphs} below).
\end{define}

\begin{example}
The minimal base-$4$ number in the third column of Example \ref{ExEncodings} is {\tt 0~1~0~2}.
Hence the absolute value of each of the graphs in Example \ref{ExEncodings} is the first graph.
The normal form of each of the signed graphs in Example \ref{ExEncodings} is the first graph taken with the appropriate sign $\pm 1$; the encodings of the normal forms are then {\tt 2 2 $\pm$1\ \ 0 1 0 2}.
\end{example}

This normal form is implemented in software as the method \texttt{normalize()} of the class \texttt{KontsevichGraph}.
By running over the entire symmetry group, it will be inefficient when the number of vertices is large.
In the future this method could be replaced by a more efficient one, without requiring changes to the rest of the code.

\begin{rem}\label{RemZeroGraphs}
The graphs $\Gamma \in G_{m,n}$ for which the associated polydifferential operator vanishes, by being equal to minus itself, are called {\em zero}.
This property can be detected during the calculation of the normal form of a signed graph.
One starts with the encoding of a signed graph.
Obtain a ``sorted'' encoding (representing the same polydifferential operator) by sorting the outgoing edges in every pair in nondecreasing order: each swap $L \rightleftarrows R$ entails a reversion of the sign.
Now run over the group~$S_n$ of permutations of the internal vertices in the graph at hand, relabeling those vertices.
Should the list of targets in the sorted encoding of a relabeling be equal to the list of targets in the original sorted encoding, but the sign be opposite, then the graph is zero.
%
We will see in Chapter~\ref{SecStar} (specifically, in Lemma~\ref{LemmaSwapLR} on p.~\pageref{LemmaSwapLR}) that the weights of these graphs also vanish, this time by the anticommutativity of certain differentials under the wedge product.
\end{rem}

\begin{example}\label{ExZeroGraph}
Consider the graph
\[
\vcenteredhbox{
\unitlength=1mm
\special{em:linewidth 0.4pt}
\linethickness{0.4pt}
\begin{picture}(31.00,25.67)(0,3)
\put(15.00,25.00){\vector(1,0){10.00}}
\put(25.00,25.00){\vector(-1,-3){5.00}}
\put(25.00,25.00){\vector(1,-3){5.00}}
\put(25.00,15.00){\vector(1,-1){4.67}}
\put(25.00,15.00){\vector(-1,-1){4.67}}
\put(15.00,25.00){\line(1,-1){6.67}}
\put(23,17){\vector(1,-1){2}}
\bezier{16}(23.00,17.00)(24.00,16.00)(25.00,15.00)
\put(15.00,25.00){\circle*{1}}
\put(25.00,25.00){\circle*{1}}
\put(25.00,15.00){\circle*{1}}
\put(30.33,9.67){\circle*{1}}
\put(19.67,9.67){\circle*{1}}
\put(13.33,25.00){\makebox(0,0)[rc]{4}}
\put(26.67,25.00){\makebox(0,0)[lc]{3}}
\put(20.00,25.67){\makebox(0,0)[cb]{\tiny$R$}}
\put(17.67,21.00){\makebox(0,0)[ct]{\tiny$L$}}
\put(25.00,13.33){\makebox(0,0)[ct]{2}}
\put(20.00,8){\makebox(0,0)[ct]{0}}
\put(30.00,7.67){\makebox(0,0)[ct]{1}}
\end{picture}
}.
\]
with the encoding {\tt 2 3 1\ \ 0 1 0 1 2 3}.
For the identity permutation we obtain the initial sorted encoding {\tt 2 3 1\ \ 0 1 0 1 2 3} (it was already sorted).
For the permutation $2 \leftrightarrows 3$ we obtain the encoding {\tt 2 3 1\ \ 0 1 0 1 3 2}; upon sorting the pairs it becomes {\tt 2 3 -1\ \ 0 1 0 1 2 3}.
The list of pairs coincides with the initial sorted encoding but the sign is opposite; hence the graph is zero.
\end{example}

The notion of normal form of graphs allows one to generate lists of graphs with different topological portraits (e.g., Kontsevich graph series, see section~\ref{SecGraphSeries} below) by using the following algorithm.
Initially, the list of generated graphs is empty.
For every possible encoding (according to Implementation~\ref{DefEncoding}) in a run\/-\/through, its normal form with sign~$+1$ or~$0$ is added to the list if it is not contained there (otherwise, the offered encoding is skipped).

\begin{implement}
To generate all the Kontsevich graphs with {\tt m} sinks and {\tt n} internal vertices in $\tilde{G}_{m,n}$ (without tadpoles or double edges), the command is
\begin{verbatim}
    > generate_graphs n m
\end{verbatim}
The procedure lists all such graphs (one per line) in the standard output.
The second argument {\tt m} may be omitted: the default value is {\tt m = 2}.

\noindent
Similarly, to generate only normal forms (with sign~$+1$ or~$0$), the call is 
\begin{verbatim}
    > generate_graphs n m --normal-forms=yes
\end{verbatim}
The optional argument {\tt --with-coefficients=yes} indicates that (numbered) undetermined coefficients should be listed alongside the graphs (the default is {\tt no}); see \S\ref{SecGraphSeries}.

\noindent
(Accordingly, see {\tt KontsevichGraph::graphs} in Appendix~\ref{AppCPP}.)
\end{implement}

\begin{example}
The Kontsevich graphs in $\tilde{G}_{m,n}$ with {\em one} internal vertex
\begin{verbatim}
    > generate_graphs 1
    2 1 1   0 1
    2 1 1   1 0
\end{verbatim}
consist of the wedge with its two different labellings.
We can verify that the number of Kontsevich graphs on $n$ internal vertices and two sinks is $(n(n+1))^n$:
\begin{verbatim}
    > generate_graphs 2 | wc -l
    36
    > generate_graphs 3 | wc -l
    1728
    > generate_graphs 4 | wc -l
    160000
    > generate_graphs 5 | wc -l
    24300000
\end{verbatim}
Here, ``{\tt | wc -l}'' counts the number of lines in the output (\texttt{wc} is from GNU \texttt{coreutils}).
\end{example}

Let us remember that while a list of graphs is generated, more options can be chosen to restrict the graphs: e.g., only \emph{prime} graphs can be taken into account, graphs of which the mirror\/-\/reflection is already on the list can be skipped, and/or only those graphs in which each sink receives at least one arrow can be taken.
The purpose and implementation of these options will be explained in the next chapter 
(see p.~\pageref{SecBasic} below).


\subsection{Series of graphs: file format}\label{SecGraphSeries}
We now specify how formal power series expansions of graphs are implemented in software.
Denote by~$\hbar$ the formal parameter; in machine\/-\/readable format, a power series expansion in~$\hbar$ is a list of coefficients of~$\hbar^k$, $k \geqslant 0$.
The coefficients are formal sums of signed graphs (see {\tt KontsevichGraphSum} in Appendix \ref{AppCPP}) in which the coefficients can be of any type, e.g.,
\begin{itemize}
\item integer or floating point numbers (e.g., $0.333$),
\item rational numbers (e.g., $1/3$),
\item undetermined variables (resp., {\tt OneThird}).
\end{itemize}
To be precise, the library \cite{kgs} contains the class {\tt KontsevichGraphSeries} which depends on a template parameter {\tt T}; it specifies the type of all the coefficients of graphs in the series.
In the command\/-\/line programs, the external type {\tt GiNaC::ex}, which is the expression type of the {\tt GiNaC} library~\cite{GiNaC}, allows all of the above values (and combinations of them).
Hence a series under study 
can contain coefficients of all types at once; the coefficient of a graph itself can be a sum of different types of objects (e.g., {\tt p16}${} + 0.25$).

\begin{implement}[series encoding]
In the file format for formal power series expansions, two kinds of lines are possible: either
\begin{verbatim}
    h^k:
\end{verbatim}
or (separated by whitespace)
\begin{verbatim}
    <encoding of a graph>   <coefficient>
\end{verbatim}
\end{implement}
The precision of the formal power series expansion is indicated by the highest {\tt k} occurring in lines of the form ``{\tt h\symbol{"5E}k:}''.
Hence one can control this bound 
by adding such a line with a high {\tt k} at the end of the file.

\begin{example}
The Kontsevich $\star$-product (see \S \ref{SecStar}) is a graph series given up to the second order in the deformation parameter $\hbar$ in the file {\tt star2w.txt} which reads
\begin{verbatim}
    h^0:
    2 0 1              1
    h^1:
    2 1 1   0 1        1
    h^2:
    2 2 1   0 1 0 1    1/2
    2 2 1   0 1 0 2    w_2_1
    2 2 1   0 1 1 2    w_2_2
    2 2 1   0 3 1 2    w_2_3
\end{verbatim}
\label{ExStarTwoUndetermined}
\end{example}

\begin{implement}
\label{ImplSubstituteRelations}
The substitution of undetermined coefficients by their actual values, as well as re-expression of indeterminates via other such objects, is done by using the program
\begin{verbatim}
    > substitute_relations <graph-series-file> <subsitutions-file>
\end{verbatim}
Its command line arguments are two file names: the first file contains the series and the second file consists of a list of substitutions (one per line), each substitution written in the form
\begin{verbatim}
    <variable>==<what it is set equal to>
\end{verbatim}
The command line program sends the series with all those substitutions to the standard output.
\end{implement}

\begin{example}
\label{ExStarTwo}
The values of the unknowns in Example \ref{ExStarTwoUndetermined} are written in {\tt weights2.txt}:
\begin{verbatim}
    w_2_1==1/3
    w_2_2==-1/3
    w_2_3==-1/6
\end{verbatim}
Whence the star-product is given modulo $\bar{o}(\hbar^2)$ as follows:
\begin{verbatim}
    $ substitute_relations star2w.txt weights2.txt > star2.txt
    $ cat star2.txt
    h^0:
    2 0 1              1
    h^1:
    2 1 1   0 1        1
    h^2:
    2 2 1   0 1 0 1    1/2
    2 2 1   0 1 0 2    1/3
    2 2 1   0 1 1 2    -1/3
    2 2 1   0 3 1 2    -1/6
\end{verbatim}
Here \texttt{cat} from GNU \texttt{coreutils} is used to display the file.
\end{example}

In practice one may encounter graph series containing many graphs and undetermined coefficients.
To split a graph series into parts, the following command is helpful.

\begin{implement}
To extract the part of a graph series proportional to a given expression, use the call
\begin{verbatim}
    > extract_coefficient <graph-series-file> <expression>
\end{verbatim}
In the standard output one obtains a modification of the original graph series: each graph coefficient {\tt c} is now replaced by the coefficient of {\tt <expression> } in {\tt c}.
If the coefficient of {\tt <expression>} in {\tt c} is identically zero, then the graph is skipped.
The special value \verb"<expression> = 1" yields the constant part of the graph series (all the undetermined variables in the input are set to zero).
\end{implement}

\begin{example}
From the file in Example \ref{ExStarTwo}, we extract the part proportional to {\tt w\symbol{"5F}2\symbol{"5F}1}:
\begin{verbatim}
    > extract_coefficient star2w.txt w_2_1
    h^0:
    h^1:
    h^2:
    2 2 1   0 1 0 2    1
\end{verbatim}
It is just 
one graph.
\end{example}

\subsection{Reduction modulo skew\/-\/symmetry}
Let us recall that for every internal vertex in a Kontsevich graph, the pair of out-going edges is ordered by the relation Left $\prec$ Right and by a 
mark\/-\/up of those two edges 
using~$L$ and~$R$.
By construction, the coefficients of a graph series are sums of \emph{signed} graphs; each signed graph is specified by its encoding, see Implementation~\ref{DefEncoding} on p. \pageref{DefEncoding} above.
Starting from the vector space of formal sums of signed graphs with real coefficients, 
we pass to its 
quotient.
Namely, we postulate that graphs which differ only by their internal vertex labeling are equal.
Further, we proclaim that every reversal of the edge order in any pair (from the same internal vertex) entails the reversion of the graph sign.
Lastly, we introduce the relations
\[
\texttt{<coeff>${}\cdot{}$(sign)$\Gamma$raph${}={}$<sign${}\cdot{}$coeff>${}\cdot(+1)\Gamma$raph},
\]
for each signed graph~{\tt (sign)$\Gamma$raph} with any coefficient {\tt <coeff>}.

The combined effect of these relations is that each sum of signed graphs may be reduced to a sum of normal forms (see Definition~\ref{DefNormalForm}) with sign $+1$.
Recall that the ordering mechanism Left $\prec$ Right creates graphs that equal zero because they are equal to minus themselves (see Remark~\ref{RemZeroGraphs} and Example~\ref{ExZeroGraph}).

\begin{rem}
To avoid such comparison of graphs with zero all the time and so, to increase efficiency, every signed graph is brought to its normal form as soon as it is constructed.
It is this moment when zero graphs acquire zero signs.
\end{rem}

The algorithm to reduce a sum of graphs modulo skew\/-\/symmetry runs as follows.
For the starting graph or every next graph in the list, its sign (if nonzero) is set equal to +1 and its coefficient is modified, if necessary, by using the rule
\begin{equation}\label{EqSignRelCoeff}
\texttt{<coeff>${}\cdot{}$<sign> = <sign${}\cdot{}$coeff>${}\cdot{}$(+1)}.
\end{equation}
Every graph with sign~$0$ is removed.
Then the graph at hand (in its normal form, times a coefficient) is compared, disregarding signs, with all the graphs which follow in the list.
A match found, its coefficient is added --\,using relation~\eqref{EqSignRelCoeff}\,--
to the coefficient of the graph we started with; the match itself is removed.
By this reduction procedure for graph sums, all vanishing graphs with zero signs are excluded from the list.

\begin{implement}
To reduce a graph series expansion modulo skew\/-\/symmetry, call
\begin{verbatim}
    > reduce_mod_skew <graph-series-file> [--print-differential-orders]
\end{verbatim}
The resulting graph series is sent to the standard output.
The optional argument {\tt --print-differential-orders} controls whether the differential orders of the graphs (as operators acting on the sinks) are included in the output, with lines such as
\begin{verbatim}
    # 2 1
\end{verbatim}
indicating subsequent graphs have differential order $(2,1)$.
(The corresponding methods are {\tt KontsevichGraphSeries<T>::reduce\_mod\_skew()} and {\tt KontsevichGraphSum<T>} {\tt::reduce\_mod\_skew()} in Appendix \ref{AppCPP}.)
\end{implement}

\begin{example}
We put the zero graph from Example \ref{ExZeroGraph} with the coefficient $+1$ into a file {\tt zerograph3.txt}:
\begin{verbatim}
    h^3:
    2 3 1  0 1 0 1 2 3    1
\end{verbatim}
We confirm that {\tt reduce\_mod\_skew} kills it:
\begin{verbatim}
    > reduce_mod_skew zerograph3.txt
    h^3:
\end{verbatim}
The output is an empty list of graphs.
\end{example}



\begin{rem}
An alternative for the implementation of \texttt{reduce\symbol{"5F}mod\symbol{"5F}skew} is to make use of the plain text file format, in three passes.
In the first pass, put all graphs in normal form with sign $+1$ (updating the coefficients).
Recall that graphs are listed \emph{first} in the file format for graph series, so the problem of collecting terms is the same as sorting the file.
In the second pass, use \texttt{sort} from GNU \texttt{coreutils} to sort the file (this uses the very efficient ``external $R$-way merge'' algorithm).
In the third pass: for each normal form, add up the coefficients of every copy in the list (since the list is sorted, one need not look far).
The implementation of this algorithm is left as an exercise to the reader.
\end{rem}

\begin{rem}
Sums of graphs may also be reduced modulo the (graphical) Jacobi identity and its (pictorial) differential consequences; this is the subject of section~\ref{SecFactor}.
\end{rem}

\subsection{Evaluate a given graph series at a given Poisson structure}\label{SecPoissonEvaluate}
Let us recall that every Kontsevich graph contains at least one sink. 
Every edge (decorated with an index, say~$i$, over which the summation runs from~$1$ to~$n=\dim N^n$) denotes the derivation with respect to a local coordinate~$x^i$ at a given point~$\bx$ of the affine manifold~$N^n$ (hence the edge de\-no\-tes~$\partial/\partial x^i|_\bx$).
Every internal vertex (if any) encodes a copy of a given Poisson structure~$\cP$.
Should the labellings of two outgoing edges be~$\begin{picture}(20.0,5.0)\put(0.0, 2.5){\vector(1,0){20.0}\put(-13,-2.4){$i$}}\end{picture}$ and~$\begin{picture}(20.0,5.0)\put(0.0, 2.5){\vector(1,0){20.0}\put(-13,-2.4){$j$}}\end{picture}$ so that 
the edge with~$i$ precedes that with~$j$, the Poisson structure in that vertex is~$\cP^{ij}(\bx)$ (that is, the ordering $i \prec j$ is preserved; moreover, the reference to a point~$\bx$ is common to all vertices).
Now, every Kontsevich graph (with a coefficient after 
it) represents a (poly)\/dif\-fe\-ren\-tial operator with respect to the content of sink(s); to build that operator, we apply the derivations (at~$\bx \in N^n$) to objects in the arrowhead vertices, multiply the content of all vertices at a fixed set of index values, 
and then sum over all the indices.

\begin{example}[Jacobi identity]
For all Poisson structures~$\cP$ and all triples of arguments from the algebra~$C^\infty(N^n)$ of functions on the Poisson manifold at hand, we have that
\begin{equation}\label{EqJacFig}
\vcenteredhbox{
\raisebox{3.3mm}
[6.5mm][3.5mm]{ 
\unitlength=1mm
\special{em:linewidth 0.4pt}
\linethickness{0.4pt}
\begin{picture}(12,15)
\put(0,-10){
\begin{picture}(12.00,15.00)
\put(0.00,10.00){\framebox(12.00,5.00)[cc]{$\bullet\ \bullet$}}
\put(2.00,10.00){\vector(-1,-3){1.33}}
\put(6.00,10.00){\vector(0,-1){4.00}}
\put(10.00,10.00){\vector(1,-3){1.33}}
\put(0.00,4.00){\makebox(0,0)[cb]{\tiny\it1}}
\put(6.00,4.00){\makebox(0,0)[cb]{\tiny\it2}}
\put(11.67,4.00){\makebox(0,0)[cb]{\tiny\it3}}
\end{picture}
}\end{picture}}}\ \ \ 
\mathrel{{:}{=}}
\text{\raisebox{-12pt}[25pt]{
\unitlength=0.70mm
\linethickness{0.4pt}
\begin{picture}(26.00,16.33)
\put(0.00,5.00){\line(1,0){26.00}}
\put(2.00,5.00){\circle*{1.33}}
\put(13.00,5.00){\circle*{1.33}}
\put(24.00,5.00){\circle*{1.33}}
\put(2.00,1.33){\makebox(0,0)[cc]{\tiny\it1}}
\put(13.00,1.33){\makebox(0,0)[cc]{\tiny\it2}}
\put(24.00,1.33){\makebox(0,0)[cc]{\tiny\it3}}
\put(7.33,11.33){\circle*{1.33}}
\put(7.33,11.33){\vector(1,-1){5.5}}
\put(7.33,11.33){\vector(-1,-1){5.5}}
\put(13,17){\circle*{1.33}}
\put(13,17){\vector(1,-1){11.2}}
\put(13,17){\vector(-1,-1){5.1}}
\put(3.00,10.00){\makebox(0,0)[cc]{\tiny$i$}}
\put(12.00,10.00){\makebox(0,0)[cc]{\tiny$j$}}
\put(24.00,10.00){\makebox(0,0)[cc]{\tiny$k$}}
\end{picture}
}}
{-}
\text{\raisebox{-12pt}[25pt]{
\unitlength=0.70mm
\linethickness{0.4pt}
\begin{picture}(26.00,16.33)
\put(0.00,5.00){\line(1,0){26.00}}
\put(2.00,5.00){\circle*{1.33}}
\put(13.00,5.00){\circle*{1.33}}
\put(24.00,5.00){\circle*{1.33}}
\put(2.00,1.33){\makebox(0,0)[cc]{\tiny\it1}}
\put(13.00,1.33){\makebox(0,0)[cc]{\tiny\it2}}
\put(24.00,1.33){\makebox(0,0)[cc]{\tiny\it3}}
\put(13,11.33){\circle*{1.33}}
\put(13,11.33){\vector(2,-1){10.8}}
\put(13,11.33){\vector(-2,-1){10.8}}
\put(18.5,17){\circle*{1.33}}
\put(18.5,17){\vector(-1,-1){5.2}}
\put(18.5,17){\vector(-1,-2){5.6}}
\put(13,15){\tiny $L$}
\put(17,12){\tiny $R$}
\put(4.00,10.00){\makebox(0,0)[cc]{\tiny$i$}}
\put(11.00,8.00){\makebox(0,0)[cc]{\tiny$j$}}
\put(22.00,10.00){\makebox(0,0)[cc]{\tiny$k$}}
\end{picture}
}}
{-}
\text{\raisebox{-12pt}[25pt]{
\unitlength=0.70mm
\linethickness{0.4pt}
\begin{picture}(26.00,16.33)
\put(0.00,5.00){\line(1,0){26.00}}
\put(2.00,5.00){\circle*{1.33}}
\put(13.00,5.00){\circle*{1.33}}
\put(24.00,5.00){\circle*{1.33}}
\put(2.00,1.33){\makebox(0,0)[cc]{\tiny\it1}}
\put(13.00,1.33){\makebox(0,0)[cc]{\tiny\it2}}
\put(24.00,1.33){\makebox(0,0)[cc]{\tiny\it3}}
\put(18.33,11.33){\circle*{1.33}}
\put(18.33,11.33){\vector(1,-1){5.5}}
\put(18.33,11.33){\vector(-1,-1){5.5}}
\put(13,17){\circle*{1.33}}
\put(13,17){\vector(-1,-1){11.2}}
\put(13,17){\vector(1,-1){5.1}}
\put(3.00,10.00){\makebox(0,0)[cc]{\tiny$i$}}
\put(13.00,10.00){\makebox(0,0)[cc]{\tiny$j$}}
\put(24.00,10.00){\makebox(0,0)[cc]{\tiny$k$}}
\end{picture}
}}
= 0. 
\end{equation}%
\vskip .5em
\noindent
In formulae, by ascribing the index $\ell$ to the unlabeled edge, the identity reads
\[
(
\partial_\ell \cP^{ij} \cP^{\ell k} +
\partial_\ell \cP^{jk} \cP^{\ell i} +
\partial_\ell \cP^{ki} \cP^{\ell j}
)
\partial_i({\tiny\it 1}) \partial_j({\tiny\it 2}) \partial_k({\tiny\it 3}) = 0.
\]
Indeed, the coefficient of $\partial_i \otimes \partial_j \otimes \partial_k$ is the familiar form of the Jacobi identity.
\end{example}

In fact, the graph itself is the most convenient way to transcribe the formulae which one constructs from it, see~\cite[\S2.1]{dq17} for more details.\footnote{\label{FootVariational}%
In the variational set\/-\/up of Poisson field models, the affine manifold~$N^n$ is realised as fibre in an affine bundle~$\pi$ over another affine manifold~$M^m$ equipped with a volume element.
The variational Poisson brackets $\{\cdot,\cdot\}_{\boldsymbol{\mathcal{P}}}$ are then defined for integral functionals that take sections of such bundle~$\pi$ to numbers.
The encoding of variational poly\-dif\-fe\-ren\-tial operators by the Kontsevich graphs now reads as follows.
Decorated by an index~$i$, every edge denotes the variation with respect to the $i$th~coordinate along the fibre.
By construction, the variations act by first differentiating their argument with respect to the fibre variables (or their derivatives along the base~$M^m$); secondly, the integrations by parts over the underlying space~$M^m$ are performed.
Whenever two or more arrows arrive at a graph vertex, its content is first differentiated the corresponding number of times with respect to the jet fibre variables in~$J^\infty(\pi)$ and only then it can be differentiated with respect to local coordinates on the base manifold~$M^m$.
The assumption that both the manifolds~$M^m$ and~$N^n$ be affine makes the construction coordinate\/-\/free, see~\cite{gvbv,cycle16} and~\cite{sqs13,prg15}.
}
The computer implementation is straightforward.
We acknowledge however that it is one of the most needed instruments.

\begin{implement}\label{ImplPoissonEvaluate}
The call is
\begin{verbatim}
    > poisson_evaluate <graph-series-filename> <poisson-structure>
\end{verbatim}
and options for {\tt <poisson-structure>} are\footnote{The current version of the software does not allow specification of an arbitrary Poisson structure at runtime (e.g. input as a matrix of functions); however, in the source file {\tt util/poison\symbol{"5F}structure\_examples.hpp} the list of Poisson structures (as matrices) can be extended to one's heart's desire.}
\begin{itemize}
\item {\tt 2d-polar},
\item {\tt 3d-generic},
\item {\tt 3d-polynomial},
\item {\tt 4d-determinant},
\item {\tt 4d-rank2},
\item {\tt 9d-rank6}.
\end{itemize}
The output is a list of coefficients of the differential operator that the graph series represents, filtered by (\hskip -0.7pt\textit{a}) powers of $\hbar$, (\hskip -0.6pt\textit{b}) the differential order as an operator acting on the sinks, and (\hskip -0.7pt\textit{c}) the actual derivatives falling on the sinks.
\end{implement}

\begin{example}
\label{ExJacEval}
Put the graph sum for the Jacobiator $\Jac(\cP)$ in {\tt jacobiator.txt}:
\begin{verbatim}
    3 2 1   0 1 2 3    -1
    3 2 1   0 2 1 3    1
    3 2 1   0 4 1 2    -1
\end{verbatim}
We evaluate it at a Poisson structure:
\begin{verbatim}
    > poisson_evaluate jacobiator.txt 2d-polar
    Coordinates: r t 
    Poisson structure matrix:
    [[0, r^(-1)]
    [-r^(-1), 0]]

    h^0:
    # 1 1 1 
    # [ r ] [ r ] [ r ]
    0
    # [ r ] [ r ] [ t ]
    0
    # [ r ] [ t ] [ r ]
    0
    # [ r ] [ t ] [ t ]
    0
    # [ t ] [ r ] [ r ]
    0
    # [ t ] [ r ] [ t ]
    0
    # [ t ] [ t ] [ r ]
    0
    # [ t ] [ t ] [ t ]
    0
\end{verbatim}
For example, the pair of lines
\begin{verbatim}
    # [ r ] [ t ] [ r ]
    0
\end{verbatim}
indicates that the coefficient of $\partial_r \otimes \partial_t \otimes \partial_r$ is zero in the polydifferential operator.
\end{example}

Restriction of graph series to Poisson structures will be essential in section~\ref{SecRestrictAssoc} below where systems of linear algebraic equations between the Kontsevich graph weights in~$\star$ will be obtained by restricting the associativity equation $\Assoc_\star(f,g,h) = 0$ to a given Poisson bracket.


\section{The Kontsevich $\star$-product}\label{SecStar}
\noindent%
The star\/-\/product $\star = \times + \hbar\{\cdot,\cdot\}_\cP + \bar{o}(\hbar)$ in $C^\infty(N^n)[[\hbar]]$ is an associative unital noncommutative deformation of the associative unital commutative product~$\times$ in the algebra of functions $C^\infty(N^n)$ on a given affine manifold~$N^n$ of dimension $n < \infty$.
The bi\/-\/linear bi\/-\/differential $\star$-\/product is realized as a formal power series in~$\hbar$ by using the weighted Kontsevich graphs.
In fact, the bi\/-\/differential operator at~$\hbar^k$ is a sum of \emph{all} Kontsevich graphs
$\Gamma \in \tilde{G}_{2,k}$ without tadpoles, with $k$~internal vertices (and two sinks) 
taken with some weights~$w(\Gamma)$.
Let us recall their original definition~\cite{MK97}.

\begin{define}
Every Kontsevich graph $\Gamma \in \tilde{G}_{2,k}$ can be embedded in the closed upper half\/-\/plane $\mathbb{H} \cup \mathbb{R} \subset \mathbb{C}$ by placing the internal vertices at pairwise distinct points in~$\mathbb{H}$ and the external vertices at~$0$ and~$1$; the edges are drawn as geodesics with respect to the hyperbolic metric, i.e.\ as vertical lines and circular segments.
The 
angle~$\varphi(p,q)$ between two distinct points $p, q \in \mathbb{H}$ is the angle between the geodesic from~$p$ to~$q$ and the geodesic from~$p$ to~$\infty$ (measured counterclockwise from the latter): $$\varphi(p,q) = \Arg\bigg(\frac{q-p}{q-\bar{p}}\bigg),$$
and it can be extended to $\mathbb{H} \cup \mathbb{R}$ by continuity.
The \emph{weight} of a Kontsevich graph~$\Gamma \in \tilde{G}_{2,k}$ is given by the integral\footnote{We omit the factor~$1/k!$ that was written in~\cite{MK97}, to make the weight multiplicative (see Lemma~\ref{LemmaMult}).}
\begin{equation}\label{EqWeight}
w(\Gamma) = \frac{1}{(2\pi)^{2k}}\int_{C_k(\mathbb{H})} \bigwedge_{j=1}^k {\rm d}\varphi(p_j,p_{\text{Left}(j)}) \wedge {\rm d}\varphi(p_j,p_{\text{Right}(j)}),
\end{equation}
over the \emph{configuration space} of $k$~points in the upper half\/-\/plane $\mathbb{H} \subset \mathbb{C}$,
\[
C_k(\mathbb{H}) = \{(p_1,\ldots,p_k) \in \mathbb{H}^k : p_i \textrm{ pairwise distinct}\};
\]
the integrand is defined pointwise at $(p_1,\ldots,p_k)$ by considering the embedding of $\Gamma$ in $\mathbb{H}$ that sends the~$j$th internal vertex to~$p_j$; the numbers~$\text{Left}(j)$ 
and~$\text{Right}(j)$ are the left and right targets of $j$th~vertex, respectively.
(If $\text{Left}(j)$ is the first or the second sink, put $p_{\text{Left}(j)} = 0$ or $1$ respectively; the same goes for $p_{\text{Right}(j)}$ if $\text{Right}(j)$ is a sink.)
\end{define}

\begin{theorNo}[Kontsevich \cite{MK97}]
For every Poisson bi\/-\/vector~$\cP$ on~$N^n$ and an infinitesimal deformation $\times \mapsto \times + \hbar\{\cdot,\cdot\}_\cP + \bar{o}(\hbar)$ towards the respective Poisson bracket, the $\hbar$-\/linear star\/-\/product
\begin{equation}\label{EqStar}
\star = \times + \sum_{k\geqslant 1} \frac{\hbar^k}{k!} \sum_{\Gamma \in \tilde{G}_{2,k}} w(\Gamma)\, \Gamma(\cP)(\cdot,\cdot)\colon 
C^\infty(N^n)[[\hbar]] \times C^\infty(N^n)[[\hbar]] \to C^\infty(N^n)[[\hbar]]
\end{equation}
is associative. 
\end{theorNo}

\begin{lemma}\label{LemmaPermute}
Permuting the internal vertex labels of a Kontsevich graph leaves the weight unchanged.
\end{lemma}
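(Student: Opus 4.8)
The plan is to realise the relabeling of the internal vertices by a permutation $\sigma \in S_k$ as a change of the integration variables in the weight integral~\eqref{EqWeight}, and then to verify that this change produces no sign. Write $\Gamma'$ for the graph obtained from $\Gamma$ by sending each internal vertex $j$ to $\sigma(j)$; its target data are $\text{Left}'(\sigma(j)) = \sigma(\text{Left}(j))$ and $\text{Right}'(\sigma(j)) = \sigma(\text{Right}(j))$, under the convention that $\sigma$ fixes the two sinks. Correspondingly I would introduce the coordinate permutation $\Phi\colon C_k(\mathbb{H}) \to C_k(\mathbb{H})$, $\Phi(p_1,\ldots,p_k) = (p_{\sigma(1)},\ldots,p_{\sigma(k)})$, which is a diffeomorphism of the configuration space because it preserves the condition that the points be pairwise distinct. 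A direct check of the target data shows that if one groups the integrand as $\omega_\Gamma = \bigwedge_{j=1}^k B_j$ with blocks $B_j = {\rm d}\varphi(p_j,p_{\text{Left}(j)}) \wedge {\rm d}\varphi(p_j,p_{\text{Right}(j)})$, then $\Phi^* B_j = B'_{\sigma(j)}$, where $B'_i$ is the block attached to vertex $i$ in $\Gamma'$.

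The heart of the argument is a two-fold sign count. First, each block $B_j$ is a $2$-form, hence of even degree, so the blocks commute in the exterior algebra; consequently the product $\bigwedge_{i=1}^k B'_i$ defining $\omega_{\Gamma'}$ may be reordered into $\bigwedge_{j=1}^k B'_{\sigma(j)} = \bigwedge_{j=1}^k \Phi^* B_j = \Phi^*\omega_\Gamma$ without any sign. Second, I would check that $\Phi$ is orientation-preserving: writing $p_j = x_j + \mathrm{i}\,y_j$, the manifold $C_k(\mathbb{H})$ carries the orientation of $dx_1 \wedge dy_1 \wedge \cdots \wedge dx_k \wedge dy_k$, and $\Phi$ merely permutes $k$ coordinate blocks, each of even real dimension $2$. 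A permutation of even-dimensional blocks is always an even permutation of the underlying coordinates, hence orientation-preserving.

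Combining the two observations, the change-of-variables formula for an orientation-preserving diffeomorphism gives $\int_{C_k(\mathbb{H})} \omega_{\Gamma'} = \int_{C_k(\mathbb{H})} \Phi^*\omega_\Gamma = \int_{C_k(\mathbb{H})} \omega_\Gamma$, and the normalising constant $1/(2\pi)^{2k}$ is untouched since the relabeling leaves $k$ fixed; therefore $w(\Gamma') = w(\Gamma)$. The only place where the proof could fail is the appearance of a spurious sign, and the whole point is that both potential sources of signs --- the reordering of the wedge factors and the permutation of the integration coordinates --- are neutralised precisely because the object attached to each internal vertex is even-dimensional (a $2$-form on a $2$-real-dimensional factor). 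This is the same even-parity bookkeeping that, by contrast, forces a genuine $-1$ in Lemma~\ref{LemmaSwapLR}, where the swap $L \rightleftarrows R$ interchanges the two \emph{odd}-degree $1$-forms \emph{within} a single block.
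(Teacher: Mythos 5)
Your proof is correct and follows essentially the same route as the paper, whose entire argument is the single sentence that the permutation merely re-orders the factors in a wedge product of two-forms. You have simply made explicit the two pieces of bookkeeping the paper leaves implicit — that the relabeling is realised by a coordinate permutation of $C_k(\mathbb{H})$, and that this permutation is orientation-preserving because each factor $\mathbb{H}$ is even-dimensional — both of which reduce to the same even-parity observation.
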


\begin{proof}
Such a permutation re\/-\/orders the factors in a wedge product of two-forms.
\end{proof}

\begin{lemma}\label{LemmaSwapLR}
Swapping $L \rightleftarrows R$ at an internal vertex of a Kontsevich graph $\Gamma \in \tilde{G}_{2,k}$ implies the reversal of the sign of its weight.
\end{lemma}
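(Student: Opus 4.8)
The plan is to compute both weights directly from the integral formula \eqref{EqWeight} and track the single factor that is affected by the swap. Write $\Gamma'$ for the graph obtained from $\Gamma$ by interchanging the Left and Right outgoing edges at the chosen internal vertex, say the $j_0$th one. The integration domain $C_k(\mathbb{H})$ depends only on the number $k$ of internal vertices, not on the $L \prec R$ ordering, so it is identical for $\Gamma$ and $\Gamma'$; I therefore only need to compare the two integrands pointwise.

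First I would isolate the factor of the wedge product $\bigwedge_{j=1}^k {\rm d}\varphi(p_j,p_{\text{Left}(j)}) \wedge {\rm d}\varphi(p_j,p_{\text{Right}(j)})$ indexed by $j = j_0$. For every other index $j \neq j_0$ the targets $\text{Left}(j)$ and $\text{Right}(j)$ are unchanged by the swap, so those $2$-form factors coincide for $\Gamma$ and $\Gamma'$. At $j = j_0$, however, the swap means $\text{Left}_{\Gamma'}(j_0) = \text{Right}_{\Gamma}(j_0)$ and $\text{Right}_{\Gamma'}(j_0) = \text{Left}_{\Gamma}(j_0)$, so the corresponding factor becomes ${\rm d}\varphi(p_{j_0},p_{\text{Right}(j_0)}) \wedge {\rm d}\varphi(p_{j_0},p_{\text{Left}(j_0)})$, i.e.\ the two $1$-forms appear in the opposite order.

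The key step is then the anticommutativity of the wedge product of $1$-forms: since each ${\rm d}\varphi(\cdot,\cdot)$ is a $1$-form, interchanging the two of them at vertex $j_0$ produces exactly one sign change, so the integrand of $w(\Gamma')$ equals minus the integrand of $w(\Gamma)$ at every point of $C_k(\mathbb{H})$. Integrating and keeping the common prefactor $1/(2\pi)^{2k}$ gives $w(\Gamma') = -\,w(\Gamma)$, which is the claim. This is precisely the mechanism alluded to in Remark~\ref{RemZeroGraphs}, and it also makes Lemma~\ref{LemmaPermute} consistent: permuting internal vertices reorders whole $2$-form blocks, which commute, whereas swapping $L \rightleftarrows R$ reorders two $1$-forms inside one block, which anticommute.

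I do not expect a genuine obstacle here, since the argument is a one-line appeal to $\omega \wedge \eta = -\,\eta \wedge \omega$ for $1$-forms. The only point deserving a remark is the implicit claim that the integration domain and the $2$-form factors at the untouched vertices are genuinely independent of the $L/R$ labelling at $j_0$; once that is noted, the sign is forced and no estimate on convergence of the (absolutely convergent) Kontsevich integral is needed, because only an algebraic sign is being tracked under a fixed integral sign.
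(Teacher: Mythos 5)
Your proof is correct and follows exactly the paper's argument, which is the one-line observation that the swap $L \rightleftarrows R$ at a vertex interchanges the two $1$-forms ${\rm d}\varphi$ in the corresponding factor of the wedge product in \eqref{EqWeight}, and anticommutativity yields the sign. You have merely spelled out the details (unchanged domain, unchanged factors at the other vertices) that the paper leaves implicit.
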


\begin{proof}
Anticommutativity of wedge product of two differentials in formula~\eqref{EqWeight}. 
\end{proof}

\begin{lemma}\label{LemmaMirror}
The weight of a graph $\Gamma \in \tilde{G}_{2,k}$ and its mirror\/-\/reflection~$\bar{\Gamma}$ are related by $w(\bar{\Gamma}) = (-)^kw(\Gamma)$.
\end{lemma}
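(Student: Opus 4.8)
The plan is to realise the mirror reflection geometrically, as the reflection $\sigma\colon z\mapsto 1-\bar z$ of the closed upper half\/-\/plane $\mathbb{H}\cup\mathbb{R}$ across the vertical line $\mathrm{Re}\,z=\tfrac12$. This is an orientation\/-\/reversing isometry of the hyperbolic metric that sends geodesics to geodesics and interchanges the two sinks, $\sigma(0)=1$ and $\sigma(1)=0$, while keeping the $L\prec R$ ordering attached to each pair of outgoing edges. Thus the embedding of $\bar{\Gamma}$ used in the weight integral~\eqref{EqWeight} is, up to this interchange of sinks, the $\sigma$\/-\/image of the embedding of $\Gamma$. I would therefore substitute $p_j=\sigma(q_j)$ in~\eqref{EqWeight} for $\bar{\Gamma}$ and track separately how the integrand and the oriented domain transform.

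The key step is the effect of $\sigma$ on the angle function. Using $\overline{\sigma(p)}=1-p$ and simplifying,
\[
\frac{\sigma(q)-\sigma(p)}{\sigma(q)-\overline{\sigma(p)}}
=\frac{\bar p-\bar q}{p-\bar q}
=\overline{\left(\frac{q-p}{q-\bar p}\right)},
\]
so that $\varphi(\sigma(p),\sigma(q))=\Arg\bigl(\overline{(q-p)/(q-\bar p)}\bigr)=-\varphi(p,q)$; every angle changes sign under the reflection, hence ${\rm d}\varphi\mapsto-{\rm d}\varphi$. Because the two outgoing edges at each internal vertex contribute the factor ${\rm d}\varphi(p_j,p_{\mathrm{Left}(j)})\wedge{\rm d}\varphi(p_j,p_{\mathrm{Right}(j)})$ and the edge order within this pair is preserved, the two sign changes cancel within each such $2$\/-\/form; after the substitution (which also carries $\mathrm{Left}(j)$ and $\mathrm{Right}(j)$ through the sink interchange $\sigma(0)=1$, $\sigma(1)=0$) the integrand of $\bar{\Gamma}$ returns \emph{exactly} the integrand of $\Gamma$.

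It remains to account for the orientation. The map $\sigma$ has real Jacobian $-1$ on $\mathbb{H}$, so the product diffeomorphism $\sigma^{\times k}$ of the configuration space $C_k(\mathbb{H})$ multiplies its orientation by $(-1)^k$. Since the integrand is invariant while the oriented domain is reversed $k$ times, the change of variables yields $w(\bar{\Gamma})=(-1)^k\,w(\Gamma)$, which is the claim.

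The main obstacle is the sign bookkeeping: one must verify that the combinatorial mirror of $\Gamma$ is matched precisely by the geometric reflection $\sigma$ — so that the angle sign changes cancel pairwise at each internal vertex and leave \emph{no} residual factor from the $2k$\/-\/form — and then correctly isolate the single genuine source of the overall sign, namely the $(-1)^k$ coming from the orientation of $C_k(\mathbb{H})$ rather than from the integrand. As an independent check, and a quicker route when $\bar{\Gamma}$ is instead viewed as reversing the edge order $L\rightleftarrows R$ at each of the $k$ internal vertices, the same factor $(-1)^k$ follows at once by applying Lemma~\ref{LemmaSwapLR} once at every internal vertex.
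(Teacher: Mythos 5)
Your main argument is correct and is exactly the paper's proof, which consists of the single observation that the reflection across the line $\Re(z)=\tfrac12$ is an orientation\/-\/reversing coordinate change on each of the $k$ factors~$\mathbb{H}$ of~$C_k(\mathbb{H})$; you have merely written out the computation $\varphi(\sigma(p),\sigma(q))=-\varphi(p,q)$ and the resulting cancellation of signs in each $2$\/-\/form explicitly. One caveat: the closing ``independent check'' is not valid, because reversing $L\rightleftarrows R$ at every internal vertex does \emph{not} produce the mirror image $\bar{\Gamma}$ --- it keeps both sinks fixed and, after renormalising each pair, returns (a signed copy of) the original graph, whereas the mirror reflection interchanges the two sinks; e.g.\ for the graph with encoding {\tt 0 1 0 2} the all\/-\/vertex $L\rightleftarrows R$ swap gives back {\tt 0 1 0 2} with sign $(+1)^2$, not the mirror {\tt 0 1 1 2}. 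Drop that remark and the proof stands as is.
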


\begin{proof}
Taking the reflection of a graph (with respect to the vertical line $\Re(z) = 1/2$) is an orientation\/-\/reversing coordinate change on each of the $k$ ``factors'' $\mathbb{H}$ in~$C_k(\mathbb{H})$.
\end{proof}

\begin{lemma}[\cite{Dito}]\label{LemmaDito} 
For a Kontsevich graph such that at least one sink receives no edge(s), its weight is zero.%
\footnote{The fact that the differential order of~$\star$ is positive with respect to either of its arguments should be expected, in view of the required property of the $\star$-\/product to be unital: $f \star 1 = f = 1 \star f$.}
\end{lemma}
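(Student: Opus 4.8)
The plan is to show that the differential form being integrated in~\eqref{EqWeight} vanishes identically once one of the sinks fails to be a target, so that the integral is zero for pointwise reasons. Suppose the sink placed at $a \in \{0,1\}$ receives no incoming edges, and let $b$ denote the position of the other sink. Since no edge has $a$ as its target, the point $a$ never occurs inside the integrand: every factor is of the form ${\rm d}\varphi(p_j,\cdot)$ whose second argument is one of the internal points $p_1,\ldots,p_k$ or the surviving sink $b$.

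First I would introduce the one-parameter group of affine dilations centred at $b$, namely $\Phi_\lambda\colon z\mapsto b+\lambda(z-b)$ for $\lambda>0$. These are orientation-preserving hyperbolic isometries fixing both $b$ and $\infty$; being of the form $z\mapsto\alpha z+\beta$ with $\alpha>0$ and $\beta\in\mathbb{R}$, they preserve the angle function, $\varphi(\Phi_\lambda(p),\Phi_\lambda(q))=\varphi(p,q)$. This follows directly from the defining formula $\varphi(p,q)=\Arg\bigl((q-p)/(q-\bar p)\bigr)$ after cancelling $\lambda$ and using $\bar\beta=\beta$. Consequently every $\varphi(p_j,p_{\text{Left}(j)})$ and $\varphi(p_j,p_{\text{Right}(j)})$ appearing in~\eqref{EqWeight} is invariant under the flow, because each of their arguments is either an internal point or the sink $b$, all of which are carried along by $\Phi_\lambda$.

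Next I would pass to the infinitesimal statement. Let $X$ be the generator of the flow, acting on the $j$th point as the radial field $p_j\mapsto p_j-b$ on each factor of $C_k(\mathbb{H})$. Because $b\in\mathbb{R}$ while every $p_j\in\mathbb{H}$, we have $p_j\neq b$, so $X$ is nowhere vanishing on $C_k(\mathbb{H})$. Invariance of each angle gives $\iota_X\,{\rm d}\varphi(p_j,\cdot)=X\bigl(\varphi(p_j,\cdot)\bigr)=0$, and hence the interior product of $X$ with the full $2k$-form integrand $\omega=\bigwedge_{j=1}^{k}{\rm d}\varphi(p_j,p_{\text{Left}(j)})\wedge{\rm d}\varphi(p_j,p_{\text{Right}(j)})$ vanishes, $\iota_X\omega=0$, since $\iota_X$ is an antiderivation and every differential factor is annihilated.

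Finally I would invoke the elementary fact that contraction with a nonzero vector is injective on forms of top degree: on the $2k$-dimensional manifold $C_k(\mathbb{H})$ a $2k$-form $\omega$ with $\iota_X\omega=0$ for a nowhere-vanishing $X$ must be identically zero (complete $X$ to a local frame and read off the single coefficient of $\omega$). Therefore the integrand of~\eqref{EqWeight} is pointwise zero and $w(\Gamma)=0$, with no convergence questions to settle. The only genuinely delicate point is the verification that this particular dilation preserves $\varphi$: that is exactly where the hypothesis is used, since $\Phi_\lambda$ fixes only $b$ and $\infty$, so had the missing sink $a$ actually occurred as a target, the factor $\varphi(p_j,a)$ would fail to be invariant and the whole argument would collapse.
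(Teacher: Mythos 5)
Your argument is correct, but note that the paper does not actually prove this lemma: it is stated with a citation to \cite{Dito}, and the accompanying footnote only offers the heuristic that positive differential order in each argument is to be expected from unitality, $f\star 1=f=1\star f$ --- which is a consistency check, not a proof. Your route is the standard scaling (dimension-count) argument in the style of Kontsevich's own vanishing lemmas: since the untouched sink $a$ never appears as an argument of any angle function, every factor ${\rm d}\varphi$ in the integrand of~\eqref{EqWeight} is annihilated by the generator $X$ of the dilation flow fixing the other sink $b$ and $\infty$, and a top-degree form contracted to zero by a nowhere-vanishing field is identically zero. All the individual steps check out: $z\mapsto b+\lambda(z-b)$ with $\lambda>0$, $b\in\mathbb{R}$ indeed preserves $\varphi(p,q)=\Arg\bigl((q-p)/(q-\bar p)\bigr)$ since both numerator and denominator scale by $\lambda$; $X$ is nonzero because $p_j\in\mathbb{H}$ while $b\in\partial\mathbb{H}$; and $\iota_X$ kills the whole wedge product because it kills each one-form factor. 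This pointwise vanishing is in fact stronger than what convergence-plus-symmetry arguments give. The only (trivial) caveat is that the argument presupposes $k\geqslant 1$ internal vertices, since for $k=0$ the empty graph has weight $1$; the lemma is of course only ever invoked for $k\geqslant 1$ in~\eqref{EqStar}. You might also remark that your mechanism is exactly the ``integrand vanishes identically'' phenomenon the authors track separately for $21$ of the $149$ basic graphs at order $4$ (Appendix~\ref{AppNumericalWeights}), here triggered uniformly by the combinatorial hypothesis on the sink.
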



\begin{lemma}\label{LemmaMult}
The map $w\colon \sqcup_k \tilde{G}_{2,k} \to \mathbb{R}$ that assigns weights to graphs is multiplicative,
\begin{equation}\label{EqMult}
w(\Gamma_i \bar{\times} \Gamma_j) = w(\Gamma_i) \times w(\Gamma_j),
\end{equation}
with respect to the product~$\bar{\times}$ of graphs, 
\[
\Gamma_i \bar{\times} \Gamma_j = (\Gamma_i \sqcup \Gamma_j)\bigr/\{a^{\text{th}}\text{ sink in }\Gamma_i=a^{\text{th}}\text{ sink in }\Gamma_j,\quad 0 \leqslant a \leqslant 1\},
\]
which identifies the respective sinks.
\end{lemma}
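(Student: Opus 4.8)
The plan is to show that both the domain of integration and the integrand in formula~\eqref{EqWeight} split as a product indexed by the two factor graphs. Write $\Gamma_i \in \tilde{G}_{2,k_i}$ and $\Gamma_j \in \tilde{G}_{2,k_j}$, so that $\Gamma := \Gamma_i \bar{\times} \Gamma_j \in \tilde{G}_{2,k}$ with $k = k_i + k_j$. Using Lemma~\ref{LemmaPermute} I first label the internal vertices of $\Gamma$ so that $p_1,\ldots,p_{k_i}$ are the vertices inherited from $\Gamma_i$ and $p_{k_i+1},\ldots,p_k$ are those inherited from $\Gamma_j$. The crucial structural observation is that in the disjoint union with identified sinks \emph{no} edge runs between the two groups of internal vertices: every edge issued from a $\Gamma_i$-vertex lands either on another $\Gamma_i$-vertex or on one of the two shared sinks, which are placed at the fixed points $0$ and $1$, and symmetrically for $\Gamma_j$. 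Hence each angle factor $\mathrm{d}\varphi(p_a,p_{\text{Left}(a)}) \wedge \mathrm{d}\varphi(p_a,p_{\text{Right}(a)})$ attached to a $\Gamma_i$-vertex depends only on $p_1,\ldots,p_{k_i}$ together with the constants $0,1$, and likewise for the $\Gamma_j$-vertices.

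Consequently the integrand of $w(\Gamma)$ is, with the chosen labelling, already grouped as the exterior product $\omega_i \wedge \omega_j$, where $\omega_i$ is the Kontsevich integrand of $\Gamma_i$ (a form in $p_1,\ldots,p_{k_i}$ only) and $\omega_j$ that of $\Gamma_j$. Each internal vertex contributes a $2$-form, so $\omega_i$ and $\omega_j$ have even degree $2k_i$ and $2k_j$; thus the vertex relabelling above and the grouping of factors are sign-free. The domains compare as follows: $C_k(\mathbb{H})$ is the subset of $\mathbb{H}^{k} = \mathbb{H}^{k_i} \times \mathbb{H}^{k_j}$ on which all $k$ points are pairwise distinct, whereas $C_{k_i}(\mathbb{H}) \times C_{k_j}(\mathbb{H})$ additionally permits a $\Gamma_i$-point to coincide with a $\Gamma_j$-point; that coincidence locus has real codimension $2$, hence Lebesgue measure zero, so integration over either domain gives the same value. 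Splitting the normalisation as $(2\pi)^{-2k} = (2\pi)^{-2k_i}(2\pi)^{-2k_j}$ and applying Fubini's theorem yields
\[
w(\Gamma) = \frac{1}{(2\pi)^{2k_i}}\int_{C_{k_i}(\mathbb{H})}\!\omega_i \cdot \frac{1}{(2\pi)^{2k_j}}\int_{C_{k_j}(\mathbb{H})}\!\omega_j = w(\Gamma_i)\, w(\Gamma_j),
\]
which is~\eqref{EqMult}.

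The step I expect to require the most care is the factorisation of the integrand together with the sign bookkeeping: one must verify that the $L \prec R$ ordering and the conventions defining $\text{Left}(a)$ and $\text{Right}(a)$ are inherited unchanged by $\Gamma$ from the factor graphs, so that $\omega_i$ is genuinely the integrand of $\Gamma_i$ rather than a signed variant (any residual reversal being governed by Lemma~\ref{LemmaSwapLR}). It is also worth recording why the normalisation in~\eqref{EqWeight} deliberately omits the factor $1/k!$ of~\cite{MK97}: multiplicativity would \emph{fail} with that factor, since $1/(k_i+k_j)! \neq (1/k_i!)(1/k_j!)$, and it is precisely this omission that turns $w$ into a homomorphism for the product $\bar{\times}$.
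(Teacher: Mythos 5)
Your proof is correct and follows essentially the same route as the paper, whose own proof is the one-line observation that the integrals converge absolutely (so Fubini's theorem applies) together with the factorisation of integrand and domain. Your elaboration of the measure-zero coincidence locus, the even-degree (hence sign-free) regrouping of the $2$-forms, and the role of dropping the $1/k!$ normalisation are all accurate fillings-in of details the paper leaves implicit.
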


\begin{proof}
The integrals converge absolutely \cite{MK97}; apply Fubini's theorem and linearity.
\end{proof}

\begin{example}\label{ExWeightRel}
Some weight relations obtained from the lemmas above:
\[w\biggl(\!\!\!\!\text{\raisebox{-21pt}{
\unitlength=0.7mm
\linethickness{0.4pt}
\begin{picture}(15.00,20.67)
\put(2.00,5.00){\circle*{1.33}}
\put(13.00,5.00){\circle*{1.33}}
\put(7.67,11.67){\circle*{1.33}}
\put(7.67,20.00){\circle*{1.33}}
\put(7.67,20.00){\vector(-1,-3){4.67}}
\put(7.67,20.00){\vector(1,-3){4.67}}
\put(7.67,11.67){\vector(-3,-4){4.00}}
\put(7.67,11.67){\vector(3,-4){4.33}}
\end{picture}
}}\!\!\biggr) = 
w\biggl(\!\!\!\!
\text{\raisebox{-18pt}{
\unitlength=0.7mm
\linethickness{0.4pt}
\begin{picture}(15.00,16.67)
\put(2.00,5.00){\circle*{1.33}}
\put(13.00,5.00){\circle*{1.33}}
\put(7.33,16.00){\circle*{1.33}}
\put(7.33,16.00){\vector(-1,-2){5.00}}
\put(7.33,16.00){\vector(1,-2){5.00}}
\end{picture}
}}\!\!
\biggr)^2;
\quad
w\biggl(
\!\!\!\!
\text{\raisebox{-18pt}{
\unitlength=0.7mm
\linethickness{0.4pt}
\begin{picture}(15.00,16.67)
\put(2.00,5.00){\circle*{1.33}}
\put(13.00,5.00){\circle*{1.33}}
\put(7.33,16.00){\circle*{1.33}}
\put(7.33,16.00){\vector(-1,-2){5.00}}
\put(0.5,10){\tiny $L$}
\put(7.33,16.00){\vector(1,-2){5.00}}
\put(11,10){\tiny $R$}
\end{picture}
}}\!\!
\biggr) = -w\biggl(
\!\!\!\!
\text{\raisebox{-18pt}{
\unitlength=0.7mm
\linethickness{0.4pt}
\begin{picture}(15.00,16.67)
\put(2.00,5.00){\circle*{1.33}}
\put(13.00,5.00){\circle*{1.33}}
\put(7.33,16.00){\circle*{1.33}}
\put(7.33,16.00){\vector(-1,-2){5.00}}
\put(0.5,10){\tiny $R$}
\put(7.33,16.00){\vector(1,-2){5.00}}
\put(11,10){\tiny $L$}
\end{picture}
}}\!\!
\biggr);
\quad
w\biggl(\!\!\!
\text{\raisebox{-18pt}{
\unitlength=0.7mm
\linethickness{0.4pt}
\begin{picture}(15.00,17.67)
\put(2.00,5.00){\circle*{1.33}}
\put(13.00,5.00){\circle*{1.33}}
\put(7.33,11.33){\circle*{1.33}}
\put(2.00,17.00){\circle*{1.33}}
\put(2.00,17.00){\vector(0,-1){11.33}}
\put(2.00,17.00){\vector(1,-1){5.33}}
\put(7.33,11.33){\vector(1,-1){5.33}}
\put(7.33,11.33){\vector(-1,-1){5.33}}
\end{picture}
}}\!\!\biggr) =
w\biggl(\!\!\!
\text{\raisebox{-18pt}{
\unitlength=0.7mm
\linethickness{0.4pt}
\begin{picture}(15.00,18.00)
\put(2.00,5.00){\circle*{1.33}}
\put(13.00,5.00){\circle*{1.33}}
\put(7.33,11.33){\circle*{1.33}}
\put(7.33,11.33){\vector(1,-1){5.33}}
\put(7.33,11.33){\vector(-1,-1){5.33}}
\put(13.00,17.33){\circle*{1.33}}
\put(13.00,17.33){\vector(0,-1){11.67}}
\put(13.00,17.33){\vector(-1,-1){5.33}}
\end{picture}
}}\!\!
\biggr).
\]
\end{example}

\noindent
Lemma \ref{LemmaMult} motivates the following definition.

\begin{define}
A Kontsevich graph $\Gamma\in \tilde{G}_{2,k}$ is called {\em composite} if $\Gamma$ is equal to the $\bar{\times}$-product of some Kontsevich graphs on two sinks and positive number of internal vertices in both of the co-factors.
Otherwise (if such a realization is not possible), the graph is called {\em prime}.
\end{define}

\noindent
Using Lemma \ref{LemmaMult} and induction, we obtain that the weight of a composite graph $\Gamma = \Gamma_1 \bar{\times} \cdots \bar{\times} \Gamma_t$ is the product of the weights of its factors: $w(\Gamma) = w(\Gamma_1) \times \cdots \times w(\Gamma_t)$.

\subsection{Basic set of graphs}

We identify a set of graphs such that the weights of those graphs would suffice to determine all the other weights.

\begin{define}
A \emph{basic} set of graphs on $k$~internal vertices is a set of pairwise distinct normal forms (the signs of which are discarded) of only those Kon\-tse\-vich graphs~$\Gamma \in \tilde{G}_{2,k}$ which are prime, and in which every sink receives at least one edge.
By definition, the basic set contains the normal form of a graph but not its mirror reflection if it differs from the graph at hand.
To decide whether a graph or its mirror\/-\/reflection~$\bar{\Gamma} \neq \Gamma$ is included into a basic set, we take the graph whose absolute value is~\emph{minimal} as a base-$(k+2)$ number.
Note that a basic set on $k\geqslant 3$ vertices {\em does} contain zero graphs.
\end{define}

\begin{cor}
To build $\star$-\/product~\eqref{EqStar} up to $\bar{o}(\hbar^k)$ for some power~$k\geqslant 1$, knowing the Kontsevich weights~$w(\Gamma_i)$ only for a \emph{basic} set of graphs $\Gamma_i \in \tilde{G}_{2,\ell}$ at all~$\ell\leqslant k$ is enough.
Indeed, the weights of all other graphs with $\ell$~internal vertices are calculated from Lemmas~\ref{LemmaPermute}, \ref{LemmaSwapLR}, \ref{LemmaMirror}, \ref{LemmaDito}, and~\ref{LemmaMult}.
\end{cor}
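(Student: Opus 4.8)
The plan is to prove the statement by reducing the weight of an \emph{arbitrary} graph $\Gamma \in \tilde{G}_{2,\ell}$ (at any order $\ell \leqslant k$) to a product of weights of basic-set graphs, using the five lemmas in a fixed order. In view of formula~\eqref{EqStar}, the bi\/-\/differential operator at $\hbar^\ell$ is the sum over all $\Gamma \in \tilde{G}_{2,\ell}$ of $w(\Gamma)\,\Gamma(\cP)$ with the known scalar prefactor $1/\ell!$; hence once every $w(\Gamma)$ with $\ell \leqslant k$ is pinned down, the whole $\star$-\/product modulo $\bar{o}(\hbar^k)$ is determined. So it suffices to exhibit, for each $\Gamma$, an explicit expression for $w(\Gamma)$ in terms of basic-set weights at orders $\leqslant \ell$.

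First I would bring $\Gamma$ to its normal form. By Lemma~\ref{LemmaPermute} the weight is unchanged under relabelling of internal vertices, and by Lemma~\ref{LemmaSwapLR} each swap $L \rightleftarrows R$ at an internal vertex merely flips the sign of $w(\Gamma)$; composing these, $w(\Gamma) = \pm\, w(|\Gamma|)$, where the sign is the product of the signs accumulated along the sequence of swaps carrying $\Gamma$ to $|\Gamma|$. If $\Gamma$ is a zero graph (its normal form carrying sign $0$), then Lemma~\ref{LemmaSwapLR} already forces $w(\Gamma) = 0$. Next, if either sink of $\Gamma$ receives no incoming edge, Lemma~\ref{LemmaDito} gives $w(\Gamma) = 0$ outright. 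It therefore remains to compute $w(\Gamma)$ for graphs in normal form in which both sinks receive at least one edge.

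For such a $\Gamma$ I would split into the prime and composite cases. Deleting the two sinks partitions the internal vertices into connected components (joined through edges between internal vertices); reattaching both sinks to each component produces a canonical factorisation $\Gamma = \Gamma_1 \bar{\times} \cdots \bar{\times} \Gamma_t$ into \emph{prime} graphs $\Gamma_i \in \tilde{G}_{2,\ell_i}$ with $\sum_i \ell_i = \ell$. By Lemma~\ref{LemmaMult} together with the $t$-fold induction noted before the definition of the basic set, $w(\Gamma) = w(\Gamma_1) \times \cdots \times w(\Gamma_t)$. Each factor is now treated directly: if some sink of $\Gamma_i$ receives no edge then $w(\Gamma_i) = 0$ by Lemma~\ref{LemmaDito}, and hence $w(\Gamma) = 0$; otherwise $\Gamma_i$ is prime with both sinks hit, so by construction either $\Gamma_i$ or its mirror reflection $\bar{\Gamma}_i$ (the one of minimal absolute value as a base-$(\ell_i+2)$ integer) is the representative stored in the basic set, and Lemma~\ref{LemmaMirror} supplies the correcting factor $w(\bar{\Gamma}_i) = (-)^{\ell_i} w(\Gamma_i)$ whenever the stored representative is the mirror. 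Combined with the normal-form reduction of the previous paragraph, this writes $w(\Gamma_i)$ as $\pm$ a basic-set weight and exhausts all cases.

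The one structural point to make rigorous is the canonical prime factorisation under $\bar{\times}$: one must verify that decomposing along the connected components of the internal-vertex subgraph is well defined and independent of choices, that each factor is again a member of $\tilde{G}_{2,\ell_i}$ (no tadpoles or double edges can be created, since none are removed either, and every internal vertex keeps its two outgoing edges), and that the procedure terminates, which it does because each nontrivial factor carries strictly fewer internal vertices than $\Gamma$. Everything else is sign bookkeeping: tracking the $\mathbb{Z}_2$-valued contributions from Lemma~\ref{LemmaSwapLR} and the factors $(-)^{\ell_i}$ from Lemma~\ref{LemmaMirror}, alongside the two vanishing criteria furnished by Lemmas~\ref{LemmaSwapLR} and~\ref{LemmaDito}. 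I expect this factorisation step to be the main—though not deep—obstacle, the remaining arguments being a direct concatenation of the already-established lemmas.
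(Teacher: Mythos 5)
Your argument is correct and is precisely the one the paper intends: the corollary is stated with only the one\/-\/line justification that the weights of non\/-\/basic graphs follow from Lemmas~\ref{LemmaPermute}, \ref{LemmaSwapLR}, \ref{LemmaMirror}, \ref{LemmaDito}, and~\ref{LemmaMult}, and your reduction (normal form, the two vanishing criteria, prime factorisation under $\bar{\times}$ with multiplicativity, and the mirror\/-\/reflection sign $(-)^{\ell_i}$) is exactly that chain of reasoning spelled out. In particular you correctly handle the case where a prime factor misses a sink even though the composite graph does not, which is the only point where Lemma~\ref{LemmaDito} must be applied factorwise rather than to $\Gamma$ itself.
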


\begin{example}
Consider the prime graph 
$\text{\!\!\!\!\!\raisebox{-8.60pt}{
\unitlength=0.7mm
\linethickness{0.4pt}
\begin{picture}(15.00,17.67)
\put(2.00,5.00){\circle*{1.33}}
\put(13.00,5.00){\circle*{1.33}}
\put(7.33,11.33){\circle*{1.33}}
\put(2.00,17.00){\circle*{1.33}}
\put(2.00,17.00){\vector(0,-1){11.33}}
\put(2.00,17.00){\vector(1,-1){5.33}}
\put(7.33,11.33){\vector(1,-1){5.33}}
\put(7.33,11.33){\vector(-1,-1){5.33}}
\end{picture}
}\!\!\!}$
and its mirror-reflection
$\text{\!\!\!\!\!\raisebox{-8.60pt}{
\unitlength=0.7mm
\linethickness{0.4pt}
\begin{picture}(15.00,18.00)
\put(2.00,5.00){\circle*{1.33}}
\put(13.00,5.00){\circle*{1.33}}
\put(7.33,11.33){\circle*{1.33}}
\put(7.33,11.33){\vector(1,-1){5.33}}
\put(7.33,11.33){\vector(-1,-1){5.33}}
\put(13.00,17.33){\circle*{1.33}}
\put(13.00,17.33){\vector(0,-1){11.67}}
\put(13.00,17.33){\vector(-1,-1){5.33}}
\end{picture}
}\!\!\!}$.
The encodings of their normal forms are {\tt 2 2 1\ \ 0 1 0 2} and {\tt 2 2 1\ \ 0 1 1 2} respectively.
Since {\tt 0~1~0~2} $<$ {\tt 0~1~1~2} as base-$4$ numbers, only the first graph is included in the basic set.
The fork graph
$\text{\!\!\!\!\raisebox{-18pt}{
\unitlength=0.7mm
\linethickness{0.4pt}
\begin{picture}(15.00,16.67)
\put(2.00,5.00){\circle*{1.33}}
\put(13.00,5.00){\circle*{1.33}}
\put(7.33,16.00){\circle*{1.33}}
\put(7.33,16.00){\vector(-1,-2){5.00}}
\put(7.33,16.00){\vector(1,-2){5.00}}
\end{picture}
}\!\!}$
is mirror-symmetric hence it is included anyway. 
\end{example}

The basic set at order~$3$ is displayed in Figure~\ref{FigBasic3}.
\begin{figure}[htb]
\begin{tabular}{c | c | c | c | c}
\text{\raisebox{-12pt}{
\unitlength=1mm
\special{em:linewidth 0.4pt}
\linethickness{0.4pt}
\begin{picture}(15.00,23.67)(0,3)
\put(0.00,22.00){\vector(1,0){10.00}}
\put(10.00,22.00){\vector(-1,-3){5.00}}
\put(10.00,22.00){\vector(1,-3){5.00}}
\put(10.00,12.00){\vector(1,-1){4.67}}
\put(10.00,12.00){\vector(-1,-1){4.67}}
\put(0.00,22.00){\line(1,-1){6.67}}
\put(8,14){\vector(1,-1){2}}
\bezier{16}(8.00,14.00)(9.00,13.00)(10.00,12.00)
\put(0.00,22.00){\circle*{1}}
\put(10.00,22.00){\circle*{1}}
\put(10.00,12.00){\circle*{1}}
\put(15.33,6.67){\circle*{1}}
\put(4.67,6.67){\circle*{1}}
\end{picture}
}}
&
\text{\raisebox{-12pt}{
\unitlength=0.70mm
\linethickness{0.4pt}
\begin{picture}(15.00,23.33)
\put(2.00,5.00){\circle*{1.33}}
\put(13.00,5.00){\circle*{1.33}}
\put(7.33,11.33){\circle*{1.33}}
\put(2.00,17.00){\circle*{1.33}}
\put(2.00,17.00){\vector(0,-1){11.33}}
\put(2.00,17.00){\vector(1,-1){5.33}}
\put(7.33,11.33){\vector(1,-1){5.33}}
\put(7.33,11.33){\vector(-1,-1){5.33}}
\put(2.00,22.67){\circle*{1.33}}
\bezier{128}(2.00,22.00)(-5.00,13.50)(2.33,4.75)
\put(1,6.33){\vector(1,-1){0.67}}
\put(2.00,22.67){\vector(1,-2){5.00}}
\end{picture}
}}
&
\text{\raisebox{-12pt}{
\unitlength=0.70mm
\linethickness{0.4pt}
\begin{picture}(15.00,23.33)
\put(5.00,5.00){\circle*{1.33}}
\put(16.00,5.00){\circle*{1.33}}
\put(-2.00,17.00){\circle*{1.33}}
\put(-2.00,17.00){\vector(1,0){6.33}}
\put(-2.00,17.00){\vector(1,-2){6.23}}
\put(5.00,17.00){\circle*{1.33}}
\put(5.00,17.00){\vector(0,-1){11.33}}
\put(5.00,17.00){\vector(1,-1){5.33}}
\put(10.33,11.33){\circle*{1.33}}
\put(10.33,11.33){\vector(1,-1){5.33}}
\put(10.33,11.33){\vector(-1,-1){5.33}}
\end{picture}
}}
&
\text{\raisebox{-12pt}{
\unitlength=0.70mm
\linethickness{0.4pt}
\begin{picture}(15.00,23.33)
\put(2.00,5.00){\circle*{1.33}}
\put(13.00,5.00){\circle*{1.33}}
\put(2.00,17.00){\circle*{1.33}}
\put(2.00,17.00){\vector(0,-1){11.33}}
\put(2.00,17.00){\vector(1,-1){5.33}}
\put(13.00,17.00){\circle*{1.33}}
\put(13.00,17.00){\vector(0,-1){11.33}}
\put(13.00,17.00){\vector(-1,-1){5.33}}
\put(7.33,11.33){\circle*{1.33}}
\put(7.33,11.33){\vector(1,-1){5.33}}
\put(7.33,11.33){\vector(-1,-1){5.33}}
\end{picture}
}}
&
\text{\raisebox{-12pt}{
\unitlength=0.70mm
\linethickness{0.4pt}
\begin{picture}(15.00,23.33)
\put(2.00,5.00){\circle*{1.33}}
\put(13.00,5.00){\circle*{1.33}}
\put(7.33,11.33){\circle*{1.33}}
\put(2.00,17.00){\circle*{1.33}}
\put(2.00,17.00){\vector(0,-1){11.33}}
\put(2.00,17.00){\vector(1,-1){5.33}}
\put(7.33,11.33){\vector(1,-1){5.33}}
\put(7.33,11.33){\vector(-1,-1){5.33}}
\put(13.00,22.67){\circle*{1.33}}
\put(13.00,22.67){\vector(0,-1){16.67}}
\put(13.00,22.67){\vector(-2,-1){10.33}}
\end{picture}
}}
\\
{\tt 0} &
{\tt w\symbol{"5F}3\symbol{"5F}1} &
{\tt w\symbol{"5F}3\symbol{"5F}2} &
{\tt w\symbol{"5F}3\symbol{"5F}3} &
{\tt w\symbol{"5F}3\symbol{"5F}4} \\
\hline
& & & & \\ 
\raisebox{-12pt}{
\unitlength=0.70mm
\linethickness{0.4pt}
\begin{picture}(15.00,23.33)
\put(2.00,5.00){\circle*{1.33}}
\put(13.00,5.00){\circle*{1.33}}
\put(7.33,11.33){\circle*{1.33}}
\put(2.00,17.00){\circle*{1.33}}
\put(2.00,17.00){\vector(0,-1){11.33}}
\put(2.00,17.00){\vector(1,-1){5.33}}
\put(7.33,11.33){\vector(1,-1){5.33}}
\put(7.33,11.33){\vector(-1,-1){5.33}}
\put(7.33,22.67){\circle*{1.33}}
\put(7.33,22.67){\vector(-1,-1){5.33}}
\put(7.33,22.67){\vector(0,-1){10.67}}
\end{picture}
}
&
\text{\raisebox{-12pt}{
\unitlength=0.70mm
\linethickness{0.4pt}
\begin{picture}(15.00,23.33)
\put(5.00,5.00){\circle*{1.33}}
\put(16.00,5.00){\circle*{1.33}}
\put(5.00,17.00){\circle*{1.33}}
\put(5.00,17.00){\vector(1,-1){5.33}}
\put(-0.90,10.90){\circle*{1.33}}
\put(-0.90,10.90){\vector(1,-1){5.33}}
\bezier{56}(5.00,17.00)(-0.5,18)(-0.90,10.90)
\bezier{56}(5.00,17.00)(4,11)(-0.90,10.90)
\put(10.33,11.33){\circle*{1.33}}
\put(10.33,11.33){\vector(1,-1){5.33}}
\put(10.33,11.33){\vector(-1,-1){5.33}}
\end{picture}
}}
&
\text{\raisebox{-12pt}{
\unitlength=0.70mm
\linethickness{0.4pt}
\begin{picture}(15.00,23.67)
\put(2.00,5.00){\circle*{1.33}}
\put(13.00,5.00){\circle*{1.33}}
\put(2.00,20.00){\circle*{1.33}}
\put(13.00,20.00){\circle*{1.33}}
\put(13.00,20.00){\vector(-2,-3){5.33}}
\put(2.00,20.00){\vector(2,-3){5.33}}
\bezier{64}(2.00,20.00)(7.00,14.00)(12.67,20.00)
\bezier{64}(13.00,20.00)(7.00,25.33)(2.67,20.00)
\put(11.67,19.00){\vector(1,1){0.67}}
\put(3.33,21.00){\vector(-1,-1){0.67}}
\put(7.33,11.33){\circle*{1.33}}
\put(7.33,11.33){\vector(1,-1){5.33}}
\put(7.33,11.33){\vector(-1,-1){5.33}}
\end{picture}
}}
&
\text{\raisebox{-12pt}{
\unitlength=0.70mm
\linethickness{0.4pt}
\begin{picture}(15.00,23.67)
\put(2.00,5.00){\circle*{1.33}}
\put(13.00,5.00){\circle*{1.33}}
\put(-3.33,13.00){\circle*{1.33}}
\put(7.67,13.00){\circle*{1.33}}
\put(7.67,13.00){\vector(-2,-3){5.33}}
\put(-3.33,13.00){\vector(2,-3){5.33}}
\bezier{64}(-3.33,13.00)(1.67,7.00)(7.50,13.00)
\bezier{64}(7.67,13.00)(1.67,18.33)(-2.50,13.00)
\put(6.5,12.00){\vector(1,1){0.67}}
\put(-2.00,14.00){\vector(-1,-1){0.67}}
\put(13.00,17.00){\circle*{1.33}}
\put(13.00,17.00){\vector(0,-1){12.33}}
\put(13.00,17.00){\vector(-3,-2){5.33}}
\end{picture}
}}
&
\raisebox{-12pt}{
\unitlength=0.7mm
\linethickness{0.4pt}
\begin{picture}(18.00,23.67)
\put(2.00,5.00){\circle*{1.33}}
\put(16.00,5.00){\circle*{1.33}}
\put(2.00,14.00){\circle*{1.33}}
\put(2.00,14.00){\vector(0,-1){8.33}}
\put(2.00,14.00){\vector(2,-1){5.33}}
\put(7.33,11.33){\circle*{1.33}}
\put(7.33,11.33){\vector(-1,-1){5.33}}
\put(7.33,11.33){\vector(0,1){6}}
\put(7.33,17.33){\circle*{1.33}}
\put(7.33,17.33){\vector(-2,-1){6}}
\put(7.33,17.33){\vector(2,-3){8.12}}
\end{picture}
}
\\
{\tt w\symbol{"5F}3\symbol{"5F}5} &
{\tt w\symbol{"5F}3\symbol{"5F}6} &
{\tt w\symbol{"5F}3\symbol{"5F}7} &
{\tt w\symbol{"5F}3\symbol{"5F}8} &
{\tt w\symbol{"5F}3\symbol{"5F}9} \\
\hline
& & & & \\
\text{\raisebox{-12pt}{
\unitlength=0.70mm
\linethickness{0.4pt}
\begin{picture}(15.00,23.67)
\put(2.00,5.00){\circle*{1.33}}
\put(13.00,5.00){\circle*{1.33}}
\put(2.00,15.00){\circle*{1.33}}
\put(13.00,15.00){\circle*{1.33}}
\put(13.00,15.00){\vector(0,-1){9.33}}
\put(2.00,15.00){\vector(0,-1){9.33}}
\bezier{64}(2.00,15.00)(7.00,9.00)(12.67,15.00)
\bezier{64}(13.00,15.00)(7.00,20.33)(2.67,15.00)
\put(11.67,14.00){\vector(1,1){0.67}}
\put(3.33,16.00){\vector(-1,-1){0.67}}
\put(-3.00,10.00){\circle*{1.33}}
\put(-3.00,10.00){\vector(1,1){5.03}}
\put(-3.00,10.00){\vector(1,-1){5.03}}
\end{picture}
}}
&
\text{\raisebox{-12pt}{
\unitlength=0.70mm
\linethickness{0.4pt}
\begin{picture}(15.00,23.67)
\put(2.00,5.00){\circle*{1.33}}
\put(13.00,5.00){\circle*{1.33}}
\put(2.00,15.00){\circle*{1.33}}
\put(13.00,15.00){\circle*{1.33}}
\put(13.00,15.00){\vector(0,-1){9.33}}
\put(2.00,15.00){\vector(0,-1){9.33}}
\bezier{64}(2.00,15.00)(7.00,9.00)(12.67,15.00)
\bezier{60}(13.00,15.00)(7.00,20.33)(2.67,15.00)
\put(11.67,14.00){\vector(1,1){0.67}}
\put(3.67,15.67){\vector(-1,-1){0.67}}
\put(2.00,23.00){\circle*{1.33}}
\bezier{128}(2.00,23.00)(-5,14.00)(1.33,6.00)
\put(2.00,23.00){\vector(3,-2){10.67}}
\put(1,6.67){\vector(1,-2){0.67}}
\end{picture}
}}
&
\text{\raisebox{-12pt}{
\unitlength=0.70mm
\linethickness{0.4pt}
\begin{picture}(15.00,23.67)
\put(2.00,5.00){\circle*{1.33}}
\put(13.00,5.00){\circle*{1.33}}
\put(2.00,15.00){\circle*{1.33}}
\put(13.00,15.00){\circle*{1.33}}
\put(13.00,15.00){\vector(0,-1){9.33}}
\put(2.00,15.00){\vector(0,-1){9.33}}
\bezier{64}(2.00,15.00)(7.00,9.00)(12.67,15.00)
\bezier{64}(13.00,15.00)(7.00,20.33)(2.67,15.00)
\put(11.67,14.00){\vector(1,1){0.67}}
\put(3.33,16.00){\vector(-1,-1){0.67}}
\put(7.33,23.00){\circle*{1.33}}
\put(7.33,23.00){\vector(-2,-3){4.33}}
\put(7.33,23.00){\vector(2,-3){4.67}}
\end{picture}
}}
&
\raisebox{-12pt}{
\unitlength=0.7mm
\linethickness{0.4pt}
\begin{picture}(15.00,23.67)
\put(1.66,5.00){\circle*{1.33}}
\put(13.33,5.00){\circle*{1.33}}
\put(3.66,11.33){\circle*{1.33}}
\put(3.66,11.33){\vector(-1,-3){2.0}}
\put(3.66,11.33){\vector(1,0){7.5}}
\put(11.33,11.33){\circle*{1.33}}
\put(11.33,11.33){\vector(1,-3){2.0}}
\put(7.33,17.00){\circle*{1.33}}
\put(7.33,17.00){\vector(-2,-3){3.5}}
\bezier{56}(7.33,17.00)(11,16)(11.33,11.33)
\bezier{56}(7.33,17.00)(8,13)(11.33,11.33)
\end{picture}
}
&
\raisebox{-12pt}{
\unitlength=0.7mm
\linethickness{0.4pt}
\begin{picture}(15.00,23.67)
\put(2.00,5.00){\circle*{1.33}}
\put(13.00,5.00){\circle*{1.33}}
\put(2.00,17.00){\circle*{1.33}}
\put(2.00,17.00){\vector(0,-1){11.33}}
\bezier{56}(2.00,17.00)(4.50,20.00)(7.33,17.00)
\bezier{56}(2.00,17.00)(4.50,14.00)(7.33,17.00)
\bezier{56}(7.33,17.00)(10.50,20.00)(13.00,17.00)
\bezier{56}(7.33,17.00)(10.50,14.00)(13.00,17.00)
\put(7.33,17.00){\circle*{1.33}}
\put(13.00,17.00){\circle*{1.33}}
\put(13.00,17.00){\vector(0,-1){11.33}}
\end{picture}
}
\\
{\tt w\symbol{"5F}3\symbol{"5F}10} &
{\tt w\symbol{"5F}3\symbol{"5F}11} &
{\tt w\symbol{"5F}3\symbol{"5F}12} &
{\tt w\symbol{"5F}3\symbol{"5F}13} &
{\tt w\symbol{"5F}3\symbol{"5F}14} \\
\end{tabular}
\caption{Basic set at order~$3$, with undetermined weights
for nonzero graphs.
(The weights are determined in Example~\ref{ExOrder3} on p.~\pageref{ExOrder3} below.)}\label{FigBasic3}
\end{figure}
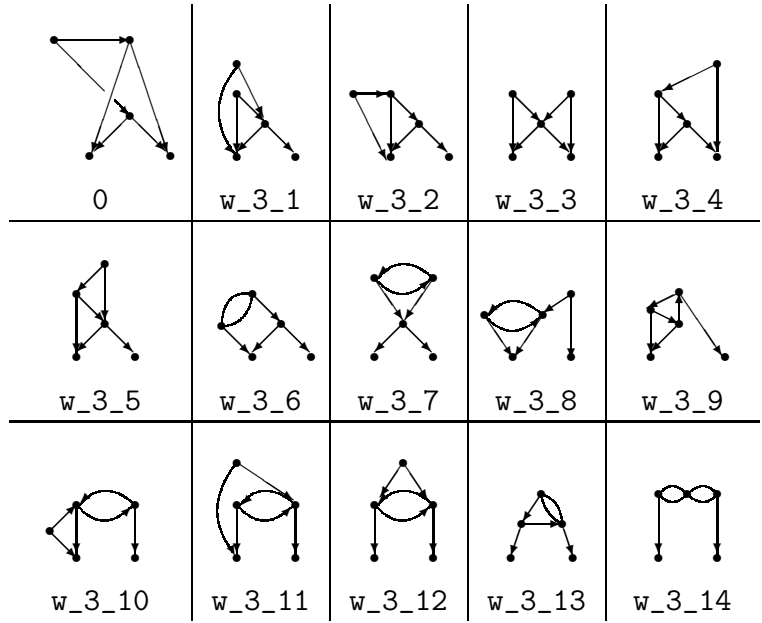

\subsection{``All'' graphs in $\star$ mod $\bar{o}(\hbar^4)$}\label{SecBasic}
In Table \ref{TableBasic} 
we list the number of basic
graphs at every order~$k \leqslant 6$ in the Kontsevich $\star$-\/product.
\begin{table}[ht]
\caption{How many basic graphs there are at low orders~$k$.}\label{TableBasic}
\begin{tabular}{l | l | l | l | l | l | l | l}
Order $=k$ & 0 & 1 & 2 & 3 & 4 & 5 & 6\\
\hline
\#(Basic set) & 0 & 1 & 2 & 15 & 156 & 2307 & 43231 \\
\#(Nonzero in basic set) & 0 & 1 & 2 & 14 & 149 & 2218 & 42050
\end{tabular}
\end{table}
The actual number of graphs 
with respect to which the sums in formula~\eqref{EqStar} expand is of course much greater.

\begin{implement}
To obtain the list of normal forms for graphs from a basic set at order $k$, the following command is available:
\begin{verbatim}
    > generate_graphs k --basic=yes
\end{verbatim}
The list of normal forms is then sent to the standard output.
This command is equivalent to
\begin{verbatim}
    > generate_graphs k --prime=yes --normal-forms=yes \
         --postive-differential-order=yes --modulo-mirror-images=yes
\end{verbatim}
\end{implement}

\begin{example}
\label{ExBasic3}
The list of basic graphs with $\leqslant 3$ internal vertices -- with undetermined coefficients at orders $1, 2, 3$ -- is constructed using the following commands:
\begin{verbatim}
    $ cat > basic3w.txt
    h^0:
    2 0 1     1
    h^1:
    ^D (press Ctrl+D)
    $ generate_graphs 1 --basic=yes --with-coefficients=yes \ 
          >> basic3w.txt
    $ echo 'h^2:' >> basic3w.txt
    $ generate_graphs 2 --basic=yes --with-coefficients=yes \
          >> basic3w.txt
    $ echo 'h^3:' >> basic3w.txt
    $ generate_graphs 3 --basic=yes --with-coefficients=yes \
          >> basic3w.txt
\end{verbatim}
The file \texttt{basic3w.txt} now contains the basic set.
\end{example}

Starting from a basic set, the $\star$-\/product is built up to a certain order $k\geqslant0$ in~$\hbar$.

\begin{implement}
The program
\begin{verbatim}
    > star_product <basic-set-filename>
\end{verbatim}
takes as its input a graph series with a basic set of graphs at each order; the graphs go with coefficients of any nature (i.e.\ number or indeterminate).
The program's output is an 
expansion of the $\star$-\/product up to the order that was specified by the input.
In other words, all the graphs which are produced from the ones contained in a given basic set are generated and their coefficients are (re)calculated from the ones in the input (using Lemmas~\ref{LemmaSwapLR}, \ref{LemmaMirror}, and~\ref{LemmaMult}).
\end{implement}

\begin{example}
\label{ExStar3}
To generate the star-product up to order $3$ with all weights of nonzero basic graphs undetermined (from Example \ref{ExBasic3}), one proceeds as follows:
\begin{verbatim}
    $ star_product basic3w.txt > star3w_unreduced.txt
    $ reduce_mod_skew --print-differential-orders star3w_unreduced.txt \
          > star3w.txt
\end{verbatim}
The file {\tt star3w.txt} now contains the desired star-product.
\end{example}

\subsection{Methods to obtain the weights of basic graphs} 
We deduce 
that to build the $\star$-\/product modulo~$\bar{o}(\hbar^4)$ as many as $149$~weights of nonzero basic graphs $\Gamma_i \in \tilde{G}_{2,4}$ at~$k=4$ must be found (or at least expressed in terms of as few master\/-\/parameters as possible).
In fact, direct calculation of all of the~$149$ Kontsevich integrals is not needed to solve the problem in full
because there exist more algebraic relations between the weights of basic graphs.
In the following proposition we recall a class of such relations.\footnote{%
A convenient approach to 
calculation of Kontsevich weights~\eqref{EqWeight} at order~$3$ by using direct integration (and for that, using methods of complex analysis such as the Cauchy residue theorem) was developed in~\cite{Decin}, see Appendix~\ref{AppNumericalWeights} on p.~\pageref{AppNumericalWeights} below.
However, we note that most successful at~$k=3$, this method is no longer effective for all graphs at~$k \geqslant 4$. 
More progress is badly needed to allow~$k \geqslant 5$.}

\begin{proposition}[cyclic weight relations \cite{WillwacherFelderIrrationality}]\label{PropCyclic}
Let $\Gamma$~be a Kontsevich graph on~$m=2$ ground vertices.
Let $E \subset \Edge(\Gamma)$ be a subset of edges in~$\Gamma$ such that for every~$e \in E$, $\operatorname{target}(e) \neq 0$.
(That is, every edge from the subset~$E$ lands on the sink~$1$ or an internal vertex.)
For every such subset~$E$, define the graph~$\Gamma_E$ as follows: let its vertices be the same as in~$\Gamma$ and
for every edge~$e \in \Edge(\Gamma)$, preserve it in~$\Gamma_E$ if~$e \neq E$, but if~$e \in E$ replace that edge 
by a new edge in~$\Gamma_E$ going from $\operatorname{source}(e)$ to the sink~$0$.
By definition, the ordering $L \prec R$ of outgoing edges is inherited in~$\Gamma_E$ from~$E$ even if the targets of any of those edges are new.
Thirdly, denote by~$N_0(\Gamma_E)$ the number of edges in~$\Gamma_E$ such that their target is the sink~$0$.
Then the Kontsevich weight of a graph~$\Gamma$ is related to the weights of all such graphs~$\Gamma_E$ obtained from~$\Gamma$ by the formula
\begin{equation}\label{EqCyclic}
w(\Gamma) = (-)^n \sum_{\substack{E \subset \Edge(\Gamma) \\ \forall e\in E, \operatorname{target}(e) \neq 0}} (-)^{N_0(\Gamma_E)} w(\Gamma_E).
\end{equation}
Note that this relation is linear in the weights of all graphs.
\end{proposition}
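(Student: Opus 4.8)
The plan is to argue entirely at the level of the integral representation~\eqref{EqWeight}, reading~\eqref{EqCyclic} as an identity between integrals of $2n$-forms over the configuration space $C_n(\mathbb{H})$ of the $n$ internal vertices. To each edge $e$ I attach the propagator one-form $\omega_e = \mathrm{d}\varphi(p_{\operatorname{source}(e)},p_{\operatorname{target}(e)})$, so that the integrand of $w(\Gamma)$ is the ordered wedge $\bigwedge_e \omega_e$, the convention $L\prec R$ at each internal vertex fixing its sign. For a redirectable edge (one with $\operatorname{target}(e)\neq0$) I write $\omega_e^0 = \mathrm{d}\varphi(p_{\operatorname{source}(e)},0)$ for the propagator of the same edge sent instead to the sink~$0$; this is exactly the edge that replaces $e$ in $\Gamma_E$ when $e\in E$.

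First I would perform the one purely formal simplification that collapses the entire subset sum. Since the edges in any admissible $E$ originally avoid the sink~$0$ and remain distinct after redirection, one has $N_0(\Gamma_E)=N_0(\Gamma)+|E|$, whence $(-)^{N_0(\Gamma_E)}=(-)^{N_0(\Gamma)}(-)^{|E|}$. Because the wedge $\bigwedge_e\omega_e^{(E)}$ (with $\omega_e^{(E)}=\omega_e^0$ for $e\in E$ and $\omega_e$ otherwise) is multilinear in the redirectable factors, summing over all admissible $E$ with the weights $(-)^{|E|}$ factorises: each redirectable edge contributes $\omega_e-\omega_e^0$. Thus the right-hand side of~\eqref{EqCyclic} equals
\[
(-)^n(-)^{N_0(\Gamma)}\,\frac{1}{(2\pi)^{2n}}\int_{C_n(\mathbb{H})}\ \bigwedge_{\operatorname{target}(e)=0}\omega_e\ \wedge\!\!\bigwedge_{\operatorname{target}(e)\neq0}\bigl(\omega_e-\omega_e^0\bigr),
\]
and the proposition reduces to the single claim that this integral returns $(-)^n(-)^{N_0(\Gamma)}w(\Gamma)$; equivalently, that replacing each redirectable propagator $\omega_e$ by its reduced form $\omega_e-\omega_e^0$ multiplies $w(\Gamma)$ by the sign $(-)^n(-)^{N_0(\Gamma)}$.

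This single integral identity is the heart of the matter and the step I expect to be the main obstacle. Each reduced one-form $\omega_e-\omega_e^0=\mathrm{d}\bigl[\varphi(p_{\operatorname{source}(e)},p_{\operatorname{target}(e)})-\varphi(p_{\operatorname{source}(e)},0)\bigr]$ is closed, which invites an application of Stokes' theorem on the Fulton--MacPherson/Kontsevich compactification $\overline{C}_n(\mathbb{H})$; the difficulty is that its local primitive is the hyperbolic angle $\angle(0,p_{\operatorname{source}(e)},p_{\operatorname{target}(e)})$, which carries monodromy $2\pi$ around the collision diagonal $p_{\operatorname{source}(e)}=p_{\operatorname{target}(e)}$ and is therefore not globally exact, so Stokes produces contributions from the codimension-one collision and degeneration faces. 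The plan is to combine a winding/closure identity for the family of angle forms issued from a fixed source with Kontsevich's vanishing lemmas, showing face by face that the boundary integrals vanish (on a face where internal points collide, or approach $0$ or $1$, the restricted angle forms become linearly dependent and the top-degree integrand degenerates), while the change of reference direction implicit in passing from $\omega_e$ to $\omega_e^0$ accounts, through repeated use of Lemma~\ref{LemmaSwapLR} and the overall orientation of $C_n(\mathbb{H})$, for the signs $(-)^{N_0(\Gamma)}$ and $(-)^n$. Keeping the orientation signs on every boundary face consistent with these combinatorial signs, simultaneously over all faces, is the delicate part that forces one to bring in the full configuration-space technology rather than purely formal manipulation.
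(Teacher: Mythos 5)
First, note that the paper itself does not prove Proposition~\ref{PropCyclic}: it is recalled from Felder--Willwacher \cite{WillwacherFelderIrrationality}, so there is no in-text proof to compare against. Judged on its own terms, the first half of your argument is correct and is the right opening move: the identity $N_0(\Gamma_E)=N_0(\Gamma)+|E|$ holds because the redirected edges are disjoint from the edges originally aimed at the sink~$0$, and the multilinear collapse of the subset sum into the single integrand $\bigwedge_{\operatorname{target}(e)=0}\omega_e\wedge\bigwedge_{\operatorname{target}(e)\neq0}(\omega_e-\omega_e^0)$ is legitimate (absolute convergence of each $w(\Gamma_E)$ justifies the finite regrouping, and the stipulated inheritance of the $L\prec R$ ordering keeps all signs coherent). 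Your reduction of the proposition to the single integral identity is also correctly normalized.

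The gap is in the second half: the Stokes/boundary-strata strategy is not the mechanism here, and as described it does not close. The integrand is already a top-degree form on $C_n(\mathbb{H})$, so ``Stokes'' would require exhibiting a global primitive, which is exactly what the monodromy you mention obstructs; and even granting a face-by-face vanishing analysis, nothing in your plan explains why the answer comes out to be $(-)^{n}(-)^{N_0(\Gamma)}w(\Gamma)$ rather than $0$. The two ingredients that actually finish the proof are elementary and require no compactification. First, $\varphi(p,q)-\varphi(p,0)$ is precisely the hyperbolic angle at $p$ measured from the geodesic $[p,0]$ instead of $[p,\infty]$, and $\varphi(p,0)=-\bigl(\varphi(p,\infty)-\varphi(p,0)\bigr)$; so after extracting a factor $(-)^{N_0(\Gamma)}$ the reduced integrand is the ordinary weight integrand of the graph in which every edge formerly aimed at~$0$ now aims at~$\infty$, computed with the angle function based at~$0$. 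Second, the orientation-preserving isometry of $\mathbb{H}$ in $PSL(2,\mathbb{R})$ that cyclically rotates $0\mapsto\infty\mapsto1\mapsto0$ converts that integral, by a Jacobian-free change of variables, into the ordinary Kontsevich weight of $\Gamma$ with its two sinks interchanged, i.e.\ of the mirror image $\bar\Gamma$; Lemma~\ref{LemmaMirror} then supplies $w(\bar\Gamma)=(-)^{n}w(\Gamma)$, and the signs $(-)^{N_0(\Gamma)}$ and $(-)^{n}$ cancel in pairs to give~\eqref{EqCyclic}. Your appeal to ``winding identities and Kontsevich's vanishing lemmas'' substitutes for, but does not supply, this M\"obius-equivariance argument, which is where the name ``cyclic'' comes from.
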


If the graph~$\Gamma$ or, in practice, some of the new graphs~$\Gamma_E$ in~\eqref{EqCyclic} is composite, Lemma~\ref{LemmaMult} provides a further, nonlinear reduction of~$w(\Gamma)$ by using graphs with fewer internal vertices.

\begin{example}
Consider the graph $\Gamma_{3,8}$ in Figure \ref{FigBasic3} with weight $w(\Gamma_{3,8})={}${\tt w\symbol{"5F}3\symbol{"5F}8}.
For every non-empty subset $E$ (with $\text{target}(e) \neq 0$ for every $e \in E$) the graph $(\Gamma_{3,8})_E$ is a zero-weight graph by virtue of one of the Lemmas at the beginning of this chapter.
Hence the only term in the sum on the right\/-\/hand side in \eqref{EqCyclic} is the weight of the graph corresponding to the empty set: $w((\Gamma_{3,8})_\varnothing) = w(\Gamma_{3,8})$.
Since $n=3$ and $N_0(\Gamma_{3,8}) = 2$ we get the cyclic relation $w(\Gamma_{3,8}) = -w(\Gamma_{3,8})$; whence $w(\Gamma_{3,8}) = 0$.
\end{example}

\begin{rem}
It is readily seen that only \emph{prime}, that is, non\/-\/composite graphs~$\Gamma$ need be used to generate \emph{all} 
relations~\eqref{EqCyclic}.
Indeed, every subset~$E$ of edges for a composite graph $\Gamma = \Gamma^1 \bar{\times} \Gamma^2$ splits to a disjoint union $E^1\sqcup E^2$ of such subsets for the graphs~$\Gamma^1$ and~$\Gamma^2$ separately.
Therefore the re\/-\/direction of edges in a composite graph would inevitably yield the composite graph $\Gamma^1_{E^1} \bar{\times} \Gamma^2_{E^2}$.
Now, the multiplicativity of Kontsevich weights and the additivity of the count $N_0(\Gamma_E) = N_0(\Gamma^1_{E^1}) + N_0(\Gamma^2_{E^2})$ can be used to conclude that the relations obtained from composite graphs are redundant. 
\end{rem}

\begin{implement}
The command
\begin{verbatim}
    > cyclic_weight_relations <star-product-file>
\end{verbatim}
treats the input $\star$-product as a clothesline for graphs and their weights.
For each graph $\Gamma$ in the $\star$-product, it outputs the relation \eqref{EqCyclic} between the weights of the respective graphs in the form {\tt LHS - RHS == 0}.
\end{implement}

\begin{example}
At the order three with the $\star$-product from Example \ref{ExStar3}:
\begin{verbatim}
    > cyclic_weight_relations star3w.txt
    ...
    1/3*w_3_6+1/6*w_2_3==0
    1/3*w_3_8==0
    ...
\end{verbatim}
\end{example}


\begin{rem}
For some (basic) graphs it happens that the weight integrand in \eqref{EqWeight}, as a differential $2k$-form, vanishes identically, even if the graph is not zero due to skew\/-\/symmetry.
This is the case for $21$ out of $149$ nonzero basic graphs at~$k=4$; see also Appendix \ref{AppNumericalWeights}.
\end{rem}


For calculations of particular weight integrals we refer to the literature in section \ref{SecDiscussion}.

\begin{rem}[rationality]
Willwacher and Felder~\cite{WillwacherFelderIrrationality} express the weight of a graph in $\tilde{G}_{2,7}$ as $p\cdot\zeta(3)^2/\pi^6 + q$ where $p$ and $q$ are rational numbers and $\zeta$ is the Riemann $\zeta$-\/function.
Whether $\zeta(3)^2/\pi^6$ is rational or not is an open problem.
The software which we presently discuss supports -- through {\tt GiNaC} \cite{GiNaC} -- the input of $\zeta$-values as coefficients, e.g. the expression $\zeta(3)^2/\pi^6$ can be input as {\tt zeta(3)\symbol{"5E}2/Pi\symbol{"5E}6}.
This can be used e.g. to express other weights in terms of such values.
According to Banks--Panzer--Pym in \cite{BanksPanzerPym}, $\mathbb{Q}[(2\pi)^{-1}]$-linear combinations of multiple zeta values actually start to appear in the harmonic weight coefficients in the $\star$-product, but at order $\hbar^6$.
They do not yet appear at order $4$ (or rather, they are all seen to be rational at order $4$).

Is any of the weights transcendental?
This has not been proved, so that it remains unknown whether any of them is or none of them are.
\end{rem}

All the above being said about methods to obtain the values~$w(\Gamma)$ for Kontsevich graph weights and about the schemes to generate linear relations between these numbers, 
we observe 
that the requirement of associativity for the $\star$-\/product modulo~$\bar{o}(\hbar^k)$, whenever that structure is completely known at all orders up to~$\hbar^k$, is an 
ample 
source of relations of that kind.
This will be used intensively in chapter~\ref{SecAssoc} from p.~\pageref{SecAssoc} 
onwards. 
In particular, we mention here that the values of weights of graphs at order $\ell$ may be restricted by the associativity requirement at orders $>\ell$, by restriction to fixed differential orders $(i,j,k)$ (see Lemma \ref{Lemma} on p. \pageref{Lemma}).


\subsection{How graphs act on graphs}
\label{SecGraphsOnGraphs}
Let us have a closer look at the equation of associativity for the sought\/-\/for $\star$-\/product:
\[
\Assoc_\star(f,g,h) = (f \star g) \star h - f \star (g \star h) = 0.
\]
We see that the graph series~$f \star g$ and~$g \star h$ serve as the left- and right co\/-\/multiples of~$h$ and~$f$, respectively, in yet another copy of the star\/-\/product.
To realize the associator by using the Kontsevich graphs, we now explain how graphs act on graphs (here, in every composition~$\star \circ \star$ the graph series acts on a graph series by linearity).

We postulate that the action of graph series on graph series is $\Bbbk[[\hbar]]$-\/linear and $\Bbbk[G_{*,*}]$-\/linear with respect to both the graphs that act and that become the arguments.

Recall that every Poisson bracket is a derivation in each of its arguments.
In consequence, every derivation falling on a sink --\,in a graph~$\Gamma_1$ that acts on a given graph~$\Gamma_2$ taken as the new content of that sink\,-- acts on the sink's content via the Leibniz rule; all the Leibniz rules for the derivations in\/-\/coming to that sink work independently from each other.
Recall that the vertices of a graph represent factors in an expression.

\begin{example}
\label{ExWedgeOnBullets}
Consider the action of a wedge graph~$\Lambda$ on the two\/-\/sinks graph $(\bullet \bullet) \in G_{2,0}$, taken as its second argument.
We have that
\[
\text{\raisebox{-18pt}{
\unitlength=0.7mm
\linethickness{0.4pt}
\begin{picture}(15.00,16.67)
\put(2.00,5.00){\circle*{1.33}}
\put(10.00,5.00){\circle*{1.33}}
\put(14.00,5.00){\circle*{1.33}}
\put(12,5){\oval(8,5)}
\put(7.33,16.00){\circle*{1.33}}
\put(7.33,16.00){\vector(-1,-2){5.00}}
\put(7.33,16.00){\vector(1,-2){4.00}}
\end{picture}
}}
=
\text{\raisebox{-18pt}{
\unitlength=0.7mm
\linethickness{0.4pt}
\begin{picture}(15.00,16.67)
\put(2.00,5.00){\circle*{1.33}}
\put(13.00,5.00){\circle*{1.33}}
\put(17.00,5.00){\circle*{1.33}}
\put(7.33,16.00){\circle*{1.33}}
\put(7.33,16.00){\vector(-1,-2){5.00}}
\put(7.33,16.00){\vector(1,-2){5.00}}
\end{picture}
}}
+
\text{\raisebox{-18pt}{
\unitlength=0.7mm
\linethickness{0.4pt}
\begin{picture}(15.00,16.67)
\put(2.00,5.00){\circle*{1.33}}
\put(13.00,5.00){\circle*{1.33}}
\put(9.00,5.00){\circle*{1.33}}
\put(7.33,16.00){\circle*{1.33}}
\put(7.33,16.00){\vector(-1,-2){5.00}}
\put(7.33,16.00){\vector(1,-2){5.00}}
\end{picture}
}}
.
\]
The result is a sum of Kontsevich graphs of type~$(3,1)$.
Let us remember that the sinks are distinguished by their ordering; in particular the two Kontsevich graphs on the right-hand side are not equal.
\end{example}

\begin{example}
\label{ExWedgeOnWedge}
Now let the wedge graph act on a wedge graph (again, as the former's second argument):
\[
\text{\raisebox{-18pt}{
\unitlength=0.7mm
\linethickness{0.4pt}
\begin{picture}(15.00,16.67)
\put(2.00,5.00){\circle*{1.33}}
\put(10.00,5.00){\circle*{1.33}}
\put(14.00,5.00){\circle*{1.33}}
\put(7.33,16.00){\circle*{1.33}}
\put(7.33,16.00){\vector(-1,-2){5.00}}
\put(7.33,16.00){\vector(1,-2){2.60}}
\put(12,7){\oval(8,8)}
\put(12.00,10.00){\circle*{1.33}}
\put(12.00,10.00){\vector(1,-2){2.60}}
\put(12.00,10.00){\vector(-1,-2){2.60}}
\end{picture}
}}
=
\text{\raisebox{-18pt}{
\unitlength=0.7mm
\linethickness{0.4pt}
\begin{picture}(15.00,16.67)
\put(2.00,5.00){\circle*{1.33}}
\put(8.00,5.00){\circle*{1.33}}
\put(12.00,5.00){\circle*{1.33}}
\put(7.33,16.00){\circle*{1.33}}
\put(7.33,16.00){\vector(-1,-2){5.00}}
\put(7.33,16.00){\vector(1,-2){2.6}}
\put(10.00,10.00){\circle*{1.33}}
\put(10.00,10.00){\vector(1,-2){2.60}}
\put(10.00,10.00){\vector(-1,-2){2.60}}
\end{picture}
}}
+
\text{\raisebox{-18pt}{
\unitlength=0.7mm
\linethickness{0.4pt}
\begin{picture}(15.00,16.67)
\put(2.00,5.00){\circle*{1.33}}
\put(13.00,5.00){\circle*{1.33}}
\put(17.00,5.00){\circle*{1.33}}
\put(7.33,16.00){\circle*{1.33}}
\put(7.33,16.00){\vector(-1,-2){5.00}}
\put(7.33,16.00){\vector(1,-2){5.00}}
\put(15.00,10.00){\circle*{1.33}}
\put(15.00,10.00){\vector(1,-2){2.60}}
\put(15.00,10.00){\vector(-1,-2){2.60}}
\end{picture}
}}
+
\text{\raisebox{-18pt}{
\unitlength=0.7mm
\linethickness{0.4pt}
\begin{picture}(15.00,16.67)
\put(2.00,5.00){\circle*{1.33}}
\put(13.00,5.00){\circle*{1.33}}
\put(9.00,5.00){\circle*{1.33}}
\put(7.33,16.00){\circle*{1.33}}
\put(7.33,16.00){\vector(-1,-2){5.00}}
\bezier{64}(7.33,16.00)(15.00,13.00)(13.00,5.00)
\put(11.00,10.00){\circle*{1.33}}
\put(11.00,10.00){\vector(1,-2){2.60}}
\put(11.00,10.00){\vector(-1,-2){2.60}}
\end{picture}
}}
.
\]
\end{example}

\begin{example}
\label{ExOrder2OnBullets}
Finally, consider a graph in which two arrows fall on the first sink and let its content be $(\bullet \bullet) \in G_{2,0}$:
\[
\text{\raisebox{-18pt}{
\unitlength=0.7mm
\linethickness{0.4pt}
\begin{picture}(15.00,17.67)
\put(1.00,5.00){\circle*{1.33}}
\put(4.00,5.00){\circle*{1.33}}
\put(2.5,5.00){\oval(6,4)}
\put(13.00,5.00){\circle*{1.33}}
\put(7.33,11.33){\circle*{1.33}}
\put(2.00,17.00){\circle*{1.33}}
\put(2.00,17.00){\vector(0,-1){9.9}}
\put(2.00,17.00){\vector(1,-1){5.33}}
\put(7.33,11.33){\vector(1,-1){5.33}}
\put(7.33,11.33){\vector(-1,-1){4.33}}
\bezier{64}(7.33,11.33)(6.33,10.33)(4.33,8.33)
\end{picture}
}}
= 
\text{\raisebox{-18pt}{
\unitlength=0.7mm
\linethickness{0.4pt}
\begin{picture}(15.00,17.67)
\put(5.00,5.00){\circle*{1.33}}
\put(2.00,5.00){\circle*{1.33}}
\put(13.00,5.00){\circle*{1.33}}
\put(7.33,11.33){\circle*{1.33}}
\put(2.00,17.00){\circle*{1.33}}
\put(2.00,17.00){\vector(0,-1){11.33}}
\put(2.00,17.00){\vector(1,-1){5.33}}
\put(7.33,11.33){\vector(1,-1){5.33}}
\put(7.33,11.33){\vector(-1,-1){5.33}}
\end{picture}
}}
+
\text{\raisebox{-18pt}{
\unitlength=0.7mm
\linethickness{0.4pt}
\begin{picture}(15.00,17.67)
\put(-1.00,5.00){\circle*{1.33}}
\put(2.00,5.00){\circle*{1.33}}
\put(13.00,5.00){\circle*{1.33}}
\put(7.33,11.33){\circle*{1.33}}
\put(2.00,17.00){\circle*{1.33}}
\put(2.00,17.00){\vector(0,-1){11.33}}
\put(2.00,17.00){\vector(1,-1){5.33}}
\put(7.33,11.33){\vector(1,-1){5.33}}
\put(7.33,11.33){\vector(-1,-1){5.33}}
\end{picture}
}}
+
\text{\raisebox{-18pt}{
\unitlength=0.7mm
\linethickness{0.4pt}
\begin{picture}(15.00,17.67)
\put(-1.00,5.00){\circle*{1.33}}
\put(2.00,5.00){\circle*{1.33}}
\put(13.00,5.00){\circle*{1.33}}
\put(7.33,11.33){\circle*{1.33}}
\put(2.00,17.00){\circle*{1.33}}
\put(2.00,17.00){\vector(-1,-4){2.8}}
\put(2.00,17.00){\vector(1,-1){5.33}}
\put(7.33,11.33){\vector(1,-1){5.33}}
\put(7.33,11.33){\vector(-1,-1){5.33}}
\end{picture}
}}
+
\text{\raisebox{-18pt}{
\unitlength=0.7mm
\linethickness{0.4pt}
\begin{picture}(15.00,17.67)
\put(-1.00,5.00){\circle*{1.33}}
\put(2.00,5.00){\circle*{1.33}}
\put(13.00,5.00){\circle*{1.33}}
\put(7.33,11.33){\circle*{1.33}}
\put(2.00,17.00){\circle*{1.33}}
\put(2.00,17.00){\vector(0,-1){11.33}}
\put(2.00,17.00){\vector(1,-1){5.33}}
\put(7.33,11.33){\vector(1,-1){5.33}}
\put(7.33,11.33){\vector(-4,-3){8.33}}
\end{picture}
}}
.
\]
\end{example}

These three examples basically cover all the situations; we shall refer to them again,
namely, from the next chapter 
where the restrictions by using the total differential orders are discussed.

So far, we have focused on 
graphs; under the action of a graph on a graph, their coefficients are multiplied.
(This is why the associativity of the $\star$-\/product is an infinite system of quadratic equations for the coefficients of all the graphs).

\begin{implement}
In the class {\tt KontsevichGraphSeries<T>}, the method
\begin{verbatim}
    KontsevichGraphSeries<T>::operator()
\end{verbatim}
allows function\/-\/call syntax for the insertions described above.
As its argument it takes a {\tt std::vector} (that is, a list) of the Kontsevich graph series in $\hbar$; these are the $m$ respective arguments for a Kontsevich graph series.
It returns a {\tt KontsevichGraphSeries<T>}.
The method is called for the object of the class, that is, for the graph series which is to be evaluated at the $m$ specified arguments.
\end{implement}

For example, this allows the realization of Examples~\ref{ExWedgeOnBullets} and~\ref{ExWedgeOnWedge}  in {\tt C++} expressions as {\tt wedge(\{ dot, twodots \})} and {\tt wedge(\{ dot, wedge \})} respectively.

\begin{implement}
\label{ImplAssoc}
To calculate the associator $\Assoc_\star(f,g,h)$ for a given $\star$-\/product and ordered objects $f,g,h$, the call is
\begin{verbatim}
    > star_product_associator <star-product-filename>
\end{verbatim}
where the input file {\tt <star-product-filename>} contains the (truncated) power series for the $\star$-\/product.
In the standard output one obtains a (truncated at the same order in~$\hbar$ as in the input) power series containing, at each power~$\hbar^k$, the sums of graphs from $G_{3,k}$ with coefficients (their admissible types were introduced in~\S\ref{SecGraphSeries} above).
\end{implement}

\begin{example}
The associator for the $\star$-product up to order $2$ (from Example \ref{ExStarTwo}):
\begin{verbatim}
    $ star_product_associator star2.txt
    h^0:
    h^1:
    h^2:
    # 1 1 1 
    3 2 1   0 1 2 3    -2/3
    3 2 1   0 2 1 3    2/3
    3 2 1   0 4 1 2    -2/3
\end{verbatim}
It is $\tfrac{2}{3}\hbar^2$ times the Jacobiator \eqref{EqJacFig}, whose encoding we saw before in Example \ref{ExJacEval}.
\end{example}

\subsection{Gauge transformations}
At first glance, the concept of gauge transformations for (graphs in the) $\star$-products is an extreme opposite of plugging a list of graph series as arguments of a given graph series.
Namely, the idea of a gauge transformation is that a graph series (possibly of finite length) is towered over a single vertex~$\bullet \in G_{1,0}$.
By definition, a gauge transformation of a vertex~$\bullet$ is a map of the form $\bullet \mapsto [\bullet] = \bullet + \hbar\cdot(...)$ taking~$G_{1,0} \to \Bbbk[G_{1,*}][[\hbar]]$.

\begin{example}\label{ExGaugeLoop}
The map $\bullet \mapsto \bullet + \frac{\hbar^2}{12}
\!\!{\text{\raisebox{-7pt}{
\unitlength=0.4mm 
\linethickness{0.4pt}
\begin{picture}(11.67,19.67)
\put(6.00,5.00){\circle*{2.33}}
\put(1.00,15.00){\circle*{1.33}}
\put(11.00,15.00){\circle*{1.33}}
\put(11.00,15.00){\vector(-1,-2){4.67}}
\put(1.00,15.00){\vector(1,-2){4.67}}
\bezier{48}(1.00,15.00)(6.00,11.00)(10.00,14.67)
\bezier{52}(11.00,15.00)(6.00,19.67)(1.33,15.33)
\put(9.33,14.00){\vector(3,2){1.00}}
\put(2.00,16.00){\vector(-3,-2){1.00}}
\end{picture}
}}}$ is a gauge transformation of~$\bullet \in G_{1,0}$.
This graph series is encoded in the following file \texttt{gaugeloop.txt}:
\begin{verbatim}
    h^0:
    1 0 1            1
    h^2:
    1 2 1  0 2 1 0   1/12
\end{verbatim}
\end{example}

The construction of gauge transformations is extended from~$G_{1,0}$ by~$\Bbbk[[\hbar]]$- and $\Bbbk[G_{*,*}]$-\/linearity. This effectively means that in the course of action by a gauge transformation~$\mathsf{t}$ on a graph series $f \in \Bbbk[G_{*,*}][[\hbar]]$, all the arrows work over the vertices in every graph in~$f$ via the Leibniz rule (as it has been explained in the previous section).
This is how one expands $[f]\star[g]$, that is, the Kontsevich $\star$-\/product~\eqref{EqStar} of two gauged arguments~$[f]$ and~$[g]$.
Let us recall further that the shape $[\bullet] = \bullet + \hbar\cdot(\ldots)$, where the gauge tail of~$\bullet$ is given by some graphs from $\Bbbk[G_{1,*}][[\hbar]]$, guarantees the existence of a formal left inverse~${\sf t}^{-1}$ to the original transformation~${\sf t}$, so that $({\sf t}^{-1}\circ{\sf t})(\bullet)=\bullet$.

\begin{lemma}
If $\bullet \mapsto \square = {\sf t}(\bullet) = \bullet + \hbar\Gamma_1(\bullet) + \ldots + \hbar^\ell\Gamma_\ell(\cdot) + \bar{o}(\hbar^\ell)$ is a gauge transformation, let 
\[
{\sf t}^{-1}(\square) = \square + \hbar \gamma_1(\square) + \ldots + \hbar^\ell \gamma_\ell(\square) + \bar{o}(\hbar^\ell)
\]
by setting 
\[
\gamma_0 = \operatorname{id}, \qquad \gamma_m(\square) \mathrel{{:}{=}} -\sum_{k=0}^{m-1} \gamma_k(\Gamma_{m-k}(\square)).
\]
Then ${\sf t}^{-1}({\sf t}(\bullet)) = \bullet$, that is, the transformation ${\sf t}^{-1}\colon \Bbbk[G_{1,*}][[\hbar]] \to \Bbbk[G_{1,*}][[\hbar]]$ is the left inverse 
of~${\sf t}$ up to~$\bar{o}(\hbar)$.
\end{lemma}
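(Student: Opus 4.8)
The plan is to treat both ${\sf t}$ and ${\sf t}^{-1}$ as formal power series in $\hbar$ whose coefficients are $\Bbbk$-linear operators on $\Bbbk[G_{1,*}]$, and to read the assertion ${\sf t}^{-1}({\sf t}(\bullet))=\bullet$ as an identity of such operator series modulo $\bar{o}(\hbar^\ell)$. To this end I would first adopt the conventions $\Gamma_0:=\id$ and $\gamma_0:=\id$, so that
\[
{\sf t}=\sum_{k=0}^{\ell}\hbar^k\,\Gamma_k+\bar{o}(\hbar^\ell),\qquad
{\sf t}^{-1}=\sum_{j=0}^{\ell}\hbar^j\,\gamma_j+\bar{o}(\hbar^\ell).
\]
The key structural input, already granted in the previous subsection, is that a gauge transformation acts on a graph series by $\Bbbk[[\hbar]]$- and $\Bbbk[G_{*,*}]$-linearity, every arrow being distributed over the vertices by the Leibniz rule; consequently each $\Gamma_k$ and each $\gamma_j$ is a genuine $\Bbbk$-linear map, and forming ${\sf t}^{-1}({\sf t}(\bullet))$ by substitution coincides with composing the two operator series. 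There is no convergence issue, since the whole computation is carried out order by order in the formal parameter $\hbar$.

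Next I would expand the composition as a Cauchy product. Collecting powers of $\hbar$ gives
\[
{\sf t}^{-1}\circ{\sf t}=\sum_{m=0}^{\ell}\hbar^m\Biggl(\,\sum_{j+k=m}\gamma_j\circ\Gamma_k\Biggr)+\bar{o}(\hbar^\ell),
\]
so it suffices to show that the operator $\sum_{j+k=m}\gamma_j\circ\Gamma_k$ equals $\id$ for $m=0$ and vanishes for $1\le m\le\ell$. The term $m=0$ is $\gamma_0\circ\Gamma_0=\id$ by the conventions above. For $1\le m\le\ell$ I would split off the summand with $j=m$, $k=0$, which is $\gamma_m\circ\Gamma_0=\gamma_m$, leaving
\[
\sum_{j+k=m}\gamma_j\circ\Gamma_k=\gamma_m+\sum_{k=0}^{m-1}\gamma_k\circ\Gamma_{m-k}.
\]
Here the defining recursion $\gamma_m(\square)=-\sum_{k=0}^{m-1}\gamma_k(\Gamma_{m-k}(\square))$ is precisely the operator identity $\gamma_m=-\sum_{k=0}^{m-1}\gamma_k\circ\Gamma_{m-k}$, so the two sums cancel and the order-$m$ coefficient is zero. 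This yields ${\sf t}^{-1}({\sf t}(\bullet))=\bullet+\bar{o}(\hbar^\ell)$, as claimed. The same recursion is also what makes $\gamma_m$ well defined: it expresses $\gamma_m$ through $\gamma_0,\ldots,\gamma_{m-1}$ and $\Gamma_1,\ldots,\Gamma_m$ only, so an induction on $m$ starting from $\gamma_0=\id$ constructs all the $\gamma_j$.

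I do not expect a serious obstacle: the recursion is reverse\/-\/engineered from the Cauchy product, so the algebra collapses at once. The only point deserving care is the justification in the first paragraph that substituting one gauge series into another agrees, order by order, with operator composition; once the $\Bbbk[[\hbar]]$- and $\Bbbk[G_{*,*}]$-linearity of the action is invoked this is routine. It is also this asymmetry of the recursion (applying $\Gamma$ first, then $\gamma$) that confines the argument to a \emph{left} inverse, the companion statement ${\sf t}\circ{\sf t}^{-1}=\id$ requiring the analogous but separate recursion.
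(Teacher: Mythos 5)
Your argument is correct. Note that the paper states this lemma without giving any proof at all, so there is nothing to compare against except the evident intent; your order-by-order Cauchy-product computation, with $\Gamma_0=\gamma_0=\id$ and the observation that the defining recursion is exactly the vanishing of the coefficient of $\hbar^m$ in ${\sf t}^{-1}\circ{\sf t}$ for $1\leqslant m\leqslant\ell$, is precisely the standard inversion the authors rely on. Your remark that substitution of graph series agrees with operator composition because the action is $\Bbbk[[\hbar]]$- and $\Bbbk[G_{*,*}]$-linear is the one point that genuinely needs saying, and you say it.
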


It is readily seen that the assembly of the entire~${\sf t}^{-1}$ can require infinitely many operations even if the direct transformation~${\sf t}$ took only finitely many of them, e.g., as in Example~\ref{ExGaugeLoop}.

In these terms, for the Kontsevich $\star$-\/product~\eqref{EqStar} we obtain, by operating with gauge transformations and their formal inverses, a class of 
star products~$\star'$ which are defined by the relation
\begin{equation}\label{EqDefGaugeStar}
{\sf t}(f \star' g) = {\sf t}(f) \star {\sf t}(g), \qquad f,g \in C^\infty(N^n)[[\hbar]].
\end{equation}
Clearly, all these gauged star\/-\/products~$\star'$ remain associative (because $\star$~was) but the coefficients of graphs at an order~$k \geqslant 2$ in~$\hbar$ are no longer necessarily equal to the respective values in~\eqref{EqStar}.
The use of gauge transformations for products allows to gauge out \emph{some} graphs, often at a certain order~$\hbar^k$ in the star\/-\/product expansion.

\begin{example}\label{ExGaugedLoop}
The graph $\!\!\text{\raisebox{-7pt}{
\unitlength=0.4mm 
\linethickness{0.4pt}
\begin{picture}(14.00,20.33)
\put(2.00,5.00){\circle*{1.33}}
\put(13.00,5.00){\circle*{1.33}}
\put(2.00,15.00){\circle*{1.33}}
\put(13.00,15.00){\circle*{1.33}}
\put(13.00,15.00){\vector(0,-1){9.33}}
\put(2.00,15.00){\vector(0,-1){9.33}}
\bezier{64}(2.00,15.00)(7.00,9.00)(12.67,15.00)
\bezier{60}(13.00,15.00)(7.00,20.33)(2.67,15.00)
\put(11.67,14.00){\vector(1,1){0.67}}
\put(3.33,16.00){\vector(-1,-1){0.67}}
\end{picture}
}}$ with a loop is gauged out from the Kontsevich $\star$-\/pro\-duct~\eqref{EqStar} 
by using the gauge transformation ${\sf t}\colon \bullet \mapsto \bullet + \frac{\hbar^2}{12}
\!\!{\text{\raisebox{-7pt}{
\unitlength=0.4mm 
\linethickness{0.4pt}
\begin{picture}(11.67,19.67)
\put(6.00,5.00){\circle*{2.33}}
\put(1.00,15.00){\circle*{1.33}}
\put(11.00,15.00){\circle*{1.33}}
\put(11.00,15.00){\vector(-1,-2){4.67}}
\put(1.00,15.00){\vector(1,-2){4.67}}
\bezier{48}(1.00,15.00)(6.00,11.00)(10.00,14.67)
\bezier{52}(11.00,15.00)(6.00,19.67)(1.33,15.33)
\put(9.33,14.00){\vector(3,2){1.00}}
\put(2.00,16.00){\vector(-3,-2){1.00}}
\end{picture}
}}}$, see Example~\ref{ExGaugeLoop}.
Note that taking the formal inverse~${\sf t}^{-1}$ does create 
loop\/-\/containing graphs at higher orders~$\hbar^{\geqslant 3}$ in the gauged star\/-\/product~$\star'$ which is specified by~\eqref{EqDefGaugeStar}.
\end{example}

\begin{rem}
Not every graph taken in the Kontsevich star\/-\/product~$\star$ at a particular 
order~$\hbar^k$ can be gauged out.
For example, such are the graphs~$\Gamma \in \tilde{G}_{2,*}$ containing an internal vertex~$v$ with edges running from it to both the ground vertices.
\end{rem}

\begin{implement}
The command for gauge transformation is
\begin{verbatim}
    > gauge <star-product-filename> <gauge-transformation-filename>
\end{verbatim}
where 
\begin{itemize}
\item the file {\tt <star-product-filename>} contains a machine\/-\/format graph encoding of star\/-\/product~$\star$ truncated modulo~$\bar{o}(\hbar^k)$ for some $k \geqslant 0$; 
\item the content of {\tt <gauge-transformation-filename>} is a gauge transformation~${\sf t}(\bullet)$, that is, a truncated modulo~$\bar{o}(\hbar^{\ell \geqslant 0}$) series in~$\hbar$ consisting of the Kontsevich graphs built over one sink vertex~$\bullet$.
\end{itemize}
In the standard output one obtains the truncation, modulo~$\bar{o}(\hbar^{\min(k,\ell)})$, of the graph series for the gauged star\/-\/product~$\star'$ defined by $f \star' g = {\sf t}^{-1}({\sf t}(f)\star{\sf t}(g))$.

(The corresponding method is {\tt KontsevichGraphSeries<T>::gauge\symbol{"5F}transform()} in Appendix \ref{AppCPP}.)
\end{implement}

\begin{example}\label{ExGaugedLoopImplement}
Let the gauge transformation from Example \ref{ExGaugedLoop} be stored in the file {\tt gaugeloop.txt}, and recall the $\star$-product up to order two from Example \ref{ExStarTwo} in the file {\tt star2.txt}.
The gauge transformation kills the loop graph:
\begin{verbatim}
    $ gauge star2.txt gaugeloop.txt > star2gauged_unreduced.txt
    $ reduce_mod_skew star2gauged_unreduced.txt > star2gauged.txt
    $ cat star2gauged.txt
    h^0:
    2 0 1             1
    h^1:
    2 1 1  0 1        1
    h^2:
    2 2 1  0 1 0 1    1/2
    2 2 1  0 1 0 2    1/3
    2 2 1  0 1 1 2    -1/3
\end{verbatim}
Indeed, we see that the line
\begin{verbatim}
    2 2 1  0 3 1 2    -1/6
\end{verbatim}
containing the loop graph has disappeared.
\end{example}

Let us note at once that every gauge transformation ${\sf t}$ given by a Kontsevich graph polynomial in~$\hbar$ of degree~$\ell$ can clearly be viewed formally as a polynomial transformation of any degree greater or equal than~$\ell$.
This is why by using the same software we can actually obtain the gauged star\/-\/product~$\star'$ modulo~$\bar{o}(\hbar^4)$ starting with the Kontsevich star\/-\/product~$\star$ modulo~$\bar{o}(\hbar^4)$ and applying the gauge transformation of nominal degree $\ell=2$ from Example~\ref{ExGaugeLoop}.
In other words, the precision in~$\star'$ with respect to~$\hbar$ is the same as in~$\star$ even though the degree of the polynomial gauge transformation~${\sf t}$ is smaller.
In practice, this is achieved by adding an empty list of graphs at the power~$\hbar^k$ to a given gauge transformation of degree~$\ell < k$.

\section{Associativity of the Kontsevich $\star$-product}\label{SecAssoc}
\noindent%
In the final section of this paper we explore two complementary matters.
On the one hand, we analyse how the associativity postulate for the Kontsevich $\star$-\/product contributes to finding the values of weights~$w(\Gamma)$ for graphs~$\Gamma$ in~$\star$.
On the other hand, a point is soon reached when no new information can be obtained about the values of~$w(\Gamma)$: specifically, neither from the fact of associativity of the $\star$-\/product nor from any proven properties of the Kontsevich weights.
We outline a computer\/-\/assisted scheme of reasoning that, working uniformly over the set of all Poisson structures under study, reveals the associativity of $\star$-\/product on the basis of our actual knowledge about the weights~$w(\Gamma)$ of graphs~$\Gamma$ in~it.

In~\cite{sqs15} we reported an exhaustive description of the Kontsevich $\star$-\/product up to~$\bar{o}(\hbar^3)$.
At the next expansion order~$\bar{o}(\hbar^4)$ in~$\star$, we now express the weights of all the $160\,000 = (5 \cdot 4)^4$
graphs~$\Gamma \in \tilde{G}_{2,4}$ (of which up to $10\,000 = (5\cdot 4/2)^4$ are different modulo signs) in terms of only~$10$ parameters; those ten master\/-\/parameters themselves are the (still unknown) Kontsevich weights of the four internal vertex graphs portrayed in Fig.~\ref{10Graphs}.
By following the second strategy we prove that for any values of those ten parameters the $\star$-\/product expansion modulo~$\bar{o}(\hbar^4)$ is 
associative, also up to~$\bar{o}(\hbar^4)$.

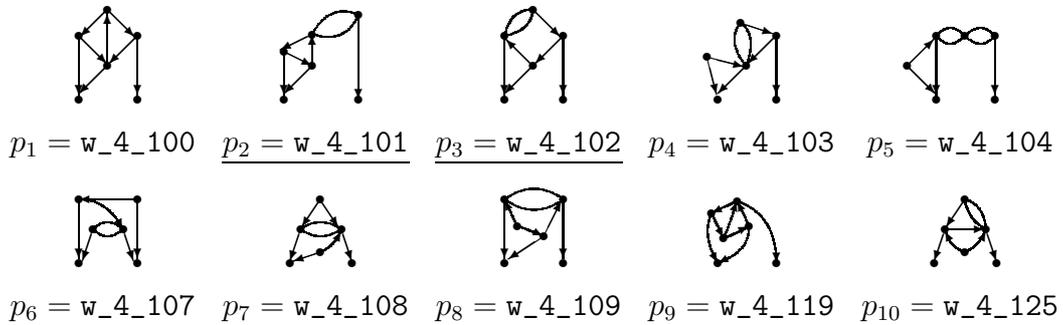
\begin{figure}[ht]
\begin{tabular}{c c c c c}
\raisebox{-8.60pt}{
\unitlength=0.7mm
\linethickness{0.4pt}
\begin{picture}(15.00,23.67)
\put(2.00,5.00){\circle*{1.33}}
\put(13.00,5.00){\circle*{1.33}}
\put(2.00,17.00){\circle*{1.33}}
\put(2.00,17.00){\vector(0,-1){11.33}}
\put(2.00,17.00){\vector(1,-1){5.33}}
\put(7.33,11.33){\circle*{1.33}}
\put(7.33,11.33){\vector(-1,-1){5.33}}
\put(7.33,11.33){\vector(0,1){10}}
\put(7.33,22.00){\circle*{1.33}}
\put(7.33,22.00){\vector(-1,-1){5.1}}
\put(7.33,22.00){\vector(1,-1){5.1}}
\put(13.00,17.00){\circle*{1.33}}
\put(13.00,17.00){\vector(0,-1){11.33}}
\put(13.00,17.00){\vector(-1,-1){5.33}}
\end{picture}
}
&
\raisebox{-8.60pt}{
\unitlength=0.7mm
\linethickness{0.4pt}
\begin{picture}(18.00,23.67)
\put(2.00,5.00){\circle*{1.33}}
\put(16.00,5.00){\circle*{1.33}}
\put(2.00,14.00){\circle*{1.33}}
\put(2.00,14.00){\vector(0,-1){8.33}}
\put(2.00,14.00){\vector(2,-1){5.33}}
\put(7.33,11.33){\circle*{1.33}}
\put(7.33,11.33){\vector(-1,-1){5.33}}
\put(7.33,11.33){\vector(0,1){6}}
\put(7.33,17.33){\circle*{1.33}}
\put(7.33,17.33){\vector(-2,-1){6}}
\bezier{56}(7.33,17.33)(13.67,15.33)(16.00,21.00)
\bezier{56}(7.33,17.33)(10.67,23.33)(16.00,21.00)
\put(16.00,21.00){\circle*{1.33}}
\put(16.00,21.00){\vector(0,-1){16}}
\end{picture}
}
&
\raisebox{-8.60pt}{
\unitlength=0.7mm
\linethickness{0.4pt}
\begin{picture}(15.00,23.67)
\put(2.00,5.00){\circle*{1.33}}
\put(13.00,5.00){\circle*{1.33}}
\put(2.00,17.00){\circle*{1.33}}
\put(2.00,17.00){\vector(0,-1){11.33}}
\put(7.33,11.33){\circle*{1.33}}
\put(7.33,11.33){\vector(-1,-1){5.33}}
\put(7.33,11.33){\vector(-1,1){5.1}}
\bezier{56}(2.00,17.00)(3.00,22.00)(7.33,22.00)
\bezier{56}(2.00,17.00)(7.00,18.00)(7.33,22.00)
\put(7.33,22.00){\circle*{1.33}}
\put(7.33,22.00){\vector(1,-1){5.1}}
\put(13.00,17.00){\circle*{1.33}}
\put(13.00,17.00){\vector(0,-1){11.33}}
\put(13.00,17.00){\vector(-1,-1){5.33}}
\end{picture}
}
&
\raisebox{-8.60pt}{
\unitlength=0.7mm
\linethickness{0.4pt}
\begin{picture}(15.00,23.67)
\put(2.00,5.00){\circle*{1.33}}
\put(13.00,5.00){\circle*{1.33}}
\put(0.00,13.00){\circle*{1.33}}
\put(0.00,13.00){\vector(1,-4){1.8}}
\put(0.00,13.00){\vector(4,-1){7.0}}
\put(7.33,11.33){\circle*{1.33}}
\put(7.33,11.33){\vector(-1,-1){5.33}}
\put(6.33,19.50){\circle*{1.33}}
\put(6.33,19.50){\vector(3,-1){6.1}}
\bezier{56}(6.33,19.50)(9.50,17.00)(7.33,11.33)
\bezier{56}(6.33,19.50)(3.50,16.00)(7.33,11.33)
\put(13.00,17.00){\circle*{1.33}}
\put(13.00,17.00){\vector(0,-1){11.33}}
\put(13.00,17.00){\vector(-1,-1){5.33}}
\end{picture}
}
&
\raisebox{-8.60pt}{
\unitlength=0.7mm
\linethickness{0.4pt}
\begin{picture}(15.00,23.67)
\put(2.00,5.00){\circle*{1.33}}
\put(13.00,5.00){\circle*{1.33}}
\put(2.00,17.00){\circle*{1.33}}
\put(2.00,17.00){\vector(0,-1){11.33}}
\put(-3.50,11.33){\circle*{1.33}}
\put(-3.50,11.33){\vector(1,-1){5.33}}
\put(-3.50,11.33){\vector(1,1){5.1}}
\bezier{56}(2.00,17.00)(4.50,20.00)(7.33,17.00)
\bezier{56}(2.00,17.00)(4.50,14.00)(7.33,17.00)
\bezier{56}(7.33,17.00)(10.50,20.00)(13.00,17.00)
\bezier{56}(7.33,17.00)(10.50,14.00)(13.00,17.00)
\put(7.33,17.00){\circle*{1.33}}
\put(13.00,17.00){\circle*{1.33}}
\put(13.00,17.00){\vector(0,-1){11.33}}
\end{picture}
}
\\
$p_1={}${\tt w\symbol{"5F}4\symbol{"5F}100} &
\underline{$p_2={}$\tt w\symbol{"5F}4\symbol{"5F}101} &
\underline{$p_3={}$\tt w\symbol{"5F}4\symbol{"5F}102} &
$p_4={}${\tt w\symbol{"5F}4\symbol{"5F}103} &
$p_5={}${\tt w\symbol{"5F}4\symbol{"5F}104} \\
\raisebox{-8.60pt}{
\unitlength=0.7mm
\linethickness{0.4pt}
\begin{picture}(15.00,23.67)
\put(2.00,5.00){\circle*{1.33}}
\put(13.00,5.00){\circle*{1.33}}
\put(2.00,17.00){\circle*{1.33}}
\put(2.00,17.00){\vector(0,-1){11.33}}
\bezier{56}(2.00,17.00)(7.33,17.00)(10.33,11.33)
\put(10.33,11.33){\vector(1,-2){0}}
\put(13.00,17.00){\circle*{1.33}}
\put(13.00,17.00){\vector(0,-1){11.33}}
\put(13.00,17.00){\vector(-1,0){11.0}}
\put(4.66,11.33){\circle*{1.33}}
\put(4.66,11.33){\vector(-1,-3){2.0}}
\put(10.33,11.33){\circle*{1.33}}
\put(10.33,11.33){\vector(1,-3){2.0}}
\bezier{56}(4.66,11.33)(7.33,14.00)(10.33,11.33)
\bezier{56}(4.66,11.33)(7.33,8.66)(10.33,11.33)
\end{picture}
}
&
\raisebox{-8.60pt}{
\unitlength=0.7mm
\linethickness{0.4pt}
\begin{picture}(15.00,23.67)
\put(1.66,5.00){\circle*{1.33}}
\put(13.33,5.00){\circle*{1.33}}
\put(3.66,11.33){\circle*{1.33}}
\put(3.66,11.33){\vector(-1,-3){2.0}}
\put(11.33,11.33){\circle*{1.33}}
\put(11.33,11.33){\vector(1,-3){2.0}}
\bezier{56}(3.66,11.33)(7.33,14.00)(11.33,11.33)
\bezier{56}(3.66,11.33)(7.33,8.66)(11.33,11.33)
\put(7.33,17.00){\circle*{1.33}}
\put(7.33,17.00){\vector(-2,-3){3.5}}
\put(7.33,17.00){\vector(2,-3){3.5}}
\put(7.33,7.00){\circle*{1.33}}
\put(7.33,7.00){\vector(-3,-1){5.1}}
\bezier{56}(7.33,7.00)(10,8)(11.33,11.33)
\put(11.33,11.00){\vector(1,2){0}}
\end{picture}
}
&
\raisebox{-8.60pt}{
\unitlength=0.7mm
\linethickness{0.4pt}
\begin{picture}(15.00,23.67)
\put(2.00,5.00){\circle*{1.33}}
\put(13.00,5.00){\circle*{1.33}}
\put(2.00,17.00){\circle*{1.33}}
\put(2.00,17.00){\vector(0,-1){11.33}}
\bezier{56}(2.00,17.00)(7.50,21.00)(13.00,17.00)
\bezier{56}(2.00,17.00)(7.50,13.00)(13.00,17.00)
\put(9.33,10.00){\circle*{1.33}}
\put(9.33,10.00){\vector(1,2){3.33}}
\put(9.33,10.00){\vector(-3,-2){6.66}}
\put(4.33,12.00){\circle*{1.33}}
\bezier{56}(4.33,12.00)(7,11)(9.33,10.00)
\put(9.33,10.00){\vector(2,-1){0}}
\bezier{56}(4.33,12.00)(3.33,14)(2.00,17.00)
\put(2.00,17.00){\vector(-1,2){0}}
\put(13.00,17.00){\circle*{1.33}}
\put(13.00,17.00){\vector(0,-1){11.33}}
\end{picture}
}
&
\raisebox{-8.60pt}{
\unitlength=0.7mm
\linethickness{0.4pt}
\begin{picture}(15.00,23.67)
\put(2.00,5.00){\circle*{1.33}}
\put(13.00,5.00){\circle*{1.33}}
\put(5.66,16.66){\circle*{1.33}}
\bezier{56}(5.66,16.66)(3,15.5)(0.84,14.38)
\put(0.84,14.38){\vector(-2,-1){0}}
\bezier{80}(5.66,16.66)(13,14)(13.00,5.00)
\put(13.00,5.00){\vector(0,-1){0}}
\put(0.84,14.38){\circle*{1.33}}
\bezier{56}(0.84,14.38)(-1,10)(2.00,5.00)
\put(2.00,5.00){\vector(1,-2){0}}
\bezier{56}(0.84,14.38)(2,12)(3.12,9.6)
\put(3.12,9.6){\vector(1,-2){0}}
\put(3.12,9.6){\circle*{1.33}}
\bezier{56}(3.12,9.6)(5,10.33)(7.9,11.88)
\put(7.9,11.88){\vector(2,1){0}}
\bezier{56}(3.12,9.6)(4.5,13)(5.66,16.66)
\put(5.66,16.66){\vector(1,3){0}}
\put(7.9,11.88){\circle*{1.33}}
\bezier{56}(7.9,11.88)(8,7)(2.00,5.00)
\put(2.00,5.00){\vector(-2,-1){0}}
\bezier{56}(7.9,11.88)(7,14)(5.66,16.66)
\put(5.66,16.66){\vector(-1,2){0}}
\end{picture}
}
&
\raisebox{-8.60pt}{
\unitlength=0.7mm
\linethickness{0.4pt}
\begin{picture}(15.00,23.67)
\put(1.66,5.00){\circle*{1.33}}
\put(13.33,5.00){\circle*{1.33}}
\put(3.66,11.33){\circle*{1.33}}
\put(3.66,11.33){\vector(-1,-3){2.0}}
\put(3.66,11.33){\vector(1,0){7.5}}
\put(11.33,11.33){\circle*{1.33}}
\put(11.33,11.33){\vector(1,-3){2.0}}
\put(7.33,17.00){\circle*{1.33}}
\put(7.33,17.00){\vector(-2,-3){3.5}}
\bezier{56}(7.33,17.00)(11,16)(11.33,11.33)
\bezier{56}(7.33,17.00)(8,13)(11.33,11.33)
\put(7.33,7.00){\circle*{1.33}}
\bezier{56}(7.33,7.00)(10,8)(11.33,11.33)
\put(11.33,11.00){\vector(1,2){0}}
\bezier{56}(7.33,7.00)(4.33,8)(3.66,11.33)
\put(3.66,11.00){\vector(-1,2){0}}
\end{picture}
}
\\
$p_6={}${\tt w\symbol{"5F}4\symbol{"5F}107} &
$p_7={}${\tt w\symbol{"5F}4\symbol{"5F}108} &
$p_8={}${\tt w\symbol{"5F}4\symbol{"5F}109} &
\underline{$p_9={}$\tt w\symbol{"5F}4\symbol{"5F}119} &
\underline{$p_{10}={}$\tt w\symbol{"5F}4\symbol{"5F}125} \\
\end{tabular}
\caption[The ten graphs whose unknown weights are taken as the master\/-\/parameters]{The ten graphs whose unknown weights\footnotemark\ are taken as the
master\/-\/parameters $p_i$; in fact, the four graphs whose weights are underlined can be gauged out from $\star$ so that there remain only $6$ parameters that determine it modulo $\bar{o}(\hbar^4)$.}\label{10Graphs}
\end{figure}

\footnotetext{Numerical approximations of two of these weights are listed in Table \ref{TableConjectured} in Appendix \ref{AppNumericalWeights}.} 

%

\subsection{Restriction of the $\star$-\/product associativity equation $\Assoc_\star(f,g,h)=0$ to a Poisson structure~$\cP$}\label{SecRestrictAssoc}
We now view the postulate of associativity for the Kontsevich $\star$-\/product as an equation for coefficients in the graph expansion of~$\star$.
Whenever an expansion modulo~$\bar{o}(\hbar^\ell)$ is known for the $\star$-\/product, one passes to the next order~$\bar{o}(\hbar^{\ell+1})$ by taking all the graphs~$\Gamma \in \tilde{G}_{2,\ell+1}$ with undetermined coefficients, and then expands (with respect to graphs) the associator $\Assoc_\star(f,g,h)$ up to the order~$\bar{o}(\hbar^{\ell+1})$.
This expansion now runs over all the graphs with at most $\ell+1$ internal vertices.
It is readily seen that by construction this associativity equation $\Assoc_\star(f,g,h) = \bar{o}(\hbar^{\ell+1})$ is always \emph{linear}\footnote{%
Should a graph~$\Gamma \in \tilde{G}_{2,\ell+1}$ be composite so that its Kontsevich weight is factorized using formula~\eqref{EqMult}, the resulting nonlinearity with respect to the weights would actually involve only the graphs with at most $\ell$~internal vertices.}
with respect to the coefficients of graphs from~$\tilde{G}_{2,\ell+1}$.

\begin{rem}\label{RemCanUseHigherOrders}
One can still 
get \emph{linear} relations between the weights~$w(\Gamma)$ of graphs $\Gamma \in \tilde{G}_{2,\ell+1}$ at order~$\hbar^{\ell+1}$ in~$\star$ by inspecting the associativity of~$\star$ at \emph{higher} orders --\,ranging from~$\ell+2$ till~$2\ell+1$\,-- in~$\hbar$.
Indeed, a linear relation containing the unknown weights (and the already known lower\/-\/order part of~$\star$ as coefficients) but not the weights of graphs with~$\geqslant \ell+2$ internal vertices can appear whenever a properly chosen homogeneous component of the tri\/-\/differential operator $\Assoc_\star(f,g,h)$ does not contain any weights from higher orders.
For instance, this is the component at homogeneity orders $(i,j,k)$ such that prime graphs $\Gamma \in \tilde{G}_{2,\geqslant \ell + 2}$ of homogeneity orders~$(i+j, k)$ and~$(i,j+k)$ (when viewed as bi\/-\/differential operators) do not exist or if the weights of all such graphs are known in advance.
\end{rem}

\subsubsection{}
Let us also note that in the graph equation $\Assoc_\star(f,g,h) = 0$ that holds by virtue of the Jacobi identity $\Jac(\cP)=0$, not every coefficient of every graph in the expansion should be expected to vanish.
Indeed, the Jacobiator is a vanishing sum of three graphs 
that evaluates to zero at every Poisson structure~$\cP$ which we put into every internal vertex.
This is why the restriction of associativity equation to a given Poisson structure (or to a class of Poisson structures) is a practical way to proceed in solution of the problem of finding the coefficients of graphs in~$\star$.
More specifically, after the restriction of associator $\Assoc_\star(f,g,h)$ 
to a structure~$\cP$ which is known to be Poisson so that all the instances and all derivatives of the Jacobiator $\Jac(\cP)$ are automatically trivialized, the left\/-\/hand side of the associativity equation $\Assoc_\star(f,g,h){\bigr|}_\cP = 0 \mod \bar{o}(\hbar^{\ell+1})$ becomes an analytic expression (\emph{linear} with respect to the unknowns~$w(\Gamma)$ for $\Gamma \in \tilde{G}_{2,\ell+1})$.
At this point one can proceed in several ways. 

We now outline three methods to obtain systems of linear equations upon the unknown weights~$w(\Gamma)$ of basic graphs $\Gamma \in \tilde{G}_{2,\ell+1}$.
Working in local coordinates, we ensure that the unknowns' coefficients in the equations which we derive are \emph{real} numbers.\footnote{From the factorization of associator for~$\star$ via differential consequences of the Jacobi identity for a Poisson structure~$\cP$, which will be revealed in section~\ref{SecFactor} below, it will be seen in hindsight that the construction of linear relations between the graph weights is overall insensitive to a choice of local coordinates in a chart within a given Poisson manifold. Indeed, the factorization will have been achieved simultaneously for all Poisson structures on all the manifolds at once, irrespective of any local coordinates.}

\begin{method}
Let the associator's arguments be given functions $f,g,h \in C^\infty(N^n)$.
Restrict the analytic expression $\Assoc_\star(f,g,h){\bigr|}_\cP$ to a point~$\bx$ of the manifold~$N^n$ equipped with a Poisson structure~$\cP$.
For every choice of $f,g,h \in C^\infty(N^n)$ and of a point~$\bx \in N^n$, the restriction $\Assoc_\star(f,g,h){\bigr|}_\cP(\bx) = 0 \mod \bar{o}(\hbar^{\ell+1})$ yields \emph{one} linear relation between the weights of graphs at order~$\hbar^{\ell+1}$.
Taking the restriction at several points $\bx_1$,\ $\ldots$,\ $\bx_k \in N^n$, one obtains a system of such equations, the rank of which does not exceed the number~$k$ of such points in~$N^n$.
Bounded by the number of unknowns~$w(\Gamma)$, the rank would always stabilize as~$k \to \infty$. 
\end{method}

Examples of Poisson structures~$\cP$ --\,for instance, on the manifolds~$\mathbb{R}^n$\,-- are available from~
\cite{Perelomov} (here $n \geqslant 3$) and~\cite{Van}; 
from Proposition 2.1 on p.~{74} 
in the latter one obtains a class of Poisson (in fact, symplectic) structures with polynomial coefficients on even\/-\/dimensional affine spaces~$\mathbb{R}^{2k}$. 
Besides, there is a regular construction (by using the R-\/matrix formalism, see \cite[p.~{287}]{VanhaeckePoisson}) 
of Poisson brackets on the vector space of square matrices $\operatorname{Mat}(\mathbb{R}, k \times k) \cong \mathbb{R}^{k^2}$ (e.g., in this way one has a rank\/-\/six Poisson structure on~$\mathbb{R}^9$).

Method~1 is the least computationally expensive, so it can be used effectively at the initial stage, e.g., to detect the zero values of certain graph weights: once found, such trivial values allow to decrease the number of unknowns in the further reasoning. 

\begin{method}
Now let $f,g,h \in \Bbbk[x^1,\ldots,x^n]$ be polynomials referred to local 
coordinates $x^1$,\ $\ldots$,\ $x^n$ on~$N^n$.
On that coordinate chart $U_\alpha 
\subset N^n$, take a Poisson structure the coefficients~$\cP^{ij}(\bx)$ of which would also be polynomial.
In consequence, the left\/-\/hand side of the equation $\Assoc_\star(f,g,h){\bigr|}_\cP = 0 \mod \bar{o}(\hbar^{\ell+1})$ then becomes polynomial as well.
Linear in the unknowns~$w(\Gamma)$, all the coefficients of this polynomial equation vanish (independently from each other).
Again, this yields a system of linear algebraic equations for the unknown weights~$w(\Gamma)$ of the Kontsevich graphs $\Gamma \in \tilde{G}_{2,\ell+1}$ in the $\star$-\/product.
\end{method}

We observe that the linear equations obtained by using Method~2 better constrain the set of unknowns~$w(\Gamma)$, that is, the rank of this system is typically higher than in Method~1.
Intuitively, this is because the polynomials at hand are not collapsed to their values at points~$\bx \in N$.

\begin{method}
Keep the associator's arguments $f,g,h$ unspecified and consider a class of Poisson structures $\cP[\psi_1,\ldots,\psi_m]$ depending in a differential polynomial way on functional parameters~$\psi_\alpha$, that is, on arbitrary functions, whenever~$\cP$ is referred to local coordinates.
(For example, let $n=3$ and on~$\mathbb{R}^3$ with Cartesian coordinates $x,y,z$ introduce the class of Poisson brackets using the Jacobian determinants,
\begin{equation}\label{Eq3DPoisson}
\{u,v\}_\cP = p \cdot \det\bigl(\partial(q,u,v)/\partial(x,y,z)\bigr), \qquad q \in C^\infty(\mathbb{R}^3),
\end{equation}
supposing that the density~$p(x,y,z)$ is also smooth on~$\mathbb{R}^3$.)
Now view the associator $\Assoc_\star(f,g,h){\bigr|}_{\cP[\psi_1,\ldots,\psi_m]}$ as a
polydifferential operator in the parameters $f,g,h$ (with respect to which it is linear) and in $\psi_1$,\ $\ldots$,\ $\psi_m$ from~$\cP$.
By splitting the associator, which is postulated to vanish modulo~$\bar{o}(\hbar^{\ell+1})$, into homogeneous differential\/-\/polynomial components, we obtain a system of linear algebraic equations upon the graph weights.
\end{method}

It is readily seen that, whenever the parameters $\psi_1$,\ $\ldots$,\ $\psi_m$ are chosen to be polynomials (here let us suppose for definition that the resulting Poisson structure~$\cP(\bx)$ itself is polynomial), the rank of the algebraic system obtained by Method~3 can be greater than the rank of an analogous system from Method~2.
This is because the analytic expression $\Assoc_\star(f,g,h){\bigr|}_{\cP[\psi_1,\ldots,\psi_m]}$ keeps track of all the parameters, whereas in Method~2 they are merged to a single polynomial.

We finally note that the linear algebraic systems which are produced by each method should be merged.
Indeed, the goal is to maximize the rank and by this, reduce the number of free parameters in the solution.\footnote{%
If the rank of the resulting linear algebraic system is equal to the number of unknowns --\,and if all the coefficients coming from lower orders~$\leqslant \ell$ within the $\star$-\/product expansion with respect to~$\hbar$ are also rational -- then all the solution components are rational numbers as well, cf.~\cite{WillwacherFelderIrrationality}.}


It has been seen in \S\ref{SecGraphsOnGraphs}, Implementation \ref{ImplAssoc} how the associator is calculated in terms of graphs.
The next step --\,namely, 
restriction of the associator to a given Poisson structure\,--
can be performed by using a call {\tt poisson\symbol{"5F}evaluate} as 
it has been explained in~\S\ref{SecPoissonEvaluate}. 
However, the further restriction as described in the Methods has been implemented in a separate program (similar to {\tt poisson\symbol{"5F}evaluate}) which directly outputs the desired relations, as follows.

\begin{implement}
The command
\begin{verbatim}
    > poisson_make_vanish <graph-series-file> <poisson-structure>
\end{verbatim}
sends to the standard output relations such as 
\begin{verbatim}
    -1/24+w_3_1+4*w_3_2==0
\end{verbatim}
between the undetermined coefficients in the input, which must hold if the input graph series is to vanish as a consequence of the Jacobi identity for the specified Poisson structure.
The implementation is described in the Methods above.
The choice of Poisson structure is made in the same way as in Implementation \ref{ImplPoissonEvaluate}.
If the optional extra argument {\tt --linear-solve} is specified, the program will assume that the relations which will be obtained are linear, and attempt to solve the linear system.
\end{implement}

\begin{example}\label{ExOrder3}
To obtain all the weights of basic graphs $\Gamma \in \tilde{G}_{2,3}$ at~$\hbar^3$ in the Kontsevich star\/-\/product~$\star$, it was enough to build the linear system of algebraic equations that combined (\textit{i}) cyclic relations~\eqref{EqCyclic}, (\textit{ii}) the relations which Method~3 produces for generic Poisson structure~\eqref{Eq3DPoisson}, and (\textit{iii}) those linear relations between the weights of $\Gamma \in \tilde{G}_{2,3}$ which --\,in view of Remark~\ref{RemCanUseHigherOrders} on p.~\pageref{RemCanUseHigherOrders}\,-- still do appear at the next power~$\hbar^4$ in $\Assoc_\star(f,g,h) = 0$, by using the same generic Poisson structure 
\eqref{Eq3DPoisson}.
The resulting expansion of~$\star$-product modulo~$\bar{o}(\hbar^3)$ is shown in formula~\eqref{EqStarOrd3} on~
p.~\pageref{EqStarOrd3}.
This result is achieved by using the software as follows.
Starting from the sets of basic graphs up to the order $2$ (with known weights) in the file {\tt basic2.txt}, generate lists of basic graphs (with undetermined weights) up to the order four:

\begin{verbatim}
    $ cp basic2.txt basic3+4w.txt
    $ echo 'h^3:' >> basic3+4w.txt
    $ generate_graphs 3 --basic=yes --with-coefficients=yes \
           >> basic3+4w.txt
    $ echo 'h^4:' >> basic3+4w.txt
    $ generate_graphs 4 --basic=yes --with-coefficients=yes \
           >> basic3+4w.txt
\end{verbatim}
Build the $\star$-product expansion up to the order $4$ from these basic sets:
\begin{verbatim}
    $ star_product basic3+4w.txt > star3+4w_unreduced.txt
    $ reduce_mod_skew star3+4w_unreduced.txt > star3+4w.txt
\end{verbatim}
Generate cyclic weight relations:
\begin{verbatim}
    $ cyclic_weight_relations star3+4w_unreduced.txt \
           > weight_relations_3+4w-cyclic.txt
\end{verbatim}
Build the associator expansion up to the order $4$ from the $\star$-product expansion:
\begin{verbatim}
    $ star_product_associator star3+4w.txt > assoc3+4w.txt
\end{verbatim} 
Obtain relations from the requirement of associativity for the Poisson structure \eqref{Eq3DPoisson}:
\begin{verbatim}
    $ poisson_make_vanish assoc3+4w.txt 3d-generic \ 
           > weight_relations_3+4w-3d.txt
\end{verbatim}
Merge the systems of linear relations:
\begin{verbatim}
    $ cat weight_relations_3+4w-* > weight_relations_3+4w_all.txt
\end{verbatim}
Solving the linear system in {\tt weight\symbol{"5F}relations\symbol{"5F}3+4w\symbol{"5F}all.txt} yields the solution 
\begin{verbatim}
    w_3_1=1/24,   w_3_2=0,      w_3_3=0,  w_3_4=-1/48, w_3_5=-1/48
    w_3_6=0,      w_3_7=0,      w_3_8=0,  w_3_9=0,     w_3_10=0
    w_3_11=-1/48, w_3_12=-1/48, w_3_13=0, w_3_14=0.
\end{verbatim}
Store the set of basic graphs at $\hbar^3$ with their true weights in the file \texttt{basic3.txt} (not removing graphs with zero weights); store the true Kontsevich $\star$-product up to $\hbar^3$ in the file \texttt{star3.txt} and its associator in the file \texttt{assoc3.txt}.

Instead of evaluating the associator in full, we could also have selected (e.g. by reading the file {\tt assoc3+4w.txt}, which also contains lines of the form ``{\tt \# i j k}'') those differential orders $(i,j,k)$ at $\hbar^4$ at which only weights from order $3$ appear, in view of Remark \ref{RemCanUseHigherOrders}: such are $(1,3,2)$, $(2,3,1)$, $(2,1,3)$, $(3,2,1)$, $(3,1,2)$, $(1,2,3)$ and $(2,2,2)$.
\end{example}

\begin{rem} 
A substitution of the values of certain graph weights expressed via other weights is tempting but not always effective.
Namely, we do not advise repeated running of any of the three methods with such expressions taken into account in the input.
Usually, the gain is disproportional to the time consumed; for instead of a coefficient to-express the program now has to handle what typically is a linear combination of several coefficients.
This shows that the only types of substitutions which are effective are either setting the coefficients to fixed numeric values (e.g., to zero) or the shortest possible assignments of a weight value via a single other weight value (like $w(\Gamma_1) = -w(\Gamma_2)$ for some graphs~$\Gamma_1$ and~$\Gamma_2$).
\end{rem}

\subsubsection{The $\star$-\/product expansion at order four}
At order four in the expansion of the Kon\-tse\-vich $\star$-\/product with respect to~$\hbar$, there are $149$~basic graphs $\Gamma \in \tilde{G}_{2,4}$.
The knowledge of their coefficients would completely determine the $\star$-\/product modulo~$\bar{o}(\hbar^4)$.
By using Methods~1--3 from~\S\ref{SecRestrictAssoc}, we found the exact values of $67$~basic graphs and we expressed the remaining $82$~weights in terms of the $10$~master\/-\/parameters (themselves the weights of certain graphs from~$\tilde{G}_{2,4}$; the other $72$~weights are linear functions of these ten).

\begin{theor}\label{ThmBig}
The weights of basic Kontsevich graphs at order~$4$ are subdivided as follows.
The weights of $27$ basic 
graphs 
are equal to zero.
Of these $27$\textup{,} the integrands of $21$~weights are identically zero\textup{,} and the other $6$~weight values were found to be equal to zero.
The remaining $122$ weights of basic graphs $\Gamma \in \tilde{G}_{2,4}$ are arranged as follows\textup{:}
\begin{itemize}
\item[$\cdot$] $40$ nonzero weights are known explicitly\textup{;}
\item[$\cdot$] the values of the remaining $82$~weights are expressed linearly in terms of 
the 
weights of those ten graphs which are shown in Fig.~\textup{\ref{10Graphs}}.
\end{itemize}
$\bullet$ The encoding of entire $\star$-\/product modulo~$\bar{o}(\hbar^4)$\textup{,} that is\textup{,} its part up to~$\bar{o}(\hbar^3)$ known from formula~\eqref{EqStarOrd3} plus $\hbar^4$~times the sum of all the prime and composite weighted graphs with four internal vertices\textup{,} is given in Appendix~\textup{\ref{AppStarEncoding}}.
\textup{(}In that table the weights of composite graphs are numbers\textup{;} for they are expressed via the known coefficients of graphs from~$\tilde{G}_{2,\leqslant 3}$.\textup{)}
The weights of basic graphs at~$\hbar^4$ are expressed in Table~\textup{\ref{Table149via10}} in terms of the ten master\/-\/parameters, see p.~\textup{\pageref{Table149via10}} in Appendix~\textup{\ref{AppStarEncoding}}.
\end{theor}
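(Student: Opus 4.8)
The plan is to read Theorem~\ref{ThmBig} as a classification statement whose proof is the assembly and exact solution of one large system of \emph{linear} relations among the $149$ unknown weights $w(\Gamma)$ of nonzero basic graphs $\Gamma\in\tilde{G}_{2,4}$. First I would generate this set of $149$ graphs (the $156$ basic graphs at order~$4$, minus the $7$ that already vanish by skew\/-\/symmetry) via the routine \texttt{generate\_graphs 4 --basic=yes}, attaching an undetermined coefficient to each. Every other graph at $\hbar^4$ has its weight fixed in terms of these by Lemmas~\ref{LemmaPermute}--\ref{LemmaMult} (composite graphs through multiplicativity, relabelings and mirror images through the remaining lemmas), so it suffices to pin down the $149$.

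The relations come from four sources, to be merged into a single system over $\mathbb{Q}$. (i)~The cyclic relations of Proposition~\ref{PropCyclic}, generated from each prime graph via~\eqref{EqCyclic}; these already force several weights (those meeting the self\/-\/opposite $w(\Gamma)=-w(\Gamma)$ pattern) to zero. (ii)~A direct check that the weight integrand in~\eqref{EqWeight}, viewed as a differential $8$\/-\/form, vanishes identically; this isolates the $21$ graphs whose weight is zero for a reason independent of the linear system. (iii)~Associativity: following~\S\ref{SecRestrictAssoc}, I would expand $\Assoc_\star(f,g,h)$ modulo $\bar{o}(\hbar^4)$ (the expansion is linear in the order\/-\/$4$ unknowns since everything below is known) and restrict it to sufficiently rich Poisson structures, with Method~3 applied to the generic Jacobian\/-\/determinant bracket~\eqref{Eq3DPoisson} (arbitrary $p$ and $q$) as the most productive source, supplemented by Methods~1--2 on polynomial brackets and, crucially, by the higher\/-\/order relations of Remark~\ref{RemCanUseHigherOrders} harvested at those differential orders $(i,j,k)$ where no order\/-\/$\geqslant 5$ graphs contribute. (iv)~The $40$ explicit nonzero values, imported from direct integration~\cite{Decin} and from the literature~\cite{WillwacherFelderIrrationality}, enter as inhomogeneous constraints.

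Solving the merged system by exact Gaussian elimination, I would verify that its affine solution space has dimension exactly $10$, take as coordinates the ten weights of the graphs in Fig.~\ref{10Graphs}, and then read off the three blocks: the $27$ weights forced to zero (the $21$ from step~(ii) together with $6$ more produced by the linear system), the $40$ determined nonzero rationals, and the remaining $82$ expressed as affine\/-\/linear functions of the ten parameters. Consistency with Lemmas~\ref{LemmaPermute}--\ref{LemmaMult} and with all previously known weight values serves as an internal cross\/-\/check, and the final encoding of $\star$ modulo $\bar{o}(\hbar^4)$ is recorded in the tables of the appendix.

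The main obstacle is establishing that the rank of the merged system is exactly $149-10=139$, so that \emph{precisely} ten free parameters survive and no spurious extra freedom remains. This is not automatic: it demands that the Poisson structures and the differential\/-\/order slices of associativity be chosen richly enough to maximize the rank, and it is here that Method~3 and Remark~\ref{RemCanUseHigherOrders} do the heavy lifting. A secondary obstacle is step~(ii) — proving that the $8$\/-\/form genuinely vanishes for each of the $21$ graphs, rather than merely integrating to zero — together with the reliability of exact rational linear algebra at this scale. That the ten surviving parameters are truly unconstrained by associativity, and hence irreducible by any further relation of this type, is confirmed \emph{a posteriori} by Theorem~\ref{ThMainAssocOrd4}.
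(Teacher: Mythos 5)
Your proposal follows essentially the same route as the paper's proof scheme for Theorem~\ref{ThmBig}: generate the $149$ undetermined basic weights at order four, merge the cyclic relations~\eqref{EqCyclic}, the identically\/-\/vanishing\/-\/integrand checks, and the associativity relations obtained by restricting $\Assoc_\star(f,g,h)$ to the generic Jacobian\/-\/determinant Poisson structure~\eqref{Eq3DPoisson} into a single linear system, and solve it exactly so that a ten\/-\/dimensional affine solution space remains, parametrized by the weights of the graphs in Fig.~\ref{10Graphs}. The only divergence is that you import the $40$ explicit nonzero values from direct integration and the literature as extra inhomogeneous constraints, whereas the paper obtains all $67$ exact values ($27$ zero plus $40$ nonzero) as outputs of the merged system --- the inhomogeneity being already supplied by the known lower\/-\/order weights entering the cyclic and associativity relations --- and uses the literature and numerical integration only as an \emph{a posteriori} consistency check.
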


\noindent
Moreover (as stated in Theorem~\ref{ThMainAssocOrd4} on p.~\pageref{ThMainAssocOrd4} below),
the associativity $\Assoc_\star(f,g,h)=0\mod\bar{o}(\hbar^4)$ is established (up to order four) for the star product $\star\mod\bar{o}(\hbar^4)$ at all values of the ten master\/-\/parameters.

\begin{proof}[Proof scheme \textup{(}for Theorem \textup{\ref{ThmBig})}]
\label{Impl149via10}
We run the software as follows.
First one generates the sets of basic graphs up to order $4$, with undetermined weights at order $4$ (the weights at order $2$ and $3$ are known from e.g. Example \ref{ExStarTwo} and Example \ref{ExOrder3}):
\begin{verbatim}
    $ cp basic3.txt basic4w.txt
    $ echo 'h^4:' >> basic4w.txt
    $ generate_graphs 4 --basic=yes --with-coefficients=yes \
          >> basic4w.txt
\end{verbatim}
(The output is listed in Table~\ref{TblBasic} of Appendix~\ref{AppStarEncoding}.)

\noindent
Build the $\star$-product expansion up to order $4$:
\begin{verbatim}
    $ star_product basic4w.txt > star4w_unreduced.txt
    $ reduce_mod_skew --print-differential-orders star4w_unreduced.txt \
          > star4w.txt
\end{verbatim}
(The output is listed in Table~\ref{TableStar} of Appendix~\ref{AppStarEncoding}.)

\noindent
Generate the linear cyclic weight relations at order $4$:
\begin{verbatim}
    $ cyclic_weight_relations star4w_unreduced.txt \
          > weight_relations_4w-cyclic.txt
\end{verbatim}
Find $21$ relations of the form {\tt w\symbol{"5F}4\symbol{"5F}xxx==0} which hold by virtue of the weight integrand vanishing in formula~\eqref{EqWeight}, by using Implementation \ref{ImplWeightIntegrands} in Appendix \ref{AppNumericalWeights}, and place these relations in the file {\tt weight\symbol{"5F}relations\symbol{"5F}4w-integrandvanishes.txt}.

\noindent
Build the expansion of the associator for the $\star$-product up to the order $4$:
\begin{verbatim}
    $ star_product_associator star4w.txt > assoc4w.txt
\end{verbatim}
(The output is listed in Table~\ref{TableAssoc4} of Appendix~\ref{AppStarEncoding}.)

\noindent
Obtain relations from the requirement of associativity for the Poisson structure \eqref{Eq3DPoisson}:
\begin{verbatim}
    $ poisson_make_vanish assoc4w.txt 3d-generic \
          > weight_relations_4w-3d.txt
\end{verbatim}
Merge the systems of linear equations:
\begin{verbatim}
    $ cat weight_relations_4w-* > weight_relations_4w_total.txt
\end{verbatim}
Solve the resulting system (contained in {\tt weight\symbol{"5F}relations\symbol{"5F}4w\symbol{"5F}total.txt}) by using any relevant software.
One obtains the relations listed in Table \ref{Table149via10} in Appendix \ref{AppStarEncoding}, e.g. in the file {\tt weight\symbol{"5F}relations\symbol{"5F}4w\symbol{"5F}intermsof10.txt}.
To express the star\/-\/product (respectively, the associator for the $\star$-product) in terms of the $10$ parameters, run
\begin{verbatim}
    $ substitute_relations star4w.txt \ 
          weight_relations_4w_intermsof10.txt \
          > star4_intermsof10_unreduced.txt
    $ reduce_mod_skew star4_intermsof10_unreduced.txt \
          > star4_intermsof10.txt
\end{verbatim}
(respectively, substitute into {\tt assoc4w.txt} to obtain \texttt{assoc4\symbol{"5F}intermsof10.txt}); see Implementation \ref{ImplSubstituteRelations}.
\end{proof}

\begin{rem}
Numerical approximations of weights are listed in Tables \ref{TableVerified} and \ref{TableConjectured} in Appendix~\ref{AppNumericalWeights}.
In particular, we have the approximate values of the master\/-\/parameters $p_4={}${\tt w\symbol{"5F}4\symbol{"5F}103}${}\approx -1/11520$ and $p_5={}${\tt w\symbol{"5F}4\symbol{"5F}104}${}\approx 1/2880$.%
\footnote{The values of ten master-parameters have been suggested by Pym and Panzer \cite{PymPrivateComm}, see Table~\ref{Tab10Pym} on p.~\pageref{Tab10Pym} in Appendix~\ref{App10Pym} below.
Their prediction completely agrees with our numeric data.
}
\end{rem}

\begin{rem}
\label{RemVanishInAssoc}
Out of the~$149$ weights of basic graphs in the Kontsevich $\star$-\/product, as many as $28$~weights do not appear in the equation $\Assoc_\star(f,g,h) = 0$ at $\hbar^4$.
A mechanism which works towards such disappearance is that some graphs $\Gamma\in \tilde{G}_{2,4}$ which do not show up are bi\/-\/derivations with respect to the sinks. 
Combined at order four in the associator with only the original undeformed product~$\times$, every such graph is cancelled out from $(f \star g) \star h - f \star (g \star h)$ according to the mechanism which we illustrate here:
\[
\biggl[
\text{\!\!\!\raisebox{-18pt}{
\unitlength=0.7mm
\linethickness{0.4pt}
\begin{picture}(15.00,16.67)
\put(2.00,5.00){\circle*{1.33}}
\put(13.00,5.00){\circle*{1.33}}
\put(7.33,16.00){\makebox(0,0)[cc]{$\square$}}
\put(7.33,16.00){\vector(-1,-2){5.00}}
\put(7.33,16.00){\vector(1,-2){5.00}}
\end{picture}
}}
,\ \bullet\ \bullet\biggr] =
\text{\raisebox{-18pt}{
\unitlength=0.7mm
\linethickness{0.4pt}
\begin{picture}(15.00,16.67)
\put(1.00,5.00){\circle*{1.33}}
\put(4.00,5.00){\circle*{1.33}}
\put(2.5,5){\oval(8,5)}
\put(12.00,5.00){\circle*{1.33}}
\put(7.33,16.00){\makebox(0,0)[cc]{$\square$}}
\put(7.33,16.00){\vector(-1,-2){4.00}}
\put(7.33,16.00){\vector(1,-2){5.00}}
\end{picture}
}}
+
\text{\raisebox{-18pt}{
\unitlength=0.7mm
\linethickness{0.4pt}
\begin{picture}(15.00,16.67)
\put(2.00,5.00){\circle*{1.33}}
\put(13.00,5.00){\circle*{1.33}}
\put(17.00,5.00){\circle*{1.33}}
\put(7.33,16.00){\makebox(0,0)[cc]{$\square$}}
\put(7.33,16.00){\vector(-1,-2){5.00}}
\put(7.33,16.00){\vector(1,-2){5.00}}
\end{picture}
}}
-
\text{\raisebox{-18pt}{
\unitlength=0.7mm
\linethickness{0.4pt}
\begin{picture}(15.00,16.67)
\put(2.00,5.00){\circle*{1.33}}
\put(10.00,5.00){\circle*{1.33}}
\put(14.00,5.00){\circle*{1.33}}
\put(12,5){\oval(8,5)}
\put(7.33,16.00){\makebox(0,0)[cc]{$\square$}}
\put(7.33,16.00){\vector(-1,-2){5.00}}
\put(7.33,16.00){\vector(1,-2){4.00}}
\end{picture}
}}
-
\text{\raisebox{-18pt}{
\unitlength=0.7mm
\linethickness{0.4pt}
\begin{picture}(15.00,16.67)
\put(2.00,5.00){\circle*{1.33}}
\put(13.00,5.00){\circle*{1.33}}
\put(-1.00,5.00){\circle*{1.33}}
\put(7.33,16.00){\makebox(0,0)[cc]{$\square$}}
\put(7.33,16.00){\vector(-1,-2){5.00}}
\put(7.33,16.00){\vector(1,-2){5.00}}
\end{picture}
}}
= 0.
\]
In this way the ten master\/-\/parameters are split into the six which do show up in the associativity equation and the four weights which do not show up in $\Assoc_\star(f,g,h) = 0$ at $\hbar^4$ but which do appear through the cyclic weight relations (see formula~\eqref{EqCyclic} on p.~\pageref{EqCyclic}).
\end{rem}

\subsection{Computer\/-\/assisted proof scheme for associativity of~$\star$ for all~$\{\cdot,\cdot\}_\cP$}\label{SecFactor}
In practice, the methods from \S\ref{SecRestrictAssoc} stop producing linear relations that would be new with respect to the already known constraints for the graph weights.
As soon as such ``saturation'' is achieved, 
the number of master\/-\/parameters in $\star$-\/product expansion may in effect be minimal.
That is, the $\star$-\/product, known so far up to a certain order~$\bar{o}(\hbar^k)$, may in fact be always
associative --\,modulo~$\bar{o}(\hbar^k)$\,-- irrespective of a choice of the Poisson struc\-tu\-re(s)~$\cP$.

In this section we outline a scheme of computer\/-\/assisted reasoning that allows to reveal the factorization $\Assoc_\star(f,g,h) = \Diamond(\cP,\Jac(\cP))(f,g,h)$ of associator for~$\star$ via the Jacobiator $\Jac(\cP)$ that vanishes by definition for every Poisson structure~$\cP$.
At order $k=2$ the factorization $\Diamond(\Jac(\cP))$ is readily seen; the factorizing operator $\Diamond(\Jac(\cP)) = \tfrac{2}{3}\hbar^2\Jac(\cP) + \bar{o}(\hbar^2)$ is a differential operator of order zero, acting on its argument $\Jac(\cP)$ by multiplication.
Involving the Jacobi identity and only seven differential consequences from it at the next expansion order $k=3$, the factorization $\Assoc_\star(f,g,h) = \Diamond(\cP,\Jac(\cP))(f,g,h)$ was established by hand in~\cite{sqs15}.
For higher orders $k\geqslant 4$ the use of software allows to extend this line of reasoning; the scheme which we now provide works uniformly at all orders~$\geqslant 2$.

Let us first inspect how sums of graphs can vanish by virtue of differential consequences of the Jacobi identity $\Jac(\cP) = 0$ for Poisson structures $\cP$ on finite-dimensional affine real manifolds $N^n$.

\begin{lemma}[\cite{sqs15}]\label{Lemma}
A tri\/-\/differential operator $C = \sum_{|I|,|J|,|K|\geqslant 0} c^{IJK}\ \partial_I \otimes \partial_J \otimes \partial_K$ 
with coefficients $c^{IJK} \in C^\infty(N^n)$
vanishes identically if and only if 
all its homogeneous components 
$C_{ijk} = \sum_{|I|=i,|J|=j,|K|=k} c^{IJK}\ \partial_I \otimes \partial_J \otimes \partial_K$ 
vanish for all differential orders $(i,j,k)$ of the respective 
multi\/-\/indices $(I,J,K)$; here $\partial_L = \partial_1^{\alpha_1} \circ \cdots \circ \partial_n^{\alpha_n}$ for a multi\/-\/index $L = (\alpha_1, \ldots, \alpha_n)$. 
\end{lemma}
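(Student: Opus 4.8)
The \emph{if} direction is immediate: since $C=\sum_{i,j,k\geqslant0}C_{ijk}$ by definition of the homogeneous components, if every $C_{ijk}$ is the zero operator then so is their sum $C$. The whole content lies in the \emph{only if} direction, and the plan is to isolate each coefficient $c^{IJK}$ separately by probing $C$ with carefully chosen test functions and reading off the answer at a single point. Because differential operators are local, it suffices to prove that $c^{IJK}(\bx_0)=0$ for every point $\bx_0\in N^n$ and all multi-indices $I,J,K$; then each $C_{ijk}$, being a partial sum of these coefficients paired with $\partial_I\otimes\partial_J\otimes\partial_K$, vanishes, which is (even more than) what is claimed.

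First I would pass to a local statement. Fix a point $\bx_0$ in a chart and choose affine coordinates $x^1,\dots,x^n$ centred at $\bx_0$, so that $\bx_0$ is the origin. For multi-indices $A,B,C$ I would then feed into $C$ the triple of Taylor monomials $f=x^A/A!$, $g=x^B/B!$, $h=x^C/C!$, where $x^A=(x^1)^{a_1}\cdots(x^n)^{a_n}$ and $A!=a_1!\cdots a_n!$. The one elementary computation needed is
\[
\bigl(\partial_I\,x^A\bigr)\big|_{x=0}=A!\,\delta_{I,A},
\]
the Kronecker delta of multi-indices, since a monomial that is over-differentiated in some coordinate is killed outright, while one that is under-differentiated still carries a positive power of some $x^i$ and hence vanishes at the origin. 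Substituting these test functions into $C(f,g,h)=\sum_{I,J,K}c^{IJK}\,(\partial_I f)(\partial_J g)(\partial_K h)$ and evaluating at $x=0$ leaves only a single surviving term,
\[
C(f,g,h)\big|_{\bx_0}=c^{ABC}(\bx_0).
\]
As $C\equiv 0$ by hypothesis, this forces $c^{ABC}(\bx_0)=0$; and since $A,B,C$ and $\bx_0$ were arbitrary, every coefficient $c^{IJK}$ vanishes identically on $N^n$, whence every $C_{ijk}=0$.

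I do not expect a genuine obstacle here: the argument is elementary, and the only points worth a word of care are that the Taylor monomials above are honest smooth functions on the affine chart (so the probing is legitimate), and that the grading by differential order $(i,j,k)=(|I|,|J|,|K|)$ is the intrinsic one, independent of the affine coordinates chosen, which is exactly why separating the coefficients pointwise is enough to separate the graded pieces. For a coordinate-free phrasing of the same mechanism one may pass to symbols: with $f=e^{\langle p,x\rangle}$, $g=e^{\langle q,x\rangle}$, $h=e^{\langle r,x\rangle}$ one computes
\[
C(f,g,h)=\Bigl(\textstyle\sum_{I,J,K}c^{IJK}\,p^I q^J r^K\Bigr)\,e^{\langle p+q+r,x\rangle},
\]
so $C\equiv0$ means the symbol polynomial in $(p,q,r)$ vanishes identically, and its multihomogeneous component of tridegree $(i,j,k)$ is precisely the symbol of $C_{ijk}$; a polynomial is zero iff all its multihomogeneous parts are, giving the equivalence.
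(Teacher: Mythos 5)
Your argument is correct. The paper itself offers no proof of this lemma --- it is imported by citation from \cite{sqs15} --- so there is nothing internal to compare against; but the route you take (probing with the Taylor monomials $x^A/A!$ at a point of an affine chart to isolate each coefficient $c^{IJK}$ pointwise, or equivalently passing to the symbol polynomial in $(p,q,r)$ and using that a polynomial vanishes iff all its multihomogeneous parts do) is the standard one and establishes the statement, in fact in the stronger form that every individual coefficient $c^{IJK}$ vanishes. The only point you might make fully explicit is that, since $C$ is a differential operator and hence local, the value $C(f,g,h)(\bx_0)$ depends only on the jets of $f,g,h$ at $\bx_0$, so testing with chart-local monomials (or their extensions by a bump function) is indeed legitimate; you gesture at this and it is the right justification.
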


Lemma~\ref{Lemma} states in practice that for every arrow falling on the Jacobiator (for which, in turn, a triple of arguments is specified), the expansion of the Leibniz rule yields four fragments which vanish separately.
Namely, there is the fragment such that the derivation acts on the content $\cP$ of the Jacobiator's two internal vertices, and there are three fragments such that the arrow falls on the first, second, or third argument of the Jacobiator.
It is readily seen that the action of a derivative on an argument of the Jacobiator effectively amounts to an appropriate redefinition of its respective argument (cf. Examples~\ref{ExWedgeOnBullets}--\ref{ExOrder2OnBullets} on p.~\pageref{ExWedgeOnBullets}).
Therefore, a restriction to the order $(1,1,1)$ is enough in the run\/-\/through over all the graphs which contain Jacobiator~\eqref{EqJacFig} and which stand on the three arguments $f,g,h$ of the operator~$\Diamond(\cP,\Jac(\cP))$ at hand. 


\begin{define}\label{DefLeibnizGraph}
A \emph{Leibniz graph} is a graph whose vertices are either sinks, or the sources for two arrows, 
or the Jacobiator (which is a source for three arrows). 
There must be at least one Jacobiator vertex.
The three arrows originating from a Jacobiator vertex must land on three distinct vertices (and not on the Jacobiator itself).
Each edge falling on a Jacobiator works by the Leibniz rule on the two internal vertices in~it.
\end{define}

An example of a Leibniz graph is given in Fig.~\ref{FigSample}.
Every Leibniz graph can be expanded to a sum of Kontsevich graphs, by expanding both the Leibniz rule(s) and all copies of the Jacobiator. 
In this way (sums of) Leibniz graphs also encode (poly)dif\-fe\-re\-ntial operators $\Diamond(\cP, \Jac(\cP))$, depending on the bi\/-\/vector $\cP$ and the tri\/-\/vector $\Jac(\cP)$.
\begin{figure}[htb]
\begin{minipage}{0.3\textwidth}
\begin{align*}
\unitlength=1mm
\special{em:linewidth 0.4pt}
\linethickness{0.4pt}
\begin{picture}(40.67,35.00)
\put(15.00,20.00){\framebox(20.00,10.00)[cc]{$\bullet\quad\bullet$}}
\put(25.00,20.00){\vector(0,-1){15.00}}
\put(18.00,20.00){\vector(-1,-2){5.00}}
\put(32.00,20.00){\vector(1,-3){5.00}}
\put(13.00,10.00){\vector(0,-1){5.00}}
\put(13.00,10.00){\vector(-1,0){8.00}}
\put(13.00,0.00){\makebox(0,0)[cb]{\tiny(\ )}}
\put(25.00,0.00){\makebox(0,0)[cb]{\tiny(\ )}}
\put(5.00,10.00){\vector(0,1){8.00}}
\put(5.00,18.00){\vector(1,-1){8.00}}
\put(37.00,0.00){\makebox(0,0)[cb]{\tiny(\ )}}
\bezier{100}(5.00,10.00)(9.00,5.00)(13.00,10.00)
\put(13.00,10.00){\vector(1,1){0.00}}
\put(5.00,18.00){\line(0,1){12}}
\bezier{80}(5.00,30.00)(5.00,35.00)(10.00,35.00)
\put(10.00,35.00){\line(1,0){5.00}}
\bezier{80}(15.00,35.00)(20.00,35.00)(20.00,30.00)
\put(20.00,30.00){\vector(0,-1){0.00}}
\put(13.00,10.00){\circle*{1}}
\put(5.00,10.00){\circle*{1}}
\put(5.00,18.00){\circle*{1}}
\end{picture}
\end{align*}
\end{minipage}
\small
\begin{minipage}{0.4\textwidth}
\begin{itemize}
\item There is a cycle, 
\item there is a loop, 
\item there are no tadpoles in this graph,
\item an arrow falls back on $\Jac(\cP)$, 
\item and $\Jac(\cP)$ does not stand on all of the three sinks.
\end{itemize}
\end{minipage}
\normalsize
\caption{A nontrivial example of Leibniz graph.}
\label{FigSample}
\end{figure}

\noindent
By design, we have

\begin{proposition}\label{PropLeibnizGraphZero}
For every Poisson bi\/-\/vector~$\cP$ the value --\,at the Jacobiator $\Jac(\cP)$\,-- of every (poly)\/dif\-fe\-ren\-ti\-al operator encoded by the Leibniz graph(s) is zero.
\end{proposition}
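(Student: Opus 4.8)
The plan is to isolate a single Jacobiator vertex and to show that its contribution to the analytic expression of the encoded operator is the identically vanishing tensor $\Jac(\cP)^{ijk}$, possibly differentiated; since a single zero factor annihilates an entire product, the whole polydifferential operator will vanish. Concretely, I would fix in a given Leibniz graph one of its Jacobiator vertices $V$ (at least one exists by the definition). Writing the value of the encoded operator on arguments $f,g,h$ as a sum over all index assignments of products of vertex tensors and of the derivatives landing on the sinks, the factor contributed by $V$ is the tensor seated there, namely $\Jac(\cP)^{ijk}$ with $i,j,k$ the indices carried by the three edges issued from $V$.

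By the very definition of the Jacobiator in~\eqref{EqJacFig}, this tensor is
\[
\Jac(\cP)^{ijk} = \partial_\ell\cP^{ij}\,\cP^{\ell k} + \partial_\ell\cP^{jk}\,\cP^{\ell i} + \partial_\ell\cP^{ki}\,\cP^{\ell j},
\]
which is precisely the left-hand side of the Jacobi identity and hence vanishes identically for every Poisson bi-vector $\cP$. Next I would account for the edges that \emph{fall on} $V$: by the definition of a Leibniz graph each such edge acts by the Leibniz rule on the two copies of $\cP$ packaged inside $V$, so that, after summing these contributions, the factor contributed by $V$ becomes $\partial_{m_1}\cdots\partial_{m_r}\Jac(\cP)^{ijk}$, where $m_1,\ldots,m_r$ are the indices of the incoming edges. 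Since $\Jac(\cP)^{ijk}$ is the zero function on $N^n$ for each fixed triple $(i,j,k)$, every partial derivative of it is again the zero function. Thus the $V$-factor vanishes identically in every term of the sum, regardless of the index assignment and of how the three edges issued from $V$ differentiate their targets. Each product therefore carries a zero factor, so the encoded tri-differential operator is the zero operator; by Lemma~\ref{Lemma} it is enough to see this on homogeneous components, which is automatic here.

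The argument is essentially mechanical, and the one point I would flag as the potential obstacle is the bookkeeping in the previous paragraph: one must be certain that an edge arriving at $V$ genuinely differentiates the \emph{coefficient} $\Jac(\cP)^{ijk}$ (through the Leibniz rule on its two internal $\cP$-vertices) rather than some outgoing argument, and that the edges leaving $V$ — which differentiate other vertices — do not interfere with the vanishing of the $V$-factor itself. Both are settled by the graph-to-tensor dictionary: an edge index belongs to the tensor at its source, while the derivation it carries acts on the tensor at its target, so the factor seated at $V$ is exactly $\partial_{m_1}\cdots\partial_{m_r}\Jac(\cP)^{ijk}$ and nothing more. One may equivalently organize the same computation as the four-fragment Leibniz expansion described just before the definition of a Leibniz graph — the fragment differentiating the content of $V$ together with the three fragments redefining the arguments of $V$ — each fragment vanishing by the Jacobi identity. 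Either route yields the claimed vanishing for every Poisson structure $\cP$ and all finite-dimensional affine manifolds $N^n$.
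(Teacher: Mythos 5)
Your argument is correct and coincides with the reasoning the paper intends: the paper states this proposition without a written proof, relying on the preceding definition (incoming edges act by the Leibniz rule on the two internal vertices of the Jacobiator) and on the four-fragment discussion after Lemma~\ref{Lemma}, which is exactly what you have made explicit. Isolating one Jacobiator vertex and observing that its contribution is $\partial_{m_1}\cdots\partial_{m_r}\Jac(\cP)^{ijk}$, a derivative of the identically vanishing tensor, is precisely the intended mechanism, so nothing is missing.
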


\begin{proof}
By induction on the number of arrows falling on the Jacobiator.
In case of zero arrows, the operator is a multiple of a Jacobiator and hence zero.
In general, the operator associated to a Leibniz graph is of the form \[ (\partial_L \Jac(\cP))(A,B,C) \cdot D, \]
where $\partial_L$ are the incoming arrows on the Jacobiator.
Now, $\Jac(\cP)(A,B,C) = 0$ implies
\begin{align*}
0 &= \partial_L(\Jac(\cP)(A,B,C)) \cdot D \\
&= (\partial_L \Jac(\cP))(A,B,C) \cdot D + \sum_{\substack{H+I+J+K=L\\H \neq L}} (\partial_H \Jac(\cP))(\partial_I A, \partial_J B, \partial_K C) \cdot D
\end{align*}
where the terms in the sum on the right are Leibniz graphs with \emph{fewer} arrows falling on the Jacobiator, hence they are zero by induction.
The same proof works for a Leibniz graph with more than one Jacobiator (the extraneous ones -- in $D$ -- are irrelevant).
\end{proof}

Hence, to show that a sum of Kontsevich graphs vanishes at every Poisson structure, it suffices to write it as a sum of Leibniz graphs.

In particular, the mechanism of factorization of the associator for the Kontsevich $\star$-product is known from \cite{MK97}; it has been discussed in \cite{Kiev18}.
Namely, by \cite{MK97} the Jacobi identity is the only obstruction to the Kontsevich $\star$-product associativity.
This is because an expression of the $\star$-product associator as a (possibly, non-unique) sum of Leibniz graphs can be predicted in advance, based on the graphs in the Formality morphism (see \cite{Kiev18} for more details).

\begin{example} 
Consider the associator $\Assoc_\star(f,g,h) \mod \bar{o}(\hbar^3)$ for the $\star$-\/product which is fully known up to order~$3$. 
The assembly 
of factorizing operator~$\Diamond(\cP,{\cdot})$ acting on~$\Jac(\cP)$ is explained in~\cite{sqs15}; linear in its argument, the operator~$\Diamond$ has differential order one with respect to the Jacobiator.
\end{example}

\begin{rem}
The same technique, showing the vanishing of a sum of Kontsevich graphs by writing it as a sum of Leibniz graphs, has been used in \cite{f16}.
The underlying mechanism from \cite{Ascona96} is analyzed in detail in \cite{OrMorphism}.
\end{rem}

\begin{implement}[Encoding of Leibniz graphs]
For a Leibniz graph 
with $\ell$~Jacobiators 
and $n-2\ell$ remaining bi\/-\/vector vertices,
an encoding is defined in terms of the encoding of a Kontsevich graph in its expansion, plus the data which tells where the Jacobiators are.
The full encoding is the integer $\ell$, followed by the Kontsevich graph encoding with $n$ internal vertices, followed by the $\ell$ pairs of Jacobiator vertices $(j_1, j_2)$, where the internal Jacobiator edge is $j_1 \leftarrow j_2$.
Each target in the Kontsevich graph encoding which is a Jacobiator vertex $j_i$ from a Jacobiator $(j_1, j_2)$ (except for the target of the internal Jacobiator edge $j_1 \leftarrow j_2$) should be interpreted as a placeholder for a Leibniz rule acting on both $j_1$ and $j_2$.
\end{implement}

\begin{example}
The Leibniz graph from Fig.~\ref{FigSample} (with $n = 5$ and $\ell = 1$) has the encoding
\begin{verbatim}
    1   3 5 1   0 5 3 6 3 4 3 1 6 2   6 7
\end{verbatim}
Here the first \texttt{6} should be interpreted as a placeholder for the Jacobiator containing the last two vertices \texttt{6} and \texttt{7}; the three arguments of the Jacobiator are \texttt{3}, \texttt{1}, \texttt{2}.
To expand this encoding into Kontsevich graph encodings, cyclically permute the arguments of the Jacobiator and replace the placeholder by \texttt{6} or \texttt{7} (in all possible ways):
\begin{verbatim}
        3 5 1   0 5 3 6 3 4 3 1 6 2
        3 5 1   0 5 3 7 3 4 3 1 6 2
        3 5 1   0 5 3 6 3 4 1 2 6 3
        3 5 1   0 5 3 7 3 4 1 2 6 3
        3 5 1   0 5 3 6 3 4 2 3 6 1
        3 5 1   0 5 3 7 3 4 2 3 6 1
\end{verbatim}
One obtains six terms.
\end{example}

\begin{implement}
\label{ImplReduceModJacobi}
Let the input file {\tt <graph-series-filename>} contain a graph series~$S$ with constant (e.\,g., rational, real or complex) coefficients; here~$S$ is supposed to vanish by virtue of the Jacobi identity and its differential consequences.
Now run the command
\begin{verbatim}
    > reduce_mod_jacobi <graph-series-filename>
\end{verbatim}
The 
program finds a particular solution~$\Diamond$ of the factorization problem
\[
S(f,g,h) = \Diamond(\cP,\Jac(\cP),\ldots,\Jac(\cP))(f,g,h).
\]
In the standard output one obtains the list of encodings of Leibniz graphs in $\Diamond$ that specify differential consequences of the Jacobi identity; every such graph encoding is followed in the output by its sought\/-\/for nonzero coefficient.\footnote{Sample outputs of specified type are contained in Table~\ref{TblReduceModJacobi} in Appendix~\ref{AppAssocEncoding}.}
Two extra options can be set equal to nonnegative integer values, by passing these two numbers as extra command\/-\/line arguments. Namely, 
\begin{itemize}
\item the parameter \texttt{max-jacobiators} restricts the number of Jacobiators in each Leibniz graph, so that by the assignment \texttt{max-jacobiators = 1} the right\/-\/hand side $\Diamond\bigl(\cP,\Jac(\cP)\bigr)$ is linear in the Jacobiator, whereas if \texttt{max-jacobiators = 2}, the right\/-\/hand side $\Diamond\bigl(\cP,\Jac(\cP),\Jac(\cP)\bigr)$ can be quadratic in~$\Jac(\cP)$, and so~on;
\item independently, the parameter \texttt{max-jac-indegree} restricts (from above) the number of arrows falling on the Jacobiator(s) in each of the Leibniz graphs that constitute the factorizing operator~$\Diamond$.
\end{itemize}
Furthermore, if \verb"--solve" is specified as the third extra argument, the input graph series is allowed to contain undetermined coefficients; these are then added as variables to-solve-for in the linear system.
\end{implement}

\begin{theor}\label{ThMainAssocOrd4}
For every component~$S^{(i)}$ of the associator (for $\star$ from Theorem~\ref{ThmBig})
\[
\Assoc_\star(f,g,h) \mod \bar{o}(\hbar^4) \mathrel{{=}{:}} S^{(0)} + p_1S^{(1)} + \ldots + p_{10}S^{(10)},
\]
there exists
a factorizing operator~$\Diamond^{(i)}$ such that
\begin{equation*}
S^{(i)}(f,g,h) = \Diamond^{(i)}\bigl(\cP,\Jac(\cP)\bigr)(f,g,h), 
\qquad 0 \leqslant i \leqslant 10.
\end{equation*}
\noindent%
\noindent%
$\bullet$\quad At no values of the master\/-\/parameters $p_i$
would the solution~$\Diamond = \sum_i \Diamond^{(i)}$ of factorization problem be a \emph{first}\/-\/order differential operator acting on the Jacobiator.
\end{theor}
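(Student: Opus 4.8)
The plan is to reduce both assertions to finite linear\/-\/algebraic problems over the Kontsevich graphs on three sinks with at most four internal vertices, and to settle them with \texttt{reduce\_mod\_jacobi} \textup{(}Implementation~\ref{ImplReduceModJacobi}\textup{)}. First I would record that, since the associativity equation at order~$\hbar^4$ is linear in the weights of graphs from~$\tilde{G}_{2,4}$, the associator is affine in the master\/-\/parameters; extracting coefficients (cf.\ the \texttt{extract\_coefficient} routine) writes it as $\Assoc_\star(f,g,h) = S^{(0)} + p_1 S^{(1)} + \ldots + p_{10} S^{(10)} \mod \bar{o}(\hbar^4)$, each $S^{(i)}$ being a fixed integer combination of Kontsevich graphs. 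By Lemma~\ref{Lemma} it then suffices to match the homogeneous component of differential order $(1,1,1)$ on the three arguments of each Jacobiator: an arrow landing on an argument of~$\Jac(\cP)$ merely redefines that argument, so all higher\/-\/order components are subsumed (cf.\ Examples~\ref{ExWedgeOnBullets}--\ref{ExOrder2OnBullets}). This bounds the candidate Leibniz graphs to a finite list.

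For Part~$1$ I would, for each~$S^{(i)}$ separately, generate all Leibniz graphs at order~$\hbar^4$ (admitting up to two Jacobiators, in anticipation of a quadratic answer), expand every one into a Kontsevich graph sum by applying the Leibniz rule and the graphical Jacobi identity~\eqref{EqJacFig}, and reduce modulo skew\/-\/symmetry. Demanding that this combination equal~$S^{(i)}$ gives a linear system in the Leibniz\/-\/graph coefficients; solving it produces an explicit $\Diamond^{(i)}$ with $S^{(i)} = \Diamond^{(i)}(\cP,\Jac(\cP))$, which I would then verify independently by re\/-\/expanding the right\/-\/hand side and checking the reduced graph identity. Summing, $\Diamond = \Diamond^{(0)} + \sum_{i=1}^{10} p_i\,\Diamond^{(i)}$ factorises the entire associator; by Proposition~\ref{PropLeibnizGraphZero} this is precisely what certifies $\Assoc_\star \equiv 0 \mod \bar{o}(\hbar^4)$ for \emph{every} Poisson structure and every value of the~$p_i$, delivering the ``Moreover'' attached to Theorem~\ref{ThmBig}.

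For the bullet \textup{(}Part~$2$\textup{)} I would rerun the search with the indegree of each Jacobiator capped at one \textup{(}the option \texttt{max-jac-indegree = 1}\textup{)}, so that only operators of differential order~$\le 1$ in~$\Jac(\cP)$ are admitted, while keeping the~$p_i$ undetermined \textup{(}the \verb"--solve" mode\textup{)} so that the unknowns comprise both the first\/-\/order Leibniz coefficients and the ten parameters. The claim to establish is that this combined system is \emph{inconsistent}: no simultaneous choice of~$p_i$ and first\/-\/order Leibniz coefficients reproduces $\Assoc_\star \mod\bar{o}(\hbar^4)$. I would make the obstruction explicit by isolating one homogeneous differential\/-\/order block in which a surviving Kontsevich graph forces two arrows onto a Jacobiator; its coefficient is then unattainable by any degree\/-\/one\/-\/in\/-\/$\Jac$ combination, uniformly in the~$p_i$. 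Reading off the empty solution set of the capped system yields the claim, and together with the order\/-\/two solution from Part~$1$ this pins the differential order at exactly two.

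The hard part is Part~$2$, and specifically converting the software's ``no solution'' into a non\/-\/existence statement that holds for \emph{all}~$p_i$ at once. I must guarantee that the capped Leibniz\/-\/graph list exhausts every first\/-\/order factorisation up to syzygies among Leibniz graphs, so that inconsistency of the capped system excludes every first\/-\/order~$\Diamond$ and not merely the one the solver happens to return, and that this inconsistency persists across the entire ten\/-\/parameter family rather than only for generic~$p_i$ \textup{(}the role of the free~$p_i$ in \verb"--solve" mode is exactly to detect any exceptional specialisation\textup{)}. Keeping the candidate\/-\/graph space finite and tractable — where the reduction to differential order $(1,1,1)$ via Lemma~\ref{Lemma} is indispensable — is what makes the computation feasible and the conclusion rigorous; the four parameters that enter only through the cyclic relations~\eqref{EqCyclic} must be tracked carefully so as not to spuriously enlarge the reachable span.
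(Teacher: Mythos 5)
Your proposal follows essentially the same route as the paper's proof scheme: split the associator (affine in the ten master\/-\/parameters) into $S^{(0)},\dots,S^{(10)}$ via \texttt{extract\_coefficient}, factor each component through Leibniz graphs with \texttt{reduce\_mod\_jacobi}, and establish the bullet point by rerunning with \texttt{max-jac-indegree = 1} and \texttt{--solve} and exhibiting a specific graph in the associator that only arises from a second\/-\/order differential consequence of the Jacobi identity. The only minor deviation is your anticipation of two Jacobiators in Part~1, whereas the paper's run finds a solution linear in $\Jac(\cP)$ for each $S^{(i)}$ (and notes that setting \texttt{max-jacobiators = 1} makes no difference).
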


\begin{proof}[Proof scheme]
Take the associator $\Assoc_\star(f,g,h) \mod \bar{o}(\hbar^4)$ for the $\star$-\/product expansion modulo~$\bar{o}(\hbar^4)$, in the file \verb"assoc4_intermsof10.txt" which was obtained in Theorem~\ref{ThmBig}.
The associator is linear in the ten master\/-\/parameters.
Let us split it into the constant 
term (e.g., at the zero value of every parameter) plus the ten respective components~$S^{(i)}$:
\begin{verbatim}
    $ extract_coefficient assoc4_intermsof10.txt 1 \
          > assoc4_intermsof10_constantpart.txt
    $ extract_coefficient assoc4_intermsof10.txt w_4_100 \
          > assoc4_intermsof10_part100.txt
    $ extract_coefficient assoc4_intermsof10.txt w_4_101 \
          > assoc4_intermsof10_part101.txt
\end{verbatim}
(and so on, for each parameter $p_i$).
In fact, four of the parameters do not show up in the associator (see Remark \ref{RemVanishInAssoc}): the corresponding files do not contain any graphs.
Now run the command {\tt reduce\symbol{"5F}mod\symbol{"5F}jacobi} for each input file with~$S^{(i)}$
, e.g., for $S^{(1)}$:
\begin{verbatim}
    $ reduce_mod_jacobi assoc4_intermsof10_part100.txt
\end{verbatim}
For each $S^{(i)}$ a solution is found: the series vanishes modulo the Jacobi identity.
The output for $S^{(1)}$ is written in Table \ref{TblReduceModJacobi} in Appendix \ref{AppAssocEncoding}.
For the second part of the theorem, we run \verb"reduce_mod_jacobi" with the options \verb"max-jac-indegree = 1" and \verb"--solve":
\begin{verbatim}
    $ reduce_mod_jacobi assoc4_intermsof10.txt 1 1 --solve
\end{verbatim}
(Our setting of \verb"max-jacobiators = 1" here makes no difference.)
No solution is found.
Inspecting the output, we find that the following term in the associator cannot be produced by a first-order differential consequence of the Jacobi identity:
\begin{center}
$-\frac{2}{15}\hbar^4$
\text{\raisebox{-12pt}{
\unitlength=0.70mm
\linethickness{0.4pt}
\begin{picture}(15.00,25.33)
\put(2.00,5.00){\circle*{1.33}}
\put(13.00,5.00){\circle*{1.33}}
\put(23.50,5.00){\circle*{1.33}}
\put(7.33,11.33){\circle*{1.33}}
\put(2.00,17.00){\circle*{1.33}}
\put(2.00,17.00){\vector(0,-1){11.33}}
\put(2.00,17.00){\vector(1,-1){5.33}}
\put(7.33,11.33){\vector(1,-1){5.33}}
\put(7.33,11.33){\vector(-1,-1){5.33}}
\put(2.00,22.67){\circle*{1.33}}
\bezier{64}(2.00,22.00)(-5.00,13.50)(2.33,4.75)
\put(1,6.33){\vector(1,-1){0.67}}
\put(2.00,22.67){\vector(1,-2){5.00}}
\put(18.00,16.00){\circle*{1.33}}
\put(18.00,16.00){\vector(1,-2){5.33}}
\put(18.00,16.00){\vector(-2,-1){10.33}}
\end{picture}
}}
\end{center}
Indeed one can show this graph arises only in a differential consequence of order two.
\end{proof}

\begin{cor}[$\star$-\/product non\/-\/extendability from $\{\cdot,\cdot\}_{\cP}$ to $\{\cdot,\cdot\}_{\bcP}$ at order~$\hbar^4$]\label{CorVariational}
Because there are at least two arrows falling on the object $\Jac(\cP)$ in $\Diamond$ at every value of the ten 
master\/-\/parameters~$p_i$,
the associativity can be broken at order~$\hbar^4$ 
for extensions of the 
$\star$-\/product to infinite\/-\/dimensional set\/-\/up${}^{\text{\ref{FootVariational} on p.~\pageref{FootVariational}}}$ 
of $N^n$-\/valued fields~$\phi\in C^\infty(M^m\to N^n)$ over a given affine manifold~$M^m$, of local functionals~$F,G,H$ taking such fields to numbers, and of variational Poisson brackets $\{\cdot,\cdot\}_{\bcP}$ on the algebra of local functionals.
\end{cor}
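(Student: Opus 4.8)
The plan is to carry the graphical factorization of Theorem~\ref{ThMainAssocOrd4} over to the variational set\/-\/up unchanged, and then to pinpoint the step at which the argument that forces $\Assoc_\star\equiv 0$ over finite\/-\/dimensional $N^n$ breaks down. First I would observe that the factorization $\Assoc_\star(f,g,h)=\Diamond\bigl(\cP,\Jac(\cP)\bigr)(f,g,h)\mod\bar{o}(\hbar^4)$ was established at the level of Kontsevich and Leibniz graphs alone, hence without any reference to $\dim N^n$, to local coordinates, or to the analytic nature of the bracket. Re\/-\/reading every edge as a fibre variation followed by integration by parts over the base $M^m$ (footnote~\ref{FootVariational}), and replacing $\{\cdot,\cdot\}_\cP$ and the arguments $f,g,h$ by a variational bracket $\{\cdot,\cdot\}_{\bcP}$ and local functionals $F,G,H$, the associator of the field\/-\/theoretic $\star$-\/product is presented by the very same sum of Leibniz graphs $\Diamond\bigl(\bcP,\Jac(\bcP)\bigr)(F,G,H)$.

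Next I would contrast the two interpretations of the Jacobiator. Over $N^n$ the tri\/-\/vector $\Jac(\cP)$ is a function that vanishes at every point, so by Proposition~\ref{PropLeibnizGraphZero} each Leibniz graph is zero no matter how many arrows land on the Jacobiator. In the variational theory only $\schouten{\bcP,\bcP}=0$ holds, which annihilates $\Jac(\bcP)$ merely as a variational poly\/-\/vector, i.e.\ modulo horizontal (total) divergences over $M^m$; as a differential polynomial on $J^\infty(\pi)$ it is a genuinely nonzero total divergence. The mechanism that kills a Leibniz graph therefore persists only as long as the whole configuration can be integrated by parts so as to expose the bare class $\int_{M^m}\Jac(\bcP)$ or one of its admissible (first\/-\/order, graded) consequences. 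Because a fibre variation does not commute with the total derivative, once two arrows are forced onto the Jacobiator the resulting expression is no longer a total divergence and is not annihilated by $\schouten{\bcP,\bcP}=0$.

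I would then combine this with the second assertion of Theorem~\ref{ThMainAssocOrd4}: for no value of the ten master\/-\/parameters does $\Diamond$ reduce to a first\/-\/order operator in $\Jac(\cP)$, so at every value $\Diamond$ inevitably carries a Leibniz graph with two arrows on the Jacobiator --\,concretely the term with coefficient $-\tfrac{2}{15}$ exhibited in that proof. Hence there is no parameter choice for which the variational Jacobi identity alone forces $\Assoc_\star(F,G,H)=0\mod\bar{o}(\hbar^4)$, which is exactly the asserted possibility of broken associativity. To turn ``can be broken'' into an actual failure I would close by producing one explicit witness: a variational Poisson bracket $\{\cdot,\cdot\}_{\bcP}$ of the field\/-\/theoretic type in~\cite{gvbv,cycle16,sqs13,prg15} together with functionals $F,G,H$ on which the surviving second\/-\/order term evaluates to a nonzero local functional.

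The main obstacle is exactly this last, quantitative point. Proving the mere non\/-\/existence of a vanishing mechanism is graph combinatorics and follows from Theorem~\ref{ThMainAssocOrd4}; but certifying that the order\/-\/two contribution genuinely does not cancel against the other Leibniz graphs, and is nonzero on some admissible $\bcP$ and $F,G,H$, is a computation inside the variational bicomplex in which the non\/-\/commutation of fibre variations with the total derivative has to be tracked term by term. Producing such a witness --\,rather than only the absence of a cancellation argument\,-- is what upgrades the graphical obstruction to an honest counterexample, and it is the crux of the proof.
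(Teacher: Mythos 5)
Your argument is correct and follows essentially the same route as the paper: the factorization of Theorem~\ref{ThMainAssocOrd4} is purely graphical and hence carries over to the variational set‑up, the variational Jacobiator vanishes only cohomologically so that a single arrow (first variation) still annihilates it while two or more arrows need not, and the unavoidable presence of second‑order Leibniz graphs in $\Diamond$ at every value of the master‑parameters is what lets associativity leak. Note that the corollary only asserts that associativity \emph{can} be broken, so the explicit witness you flag as the crux is not required by (and is not supplied in) the paper's own argument either.
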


Indeed, the Jacobiator $\Jac(\bcP)\cong0$ for a variational Poisson bi\/-\/vector~$\bcP$ is a cohomologically trivial variational tri\/-\/vector on the jet space $J^\infty(M^m\to N^n)$, whence the first variation of~$\Jac(\bcP)$ brought on it by a unique arrow would of course be vanishing identically. Nevertheless, that variational tri\/-\/vector's density is not necessarily equal to zero on~$J^\infty(M^m\to N^n)$ over~$M^m$ for those variational Poisson structures whose coefficients~$\cP^{ij}$ explicitly depend on the fields~$\phi$ or their derivatives along~$M^m$. This is why the second and higher variations of the Jacobiator~$\Jac(\bcP)$ would not always vanish. (Such higher\/-\/order variations of functionals are calculated by using the techniques from~\cite{gvbv,cycle16}.)
We know from~\cite{sqs15} that $\Assoc_\star(F,G,H)\cong 0 \mod \bar{o}(\hbar^3)$, 
i.e.\ the associator is trivial 
up to order~$\hbar^3$ for all variational Poisson brackets~$\{\cdot,\cdot\}_{\bcP}$ but we now see that it can contain cohomologically nontrivial terms proportional to~$\hbar^4$. Consequently, it is the order four at which the associativity of 
$\star$-\/products can start to leak in the course of deformation quantization of Poisson field models.



We now claim that four master\/-\/parameters can simultaneously be gauged out of the star\/-\/product.
\textup{(}That is, either some of the four or all of them at once can be set equal to zero\textup{,} although this may not necessarily be their true value given by formula~\eqref{EqWeight}.\textup{)}%
\footnote{Let us recall that the property of a parameter in a family of star\/-\/products to be removable by some gauge transformation is not the same as setting such parameter to zero (or any other value).
Indeed, other graph coefficients, not depending on the parameter at hand, might get modified by that gauge transformation.
However --\,and similarly to the removal of the loop graph at~$\hbar^2$ in the Kontsevich $\star$-\/product (see Examples~\ref{ExGaugedLoop} and~\ref{ExGaugedLoopImplement})\,-- the trivialization of four parameters at no extra cost is the case which Theorem~\ref{ThGauge} states.}

\begin{theor}\label{ThGauge}
For each $j\in\{2, 3, 9, 10\}$
there exists a gauge transformation $\text{\textup{id}} + \hbar^4 p_j Z_j$
\textup{(}listed in Table \ref{TableGauge} in Appendix \ref{AppGaugeEncoding}\textup{)}
such that the
master\/-\/parameter~$p_j$ is reset to
zero in the deformed star\/-\/product~$\star'$. This is achieved in
such a way that no graph coefficients which initially did not contain
the parameter to gauge out would change at all.

\noindent$\bullet$\quad Moreover\textup{,} the gauge transformation
$\text{\textup{id}}+\hbar^4\cdot\bigl(\sum_j
p_j Z_j \bigr)$ removes at once
all the four master\/-\/parameters\textup{,} still preserving those
coefficients of graphs in~$\star$ which did not depend on any of them.
\end{theor}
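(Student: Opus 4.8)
The plan is to isolate the single order at which a gauge transformation based at $\hbar^4$ can perturb the $\star$-product and to read off that perturbation as a Hochschild coboundary that one then fits to the $p_j$-part of the order-four term. Fix $j\in\{2,3,9,10\}$ and take the gauge transformation in the form ${\sf t}=\text{id}+\hbar^4 p_j Z_j$ with $Z_j\in\Bbbk[G_{1,4}]$ a one-sink series of four-internal-vertex graphs (the corresponding entry of Table~\ref{TableGauge}). By the defining relation~\eqref{EqDefGaugeStar}, $f\star' g={\sf t}^{-1}({\sf t}(f)\star{\sf t}(g))$. Since ${\sf t}$ departs from the identity only at order $\hbar^4$, every cross-term and every contribution in which $Z_j$ meets a non-leading term of $\star$ lies in $\bar{o}(\hbar^4)$; hence $\star'$ agrees with $\star$ below $\hbar^4$, and at $\hbar^4$ the correction is, once ${\sf t}^{-1}$ has acted on the undeformed leading term $f\times g$,
\[
\Delta_4(f,g)=p_j\bigl(Z_j(f)\cdot g + f\cdot Z_j(g) - Z_j(f\cdot g)\bigr)=p_j\,(\delta Z_j)(f,g),
\]
namely $p_j$ times the Hochschild coboundary of $Z_j$, which is assembled graph by graph by the Leibniz and insertion rules of~\S\ref{SecGraphsOnGraphs}.

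This reduces the first assertion to solving $\delta Z_j=-S_j$, where $S_j$ is the sum of those graphs in $\star\bmod\bar{o}(\hbar^4)$ whose coefficient is proportional to $p_j$; I would extract $S_j$ from the order-four expansion of Theorem~\ref{ThmBig} via \texttt{extract\_coefficient}, so that $p_j S_j$ is exactly the $p_j$-dependent part of the bidifferential operator at $\hbar^4$. Solvability requires $S_j$ to be a Hochschild $2$-cocycle, and this is essentially the content of Remark~\ref{RemVanishInAssoc}: because $p_2,p_3,p_9,p_{10}$ are the master-parameters that do \emph{not} occur in $\Assoc_\star(f,g,h)=0$ at $\hbar^4$, the graphs carrying them cancel termwise against the undeformed product inside the associator, so that $\delta S_j=0$.

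To produce $Z_j$ explicitly I would set up a linear system whose unknowns are the coefficients of $Z_j$ over the one-sink four-vertex graphs and whose equations demand that $\delta Z_j$, evaluated term by term through the action of~\S\ref{SecGraphsOnGraphs}, reproduce $-S_j$ coefficientwise. Solving it, by the same linear-algebra step used throughout Theorem~\ref{ThmBig}, yields the transformations of Table~\ref{TableGauge}. Since $\delta Z_j$ then equals $-S_j$ \emph{exactly}, a sum supported only on graphs whose coefficient was proportional to $p_j$, the passage $\star\mapsto\star'$ deletes the $p_j$-term and leaves every $p_j$-independent coefficient untouched. For the \emph{Moreover} part I would take ${\sf t}=\text{id}+\hbar^4\bigl(\sum_j p_j Z_j\bigr)$: modulo $\bar{o}(\hbar^4)$ the composite acts additively, because every product $p_i Z_i\cdot p_k Z_k$ sits at order $\hbar^8$, so its effect at $\hbar^4$ is $\sum_j p_j\,\delta Z_j=-\sum_j p_j S_j$, removing all four parameters at once while preserving every coefficient independent of all of them.

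The main obstacle is this coboundary step, that is, upgrading the cocycle identity $\delta S_j=0$ to an \emph{explicit} primitive $Z_j$ with $\delta Z_j=-S_j$ and no spurious terms. Merely knowing $S_j$ to be a cocycle does not suffice, since a nontrivial Hochschild cohomology class could obstruct the gauging; the real work is therefore the solvability of the linear system above, which is what distinguishes these four parameters from the remaining six and what the entries of Table~\ref{TableGauge} certify.
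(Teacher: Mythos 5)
Your proposal is correct, and the computation it prescribes is the same one the paper runs: put an undetermined one\/-\/sink tail $\hbar^4 G$ on the identity, work out its effect on $\star$ at order $\hbar^4$, and solve a linear system so that exactly the $p_j$\/-\/dependent coefficients are cancelled (the paper does this by generating all admissible graphs in $G_{1,4}$ with unknowns $g_i$, applying \texttt{gauge}, reducing mod skew\/-\/symmetry, and solving for $g_i=\alpha_{ij}p_j$; the entries are Table~\ref{TableGauge}). What you add, and the paper leaves implicit, is the Hochschild framing: the order\/-\/$\hbar^4$ perturbation produced by $\text{id}+\hbar^4 p_jZ_j$ is precisely $p_j\,\delta Z_j$ with $\delta$ the Hochschild differential of~$\times$, the absence of $p_j$ from $\Assoc_\star=0$ at $\hbar^4$ (Remark~\ref{RemVanishInAssoc}) is exactly the cocycle condition $\delta S_j=0$, and gauging out $p_j$ is the assertion that this cocycle is a coboundary. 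This explains \emph{a priori} why only the parameters invisible in the associator are candidates — the paper discovers the same fact empirically by checking which $p_j$\/-\/coefficients are unreachable by any $g_i$ (e.g.\ the $17$ graphs blocking $p_1$) — and you are right to insist that the cocycle condition alone does not finish the job: a nontrivial class (e.g.\ a genuine bi\/-\/vector component) would obstruct the gauging, so the explicit solvability of the linear system is the actual content, in both your argument and the paper's.
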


\begin{proof}[Proof scheme]
Let the $\star$-product expansion in terms of $10$ parameters (obtained in Theorem \ref{ThmBig}) be contained in \verb"star4_intermsof10.txt".
Construct a gauge transformation of the form ${\rm id} + \hbar^4 G$, where $G$ is the sum over all possible graphs with four internal vertices over one sink which are nonzero, without double edges, without tadpoles, and with positive differential order, taken with undetermined coefficients $g_i$:
\begin{verbatim}
    $ cat > gauge4.txt
    1 0 1           1
    h^4:
    ^D (press Ctrl+D)
    $ generate_graphs 4 1 --normal-forms=yes --zero=no \
          --positive-differential-order=yes \
          --with-coefficients=yes >> gauge4.txt
    $ sed -i 's/w/g/' gauge4.txt # replace coefficient prefix 'w' by 'g'
\end{verbatim}
Obtain gauged star-product expansion $\star'$ by applying the gauge transformation to $\star$:
\begin{verbatim}
    $ gauge star4_intermsof10.txt gauge4.txt \
          > star4_intermsof10_gauged_unreduced.txt
\end{verbatim}
Reduce the graph series for $\star'$ modulo skew-symmetry:
\begin{verbatim}
    $ reduce_mod_skew star4_intermsof10_gauged_unreduced.txt \
          > star4_intermsof10_gauged.txt
\end{verbatim}
Inspect which of the 10 parameters $p_j$ cannot be gauged out, by checking for the existence of graph coefficients containing $p_j$ but not any $g_i$.
For example, for $p_1={}$\verb"w_4_100":
\begin{verbatim}
    $ grep w_4_100 star4_intermsof10_gauged.txt \
           | grep -v g | wc -l
    17
\end{verbatim}
There are 17 graphs with such coefficients, so $p_1={}$\verb"w_4_100" cannot be gauged out.
Following this procedure for all the $10$ parameters, we find that the only candidates to be gauged out are $p_2={}$\verb"w_4_101", $p_3={}$\verb"w_4_102", $p_9={}$\verb"w_4_119", and $p_{10}={}$\verb"w_4_125".
Now inspect the file \verb"star4_intermsof10_gauged.txt" for the lines containing these $p_j$ and (necessarily, some) $g_i$.
For each $p_j$, find a choice of $g_i$ so that $p_j$ is completely removed from the file.
(The $g_i$ will be of the form $g_i = \alpha_{ij} p_j$ for $\alpha_{ij} \in \BBR$.)
It turns out that this is always possible.
Hence this choice of $g_i$ defines the sought\/-\/for gauge transformation ${\rm id} + \hbar^4 p_j Z_j$ which gauges out the parameter $p_j$.
The gauge-transformations which kill the (four) parameters separately may be combined into the gauge-transformation ${\rm id} + \hbar^4(\sum_j p_j Z_j)$ that kills all (four) of them simultaneously.
\end{proof}

\begin{rem}
The master\/-\/parameters which we can gauge out are exactly the ones which do not show up in the associativity equation (see Remark \ref{RemVanishInAssoc}).
\end{rem}

Let us finally address a possible origin of so ample a freedom in the
ten\/-\/parameter family of star\/-\/products (now known up
to~$\bar{o}(\hbar^4)$). We claim that the mechanism of vanishing via
differential consequences of the Jacobi identity, which was recalled
in Lemma~\ref{Lemma} and used in Theorem~\ref{ThMainAssocOrd4}, 
starts working not only for the associator built over~$\star$,
but it may even start working for the $\star$-product expansion itself.

\begin{theor}\label{ThNull}
The ten-parameter family of star-product expansions $\star = ... + \hbar^4\bigl(\star^{(0)} + \sum_{i=1}^{10} p_i \star^{(i)}\bigr) + \bar{o}(\hbar^4)$ does contain, in the ten-dimensional affine subspace parametrized by $p_1, \ldots, p_{10}$ in $\Bbbk[G_{2,4}]$, a unique one-dimensional \textup{(}null or `improper'\textup{)} subspace such that every point $\alpha \cdot (\star^{(9)} - 2\star^{(6)}) = \alpha \cdot \star^{(9|6)}$ in it admits a Leibniz graph factorization \textup{(}via the Jacobiator\textup{)} $\star^{(9|6)} = \nabla(\cP, \Jac(\cP)) \in \Bbbk[G_{2,4}]$.
This null space is the span of the direction \verb"w_4_119"\ {\em : }\verb"w_4_107"\ $ : \ldots = 1 : (-2) : 0 : ... : 0 \in \BBR\BBP^9$, that is, the master-parameters $p_9$ and $p_6$ occur in proportion $1: (-2)$ and all the other $p_i$\!'s are zero.
\end{theor}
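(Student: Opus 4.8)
The plan is to recognize Theorem~\ref{ThNull} as a finite linear\/-\/algebra problem over the space $\Bbbk[G_{2,4}]$ and to settle it with the factorization machinery of Implementation~\ref{ImplReduceModJacobi}. First I would isolate the ten components: having the $\star$-\/product modulo~$\bar{o}(\hbar^4)$ written in terms of the master\/-\/parameters (from Theorem~\ref{ThmBig}), I extract each $\star^{(i)} \in \Bbbk[G_{2,4}]$ as the coefficient of $p_i$ in the $\hbar^4$~term, e.g.\ by \texttt{extract\_coefficient} applied to \verb"w_4_100", \ldots, \verb"w_4_125", taking each after reduction modulo skew\/-\/symmetry so that all matching is against normal forms. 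I then form the generic combination $S = \sum_{i=1}^{10} c_i\,\star^{(i)}$ with undetermined scalars $c_i$ and ask for which $(c_1,\ldots,c_{10})$ the series $S$ admits a Leibniz\/-\/graph factorization $S = \nabla(\cP,\Jac(\cP))$. By Proposition~\ref{PropLeibnizGraphZero} any such $S$ necessarily vanishes at every Poisson structure, so the factorizable combinations are precisely the ``null'' directions along which the $\star$-\/product does not move as an operator on functions; the content of the theorem is that this subspace is one\/-\/dimensional.

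The second step is to make the factorization ansatz \emph{exhaustive}. Because $S \in \Bbbk[G_{2,4}]$ has exactly four internal vertices, and each Jacobiator vertex of a Leibniz graph carries two internal vertices while each remaining $\cP$-\/vertex of $\nabla$ carries one, the only admissible profiles are one Jacobiator together with two $\cP$-\/vertices, or two Jacobiators with no further $\cP$-\/vertices. Hence the set of Leibniz graphs on two sinks that expand into $\Bbbk[G_{2,4}]$ is finite and can be enumerated completely. I would run \verb"reduce_mod_jacobi" on $S$ with the \verb"--solve" flag, letting the $c_i$ enter as additional unknowns alongside the Leibniz\/-\/graph coefficients, with \verb"max-jacobiators = 2" to cover both profiles and \verb"max-jac-indegree" set high enough to impose no genuine restriction. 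Expanding every Leibniz graph into Kontsevich graphs (cyclically permuting the Jacobiator arguments and resolving each placeholder as in Implementation~\ref{ImplReduceModJacobi}) and reducing modulo skew\/-\/symmetry, the requirement that this expansion equal $S$ reduces, upon matching the coefficient of each normal\/-\/form Kontsevich graph (the bi\/-\/differential analogue of Lemma~\ref{Lemma}), to a single homogeneous linear system in the $c_i$ and the Leibniz coefficients.

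The third step is to read off the solution. Eliminating the Leibniz coefficients, the induced conditions on $(c_1,\ldots,c_{10})$ cut out a one\/-\/dimensional subspace; inspecting a basis vector shows it is $c_9 : c_6 = 1 : (-2)$ with all other $c_i = 0$, i.e.\ the direction $\star^{(9|6)} = \star^{(9)} - 2\,\star^{(6)}$. Existence is witnessed by the explicit solution $\nabla$ returned by the program (to be tabulated in the appendix), which certifies $\star^{(9|6)} = \nabla(\cP,\Jac(\cP))$ directly and is linear in the Jacobiator as the notation suggests; uniqueness is the statement that the system has rank exactly $9$ on the $c$-\/coordinates, so that $\star^{(9|6)}$ spans the null line and no independent combination factorizes.

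The delicate point — and the one I would guard most carefully — is the completeness of the enumeration underwriting uniqueness: I must be certain that allowing a genuinely richer factorizing operator could not enlarge the factorizable set. This is exactly where the fixed internal\/-\/vertex count is decisive, since it caps the number of Jacobiators at two (and the auxiliary $\cP$-\/vertices accordingly), making the ansatz finite rather than merely truncated; with the in\/-\/degree bound inactive, the linear system is then the honest obstruction to factorization over \emph{all} Leibniz operators of the correct weight, not just over a chosen family. A secondary care is purely arithmetic: the rank and the extracted ratio must be computed over $\Bbbk$ exactly, so as to license the precise proportion $1:(-2)$ rather than a numerical approximation of it.
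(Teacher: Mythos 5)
Your proposal is correct and is essentially the paper's own argument: the paper disposes of Theorem~\ref{ThNull} in one sentence by invoking ``the same computer-assisted scheme of reasoning which worked in the proof of Theorem~\ref{ThMainAssocOrd4},'' i.e.\ running \verb"reduce_mod_jacobi" with \verb"--solve" on the generic combination with the $c_i$ as additional unknowns and reading off the rank of the resulting linear system. Your added observation that the four internal vertices cap the Leibniz-graph ansatz at two Jacobiators (so the enumeration is genuinely exhaustive, not merely truncated) is a useful point the paper leaves implicit.
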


In effect, the respective part of the star-product always cancels out for every given Poisson structure $\cP$.
This factorization and uniqueness of the direction $\star^{(9|6)}$ is established by using the same computer\/-\/assisted scheme of reasoning which worked in the proof of Theorem~\ref{ThMainAssocOrd4}.

\section{Discussion}
\label{SecDiscussion}

The coefficients of (sometimes different, sometimes gauge-inequivalent) star-product expansions up to low orders were previously obtained in the papers \cite{Kathotia, PenkavaVanhaecke, AmmarChloupGuttUniversal3, BenAmar, WillwacherObstruction}.
Let us compare the result in this paper with those publications, and let us use the software described in this paper to verify some results about other star-products.

\subsection{Previously known weights}

The values of some (families of) Kontsevich graph weights are given in the literature.
The graphs in the Bernoulli family have scaled Bernoulli numbers as weights (see \cite[Corollary 6.3]{BenAmar} or \cite[Proposition 4.4.1]{Kathotia}), e.g. {\tt w\symbol{"5F}3\symbol{"5F}2}${}=B_3/3!=0$ and {\tt w\symbol{"5F}4\symbol{"5F}12}${}=B_4/4!=-1/720$.
The weights of a family of graphs containing cycles are obtained in \cite[Corollary 6.3]{BenAmar}, e.g. {\tt w\symbol{"5F}3\symbol{"5F}9}${}=\pm B_3/(2\cdot3!) = 0$ and {\tt w\symbol{"5F}4\symbol{"5F}72}${}=-B_4/(2\cdot 4!) = 1/1440$.
Willwacher states in \cite{WillwacherObstruction} the vanishing of three graph weights at the order $3$ 
(they are {\tt w\symbol{"5F}3\symbol{"5F}7}, {\tt w\symbol{"5F}3\symbol{"5F}13}, {\tt w\symbol{"5F}3\symbol{"5F}14} in Figure~\ref{FigBasic3}) and the non-vanishing one other (it is \texttt{w\symbol{"5F}3\symbol{"5F}12} in Figure~\ref{FigBasic3}); this agrees with our calculation in Example~\ref{ExOrder3}.

\subsection{Numerical approximation}

In Tables \ref{TableVerified} and \ref{TableConjectured} in Appendix \ref{AppNumericalWeights} we list numerical approximations of several weights.
These approximations are consistent with the exact weights (and relations) obtained in this paper.
%
%

\subsection{Independent symbolic calculation}

The values for the weights found in this paper agree with a symbolic calculation of the graph weights reported by Pym and Panzer \cite{PymPrivateComm} and reproduced in Table~\ref{Tab10Pym} on p.~\pageref{Tab10Pym} in Appendix~\ref{App10Pym}.

\subsection{The obstruction to the existence of a loopless star product}
\label{SecWillwacherLoopless}

In \cite{WillwacherObstruction}, Willwacher establishes that any universal star-product (defined by the Kontsevich graphs, possibly with different coefficients) which is gauge-equivalent to Kontsevich's $\star$-product \emph{must} contain graphs with $2$-cycles.
To obtain the same result using our software, we proceed as follows.
\begin{example}
We start with Kontsevich's $\star$-product up to $\hbar^3$ in \texttt{star3.txt}.
The unique graph with a loop at order 2 can be removed by extending the gauge transformation from Example~\ref{ExGaugeLoop} which was stored in \texttt{gaugeloop.txt}:
\begin{verbatim}
    $ cp gaugeloop.txt gaugeloop3.txt
    $ echo "h^3:" >> gaugeloop3.txt
    $ gauge star3.txt gaugeloop3.txt > star3_gauge2_unreduced.txt
    $ reduce_mod_skew star3_gauge2_unreduced.txt > star3_gauge2.txt
\end{verbatim}
The gauged $\star$-product is obtained in \texttt{star3\symbol{"5F}gauge2.txt}:
\begin{verbatim}
    h^0:
    2 0 1       1
    h^1:
    2 1 1   0 1    1
    h^2:
    2 2 1   0 1 1 2    -1/3
    2 2 1   0 1 0 2    1/3
    2 2 1   0 1 0 1    1/2
    h^3:
    2 3 1   0 1 1 2 1 2    1/6
    2 3 1   0 1 0 1 1 2    -1/3
    2 3 1   0 1 0 2 0 2    1/6
    2 3 1   0 1 0 1 0 2    1/3
    2 3 1   0 1 0 1 0 1    1/6
    2 3 1   0 3 1 2 2 3    -1/6
    2 3 1   0 1 2 4 2 3    1/12
    2 3 1   0 1 0 2 1 3    -1/6
    2 3 1   0 1 0 4 1 2    -1/6
    2 3 1   0 3 1 4 1 3    -1/6
    2 3 1   0 1 1 2 2 3    -1/6
    2 3 1   0 3 1 2 1 2    1/6
    2 3 1   0 1 1 4 2 3    1/6
    2 3 1   0 3 0 2 1 2    1/6
    2 3 1   0 1 0 2 2 3    -1/6
    2 3 1   0 3 1 2 0 3    -1/6
    2 3 1   0 1 0 4 2 3    1/6
\end{verbatim}
\end{example}
Willwacher denotes this $\star$-product by $a = a_0 + a_1 + a_2 + a_3 + \ldots$, and supposes that our (desirably loopless) $\star$-product reads $b = a_0 + a_1 + a_2 + (a_3 + b_3) + (a_4 +b_4) + \ldots$
The Maurer-Cartan associativity equation $[b,b] = 0$ implies in particular $[a_0,b_3] = 0$ and $[a_1,b_3] + [a_0,b_4] = 0$ (here $[-,-]$ is the Gerstenhaber bracket).
\begin{claimNo}
For any solution of $[b,b]=0$, the sum $a_3 + b_3$ contains graphs with cycles.
\end{claimNo}
We carry out Willwacher's proof, with some minor corrections.
Since $b_3$ is a Hochschild cocycle it suffices to assume $b_3$ is in the image of the (graphical) Hochschild-Kostant-Rosenberg map, so it is a skew-symmetric bi-derivation.\footnote{However, this does not imply that each individual graph in it is a skew-symmetric bi-derivation.
Rather, each graph which is a bi-derivation can be skew-symmetrized, which yields either the original graph or the sum of two graphs which are mirror-reflections of each other.
It is clear that this is what Willwacher intended, e.g. because the mirror-reflection of his graph $D$ is not drawn.}
There are two terms $-\alpha A$ and $-\beta B$ in $a_3$  ($\alpha, \beta \neq 0$) which are skew-symmetric bi-derivations and graphs with cycles.
So for $a_3 + b_3$ to have no cycles, $b_3$ must be the linear combination $b_3 = \alpha A + \beta B$.
The proof proceeds by showing that $[a_1,b_3]+[a_0,b_4] = 0$ cannot hold: indeed, simplifying the $3$-cochain $[a_1,b_3]$ modulo the image of $[a_0, -]$ and the Jacobi identity results in a sum of graphs (called Shoikhet's obstruction) which does not vanish.
Let us illustrate all of this in detail.

\begin{example}
The skew bi-derivation terms in $a_3$ are:
\begin{verbatim}
    2 3 1   0 1 2 4 2 3    1/12
    2 3 1   0 3 1 2 2 3    -1/6
\end{verbatim}
(We have $\alpha = -1/12$ and $\beta = 1/6$.)
We store the correction term $b_3$ in the file \texttt{b3.txt}:
\begin{verbatim}
    2 3 1   0 1 2 4 2 3    -1/12
    2 3 1   0 3 1 2 2 3    1/6
\end{verbatim}
Calculate the Gerstenhaber bracket $[a_1,b_3]$ (using $a_1$ in \texttt{wedge.txt})\footnote{The graphical calculation of $[a_1,b_3]$ in~\cite{WillwacherObstruction} contains errors, e.g. the first graph has a vertex with three outgoing edges and the term with coefficient $\beta$ has arrows in the wrong direction.}:
\begin{verbatim}
    $ echo '2 1 1   0 1    1' > wedge.txt
    $ gerstenhaber_bracket wedge.txt b3.txt > \[wedge,b3\]_unreduced.txt
    $ reduce_mod_skew \[wedge,b3\]_unreduced.txt > \[wedge,b3\].txt
\end{verbatim}
Generate an ansatz for $b_4$:
\begin{verbatim}
    $ generate_graphs 4 --normal-forms=yes --with-coefficients=yes \
          --positive-differential-order=yes --zero=no > b4.txt
\end{verbatim}
Calculate $[a_0,b_4]$ (using $a_0$ in \texttt{dotdot.txt}):
\begin{verbatim}
    $ echo '2 0 1       1' > dotdot.txt
    $ gerstenhaber_bracket dotdot.txt b4.txt > \[dotdot,b4\]_unreduced.txt
    $ reduce_mod_skew \[dotdot,b4\]_unreduced.txt > \[dotdot,b4\].txt
\end{verbatim}
Store Shoikhet's obstruction with undetermined coefficients $A,B$ in \texttt{shoikhet\symbol{"5F}obs.txt}:
\begin{verbatim}
    3 4 1   0 1 2 3 3 4 3 4   A
    3 4 1   2 1 0 3 3 4 3 4   -A
    3 4 1   0 1 2 3 3 4 4 5   B
    3 4 1   2 1 0 3 3 4 4 5   -B
\end{verbatim}
Add $[a_0,b_4]$ and Shoikhet's obstruction to $[a_1,b_3]$:
\begin{verbatim}
    $ cat \[wedge,b3\].txt \[dotdot,b4\].txt shoikhet_obs.txt \
          > \[wedge,b3\]+\[dotdot,b4\]+shoikhet_obs_unreduced.txt
    $ reduce_mod_skew \
          \[wedge,b3\]+\[dotdot,b4\]+shoikhet_obs_unreduced.txt \
          > \[wedge,b3\]+\[dotdot,b4\]+shoikhet_obs.txt
\end{verbatim}
Reduce modulo the Jacobi identity and solve (for the expression to be equal to zero):
\begin{verbatim}
    $ reduce_mod_jacobi \[wedge,b3\]+\[dotdot,b4\]+shoikhet_obs.txt \
           1 10 --solve
\end{verbatim}
Indeed, there is a solution $A = \beta = 1/6$ and $B = -4(\alpha+\beta) = -1/3$.
So, modulo the image of $[a_0,-]$ and the Jacobi identity, $[a_1,b_3]$ is equal to Shoikhet's obstruction with $A = -\beta = -1/6$ and $B = 4(\alpha+\beta) = 1/3$ (the sign changed because we added Shoikhet's obstruction instead of subtracting it).\footnote{The solution reported here differs from Willwacher's, not in sign (which is left ambiguous in~\cite{WillwacherObstruction}) but in proportion: he claims $A = \pm 2\beta$ and $B = \pm 2(\alpha+\beta)$.}

\end{example}


\begin{example}
An example of a Poisson structure for which Shoikhet's obstruction doesn't vanish is given by \texttt{3d-polynomial}:
\begin{verbatim}
    $ poisson_evaluate shoikhet_obs.txt 3d-polynomial
    ...
    # [ x ] [ x ] [ y ]
    -4*A*y^3*z^2*x^2+2*y^3*B*z*x^3-2*y*B*z^3*x^3+y*B*z^4*x^2+...
    ...
\end{verbatim}
For example, the coefficient of the differential monomial $x^2y^3z^2 \partial_x \otimes \partial_x \otimes \partial_y$ is $-4A \neq 0$.
\end{example}

In this section we traced Willwacher's steps.
There is a much simpler proof of the claim when all the coefficients of graphs in $a_3$ are known (which was not the case in~\cite{WillwacherObstruction}):
there are loopful graphs with nonzero coefficients in $a_3$ which cannot be gauged out.

\subsection{Penkava--Vanhaecke deformations}

In \cite{PenkavaVanhaecke} M.~Penkava and P.~Vanhaecke give (among other things) deformations $\pi_\star = \pi + h\pi_1 + h^2\pi_2 + \ldots + h^n\pi_n + \bar{o}(h^n)$ where $\pi$ is the pointwise product, $\pi_1$ is the Poisson bracket, $h$ is the formal parameter, and associativity holds \emph{modulo} $\bar{o}(h^n)$ for arbitrary \emph{polynomial} Poisson algebras.
Note that every $\star$-product (which is associative as a formal power series in $h$) induces such an expansion modulo $\bar{o}(h^n)$ for every $n$, but not every deformation modulo $\bar{o}(h^n)$ can be extended to higher orders.\footnote{Note that this problem is different from the computation of obstructions to Kontsevich's Formality \cite{MK97,Ascona96}. Specifically, in ``Formality Conjecture'' \cite{Ascona96}, Kontsevich reports the absence of obstructions to Formality up to $n \leqslant 6$. Formality is now a theorem: Kontsevich's $\star$-product exists at all orders.}
Indeed, Penkava--Vanhaecke exhibit deformations which can be extended and some which cannot be extended.
(Namely, already at order $3$ there exist formulas which do not extend to higher orders 
--- although such formulas are clearly not the genuine Kontsevich star-product.)
In the following sequence of examples, we verify some of their results and compare them with ours.
\begin{example}\label{ExPV5.1}
Proposition 5.1 in \cite{PenkavaVanhaecke} gives a deformation $\pi + h\pi_1 + h^2\pi_2 + \bar{o}(h^2)$, and in fact it coincides with Kontsevich's $\star$-product modulo $\bar{o}(h^2)$ with the loop graph gauged out; see Example~\ref{ExGaugedLoopImplement} in this text.
\end{example}
\begin{example}\label{ExPV5.6a}
Theorem 5.6 in \cite{PenkavaVanhaecke} provides a deformation $\pi_\star = \pi + h\pi_1 + h^2\pi_2 + h^3\pi_3 + \bar{o}(h^3)$.
The differential polynomials in it can be viewed as Kontsevich graphs; we store their encodings with their numerical coefficients in \texttt{star3pv5.6.txt}:
\begin{verbatim}
    h^0:
    2 0 1                  1
    h^1:
    2 1 1   0 1            1
    h^2:
    2 2 1   0 1 1 2        -1/3
    2 2 1   0 1 0 2        1/3
    2 2 1   0 1 0 1        1/2
    h^3:
    2 3 1   0 1 1 2 2 3    -1/3
    2 3 1   0 1 0 2 2 3    -1/3
    2 3 1   0 1 0 1 0 1    1/6
    2 3 1   0 1 0 2 1 3    -1/6
    2 3 1   0 1 0 4 1 2    -1/6
    2 3 1   0 1 1 2 1 2    1/6
    2 3 1   0 1 0 2 0 2    1/6
    2 3 1   0 1 0 1 0 2    1/3
    2 3 1   0 1 0 1 1 2    -1/3
\end{verbatim}
Calculate the associator in terms of graphs (see Implementation~\ref{ImplAssoc}):
\begin{verbatim}
    $ star_product_associator star3pv5.6.txt > assoc3pv5.6.txt
\end{verbatim}
It vanishes as a consequence of the Jacobi identity (see Implementation~\ref{ImplReduceModJacobi}):
\begin{verbatim}
    $ reduce_mod_jacobi assoc3pv5.6.txt
\end{verbatim}
Hence $\pi_\star$ is associative modulo $\bar{o}(h^3)$ (for arbitrary Poisson structures on $\mathbb{R}^d$).
This deformation is not equal to Kontsevich's $\star$-product modulo $\bar{o}(h^3)$, nor is it Kontsevich's $\star$-product with the loop graph gauged out, but the next example gives the explicit relation between this product and Kontsevich's.
\end{example}
\begin{example}\label{ExPV5.6b}
Theorem 5.6 in \cite{PenkavaVanhaecke} 
further relates arbitrary deformations to $\pi_\star = \pi + h\pi_1 + h^2\pi_2 + h^3\pi_3 + \bar{o}(h^3)$ from Example~\ref{ExPV5.6a}.
Namely, every deformation modulo $\bar{o}(h^3)$ is gauge-equivalent to $\tilde{\pi}_\star = \pi + h\pi_1 + h^2(\pi_2 + \varphi_2) + h^3(\pi_3 + \varphi_3 + \psi_3)$ for some choice of $(\varphi_2,\varphi_3,\psi_3)$, where $\varphi_2$ and $\varphi_3$ are antisymmetric biderivations and $\psi_3$ is a symmetric $2$-cochain satisfying $\partial \psi_3 = [\pi_1, \varphi_2]$.
Let us show that this holds for Kontsevich's $\star$-product expansion $\pi_\star^K \mod \bar{o}(h^3)$.
In Section~\ref{SecWillwacherLoopless} we obtained Kontsevich's $\star$-product with the loop graph gauged out; let us denote it by $\tilde{\pi}_\star^K \mod \bar{o}(h^3)$.
Up to $\bar{o}(h^2)$ the deformations $\tilde{\pi}_\star^K$ and $\pi_\star$ are equal (as we observed in Example~\ref{ExPV5.1}), so we choose $\varphi_2 = 0$.
Subtracting $\pi_\star$ from $\tilde{\pi}_\star^K$ yields the file \texttt{star3\symbol{"5F}gauge2\symbol{"5F}minus\symbol{"5F}pv5.6.txt}:
\begin{verbatim}
    h^3:
    # 1 1 
    2 3 1   0 3 1 2 2 3    -1/6
    2 3 1   0 1 2 4 2 3    1/12
    # 1 2 
    2 3 1   0 3 1 4 1 3    -1/6
    2 3 1   0 1 1 2 2 3    1/6
    2 3 1   0 3 1 2 1 2    1/6
    2 3 1   0 1 1 4 2 3    1/6
    # 2 1 
    2 3 1   0 3 0 2 1 2    1/6
    2 3 1   0 1 0 2 2 3    1/6
    2 3 1   0 3 1 2 0 3    -1/6
    2 3 1   0 1 0 4 2 3    1/6
\end{verbatim}
and it can be seen that it is antisymmetric, so this must be $h^3\varphi_3$ and hence $\psi_3 = 0$.
But the terms are not all of differential order $(1,1)$, so how can $\varphi_3$ be a bi-derivation? 
The answer is that all other terms vanish due to two first-order differential consequences of the Jacobi identity for the Poisson structure.
In other words, there is a Leibniz graph -- with one arrow incoming on the Jacobiator -- which expands to the homogeneous component of order $(1,2)$, and naturally the mirror-reflection of that Leibniz graph expands to the order $(2,1)$ component.
This can be verified using \texttt{reduce\symbol{"5F}mod\symbol{"5F}jacobi}.

\end{example}
\begin{example}
Theorem~6.1 in \cite{PenkavaVanhaecke} gives the obstruction to extending $\pi_\star$ from Example~\ref{ExPV5.6a} to the fourth 
order.
The proof shows that this obstruction is the skew-symmetrization of the degree-$(1,1,1)$ homogeneous component of the associator at $h^4$. 
We reproduce this as follows.
First create a file representing $\pi_\star \mod \bar{o}(\hbar^4)$:
\begin{verbatim}
    $ cp star3pv5.6.txt star4pv6.1.txt
    $ echo "h^4:" >> star4pv6.1.txt
\end{verbatim}
Calculate the associator:
\begin{verbatim}
    $ star_product_associator star4pv6.1.txt > assoc4pv6.1.txt
\end{verbatim}
Skew\/-\/symmetrize (see \S\ref{ApplMultivectorFields} below):
\begin{verbatim}
    $ skew_symmetrize assoc4pv6.1.txt > obs4pv6.1_unreduced.txt
\end{verbatim}
Reduce modulo skew\/-\/symmetry:
\begin{verbatim}
    $ reduce_mod_skew obs4pv6.1_unreduced.txt \ 
          --print-differential-orders > obs4pv6.1.txt
\end{verbatim}
Finally, we see that the degree-$(1,1,1)$ homogeneous component at $\hbar^4$ is 
\begin{verbatim}
    3 4 1   0 1 2 3 3 4 4 5    4/3
    3 4 1   0 2 1 3 3 4 4 5    -4/3
    3 4 1   0 4 1 2 3 4 3 5    -4/3
    3 4 1   0 1 2 3 3 4 3 4    -2/3
    3 4 1   0 2 1 3 3 4 3 4    2/3
    3 4 1   0 4 1 2 3 4 3 4    -2/3
\end{verbatim}
which is (up to an irrelevant constant factor) the sum of six terms written in Theorem 6.1.
We store this component in the file \texttt{obs4pv6.1\symbol{"5F}111.txt} and the others in \texttt{obs4pv6.1\symbol{"5F}rest.txt}.
The latter vanish as a consequence of the Jacobi identity:
\begin{verbatim}
    $ reduce_mod_jacobi obs4pv6.1_rest.txt
\end{verbatim}
as claimed in Theorem~6.1.
The $(1,1,1)$-component does not vanish in general: indeed,
\begin{verbatim}
    $ reduce_mod_jacobi obs4pv6.1_111.txt
\end{verbatim}
does not find any solution.
An explicit Poisson structure for which the obstruction does not vanish is given at the end of \cite[\S 9]{PenkavaVanhaecke}.
We can do the same computation in our software: the respective Poisson structure was added under the name \texttt{4d-pv}, so that
\begin{verbatim}
    $ poisson_evaluate obs4pv6.1_111.txt 4d-pv
\end{verbatim}
shows a multi-vector field which is not identically zero.
\end{example}

\begin{example}
Finally Lemma 8.2 in \cite{PenkavaVanhaecke} gives the correction term $\varphi_3$ to $\pi_3$ for the deformation to extend to the fourth order.
In terms of graph encodings with coefficients, $\varphi_3$ is:
\begin{verbatim}
    2 3 1   0 3 1 2 2 3    -1/6
    2 3 1   0 1 2 4 2 3    1/12
\end{verbatim}
which is exactly the correction term we found in Example~\ref{ExPV5.6b} to make the deformation equal to Kontsevich's $\star$-product modulo $\bar{o}(h^3)$ with the loop graph gauged out.
This proves that the deformation extends to the fourth order.
Alternatively, we can store the full expansion in \texttt{star3pv8.2.txt} and confirm that it extends to the order~$4$ as follows.
First, add graphs with undetermined coefficients at $\hbar^4$:
\begin{verbatim}
    $ cp star3pv8.2.txt star4pv8.2.txt
    $ echo 'h^4:' >> star4pv8.2.txt
    $ generate_graphs 4 --normal-forms=yes --with-coefficients=yes \
          >> star4pv8.2.txt
\end{verbatim}
Calculate the graphical associator:
\begin{verbatim}
    $ star_product_associator star4pv8.2.txt > assoc4pv8.2.txt
\end{verbatim}
Finally, run
\begin{verbatim}
    $ reduce_mod_jacobi assoc4pv8.2.txt 1 2 --solve
\end{verbatim}
and observe that there is a solution.
\end{example}

\subsection{Universal star-products}

We do work on affine Poisson manifolds, so that formulae are coordinate-independent because of the contraction of upper versus lower indices in all tensor objects \emph{and} because all the Jacobians are constant in the course of affine coordinate reparametrizations.
S. Gutt et al in \cite{AmmarChloupGuttUniversal3} provide star-products modulo $\bar{o}(\hbar^3)$ which are universal with respect to all Poisson structures $\cP$ on all smooth finite-dimensional manifolds $\mathcal{M}^d$ equipped with a torsion-free, not necessarily flat, linear connection $\nabla$.
Then the formula of $\star \mod \bar{o}(\hbar^3)$ is expressed in terms of differential polynomials in not only the bi-vector $\cP$ -- clearly, our $\partial_i$ replaced by $\nabla_i$ in every instance -- but also in the \emph{curvature} $R$ of $\nabla$.
\begin{example}
To compare with Kontsevich's formula up to $\bar{o}(\hbar^3)$ which is given in the present paper (also in \cite{sqs15}), we can put $R = 0$ in the formula by Gutt et al.
The terms up to $\bar{o}(\hbar^2)$ clearly match.
A-priori there are $(5-1) \times (5-1) = 16$ terms with coefficient $-1/6$ at $\hbar^3$.
Two pairs of terms double, and they become two terms with coefficients $\pm 1/3$.
One term vanishes identically, because it is the zero graph from Example~\ref{ExZeroGraph}.
The resulting $13$ terms are exactly those in Kontsevich's formula \eqref{EqStarOrd3} at $\hbar^3$.
Hence the formula obtained by Gutt et al. restricted to $R=0$ coincides with Kontsevich's $\star$-product up to $\bar{o}(\hbar^3)$.
\end{example}
It would be interesting to recover such a univeral formula $\star(\cP, R)$ -- depending also on the curvature $R$ -- modulo $\bar{o}(\hbar^4)$ and beyond.

\subsection{Universal flows on spaces of Poisson structures}
\label{ApplMultivectorFields}

The software presented in this paper has been extended to operate on first\/-\/order differential operators which represent (skew-symmetric) multi\/-\/vector fields.
In particular skew\/-\/symmetrization was implemented in \verb"skew_symmetrize" and the graphical Schouten bracket was implemented in \verb"schouten_bracket".
This has been applied by the authors jointly with A.~Bouisaghouane in \cite{f16} to confirm the existence of a universal flow on the spaces of Poisson structures, which was suggested by Kontsevich.
The explicit mechanism that explains why these universal flows exist, based on work by Kontsevich, Willwacher, and Jost, is given by the authors in \cite{OrMorphism}.

\subsection{Open problems}
The following two questions, posed by M.~Kontsevich (private communication) can be approached up to finite orders in $\hbar$ by using the software modules which we have presented:
\begin{itemize}
\item Which quadratic weight relations are determined by the associativity alone?
(We refer to the preprint \cite[p.~61]{BanksPanzerPym} for discussion.)
\item How many degrees of freedom in the graph weights (at a fixed order in $\hbar$) are due to gauge transformations?
\end{itemize}

Independently (Kevin Morand, private communication), an open problem is to describe the action of the graph complex (with suitable cocycles $\gamma \in \ker [\bullet\!\!\!-\!\!\!\bullet, -]$) on the $\star$-product under the infinitesimal symmetries $\hbar\cP \mapsto \hbar\cP + \varepsilon \Or(\gamma)(\hbar\cP) + \bar{o}(\varepsilon)$ of Poisson structures (see \cite{f16,tetra16,OrMorphism} and \cite{MK94,MKZurichICM,Ascona96}).

For a long time, the third and fourth order expansion of Kontsevich's $\star$-product was unknown to the physics community, which may have delayed the implementation of deformation quantization in the study of Nature.
No model of that theory could be tested approximately because it could not be known what any such model actually was.
This is why we present the formula $\star$ mod $\bar{o}(\hbar^4)$ in Eq.~\eqref{EqStarWith10Pym} on pp.~\pageref{EqStarWith10PymStart}--\pageref{EqStarWith10PymEnd} in this paper.

\label{SecDiscussionEnd}

\newpage

\section*{Conclusion}
\label{SecConcl}

\noindent%
The expansion of Kontsevich's star\/-\/product modulo~$\bar{o}(\hbar^4)$
is (here $f,g\in C^\infty(N^n)$)
{\small
\begin{multline*}
f \star g = f \times g 
+\hbar
\cP^{ij} \partial_{i} f \partial_{j} g 
+\hbar^{2}\big(
\tfrac{1}{2} \cP^{ij} \cP^{k\ell} \partial_{k} \partial_{i} f \partial_{\ell} \partial_{j} g 
+\tfrac{1}{3} \partial_{\ell} \cP^{ij} \cP^{k\ell} \partial_{k} \partial_{i} f \partial_{j} g \\
-\tfrac{1}{3} \partial_{\ell} \cP^{ij} \cP^{k\ell} \partial_{i} f \partial_{k} \partial_{j} g 
-\tfrac{1}{6} \partial_{\ell} \cP^{ij} \partial_{j} \cP^{k\ell} \partial_{i} f \partial_{k} g 
\big)
+\hbar^{3}\big(
\tfrac{1}{6} \cP^{ij} \cP^{k\ell} \cP^{mn} \partial_{m} \partial_{k} \partial_{i} f \partial_{n} \partial_{\ell} \partial_{j} g \\
+\tfrac{1}{3} \partial_{n} \cP^{ij} \cP^{k\ell} \cP^{mn} \partial_{m} \partial_{k} \partial_{i} f \partial_{\ell} \partial_{j} g 
-\tfrac{1}{3} \partial_{n} \cP^{ij} \cP^{k\ell} \cP^{mn} \partial_{k} \partial_{i} f \partial_{m} \partial_{\ell} \partial_{j} g \\
-\tfrac{1}{6} \cP^{ij} \partial_{n} \cP^{k\ell} \partial_{\ell} \cP^{mn} \partial_{k} \partial_{i} f \partial_{m} \partial_{j} g 
+\tfrac{1}{6} \partial_{n} \partial_{\ell} \cP^{ij} \cP^{k\ell} \cP^{mn} \partial_{m} \partial_{k} \partial_{i} f \partial_{j} g \\
+\tfrac{1}{6} \partial_{n} \partial_{\ell} \cP^{ij} \cP^{k\ell} \cP^{mn} \partial_{i} f \partial_{m} \partial_{k} \partial_{j} g 
-\tfrac{1}{6} \partial_{m} \partial_{\ell} \cP^{ij} \partial_{n} \cP^{k\ell} \cP^{mn} \partial_{k} \partial_{i} f \partial_{j} g \\
-\tfrac{1}{6} \partial_{m} \partial_{\ell} \cP^{ij} \partial_{n} \cP^{k\ell} \cP^{mn} \partial_{i} f \partial_{k} \partial_{j} g 
-\tfrac{1}{6} \partial_{n} \cP^{ij} \cP^{k\ell} \partial_{\ell} \cP^{mn} \partial_{k} \partial_{i} f \partial_{m} \partial_{j} g \\
-\tfrac{1}{6} \partial_{\ell} \cP^{ij} \partial_{n} \cP^{k\ell} \cP^{mn} \partial_{k} \partial_{i} f \partial_{m} \partial_{j} g 
+\tfrac{1}{6} \partial_{n} \partial_{\ell} \cP^{ij} \partial_{j} \cP^{k\ell} \cP^{mn} \partial_{i} f \partial_{m} \partial_{k} g \\
-\tfrac{1}{6} \partial_{\ell} \cP^{ij} \partial_{n} \partial_{j} \cP^{k\ell} \cP^{mn} \partial_{m} \partial_{i} f \partial_{k} g 
-\tfrac{1}{6} \partial_{m} \partial_{\ell} \cP^{ij} \partial_{n} \partial_{j} \cP^{k\ell} \cP^{mn} \partial_{i} f \partial_{k} g 
\big)\\
+\hbar^{4}\big(
\tfrac{1}{6} \partial_{q} \cP^{ij} \cP^{k\ell} \cP^{mn} \cP^{pq} \partial_{p} \partial_{m} \partial_{k} \partial_{i} f \partial_{n} \partial_{\ell} \partial_{j} g 
-\tfrac{1}{6} \partial_{q} \cP^{ij} \cP^{k\ell} \cP^{mn} \cP^{pq} \partial_{m} \partial_{k} \partial_{i} f \partial_{p} \partial_{n} \partial_{\ell} \partial_{j} g \\
+\tfrac{1}{6} \partial_{q} \partial_{n} \cP^{ij} \cP^{k\ell} \cP^{mn} \cP^{pq} \partial_{p} \partial_{m} \partial_{k} \partial_{i} f \partial_{\ell} \partial_{j} g 
+\tfrac{1}{6} \partial_{q} \partial_{n} \cP^{ij} \cP^{k\ell} \cP^{mn} \cP^{pq} \partial_{k} \partial_{i} f \partial_{p} \partial_{m} \partial_{\ell} \partial_{j} g \\
-\tfrac{1}{6} \partial_{p} \partial_{n} \cP^{ij} \cP^{k\ell} \partial_{q} \cP^{mn} \cP^{pq} \partial_{m} \partial_{k} \partial_{i} f \partial_{\ell} \partial_{j} g 
-\tfrac{1}{6} \partial_{p} \partial_{n} \cP^{ij} \cP^{k\ell} \partial_{q} \cP^{mn} \cP^{pq} \partial_{k} \partial_{i} f \partial_{m} \partial_{\ell} \partial_{j} g \\
-\tfrac{1}{6} \partial_{q} \cP^{ij} \cP^{k\ell} \cP^{mn} \partial_{n} \cP^{pq} \partial_{m} \partial_{k} \partial_{i} f \partial_{p} \partial_{\ell} \partial_{j} g 
-\tfrac{1}{6} \partial_{n} \cP^{ij} \cP^{k\ell} \partial_{q} \cP^{mn} \cP^{pq} \partial_{m} \partial_{k} \partial_{i} f \partial_{p} \partial_{\ell} \partial_{j} g \\
+\tfrac{1}{6} \cP^{ij} \partial_{q} \partial_{n} \cP^{k\ell} \partial_{\ell} \cP^{mn} \cP^{pq} \partial_{k} \partial_{i} f \partial_{p} \partial_{m} \partial_{j} g 
-\tfrac{1}{6} \cP^{ij} \partial_{n} \cP^{k\ell} \partial_{q} \partial_{\ell} \cP^{mn} \cP^{pq} \partial_{p} \partial_{k} \partial_{i} f \partial_{m} \partial_{j} g \\
-\tfrac{1}{6} \cP^{ij} \partial_{p} \partial_{n} \cP^{k\ell} \partial_{q} \partial_{\ell} \cP^{mn} \cP^{pq} \partial_{k} \partial_{i} f \partial_{m} \partial_{j} g 
-\tfrac{1}{9} \partial_{n} \cP^{ij} \partial_{q} \cP^{k\ell} \cP^{mn} \cP^{pq} \partial_{m} \partial_{k} \partial_{i} f \partial_{p} \partial_{\ell} \partial_{j} g \\
-\tfrac{1}{9} \partial_{p} \partial_{n} \cP^{ij} \partial_{q} \cP^{k\ell} \cP^{mn} \cP^{pq} \partial_{m} \partial_{k} \partial_{i} f \partial_{\ell} \partial_{j} g 
-\tfrac{1}{9} \partial_{p} \partial_{n} \cP^{ij} \partial_{q} \cP^{k\ell} \cP^{mn} \cP^{pq} \partial_{k} \partial_{i} f \partial_{m} \partial_{\ell} \partial_{j} g \\
+\tfrac{1}{24} \cP^{ij} \cP^{k\ell} \cP^{mn} \cP^{pq} \partial_{p} \partial_{m} \partial_{k} \partial_{i} f \partial_{q} \partial_{n} \partial_{\ell} \partial_{j} g 
-\tfrac{1}{12} \cP^{ij} \cP^{k\ell} \partial_{q} \cP^{mn} \partial_{n} \cP^{pq} \partial_{m} \partial_{k} \partial_{i} f \partial_{p} \partial_{\ell} \partial_{j} g \\
+\tfrac{1}{18} \partial_{n} \cP^{ij} \partial_{q} \cP^{k\ell} \cP^{mn} \cP^{pq} \partial_{p} \partial_{m} \partial_{k} \partial_{i} f \partial_{\ell} \partial_{j} g 
-\tfrac{1}{18} \partial_{\ell} \cP^{ij} \cP^{k\ell} \partial_{q} \cP^{mn} \partial_{n} \cP^{pq} \partial_{m} \partial_{k} \partial_{i} f \partial_{p} \partial_{j} g \\
+\tfrac{1}{18} \partial_{n} \cP^{ij} \partial_{q} \cP^{k\ell} \cP^{mn} \cP^{pq} \partial_{k} \partial_{i} f \partial_{p} \partial_{m} \partial_{\ell} \partial_{j} g 
+\tfrac{1}{18} \partial_{q} \cP^{ij} \partial_{n} \cP^{k\ell} \partial_{\ell} \cP^{mn} \cP^{pq} \partial_{k} \partial_{i} f \partial_{p} \partial_{m} \partial_{j} g \\
+\tfrac{1}{72} \partial_{\ell} \cP^{ij} \partial_{j} \cP^{k\ell} \partial_{q} \cP^{mn} \partial_{n} \cP^{pq} \partial_{m} \partial_{i} f \partial_{p} \partial_{k} g 
-\tfrac{1}{18} \partial_{n} \cP^{ij} \partial_{p} \cP^{k\ell} \partial_{q} \cP^{mn} \cP^{pq} \partial_{m} \partial_{k} \partial_{i} f \partial_{\ell} \partial_{j} g \\
-\tfrac{1}{18} \partial_{n} \cP^{ij} \partial_{p} \cP^{k\ell} \partial_{q} \cP^{mn} \cP^{pq} \partial_{k} \partial_{i} f \partial_{m} \partial_{\ell} \partial_{j} g 
+\tfrac{1}{30} \partial_{q} \partial_{n} \partial_{\ell} \cP^{ij} \cP^{k\ell} \cP^{mn} \cP^{pq} \partial_{p} \partial_{m} \partial_{k} \partial_{i} f \partial_{j} g \\
-\tfrac{1}{30} \partial_{q} \partial_{n} \partial_{\ell} \cP^{ij} \cP^{k\ell} \cP^{mn} \cP^{pq} \partial_{i} f \partial_{p} \partial_{m} \partial_{k} \partial_{j} g 
+\tfrac{2}{45} \partial_{n} \partial_{\ell} \cP^{ij} \partial_{q} \cP^{k\ell} \cP^{mn} \cP^{pq} \partial_{p} \partial_{m} \partial_{k} \partial_{i} f \partial_{j} g \\
-\tfrac{2}{45} \partial_{n} \partial_{\ell} \cP^{ij} \partial_{q} \cP^{k\ell} \cP^{mn} \cP^{pq} \partial_{i} f \partial_{p} \partial_{m} \partial_{k} \partial_{j} g 
-\tfrac{1}{30} \partial_{q} \partial_{n} \partial_{\ell} \cP^{ij} \cP^{k\ell} \cP^{mn} \cP^{pq} \partial_{m} \partial_{k} \partial_{i} f \partial_{p} \partial_{j} g \\
+\tfrac{1}{30} \partial_{q} \partial_{n} \partial_{\ell} \cP^{ij} \cP^{k\ell} \cP^{mn} \cP^{pq} \partial_{k} \partial_{i} f \partial_{p} \partial_{m} \partial_{j} g 
-\tfrac{7}{90} \partial_{p} \partial_{n} \partial_{\ell} \cP^{ij} \partial_{q} \cP^{k\ell} \cP^{mn} \cP^{pq} \partial_{m} \partial_{k} \partial_{i} f \partial_{j} g \\
+\tfrac{7}{90} \partial_{p} \partial_{n} \partial_{\ell} \cP^{ij} \partial_{q} \cP^{k\ell} \cP^{mn} \cP^{pq} \partial_{i} f \partial_{m} \partial_{k} \partial_{j} g 
+\tfrac{1}{30} \partial_{\ell} \cP^{ij} \partial_{q} \partial_{n} \cP^{k\ell} \cP^{mn} \cP^{pq} \partial_{p} \partial_{m} \partial_{k} \partial_{i} f \partial_{j} g \\
-\tfrac{1}{30} \partial_{\ell} \cP^{ij} \partial_{q} \partial_{n} \cP^{k\ell} \cP^{mn} \cP^{pq} \partial_{i} f \partial_{p} \partial_{m} \partial_{k} \partial_{j} g 
-\tfrac{1}{45} \partial_{\ell} \cP^{ij} \partial_{n} \cP^{k\ell} \partial_{q} \cP^{mn} \cP^{pq} \partial_{p} \partial_{m} \partial_{k} \partial_{i} f \partial_{j} g \\
+\tfrac{1}{45} \partial_{\ell} \cP^{ij} \partial_{n} \cP^{k\ell} \partial_{q} \cP^{mn} \cP^{pq} \partial_{i} f \partial_{p} \partial_{m} \partial_{k} \partial_{j} g 
+\tfrac{1}{45} \partial_{q} \partial_{\ell} \cP^{ij} \partial_{n} \cP^{k\ell} \cP^{mn} \cP^{pq} \partial_{m} \partial_{k} \partial_{i} f \partial_{p} \partial_{j} g \\
-\tfrac{1}{45} \partial_{n} \partial_{\ell} \cP^{ij} \cP^{k\ell} \partial_{q} \cP^{mn} \cP^{pq} \partial_{k} \partial_{i} f \partial_{p} \partial_{m} \partial_{j} g 
+\tfrac{1}{30} \partial_{\ell} \cP^{ij} \partial_{q} \partial_{n} \cP^{k\ell} \cP^{mn} \cP^{pq} \partial_{m} \partial_{k} \partial_{i} f \partial_{p} \partial_{j} g \\
-\tfrac{1}{30} \partial_{n} \cP^{ij} \cP^{k\ell} \partial_{q} \partial_{\ell} \cP^{mn} \cP^{pq} \partial_{k} \partial_{i} f \partial_{p} \partial_{m} \partial_{j} g 
-\tfrac{1}{90} \partial_{\ell} \cP^{ij} \partial_{n} \cP^{k\ell} \partial_{q} \cP^{mn} \cP^{pq} \partial_{m} \partial_{k} \partial_{i} f \partial_{p} \partial_{j} g \\
+\tfrac{1}{90} \partial_{q} \cP^{ij} \cP^{k\ell} \partial_{\ell} \cP^{mn} \partial_{n} \cP^{pq} \partial_{k} \partial_{i} f \partial_{p} \partial_{m} \partial_{j} g 
+\tfrac{1}{90} \partial_{p} \partial_{\ell} \cP^{ij} \partial_{q} \partial_{n} \cP^{k\ell} \cP^{mn} \cP^{pq} \partial_{m} \partial_{k} \partial_{i} f \partial_{j} g \\
-\tfrac{1}{90} \partial_{p} \partial_{\ell} \cP^{ij} \partial_{q} \partial_{n} \cP^{k\ell} \cP^{mn} \cP^{pq} \partial_{i} f \partial_{m} \partial_{k} \partial_{j} g 
-\tfrac{1}{30} \partial_{p} \partial_{\ell} \cP^{ij} \partial_{n} \cP^{k\ell} \partial_{q} \cP^{mn} \cP^{pq} \partial_{m} \partial_{k} \partial_{i} f \partial_{j} g \\
+\tfrac{1}{30} \partial_{p} \partial_{\ell} \cP^{ij} \partial_{n} \cP^{k\ell} \partial_{q} \cP^{mn} \cP^{pq} \partial_{i} f \partial_{m} \partial_{k} \partial_{j} g 
-\tfrac{2}{45} \partial_{\ell} \cP^{ij} \partial_{p} \partial_{n} \cP^{k\ell} \partial_{q} \cP^{mn} \cP^{pq} \partial_{m} \partial_{k} \partial_{i} f \partial_{j} g
\end{multline*}
\begin{multline*}
+\tfrac{2}{45} \partial_{\ell} \cP^{ij} \partial_{p} \partial_{n} \cP^{k\ell} \partial_{q} \cP^{mn} \cP^{pq} \partial_{i} f \partial_{m} \partial_{k} \partial_{j} g 
-\tfrac{1}{30} \partial_{q} \partial_{\ell} \cP^{ij} \cP^{k\ell} \cP^{mn} \partial_{n} \cP^{pq} \partial_{m} \partial_{k} \partial_{i} f \partial_{p} \partial_{j} g \\
+\tfrac{1}{30} \partial_{n} \partial_{\ell} \cP^{ij} \partial_{q} \cP^{k\ell} \cP^{mn} \cP^{pq} \partial_{k} \partial_{i} f \partial_{p} \partial_{m} \partial_{j} g 
-\tfrac{1}{15} \partial_{\ell} \cP^{ij} \partial_{q} \cP^{k\ell} \cP^{mn} \partial_{n} \cP^{pq} \partial_{m} \partial_{k} \partial_{i} f \partial_{p} \partial_{j} g \\
+\tfrac{1}{15} \partial_{n} \cP^{ij} \partial_{q} \cP^{k\ell} \partial_{\ell} \cP^{mn} \cP^{pq} \partial_{k} \partial_{i} f \partial_{p} \partial_{m} \partial_{j} g 
-\tfrac{1}{30} \partial_{p} \partial_{\ell} \cP^{ij} \partial_{q} \cP^{k\ell} \cP^{mn} \partial_{n} \cP^{pq} \partial_{m} \partial_{k} \partial_{i} f \partial_{j} g \\
+\tfrac{1}{30} \partial_{p} \partial_{\ell} \cP^{ij} \partial_{q} \cP^{k\ell} \cP^{mn} \partial_{n} \cP^{pq} \partial_{i} f \partial_{m} \partial_{k} \partial_{j} g 
-\tfrac{1}{45} \partial_{p} \partial_{\ell} \cP^{ij} \cP^{k\ell} \partial_{q} \cP^{mn} \partial_{n} \cP^{pq} \partial_{m} \partial_{k} \partial_{i} f \partial_{j} g \\
+\tfrac{1}{45} \partial_{p} \partial_{\ell} \cP^{ij} \cP^{k\ell} \partial_{q} \cP^{mn} \partial_{n} \cP^{pq} \partial_{i} f \partial_{m} \partial_{k} \partial_{j} g 
+\tfrac{1}{90} \partial_{\ell} \cP^{ij} \partial_{p} \cP^{k\ell} \partial_{q} \cP^{mn} \partial_{n} \cP^{pq} \partial_{m} \partial_{k} \partial_{i} f \partial_{j} g \\
-\tfrac{1}{90} \partial_{\ell} \cP^{ij} \partial_{p} \cP^{k\ell} \partial_{q} \cP^{mn} \partial_{n} \cP^{pq} \partial_{i} f \partial_{m} \partial_{k} \partial_{j} g 
+\tfrac{2}{45} \partial_{p} \partial_{n} \partial_{\ell} \cP^{ij} \partial_{q} \cP^{k\ell} \cP^{mn} \cP^{pq} \partial_{k} \partial_{i} f \partial_{m} \partial_{j} g \\
-\tfrac{2}{45} \partial_{p} \partial_{n} \partial_{\ell} \cP^{ij} \cP^{k\ell} \partial_{q} \cP^{mn} \cP^{pq} \partial_{k} \partial_{i} f \partial_{m} \partial_{j} g 
+\tfrac{1}{15} \partial_{\ell} \cP^{ij} \partial_{q} \partial_{n} \cP^{k\ell} \cP^{mn} \cP^{pq} \partial_{k} \partial_{i} f \partial_{p} \partial_{m} \partial_{j} g \\
-\tfrac{1}{15} \partial_{n} \cP^{ij} \cP^{k\ell} \partial_{q} \partial_{\ell} \cP^{mn} \cP^{pq} \partial_{p} \partial_{k} \partial_{i} f \partial_{m} \partial_{j} g 
+\tfrac{1}{90} \partial_{\ell} \cP^{ij} \partial_{n} \cP^{k\ell} \partial_{q} \cP^{mn} \cP^{pq} \partial_{k} \partial_{i} f \partial_{p} \partial_{m} \partial_{j} g \\
-\tfrac{1}{90} \partial_{q} \cP^{ij} \cP^{k\ell} \partial_{\ell} \cP^{mn} \partial_{n} \cP^{pq} \partial_{m} \partial_{k} \partial_{i} f \partial_{p} \partial_{j} g 
+\tfrac{1}{90} \partial_{p} \partial_{\ell} \cP^{ij} \cP^{k\ell} \partial_{q} \cP^{mn} \partial_{n} \cP^{pq} \partial_{k} \partial_{i} f \partial_{m} \partial_{j} g \\
-\tfrac{1}{90} \partial_{q} \partial_{m} \cP^{ij} \partial_{n} \cP^{k\ell} \partial_{\ell} \cP^{mn} \cP^{pq} \partial_{k} \partial_{i} f \partial_{p} \partial_{j} g 
+\tfrac{1}{90} \partial_{p} \cP^{ij} \partial_{q} \cP^{k\ell} \partial_{\ell} \cP^{mn} \partial_{n} \cP^{pq} \partial_{m} \partial_{k} \partial_{i} f \partial_{j} g \\
-\tfrac{1}{90} \partial_{p} \cP^{ij} \partial_{q} \cP^{k\ell} \partial_{\ell} \cP^{mn} \partial_{n} \cP^{pq} \partial_{i} f \partial_{m} \partial_{k} \partial_{j} g 
+\tfrac{1}{90} \partial_{p} \cP^{ij} \cP^{k\ell} \partial_{q} \partial_{\ell} \cP^{mn} \partial_{n} \cP^{pq} \partial_{m} \partial_{k} \partial_{i} f \partial_{j} g \\
-\tfrac{1}{90} \partial_{p} \cP^{ij} \cP^{k\ell} \partial_{q} \partial_{\ell} \cP^{mn} \partial_{n} \cP^{pq} \partial_{i} f \partial_{m} \partial_{k} \partial_{j} g 
-\tfrac{1}{30} \partial_{m} \cP^{ij} \partial_{n} \cP^{k\ell} \partial_{q} \partial_{\ell} \cP^{mn} \cP^{pq} \partial_{p} \partial_{k} \partial_{i} f \partial_{j} g \\
+\tfrac{1}{30} \partial_{m} \cP^{ij} \partial_{n} \cP^{k\ell} \partial_{q} \partial_{\ell} \cP^{mn} \cP^{pq} \partial_{i} f \partial_{p} \partial_{k} \partial_{j} g 
+\tfrac{1}{90} \partial_{q} \cP^{ij} \partial_{j} \cP^{k\ell} \partial_{\ell} \cP^{mn} \partial_{n} \cP^{pq} \partial_{m} \partial_{k} \partial_{i} f \partial_{p} g \\
+\tfrac{1}{90} \partial_{q} \cP^{ij} \partial_{j} \cP^{k\ell} \partial_{\ell} \cP^{mn} \partial_{n} \cP^{pq} \partial_{i} f \partial_{p} \partial_{m} \partial_{k} g 
+\tfrac{1}{90} \cP^{ij} \partial_{q} \partial_{j} \cP^{k\ell} \partial_{\ell} \cP^{mn} \partial_{n} \cP^{pq} \partial_{m} \partial_{k} \partial_{i} f \partial_{p} g \\
+\tfrac{1}{90} \partial_{n} \cP^{ij} \partial_{q} \partial_{j} \cP^{k\ell} \partial_{\ell} \cP^{mn} \cP^{pq} \partial_{i} f \partial_{p} \partial_{m} \partial_{k} g 
+\tfrac{1}{90} \cP^{ij} \partial_{j} \cP^{k\ell} \partial_{q} \partial_{\ell} \cP^{mn} \partial_{n} \cP^{pq} \partial_{m} \partial_{k} \partial_{i} f \partial_{p} g \\
+\tfrac{1}{90} \partial_{\ell} \cP^{ij} \partial_{n} \partial_{j} \cP^{k\ell} \partial_{q} \cP^{mn} \cP^{pq} \partial_{i} f \partial_{p} \partial_{m} \partial_{k} g 
-\tfrac{1}{30} \partial_{n} \cP^{ij} \partial_{q} \partial_{j} \cP^{k\ell} \partial_{\ell} \cP^{mn} \cP^{pq} \partial_{p} \partial_{k} \partial_{i} f \partial_{m} g \\
-\tfrac{1}{30} \partial_{n} \cP^{ij} \partial_{j} \cP^{k\ell} \partial_{q} \partial_{\ell} \cP^{mn} \cP^{pq} \partial_{i} f \partial_{p} \partial_{m} \partial_{k} g 
-\tfrac{1}{45} \partial_{n} \cP^{ij} \partial_{j} \cP^{k\ell} \partial_{q} \partial_{\ell} \cP^{mn} \cP^{pq} \partial_{p} \partial_{k} \partial_{i} f \partial_{m} g \\
-\tfrac{1}{45} \partial_{q} \partial_{n} \cP^{ij} \partial_{j} \cP^{k\ell} \partial_{\ell} \cP^{mn} \cP^{pq} \partial_{i} f \partial_{p} \partial_{m} \partial_{k} g 
-\tfrac{1}{90} \partial_{n} \cP^{ij} \partial_{j} \cP^{k\ell} \partial_{q} \partial_{\ell} \cP^{mn} \cP^{pq} \partial_{k} \partial_{i} f \partial_{p} \partial_{m} g \\
-\tfrac{1}{90} \cP^{ij} \partial_{q} \partial_{j} \cP^{k\ell} \partial_{\ell} \cP^{mn} \partial_{n} \cP^{pq} \partial_{k} \partial_{i} f \partial_{p} \partial_{m} g 
-\tfrac{1}{60} \cP^{ij} \partial_{q} \partial_{n} \partial_{j} \cP^{k\ell} \partial_{\ell} \cP^{mn} \cP^{pq} \partial_{p} \partial_{k} \partial_{i} f \partial_{m} g \\
-\tfrac{1}{60} \partial_{\ell} \cP^{ij} \partial_{q} \partial_{n} \partial_{j} \cP^{k\ell} \cP^{mn} \cP^{pq} \partial_{i} f \partial_{p} \partial_{m} \partial_{k} g 
-\tfrac{1}{45} \cP^{ij} \partial_{n} \partial_{j} \cP^{k\ell} \partial_{q} \partial_{\ell} \cP^{mn} \cP^{pq} \partial_{p} \partial_{k} \partial_{i} f \partial_{m} g \\
-\tfrac{1}{45} \partial_{n} \partial_{\ell} \cP^{ij} \partial_{q} \partial_{j} \cP^{k\ell} \cP^{mn} \cP^{pq} \partial_{i} f \partial_{p} \partial_{m} \partial_{k} g 
+\tfrac{1}{30} \cP^{ij} \partial_{q} \partial_{n} \partial_{j} \cP^{k\ell} \partial_{\ell} \cP^{mn} \cP^{pq} \partial_{k} \partial_{i} f \partial_{p} \partial_{m} g \\
+\tfrac{1}{30} \partial_{\ell} \cP^{ij} \partial_{q} \partial_{n} \partial_{j} \cP^{k\ell} \cP^{mn} \cP^{pq} \partial_{m} \partial_{i} f \partial_{p} \partial_{k} g 
-\tfrac{1}{90} \cP^{ij} \partial_{n} \partial_{j} \cP^{k\ell} \partial_{q} \partial_{\ell} \cP^{mn} \cP^{pq} \partial_{k} \partial_{i} f \partial_{p} \partial_{m} g \\
-\tfrac{1}{45} \cP^{ij} \partial_{j} \cP^{k\ell} \partial_{q} \partial_{\ell} \cP^{mn} \partial_{n} \cP^{pq} \partial_{p} \partial_{k} \partial_{i} f \partial_{m} g 
-\tfrac{1}{45} \partial_{n} \partial_{\ell} \cP^{ij} \partial_{j} \cP^{k\ell} \partial_{q} \cP^{mn} \cP^{pq} \partial_{i} f \partial_{p} \partial_{m} \partial_{k} g \\
-\tfrac{1}{20} \partial_{\ell} \cP^{ij} \partial_{q} \partial_{n} \partial_{j} \cP^{k\ell} \cP^{mn} \cP^{pq} \partial_{p} \partial_{m} \partial_{i} f \partial_{k} g 
-\tfrac{1}{20} \partial_{q} \partial_{n} \partial_{\ell} \cP^{ij} \partial_{j} \cP^{k\ell} \cP^{mn} \cP^{pq} \partial_{i} f \partial_{p} \partial_{m} \partial_{k} g \\
-\tfrac{13}{90} \partial_{n} \partial_{\ell} \cP^{ij} \partial_{q} \cP^{k\ell} \cP^{mn} \cP^{pq} \partial_{m} \partial_{k} \partial_{i} f \partial_{p} \partial_{j} g 
+\tfrac{13}{90} \partial_{q} \partial_{n} \cP^{ij} \cP^{k\ell} \partial_{\ell} \cP^{mn} \cP^{pq} \partial_{k} \partial_{i} f \partial_{p} \partial_{m} \partial_{j} g \\
+\tfrac{13}{90} \partial_{q} \partial_{\ell} \cP^{ij} \partial_{n} \partial_{j} \cP^{k\ell} \cP^{mn} \cP^{pq} \partial_{m} \partial_{i} f \partial_{p} \partial_{k} g 
-16  p_4 \partial_{p} \partial_{m} \partial_{\ell} \cP^{ij} \partial_{n} \cP^{k\ell} \partial_{q} \cP^{mn} \cP^{pq} \partial_{k} \partial_{i} f \partial_{j} g \\
+16  p_4 \partial_{p} \partial_{m} \partial_{\ell} \cP^{ij} \partial_{n} \cP^{k\ell} \partial_{q} \cP^{mn} \cP^{pq} \partial_{i} f \partial_{k} \partial_{j} g 
-16  p_5 \partial_{p} \partial_{m} \cP^{ij} \partial_{n} \cP^{k\ell} \partial_{q} \partial_{\ell} \cP^{mn} \cP^{pq} \partial_{k} \partial_{i} f \partial_{j} g \\
+16  p_5 \partial_{p} \partial_{m} \cP^{ij} \partial_{n} \cP^{k\ell} \partial_{q} \partial_{\ell} \cP^{mn} \cP^{pq} \partial_{i} f \partial_{k} \partial_{j} g 
-16  p_4 \partial_{p} \partial_{m} \cP^{ij} \partial_{q} \cP^{k\ell} \partial_{\ell} \cP^{mn} \partial_{n} \cP^{pq} \partial_{k} \partial_{i} f \partial_{j} g \\
+16  p_4 \partial_{p} \partial_{m} \cP^{ij} \partial_{q} \cP^{k\ell} \partial_{\ell} \cP^{mn} \partial_{n} \cP^{pq} \partial_{i} f \partial_{k} \partial_{j} g 
+16  p_4 \partial_{m} \cP^{ij} \partial_{p} \cP^{k\ell} \partial_{q} \partial_{\ell} \cP^{mn} \partial_{n} \cP^{pq} \partial_{k} \partial_{i} f \partial_{j} g \\
-16  p_4 \partial_{m} \cP^{ij} \partial_{p} \cP^{k\ell} \partial_{q} \partial_{\ell} \cP^{mn} \partial_{n} \cP^{pq} \partial_{i} f \partial_{k} \partial_{j} g 
+16  p_5 \partial_{p} \cP^{ij} \partial_{m} \cP^{k\ell} \partial_{q} \partial_{\ell} \cP^{mn} \partial_{n} \cP^{pq} \partial_{k} \partial_{i} f \partial_{j} g \\
-16  p_5 \partial_{p} \cP^{ij} \partial_{m} \cP^{k\ell} \partial_{q} \partial_{\ell} \cP^{mn} \partial_{n} \cP^{pq} \partial_{i} f \partial_{k} \partial_{j} g 
+16  p_1 \partial_{m} \cP^{ij} \partial_{q} \partial_{j} \cP^{k\ell} \partial_{\ell} \cP^{mn} \partial_{n} \cP^{pq} \partial_{k} \partial_{i} f \partial_{p} g \\
-16  p_1 \partial_{m} \cP^{ij} \partial_{q} \partial_{j} \cP^{k\ell} \partial_{\ell} \cP^{mn} \partial_{n} \cP^{pq} \partial_{i} f \partial_{p} \partial_{k} g 
+16  p_2 \partial_{m} \cP^{ij} \partial_{j} \cP^{k\ell} \partial_{q} \partial_{\ell} \cP^{mn} \partial_{n} \cP^{pq} \partial_{k} \partial_{i} f \partial_{p} g \\
-16  p_2 \partial_{k} \cP^{ij} \partial_{q} \partial_{j} \cP^{k\ell} \partial_{\ell} \cP^{mn} \partial_{n} \cP^{pq} \partial_{i} f \partial_{p} \partial_{m} g 
+16  p_3 \partial_{q} \cP^{ij} \partial_{m} \partial_{j} \cP^{k\ell} \partial_{\ell} \cP^{mn} \partial_{n} \cP^{pq} \partial_{k} \partial_{i} f \partial_{p} g \\
\end{multline*}
\begin{multline*}
-16  p_3 \partial_{p} \cP^{ij} \partial_{j} \cP^{k\ell} \partial_{q} \partial_{\ell} \cP^{mn} \partial_{n} \cP^{pq} \partial_{i} f \partial_{m} \partial_{k} g 
+16  p_4 \cP^{ij} \partial_{q} \partial_{m} \partial_{j} \cP^{k\ell} \partial_{\ell} \cP^{mn} \partial_{n} \cP^{pq} \partial_{k} \partial_{i} f \partial_{p} g \\
-16  p_4 \partial_{m} \cP^{ij} \partial_{q} \partial_{n} \partial_{j} \cP^{k\ell} \partial_{\ell} \cP^{mn} \cP^{pq} \partial_{i} f \partial_{p} \partial_{k} g 
+16  p_5 \cP^{ij} \partial_{m} \partial_{j} \cP^{k\ell} \partial_{q} \partial_{\ell} \cP^{mn} \partial_{n} \cP^{pq} \partial_{k} \partial_{i} f \partial_{p} g \\
-16  p_5 \partial_{k} \cP^{ij} \partial_{n} \partial_{j} \cP^{k\ell} \partial_{q} \partial_{\ell} \cP^{mn} \cP^{pq} \partial_{i} f \partial_{p} \partial_{m} g 
+16  p_6 \partial_{\ell} \cP^{ij} \partial_{n} \partial_{j} \cP^{k\ell} \partial_{q} \cP^{mn} \cP^{pq} \partial_{m} \partial_{i} f \partial_{p} \partial_{k} g \\
+16  p_6 \partial_{q} \partial_{\ell} \cP^{ij} \partial_{j} \cP^{k\ell} \cP^{mn} \partial_{n} \cP^{pq} \partial_{m} \partial_{i} f \partial_{p} \partial_{k} g 
+16  p_7 \partial_{p} \partial_{\ell} \cP^{ij} \partial_{q} \partial_{n} \partial_{j} \cP^{k\ell} \cP^{mn} \cP^{pq} \partial_{m} \partial_{i} f \partial_{k} g \\
-16  p_7 \partial_{p} \partial_{n} \partial_{\ell} \cP^{ij} \partial_{q} \partial_{j} \cP^{k\ell} \cP^{mn} \cP^{pq} \partial_{i} f \partial_{m} \partial_{k} g 
+16  p_8 \partial_{p} \partial_{\ell} \cP^{ij} \partial_{n} \partial_{j} \cP^{k\ell} \partial_{q} \cP^{mn} \cP^{pq} \partial_{m} \partial_{i} f \partial_{k} g \\
+16  p_8 \partial_{n} \partial_{\ell} \cP^{ij} \partial_{p} \partial_{j} \cP^{k\ell} \partial_{q} \cP^{mn} \cP^{pq} \partial_{i} f \partial_{m} \partial_{k} g 
+16  p_9 \partial_{p} \cP^{ij} \partial_{q} \partial_{j} \cP^{k\ell} \partial_{\ell} \cP^{mn} \partial_{n} \cP^{pq} \partial_{m} \partial_{i} f \partial_{k} g \\
-16  p_9 \partial_{q} \partial_{m} \cP^{ij} \partial_{j} \cP^{k\ell} \partial_{\ell} \cP^{mn} \partial_{n} \cP^{pq} \partial_{i} f \partial_{p} \partial_{k} g 
-32  p_4 \partial_{m} \cP^{ij} \partial_{q} \partial_{n} \partial_{j} \cP^{k\ell} \partial_{\ell} \cP^{mn} \cP^{pq} \partial_{p} \partial_{i} f \partial_{k} g \\
+32  p_4 \partial_{q} \partial_{n} \partial_{k} \cP^{ij} \partial_{j} \cP^{k\ell} \partial_{\ell} \cP^{mn} \cP^{pq} \partial_{i} f \partial_{p} \partial_{m} g 
+16  p_{10} \partial_{p} \partial_{m} \cP^{ij} \partial_{q} \partial_{n} \partial_{j} \cP^{k\ell} \partial_{\ell} \cP^{mn} \cP^{pq} \partial_{i} f \partial_{k} g \\
+16  p_{10} \partial_{p} \partial_{n} \partial_{k} \cP^{ij} \partial_{j} \cP^{k\ell} \partial_{q} \partial_{\ell} \cP^{mn} \cP^{pq} \partial_{i} f \partial_{m} g 
+32  p_5 \cP^{ij} \partial_{m} \partial_{j} \cP^{k\ell} \partial_{q} \partial_{\ell} \cP^{mn} \partial_{n} \cP^{pq} \partial_{p} \partial_{i} f \partial_{k} g \\
+32  p_5 \partial_{q} \partial_{k} \cP^{ij} \partial_{n} \partial_{j} \cP^{k\ell} \partial_{\ell} \cP^{mn} \cP^{pq} \partial_{i} f \partial_{p} \partial_{m} g 
+(\tfrac{1}{12}+8 p_7) \partial_{p} \partial_{m} \partial_{\ell} \cP^{ij} \partial_{q} \partial_{n} \cP^{k\ell} \cP^{mn} \cP^{pq} \partial_{k} \partial_{i} f \partial_{j} g \\
+(-\tfrac{1}{12}-8 p_7) \partial_{p} \partial_{m} \partial_{\ell} \cP^{ij} \partial_{q} \partial_{n} \cP^{k\ell} \cP^{mn} \cP^{pq} \partial_{i} f \partial_{k} \partial_{j} g \\
+(-\tfrac{1}{12}-8 p_7) \partial_{m} \cP^{ij} \partial_{p} \partial_{n} \cP^{k\ell} \partial_{q} \partial_{\ell} \cP^{mn} \cP^{pq} \partial_{k} \partial_{i} f \partial_{j} g \\
+(\tfrac{1}{12}+8 p_7) \partial_{m} \cP^{ij} \partial_{p} \partial_{n} \cP^{k\ell} \partial_{q} \partial_{\ell} \cP^{mn} \cP^{pq} \partial_{i} f \partial_{k} \partial_{j} g \\
+(-\tfrac{1}{12}-8 p_7) \cP^{ij} \partial_{p} \partial_{n} \partial_{j} \cP^{k\ell} \partial_{q} \partial_{\ell} \cP^{mn} \cP^{pq} \partial_{k} \partial_{i} f \partial_{m} g \\
+(\tfrac{1}{12}+8 p_7) \partial_{p} \partial_{\ell} \cP^{ij} \partial_{q} \partial_{n} \partial_{j} \cP^{k\ell} \cP^{mn} \cP^{pq} \partial_{i} f \partial_{m} \partial_{k} g \\
+(\tfrac{1}{45}+8 p_7) \partial_{p} \partial_{m} \partial_{\ell} \cP^{ij} \partial_{q} \partial_{n} \partial_{j} \cP^{k\ell} \cP^{mn} \cP^{pq} \partial_{i} f \partial_{k} g \\
+(-\tfrac{1}{60}-24 p_7) \partial_{p} \partial_{\ell} \cP^{ij} \partial_{q} \partial_{n} \cP^{k\ell} \cP^{mn} \cP^{pq} \partial_{k} \partial_{i} f \partial_{m} \partial_{j} g \\
+(\tfrac{1}{60}+24 p_7) \partial_{p} \partial_{n} \cP^{ij} \cP^{k\ell} \partial_{q} \partial_{\ell} \cP^{mn} \cP^{pq} \partial_{k} \partial_{i} f \partial_{m} \partial_{j} g \\
+(\tfrac{4}{45}-16 p_6) \partial_{q} \cP^{ij} \partial_{n} \partial_{j} \cP^{k\ell} \partial_{\ell} \cP^{mn} \cP^{pq} \partial_{k} \partial_{i} f \partial_{p} \partial_{m} g \\
+(\tfrac{4}{45}-16 p_6) \partial_{\ell} \cP^{ij} \partial_{q} \partial_{j} \cP^{k\ell} \cP^{mn} \partial_{n} \cP^{pq} \partial_{m} \partial_{i} f \partial_{p} \partial_{k} g \\
+(\tfrac{17}{90}+24 p_7) \partial_{p} \partial_{m} \cP^{ij} \partial_{q} \partial_{n} \cP^{k\ell} \cP^{mn} \cP^{pq} \partial_{k} \partial_{i} f \partial_{\ell} \partial_{j} g \\
+(-\tfrac{1}{12}+16 p_6+48 p_5) \partial_{m} \cP^{ij} \partial_{p} \cP^{k\ell} \partial_{q} \cP^{mn} \partial_{n} \cP^{pq} \partial_{k} \partial_{i} f \partial_{\ell} \partial_{j} g \\
+(\tfrac{7}{90}-16 p_6-48 p_5) \partial_{p} \cP^{ij} \cP^{k\ell} \partial_{q} \partial_{\ell} \cP^{mn} \partial_{n} \cP^{pq} \partial_{k} \partial_{i} f \partial_{m} \partial_{j} g \\
+(-\tfrac{7}{90}+16 p_6+48 p_5) \partial_{m} \cP^{ij} \partial_{q} \partial_{n} \cP^{k\ell} \partial_{\ell} \cP^{mn} \cP^{pq} \partial_{k} \partial_{i} f \partial_{p} \partial_{j} g \\
+(-\tfrac{1}{36}+16 p_4-16 p_5-8 p_7) \partial_{p} \cP^{ij} \partial_{q} \partial_{m} \cP^{k\ell} \partial_{\ell} \cP^{mn} \partial_{n} \cP^{pq} \partial_{k} \partial_{i} f \partial_{j} g \\
+(\tfrac{1}{36}-16 p_4+16 p_5+8 p_7) \partial_{p} \cP^{ij} \partial_{q} \partial_{m} \cP^{k\ell} \partial_{\ell} \cP^{mn} \partial_{n} \cP^{pq} \partial_{i} f \partial_{k} \partial_{j} g \\
+(-\tfrac{1}{9}+16 p_6-48 p_4+48 p_5) \partial_{n} \cP^{ij} \partial_{q} \partial_{j} \cP^{k\ell} \partial_{\ell} \cP^{mn} \cP^{pq} \partial_{k} \partial_{i} f \partial_{p} \partial_{m} g \\
+(-\tfrac{1}{9}+16 p_6-48 p_4+48 p_5) \cP^{ij} \partial_{q} \partial_{j} \cP^{k\ell} \partial_{\ell} \cP^{mn} \partial_{n} \cP^{pq} \partial_{m} \partial_{i} f \partial_{p} \partial_{k} g \\
+(-\tfrac{1}{36}+16 p_4-16 p_5-8 p_7) \cP^{ij} \partial_{p} \partial_{j} \cP^{k\ell} \partial_{q} \partial_{\ell} \cP^{mn} \partial_{n} \cP^{pq} \partial_{k} \partial_{i} f \partial_{m} g \\
+(\tfrac{1}{36}-16 p_4+16 p_5+8 p_7) \partial_{n} \partial_{k} \cP^{ij} \partial_{j} \cP^{k\ell} \partial_{q} \partial_{\ell} \cP^{mn} \cP^{pq} \partial_{i} f \partial_{p} \partial_{m} g \\
+(-\tfrac{13}{180}+8 p_6+24 p_5-8 p_1) \partial_{m} \partial_{\ell} \cP^{ij} \partial_{p} \partial_{n} \cP^{k\ell} \partial_{q} \cP^{mn} \cP^{pq} \partial_{k} \partial_{i} f \partial_{j} g \\
+(\tfrac{13}{180}-8 p_6-24 p_5+8 p_1) \partial_{m} \partial_{\ell} \cP^{ij} \partial_{p} \partial_{n} \cP^{k\ell} \partial_{q} \cP^{mn} \cP^{pq} \partial_{i} f \partial_{k} \partial_{j} g \\
+(\tfrac{4}{45}-16 p_6+48 p_4-48 p_5) \partial_{q} \partial_{n} \cP^{ij} \partial_{j} \cP^{k\ell} \partial_{\ell} \cP^{mn} \cP^{pq} \partial_{k} \partial_{i} f \partial_{p} \partial_{m} g \\
\end{multline*}
\begin{multline*}
+(\tfrac{4}{45}-16 p_6+48 p_4-48 p_5) \partial_{n} \cP^{ij} \partial_{q} \partial_{j} \cP^{k\ell} \partial_{\ell} \cP^{mn} \cP^{pq} \partial_{p} \partial_{i} f \partial_{m} \partial_{k} g \\
+(-\tfrac{1}{18}+32 p_4-32 p_5-16 p_7 )\cP^{ij} \partial_{p} \partial_{j} \cP^{k\ell} \partial_{q} \partial_{\ell} \cP^{mn} \partial_{n} \cP^{pq} \partial_{m} \partial_{i} f \partial_{k} g \\
+(-\tfrac{1}{18}+32 p_4-32 p_5-16 p_7) \partial_{q} \partial_{m} \cP^{ij} \partial_{n} \partial_{j} \cP^{k\ell} \partial_{\ell} \cP^{mn} \cP^{pq} \partial_{i} f \partial_{p} \partial_{k} g \\
+(\tfrac{17}{90}-32 p_6+96 p_4-96 p_5) \partial_{q} \cP^{ij} \partial_{j} \cP^{k\ell} \partial_{\ell} \cP^{mn} \partial_{n} \cP^{pq} \partial_{k} \partial_{i} f \partial_{p} \partial_{m} g \\
+(\tfrac{23}{360}+16 p_4-8 p_1+12 p_7) \partial_{p} \partial_{m} \partial_{\ell} \cP^{ij} \partial_{n} \partial_{j} \cP^{k\ell} \partial_{q} \cP^{mn} \cP^{pq} \partial_{i} f \partial_{k} g \\
+(-\tfrac{23}{360}-16 p_4+8 p_1-12 p_7) \partial_{m} \partial_{\ell} \cP^{ij} \partial_{p} \partial_{n} \partial_{j} \cP^{k\ell} \partial_{q} \cP^{mn} \cP^{pq} \partial_{i} f \partial_{k} g \\
+(\tfrac{49}{180}-16 p_6-48 p_5+24 p_7) \partial_{p} \partial_{m} \cP^{ij} \partial_{n} \cP^{k\ell} \partial_{q} \cP^{mn} \cP^{pq} \partial_{k} \partial_{i} f \partial_{\ell} \partial_{j} g \\
+(-\tfrac{19}{180}-32 p_4+16 p_1-16 p_7) \partial_{m} \partial_{\ell} \cP^{ij} \partial_{p} \partial_{j} \cP^{k\ell} \partial_{q} \cP^{mn} \partial_{n} \cP^{pq} \partial_{i} f \partial_{k} g \\
+(-\tfrac{31}{180}+32 p_6-96 p_4+96 p_5) \partial_{q} \cP^{ij} \partial_{j} \cP^{k\ell} \partial_{\ell} \cP^{mn} \partial_{n} \cP^{pq} \partial_{m} \partial_{i} f \partial_{p} \partial_{k} g \\
+(-\tfrac{1}{90}-8 p_6-24 p_5+8 p_1-8 p_7) \partial_{p} \partial_{m} \cP^{ij} \cP^{k\ell} \partial_{q} \partial_{\ell} \cP^{mn} \partial_{n} \cP^{pq} \partial_{k} \partial_{i} f \partial_{j} g \\
+(\tfrac{1}{90}+8 p_6+24 p_5-8 p_1+8 p_7) \partial_{p} \partial_{m} \cP^{ij} \cP^{k\ell} \partial_{q} \partial_{\ell} \cP^{mn} \partial_{n} \cP^{pq} \partial_{i} f \partial_{k} \partial_{j} g \\
+(\tfrac{2}{9}-48 p_8+96 p_4-96 p_5+48 p_7) \partial_{\ell} \cP^{ij} \partial_{p} \partial_{n} \cP^{k\ell} \partial_{q} \cP^{mn} \cP^{pq} \partial_{k} \partial_{i} f \partial_{m} \partial_{j} g \\
+(\tfrac{2}{9}-48 p_8+96 p_4-96 p_5+48 p_7) \partial_{n} \cP^{ij} \partial_{p} \cP^{k\ell} \partial_{q} \partial_{\ell} \cP^{mn} \cP^{pq} \partial_{k} \partial_{i} f \partial_{m} \partial_{j} g \\
+(\tfrac{1}{6}-32 p_8+64 p_4-64 p_5+32 p_7) \partial_{p} \cP^{ij} \partial_{q} \partial_{n} \partial_{j} \cP^{k\ell} \partial_{\ell} \cP^{mn} \cP^{pq} \partial_{k} \partial_{i} f \partial_{m} g \\
+(-\tfrac{1}{6}+32 p_8-64 p_4+64 p_5-32 p_7) \partial_{\ell} \cP^{ij} \partial_{p} \partial_{n} \partial_{j} \cP^{k\ell} \partial_{q} \cP^{mn} \cP^{pq} \partial_{i} f \partial_{m} \partial_{k} g \\
+(-\tfrac{1}{9}+16 p_8-32 p_4+32 p_5-16 p_7) \partial_{k} \cP^{ij} \partial_{q} \partial_{n} \partial_{j} \cP^{k\ell} \partial_{\ell} \cP^{mn} \cP^{pq} \partial_{p} \partial_{i} f \partial_{m} g \\
+(\tfrac{1}{9}-16 p_8+32 p_4-32 p_5+16 p_7) \partial_{k} \cP^{ij} \partial_{q} \partial_{n} \partial_{j} \cP^{k\ell} \partial_{\ell} \cP^{mn} \cP^{pq} \partial_{i} f \partial_{p} \partial_{m} g \\
+(\tfrac{1}{120}-8 p_8+16 p_4-24 p_5+4 p_7) \partial_{p} \partial_{k} \cP^{ij} \partial_{q} \partial_{n} \partial_{j} \cP^{k\ell} \partial_{\ell} \cP^{mn} \cP^{pq} \partial_{i} f \partial_{m} g \\
+(\tfrac{1}{120}-8 p_8+16 p_4-24 p_5+4 p_7) \partial_{k} \cP^{ij} \partial_{p} \partial_{n} \partial_{j} \cP^{k\ell} \partial_{q} \partial_{\ell} \cP^{mn} \cP^{pq} \partial_{i} f \partial_{m} g \\
+(\tfrac{5}{18}-48 p_8+96 p_4-96 p_5+48 p_7) \partial_{\ell} \cP^{ij} \partial_{p} \cP^{k\ell} \partial_{q} \cP^{mn} \partial_{n} \cP^{pq} \partial_{k} \partial_{i} f \partial_{m} \partial_{j} g \\
+(\tfrac{5}{18}-48 p_8+96 p_4-96 p_5+48 p_7) \partial_{q} \cP^{ij} \partial_{m} \cP^{k\ell} \partial_{\ell} \cP^{mn} \partial_{n} \cP^{pq} \partial_{k} \partial_{i} f \partial_{p} \partial_{j} g \\
+(\tfrac{4}{15}-48 p_8+96 p_4-96 p_5+48 p_7) \partial_{m} \cP^{ij} \partial_{n} \cP^{k\ell} \partial_{q} \partial_{\ell} \cP^{mn} \cP^{pq} \partial_{k} \partial_{i} f \partial_{p} \partial_{j} g \\
+(-\tfrac{4}{15}+48 p_8-96 p_4+96 p_5-48 p_7) \partial_{m} \cP^{ij} \cP^{k\ell} \partial_{q} \partial_{\ell} \cP^{mn} \partial_{n} \cP^{pq} \partial_{k} \partial_{i} f \partial_{p} \partial_{j} g \\
+(-\tfrac{1}{18}+16 p_8-32 p_4+32 p_5-16 p_7) \partial_{\ell} \cP^{ij} \partial_{p} \partial_{n} \partial_{j} \cP^{k\ell} \partial_{q} \cP^{mn} \cP^{pq} \partial_{m} \partial_{i} f \partial_{k} g \\
+(-\tfrac{1}{18}+16 p_8-32 p_4+32 p_5-16 p_7) \partial_{p} \partial_{n} \partial_{\ell} \cP^{ij} \partial_{j} \cP^{k\ell} \partial_{q} \cP^{mn} \cP^{pq} \partial_{i} f \partial_{m} \partial_{k} g \\
+(-\tfrac{7}{36}+32 p_8-64 p_4+64 p_5-32 p_7) \partial_{p} \partial_{\ell} \cP^{ij} \partial_{j} \cP^{k\ell} \partial_{q} \cP^{mn} \partial_{n} \cP^{pq} \partial_{m} \partial_{i} f \partial_{k} g \\
+(-\tfrac{7}{36}+32 p_8-64 p_4+64 p_5-32 p_7) \partial_{\ell} \cP^{ij} \partial_{p} \partial_{j} \cP^{k\ell} \partial_{q} \cP^{mn} \partial_{n} \cP^{pq} \partial_{i} f \partial_{m} \partial_{k} g \\
+(-\tfrac{1}{12}+16 p_8-32 p_4+32 p_5-16 p_7) \partial_{\ell} \cP^{ij} \partial_{p} \partial_{j} \cP^{k\ell} \partial_{q} \cP^{mn} \partial_{n} \cP^{pq} \partial_{m} \partial_{i} f \partial_{k} g \\
+(-\tfrac{1}{12}+16 p_8-32 p_4+32 p_5-16 p_7) \partial_{p} \partial_{\ell} \cP^{ij} \partial_{j} \cP^{k\ell} \partial_{q} \cP^{mn} \partial_{n} \cP^{pq} \partial_{i} f \partial_{m} \partial_{k} g \\
+(-\tfrac{1}{90}-16 p_8+32 p_4-80 p_5+16 p_1) \partial_{p} \partial_{k} \cP^{ij} \partial_{n} \partial_{j} \cP^{k\ell} \partial_{q} \partial_{\ell} \cP^{mn} \cP^{pq} \partial_{i} f \partial_{m} g \\
+(\tfrac{1}{360}-16 p_8-16 p_3+32 p_4-48 p_5) \partial_{p} \partial_{k} \cP^{ij} \partial_{j} \cP^{k\ell} \partial_{q} \partial_{\ell} \cP^{mn} \partial_{n} \cP^{pq} \partial_{i} f \partial_{m} g \\
+(\tfrac{11}{120}-16 p_8+32 p_4-16 p_5+16 p_7) \partial_{k} \cP^{ij} \partial_{m} \partial_{j} \cP^{k\ell} \partial_{q} \partial_{\ell} \cP^{mn} \partial_{n} \cP^{pq} \partial_{i} f \partial_{p} g \\
+(\tfrac{1}{90}+8 p_6+16 p_4+24 p_5-8 p_1+8 p_7) \partial_{m} \partial_{\ell} \cP^{ij} \partial_{p} \cP^{k\ell} \partial_{q} \cP^{mn} \partial_{n} \cP^{pq} \partial_{k} \partial_{i} f \partial_{j} g \\
\end{multline*}
\begin{multline*}
+(-\tfrac{1}{90}-8 p_6-16 p_4-24 p_5+8 p_1-8 p_7) \partial_{m} \partial_{\ell} \cP^{ij} \partial_{p} \cP^{k\ell} \partial_{q} \cP^{mn} \partial_{n} \cP^{pq} \partial_{i} f \partial_{k} \partial_{j} g \\
+(-\tfrac{1}{15}+8 p_8+8 p_4+8 p_2+16 p_{10}-16 p_7) \partial_{m} \cP^{ij} \partial_{p} \partial_{j} \cP^{k\ell} \partial_{q} \partial_{\ell} \cP^{mn} \partial_{n} \cP^{pq} \partial_{i} f \partial_{k} g \\
+(-\tfrac{1}{15}+8 p_8+8 p_4+8 p_2+16 p_{10}-16 p_7) \partial_{p} \partial_{n} \cP^{ij} \partial_{q} \partial_{j} \cP^{k\ell} \partial_{k} \cP^{mn} \partial_{\ell} \cP^{pq} \partial_{i} f \partial_{m} g \\
+(\tfrac{1}{20}-8 p_8+24 p_4-16 p_5-8 p_2+8 p_7) \partial_{k} \cP^{ij} \partial_{q} \partial_{m} \partial_{j} \cP^{k\ell} \partial_{\ell} \cP^{mn} \partial_{n} \cP^{pq} \partial_{i} f \partial_{p} g \\
+(-\tfrac{1}{20}+8 p_8-24 p_4+16 p_5+8 p_2-8 p_7) \partial_{p} \cP^{ij} \partial_{q} \partial_{n} \partial_{j} \cP^{k\ell} \partial_{k} \cP^{mn} \partial_{\ell} \cP^{pq} \partial_{i} f \partial_{m} g \\
+(-\tfrac{1}{40}+8 p_8+16 p_4+8 p_5+16 p_{10}-12 p_7) \partial_{m} \cP^{ij} \partial_{p} \partial_{n} \partial_{j} \cP^{k\ell} \partial_{q} \partial_{\ell} \cP^{mn} \cP^{pq} \partial_{i} f \partial_{k} g \\
+(\tfrac{1}{40}-8 p_8-16 p_4-8 p_5-16 p_{10}+12 p_7) \partial_{p} \partial_{n} \partial_{k} \cP^{ij} \partial_{q} \partial_{j} \cP^{k\ell} \partial_{\ell} \cP^{mn} \cP^{pq} \partial_{i} f \partial_{m} g \\
+(\tfrac{11}{90}+8 p_6-16 p_4+40 p_5-8 p_1+24 p_7) \partial_{p} \partial_{m} \cP^{ij} \partial_{q} \partial_{n} \cP^{k\ell} \partial_{\ell} \cP^{mn} \cP^{pq} \partial_{k} \partial_{i} f \partial_{j} g \\
+(-\tfrac{11}{90}-8 p_6+16 p_4-40 p_5+8 p_1-24 p_7) \partial_{p} \partial_{m} \cP^{ij} \partial_{q} \partial_{n} \cP^{k\ell} \partial_{\ell} \cP^{mn} \cP^{pq} \partial_{i} f \partial_{k} \partial_{j} g \\
+(\tfrac{1}{5}-32 p_8-48 p_5-32 p_{10}+16 p_1+48 p_7) \partial_{p} \partial_{n} \cP^{ij} \partial_{q} \partial_{j} \cP^{k\ell} \partial_{\ell} \cP^{mn} \cP^{pq} \partial_{k} \partial_{i} f \partial_{m} g \\
+(\tfrac{1}{5}-32 p_8-48 p_5-32 p_{10}+16 p_1+48 p_7) \partial_{n} \cP^{ij} \partial_{p} \partial_{j} \cP^{k\ell} \partial_{q} \partial_{\ell} \cP^{mn} \cP^{pq} \partial_{i} f \partial_{m} \partial_{k} g \\
+(-\tfrac{1}{6}+16 p_8-16 p_3+32 p_4-16 p_1-32 p_7) \partial_{p} \cP^{ij} \partial_{j} \cP^{k\ell} \partial_{q} \partial_{\ell} \cP^{mn} \partial_{n} \cP^{pq} \partial_{m} \partial_{i} f \partial_{k} g \\
+(\tfrac{1}{6}-16 p_8+16 p_3-32 p_4+16 p_1+32 p_7) \partial_{q} \cP^{ij} \partial_{m} \partial_{j} \cP^{k\ell} \partial_{\ell} \cP^{mn} \partial_{n} \cP^{pq} \partial_{i} f \partial_{p} \partial_{k} g \\
+(\tfrac{1}{9}-16 p_8+16 p_4-32 p_5+16 p_1+16 p_7) \partial_{m} \cP^{ij} \partial_{n} \partial_{j} \cP^{k\ell} \partial_{q} \partial_{\ell} \cP^{mn} \cP^{pq} \partial_{i} f \partial_{p} \partial_{k} g \\
+(-\tfrac{1}{9}+16 p_8-16 p_4+32 p_5-16 p_1-16 p_7) \partial_{n} \partial_{k} \cP^{ij} \partial_{q} \partial_{j} \cP^{k\ell} \partial_{\ell} \cP^{mn} \cP^{pq} \partial_{p} \partial_{i} f \partial_{m} g \\
+(-\tfrac{1}{9}+16 p_8-32 p_4+48 p_5+16 p_2-16 p_7) \partial_{m} \cP^{ij} \partial_{j} \cP^{k\ell} \partial_{q} \partial_{\ell} \cP^{mn} \partial_{n} \cP^{pq} \partial_{p} \partial_{i} f \partial_{k} g \\
+(-\tfrac{1}{9}+16 p_8-32 p_4+48 p_5+16 p_2-16 p_7) \partial_{n} \cP^{ij} \partial_{q} \partial_{j} \cP^{k\ell} \partial_{k} \cP^{mn} \partial_{\ell} \cP^{pq} \partial_{i} f \partial_{p} \partial_{m} g \\
+(\tfrac{1}{9}-16 p_8+32 p_4-48 p_5+16 p_2+16 p_7) \partial_{m} \cP^{ij} \partial_{j} \cP^{k\ell} \partial_{q} \partial_{\ell} \cP^{mn} \partial_{n} \cP^{pq} \partial_{i} f \partial_{p} \partial_{k} g \\
+(-\tfrac{1}{9}+16 p_8-32 p_4+48 p_5-16 p_2-16 p_7) \partial_{k} \cP^{ij} \partial_{q} \partial_{j} \cP^{k\ell} \partial_{\ell} \cP^{mn} \partial_{n} \cP^{pq} \partial_{p} \partial_{i} f \partial_{m} g \\
+(\tfrac{7}{90}-16 p_8+40 p_4-40 p_5+8 p_2+12 p_7) \partial_{k} \cP^{ij} \partial_{p} \partial_{j} \cP^{k\ell} \partial_{q} \partial_{\ell} \cP^{mn} \partial_{n} \cP^{pq} \partial_{i} f \partial_{m} g \\
+(\tfrac{7}{90}-16 p_8+40 p_4-40 p_5+8 p_2+12 p_7) \partial_{m} \partial_{k} \cP^{ij} \partial_{j} \cP^{k\ell} \partial_{q} \partial_{\ell} \cP^{mn} \partial_{n} \cP^{pq} \partial_{i} f \partial_{p} g \\
+(\tfrac{1}{180}-16 p_8-16 p_4-16 p_5-32 p_{10}+8 p_7) \partial_{n} \cP^{ij} \partial_{p} \partial_{j} \cP^{k\ell} \partial_{q} \partial_{\ell} \cP^{mn} \cP^{pq} \partial_{k} \partial_{i} f \partial_{m} g \\
+(-\tfrac{1}{180}+16 p_8+16 p_4+16 p_5+32 p_{10}-8 p_7) \partial_{p} \partial_{n} \cP^{ij} \partial_{j} \cP^{k\ell} \partial_{q} \partial_{\ell} \cP^{mn} \cP^{pq} \partial_{i} f \partial_{m} \partial_{k} g \\
+(\tfrac{37}{90}-48 p_8-16 p_6+96 p_4-96 p_5+48 p_7) \partial_{p} \cP^{ij} \partial_{q} \partial_{n} \cP^{k\ell} \partial_{\ell} \cP^{mn} \cP^{pq} \partial_{k} \partial_{i} f \partial_{m} \partial_{j} g \\
+(-\tfrac{37}{90}+48 p_8+16 p_6-96 p_4+96 p_5-48 p_7) \partial_{p} \cP^{ij} \partial_{n} \cP^{k\ell} \partial_{q} \partial_{\ell} \cP^{mn} \cP^{pq} \partial_{k} \partial_{i} f \partial_{m} \partial_{j} g \\
+(\tfrac{29}{360}-16 p_8-16 p_4-16 p_{10}+8 p_1+20 p_7) \partial_{p} \partial_{m} \cP^{ij} \partial_{n} \partial_{j} \cP^{k\ell} \partial_{q} \partial_{\ell} \cP^{mn} \cP^{pq} \partial_{i} f \partial_{k} g \\
+(\tfrac{29}{360}-16 p_8-16 p_4-16 p_{10}+8 p_1+20 p_7) \partial_{n} \partial_{k} \cP^{ij} \partial_{p} \partial_{j} \cP^{k\ell} \partial_{q} \partial_{\ell} \cP^{mn} \cP^{pq} \partial_{i} f \partial_{m} g \\
+(\tfrac{34}{45}-96 p_8-32 p_6+240 p_4-288 p_5+96 p_7) \partial_{p} \cP^{ij} \partial_{q} \cP^{k\ell} \partial_{\ell} \cP^{mn} \partial_{n} \cP^{pq} \partial_{k} \partial_{i} f \partial_{m} \partial_{j} g \\
+(-\tfrac{34}{45}+96 p_8+32 p_6-240 p_4+288 p_5-96 p_7) \partial_{m} \cP^{ij} \partial_{q} \cP^{k\ell} \partial_{\ell} \cP^{mn} \partial_{n} \cP^{pq} \partial_{k} \partial_{i} f \partial_{p} \partial_{j} g \\
+(-\tfrac{2}{45}+8 p_9+4 p_6-8 p_3+4 p_5-4 p_1-4 p_7) \partial_{p} \cP^{ij} \partial_{q} \partial_{m} \partial_{j} \cP^{k\ell} \partial_{\ell} \cP^{mn} \partial_{n} \cP^{pq} \partial_{i} f \partial_{k} g \\
+(\tfrac{2}{45}-8 p_9-4 p_6+8 p_3-4 p_5+4 p_1+4 p_7) \partial_{q} \partial_{m} \partial_{k} \cP^{ij} \partial_{j} \cP^{k\ell} \partial_{\ell} \cP^{mn} \partial_{n} \cP^{pq} \partial_{i} f \partial_{p} g \\
+(\tfrac{1}{30}+8 p_6+32 p_4+8 p_5+32 p_{10}-8 p_1+8 p_7) \partial_{p} \partial_{n} \cP^{ij} \partial_{j} \cP^{k\ell} \partial_{q} \partial_{\ell} \cP^{mn} \cP^{pq} \partial_{k} \partial_{i} f \partial_{m} g \\
+(-\tfrac{1}{30}-8 p_6-32 p_4-8 p_5-32 p_{10}+8 p_1-8 p_7) \partial_{p} \partial_{n} \cP^{ij} \partial_{q} \partial_{j} \cP^{k\ell} \partial_{\ell} \cP^{mn} \cP^{pq} \partial_{i} f \partial_{m} \partial_{k} g \\
\end{multline*}
\begin{multline}
+(-\tfrac{7}{90}+8 p_8-16 p_3+16 p_4-8 p_5-8 p_1-16 p_7) \partial_{p} \cP^{ij} \partial_{m} \partial_{j} \cP^{k\ell} \partial_{q} \partial_{\ell} \cP^{mn} \partial_{n} \cP^{pq} \partial_{i} f \partial_{k} g \\
+(\tfrac{7}{90}-8 p_8+16 p_3-16 p_4+8 p_5+8 p_1+16 p_7) \partial_{q} \partial_{k} \cP^{ij} \partial_{m} \partial_{j} \cP^{k\ell} \partial_{\ell} \cP^{mn} \partial_{n} \cP^{pq} \partial_{i} f \partial_{p} g \\
+(-\tfrac{7}{180}+16 p_8-8 p_6-16 p_4+8 p_5+8 p_1-16 p_7) \partial_{m} \cP^{ij} \partial_{n} \partial_{j} \cP^{k\ell} \partial_{q} \partial_{\ell} \cP^{mn} \cP^{pq} \partial_{p} \partial_{i} f \partial_{k} g \\
+(\tfrac{7}{180}-16 p_8+8 p_6+16 p_4-8 p_5-8 p_1+16 p_7) \partial_{n} \partial_{k} \cP^{ij} \partial_{q} \partial_{j} \cP^{k\ell} \partial_{\ell} \cP^{mn} \cP^{pq} \partial_{i} f \partial_{p} \partial_{m} g \\
+(\tfrac{13}{360}-8 p_8+24 p_4-32 p_5-8 p_2+8 p_1+4 p_7) \partial_{m} \partial_{k} \cP^{ij} \partial_{q} \partial_{j} \cP^{k\ell} \partial_{\ell} \cP^{mn} \partial_{n} \cP^{pq} \partial_{i} f \partial_{p} g \\
+(-\tfrac{13}{360}+8 p_8-24 p_4+32 p_5+8 p_2-8 p_1-4 p_7) \partial_{p} \cP^{ij} \partial_{n} \partial_{j} \cP^{k\ell} \partial_{q} \partial_{k} \cP^{mn} \partial_{\ell} \cP^{pq} \partial_{i} f \partial_{m} g \\
+(\tfrac{1}{15}-32 p_8+8 p_6+48 p_4-72 p_5+24 p_1+24 p_7) \partial_{p} \cP^{ij} \partial_{n} \partial_{j} \cP^{k\ell} \partial_{q} \partial_{\ell} \cP^{mn} \cP^{pq} \partial_{k} \partial_{i} f \partial_{m} g \\
+(-\tfrac{1}{15}+32 p_8-8 p_6-48 p_4+72 p_5-24 p_1-24 p_7) \partial_{p} \partial_{\ell} \cP^{ij} \partial_{n} \partial_{j} \cP^{k\ell} \partial_{q} \cP^{mn} \cP^{pq} \partial_{i} f \partial_{m} \partial_{k} g \\
+(-\tfrac{11}{180}-16 p_9+16 p_8-8 p_6+8 p_5+8 p_1-16 p_7) \partial_{p} \cP^{ij} \partial_{q} \partial_{j} \cP^{k\ell} \partial_{k} \cP^{mn} \partial_{n} \partial_{\ell} \cP^{pq} \partial_{i} f \partial_{m} g \\
+(-\tfrac{17}{180}+16 p_8-8 p_6-32 p_4+40 p_5-8 p_1-16 p_7) \partial_{p} \partial_{\ell} \cP^{ij} \partial_{q} \partial_{j} \cP^{k\ell} \cP^{mn} \partial_{n} \cP^{pq} \partial_{m} \partial_{i} f \partial_{k} g \\
+(\tfrac{17}{180}-16 p_8+8 p_6+32 p_4-40 p_5+8 p_1+16 p_7) \partial_{p} \partial_{\ell} \cP^{ij} \partial_{q} \partial_{j} \cP^{k\ell} \cP^{mn} \partial_{n} \cP^{pq} \partial_{i} f \partial_{m} \partial_{k} g \\
+(\tfrac{61}{180}-48 p_8-8 p_6+96 p_4-120 p_5+24 p_1+48 p_7) \partial_{p} \partial_{\ell} \cP^{ij} \partial_{q} \cP^{k\ell} \cP^{mn} \partial_{n} \cP^{pq} \partial_{k} \partial_{i} f \partial_{m} \partial_{j} g \\
+(-\tfrac{61}{180}+48 p_8+8 p_6-96 p_4+120 p_5-24 p_1-48 p_7) \partial_{q} \partial_{m} \cP^{ij} \cP^{k\ell} \partial_{\ell} \cP^{mn} \partial_{n} \cP^{pq} \partial_{k} \partial_{i} f \partial_{p} \partial_{j} g \\
+(\tfrac{53}{90}-96 p_8-16 p_6+192 p_4-240 p_5+48 p_1+96 p_7) \partial_{n} \partial_{\ell} \cP^{ij} \partial_{p} \cP^{k\ell} \partial_{q} \cP^{mn} \cP^{pq} \partial_{k} \partial_{i} f \partial_{m} \partial_{j} g \\
+(-\tfrac{49}{90}+48 p_8+24 p_6-144 p_4+168 p_5-24 p_1-72 p_7) \partial_{p} \partial_{\ell} \cP^{ij} \partial_{n} \cP^{k\ell} \partial_{q} \cP^{mn} \cP^{pq} \partial_{k} \partial_{i} f \partial_{m} \partial_{j} g \\
+(\tfrac{49}{90}-48 p_8-24 p_6+144 p_4-168 p_5+24 p_1+72 p_7) \partial_{p} \partial_{n} \cP^{ij} \partial_{q} \cP^{k\ell} \partial_{\ell} \cP^{mn} \cP^{pq} \partial_{k} \partial_{i} f \partial_{m} \partial_{j} g \\
+(\tfrac{1}{90}-16 p_8+8 p_6-16 p_3+16 p_4-24 p_5-8 p_1+8 p_7) \partial_{p} \cP^{ij} \partial_{j} \cP^{k\ell} \partial_{q} \partial_{\ell} \cP^{mn} \partial_{n} \cP^{pq} \partial_{k} \partial_{i} f \partial_{m} g \\
+(-\tfrac{1}{90}+16 p_8-8 p_6+16 p_3-16 p_4+24 p_5+8 p_1-8 p_7) \partial_{q} \partial_{k} \cP^{ij} \partial_{j} \cP^{k\ell} \partial_{\ell} \cP^{mn} \partial_{n} \cP^{pq} \partial_{i} f \partial_{p} \partial_{m} g \\
+(\tfrac{3}{20}+16 p_9-32 p_8+8 p_6+16 p_4-40 p_5-8 p_1+32 p_7) \partial_{p} \cP^{ij} \partial_{q} \partial_{j} \cP^{k\ell} \partial_{\ell} \cP^{mn} \partial_{n} \cP^{pq} \partial_{k} \partial_{i} f \partial_{m} g \\
+(-\tfrac{3}{20}-16 p_9+32 p_8-8 p_6-16 p_4+40 p_5+8 p_1-32 p_7) \partial_{n} \cP^{ij} \partial_{j} \cP^{k\ell} \partial_{q} \partial_{k} \cP^{mn} \partial_{\ell} \cP^{pq} \partial_{i} f \partial_{p} \partial_{m} g \\
+(-\tfrac{7}{180}-16 p_9+16 p_8-8 p_6+16 p_4+8 p_5-8 p_1-16 p_7) \partial_{q} \partial_{m} \cP^{ij} \partial_{j} \cP^{k\ell} \partial_{\ell} \cP^{mn} \partial_{n} \cP^{pq} \partial_{k} \partial_{i} f \partial_{p} g \\
+(\tfrac{7}{180}+16 p_9-16 p_8+8 p_6-16 p_4-8 p_5+8 p_1+16 p_7) \partial_{p} \cP^{ij} \partial_{q} \partial_{j} \cP^{k\ell} \partial_{\ell} \cP^{mn} \partial_{n} \cP^{pq} \partial_{i} f \partial_{m} \partial_{k} g \\
+(\tfrac{7}{120}+16 p_9-8 p_8+8 p_6+16 p_4+16 p_{10}-8 p_1+12 p_7) \partial_{p} \partial_{m} \cP^{ij} \partial_{q} \partial_{j} \cP^{k\ell} \partial_{\ell} \cP^{mn} \partial_{n} \cP^{pq} \partial_{i} f \partial_{k} g \\
+(-\tfrac{7}{120}-16 p_9+8 p_8-8 p_6-16 p_4-16 p_{10}+8 p_1-12 p_7) \partial_{p} \partial_{n} \cP^{ij} \partial_{j} \cP^{k\ell} \partial_{q} \partial_{k} \cP^{mn} \partial_{\ell} \cP^{pq} \partial_{i} f \partial_{m} g \\
+(8 p_9-8 p_8+4 p_6-8 p_3-8 p_4+4 p_5-8 p_2-4 p_1+4 p_7) \partial_{p} \partial_{k} \cP^{ij} \partial_{q} \partial_{j} \cP^{k\ell} \partial_{\ell} \cP^{mn} \partial_{n} \cP^{pq} \partial_{i} f \partial_{m} g \\
+(-8 p_9+8 p_8-4 p_6+8 p_3+8 p_4-4 p_5+8 p_2+4 p_1-4 p_7) \partial_{p} \cP^{ij} \partial_{j} \cP^{k\ell} \partial_{q} \partial_{k} \cP^{mn} \partial_{n} \partial_{\ell} \cP^{pq} \partial_{i} f \partial_{m} g \\
+(\tfrac{23}{360}+8 p_9-16 p_8+4 p_6-8 p_3+8 p_4-20 p_5+8 p_2-16 p_{10}-4 p_1+16 p_7) \\ \partial_{p} \partial_{m} \cP^{ij} \partial_{j} \cP^{k\ell} \partial_{q} \partial_{\ell} \cP^{mn} \partial_{n} \cP^{pq} \partial_{i} f \partial_{k} g \\
+(-\tfrac{23}{360}-8 p_9+16 p_8-4 p_6+8 p_3-8 p_4+20 p_5-8 p_2+16 p_{10}+4 p_1-16 p_7) \\ \partial_{n} \cP^{ij} \partial_{p} \partial_{j} \cP^{k\ell} \partial_{q} \partial_{k} \cP^{mn} \partial_{\ell} \cP^{pq} \partial_{i} f \partial_{m} g
\big)
+ \bar{o}(\hbar^4)
.\label{EqStar4}
\end{multline}
}

\noindent%
The ten master\/-\/parameters in~\eqref{EqStar4} are the still unknown
weights of the prime graphs which are portrayed in Fig.~\ref{10Graphs}
on p.~\pageref{10Graphs}. The four underlined parameters can be gauged
out (\emph{without} modifying the coefficients of any other
Kontsevich graphs with four internal vertices), see
Theorem~\ref{ThGauge} on p.~\pageref{ThGauge}. At all values of the
ten master\/-\/parameters, that is, irrespective of their true values
given by formula~\eqref{EqWeight}, the $\star$-\/product is proven in
Theorem~\ref{ThMainAssocOrd4} to be associative
modulo~$\bar{o}(\hbar^4)$.

\subsubsection*{Acknowledgements}
The authors are grateful to the anonymous referees for their critical comments and suggestions which helped us improve this text, to prof. S. Tabachnikov (Editor-in-Chief) for persistence and constructive criticism, and B. Pym and E. Panzer for communicating the values of ten master-parameters obtained via a different technique \cite{PymPrivateComm}.
We thank prof. M. Gerstenhaber and M. Kontsevich for their attention to our work.

This research was supported in part by JBI~RUG project~106552 (Groningen, The Netherlands) and IM JGU project 5020 (Mainz, Germany). 
The authors also thank the Center for Information Technology of the University of Groningen 
for providing access to 
\textsf{Peregrine} high performance computing cluster.
A~part of this research was done while the authors were visiting at the $\smash{\text{IH\'ES}}$ in Bures\/-\/sur-\/Yvette, France and AVK was visiting at the MPIM Bonn, Germany; warm hospitality and partial financial support by these institutions are gratefully acknowledged.

\vspace{3em}
\rightline{
\small \textit{This text was submitted in its original form on 20 December 2017.}}

\newpage
\appendix


\section{
Approximations and conjectured values of weight integrals}

\noindent
The material presented here is an expanded version of section 3 of the note \cite{Decin} by the authors. 

\subsection{The weight integral in Cartesian coordinates}
\label{AppNumericalWeights}

Recall the integral formula for the weight of a graph $\Gamma \in \tilde{G}_{2,k}$ (see section \ref{SecStar}):
\begin{equation}\tag{\ref{EqWeight}}
w(\Gamma) = \frac{1}{(2\pi)^{2k}}\int_{C_k(\mathbb{H})} \bigwedge_{j=1}^k {\rm d}\varphi(p_j,p_{\text{Left}(j)}) \wedge {\rm d}\varphi(p_j,p_{\text{Right}(j)}),
\end{equation}
such that the integral is taken over the configuration space of $k$~points in the upper half\/-\/plane $\mathbb{H} \subset \mathbb{C}$,
\[
C_k(\mathbb{H}) = \{(p_1,\ldots,p_k) \in \mathbb{H}^k : p_i \textrm{ pairwise distinct}\},
\]
and where $\varphi\colon C_2(\mathbb{H}) \to [0, 2\pi)$ was defined by $\varphi(p,q) = \Arg\big(\frac{q-p}{q-\bar{p}}\big)$.

For nonzero $z = x+iy$ in $\mathbb{H}$ we have $\operatorname{Arg}(x+iy) \cong \arctan(y/x)$, where $\cong$ denotes equality of functions up to a constant.
Since $\frac{d}{dt} \arctan(t) = 1/(1+t^2)$, the weight integrand is a rational function of the Cartesian coordinates: for $p = a + ib$ and $q = x + iy$,
\begin{equation}
\label{eq:cartesianphi}
\varphi(p,q) \cong \arctan\bigg(\frac{2b(a-x)}{(a-x)^2 + (y+b)(y-b)}\bigg).
\end{equation}
In Cartesian coordinates $(x_1,y_1,\ldots,x_k,y_k)$, the weight integrand can now be written as the Jacobian determinant of the map $\Phi_\Gamma\colon C_k(\mathbb{H}) \to [0,2\pi)^{2k}$ defined by\footnote{Called a Gauss map by M. Polyak \cite{Polyak}.}
\[
\Phi_\Gamma(p_1,\ldots,p_k) = (\varphi(p_1,p_{\text{Left}(1)}), \varphi(p_1, p_{\text{Right}(1)}), \ldots, \varphi(p_k, p_{\text{Left}(k)}), \varphi(p_k, p_{\text{Right}(k)}))
\]
considered as a function of the $(x_j, y_j)$ through $p_j = x_j + iy_j$.

\begin{implement}\label{ImplWeightIntegrands}
The command
\begin{verbatim}
    > weight_integrands <graph-series-file>
\end{verbatim}
takes as input a list of graphs $\Gamma \in \tilde{G}_{2,k}$ with (possibly undetermined) coefficients, and sends to the standard output lines of the following form:
\begin{verbatim}
    (* <graph encoding>    <coefficient> *)
    <weight integrand of the graph above>
\end{verbatim}
where the weight integrands are written in {\sf Mathematica} format, as {\tt Det[...]}.
\end{implement}

We can take integration domain to be $\mathbb{H}^k$, since for any $i \neq j$ the set $\{(p_1,\ldots,p_k) \in \mathbb{C}^k : p_i = p_j\}$ is a strict linear subspace of $\mathbb{C}^k$, which has measure zero.
The weight integral is absolutely convergent \cite{MK97}, so by the Fubini--Tonelli theorem we may evaluate it as an iterated integral in any order.
We can use the residue theorem\footnote{G. Dito used the residue method for one graph \cite{Dito} at $k = 2$, and remarked that that it becomes unpractical for $k \geqslant 3$.} to integrate out the Cartesian coordinates corresponding to the $k$ real parts, halving the dimension.
It then remains to integrate the result (a function of the $k$ imaginary parts) over $\mathbb{R}^k$.

\begin{example}
For the wedge graph $\Lambda$
we have the Cartesian coordinates $x + iy$ in the upper half-plane and the integrand (obtained using Implementation \ref{ImplWeightIntegrands})
\[
f(x,y) = \frac{4y}{((x-1)^2 + y^2)(x^2 + y^2)}.
\]
To apply the residue theorem we interpret $f(x,y)$ as a rational function in a single {\em complex} variable $x$.
Its poles are then $\pm iy$ and $1 \pm iy$, so the poles in the upper half-plane are $iy$ and $1 + iy$ (since $y > 0$).
The residues at these poles are $r_1 = 2/(i+2y)$ and $r_2 = -2/(2y-i)$ respectively.
Hence the residue theorem yields that the integral of $f(x,y)$ with respect to $x$ over the real line is $2\pi i(r_1 + r_2) = 8\pi/(1+4y^2)$.
When we integrate this over $y > 0$ and divide by $(2\pi)^2$ we obtain $1/2$, as desired.
\end{example}

This is of course a toy example.
For higher $k$ the expressions become larger, but also one has to consider more carefully which poles are in the upper half-plane.
From the expression \eqref{eq:cartesianphi} for $\varphi$ one can see that this issue depends on the relative position of the coordinates on the imaginary axis ($y$ and $b$ in that formula).

For $k = 3$ with coordinates on $\mathbb{H}^3$ given by $$a+bi, \quad c+di, \quad e+fi,$$ let us agree to call $a,c,e$ the real coordinates and $b, d, f$ the imaginary coordinates.
We now split the integral into a sum of integrals over $3! = 6$ regions, one for each possible ordering of the imaginary coordinates:
\[
b<d<f;\ \ \ b<f<d;\ \ \ d<b<f;\ \ \ d<f<b;\ \ \ f<b<d;\ \ \ f<d<b.
\]
In each such region it is known for every (complexified) real coordinate which poles are in the upper half-plane, so we can apply the residue theorem three times.
The result can be numerically integrated more effectively than the original expression, for one because we have halved the dimension of the integration domain.

\begin{rem}
To integrate over the region of $\mathbb{H}^3$ defined by $b<d<f$, one can choose integration bounds as follows: $\int_0^\infty {\rm d}b \int_b^\infty {\rm d}f \int_b^f {\rm d}d$ (and similarly for the other permutations).
For the region of $\mathbb{H}^4$ defined by $b<d<f<h$ one can choose the integration bounds $\int_0^\infty {\rm d}b \int_b^\infty {\rm d}h \int_b^h {\rm d}d \int_d^h {\rm d}f$, and so on.
\end{rem}


\begin{implement}
The strategy above is implemented by the following {\sf Mathematica} code (for the order $4$, but it can be adapted for others), where $W$ is the weight integrand.
\vskip 1em
\noindent
{\tt W = an integrand, e.g. from list \cite{OnlineList};}
\begin{verbatim}

integrationvariables = {a, b, c, d, e, f, g, h};
imaginaryvariables = 
 integrationvariables[[2 #1]] & /@ 
  Range[1, Length[integrationvariables]/2];
realvariables = 
 integrationvariables[[2 #1 - 1]] & /@ 
  Range[1, Length[integrationvariables]/2];

basicAssumptions =
  Element[a, Reals] && Element[c, Reals] && Element[e, Reals] &&
   Element[g, Reals] && b > 0 && d > 0 && f > 0 && h > 0;

ContourIntegrate[function_, variable_, assumptions_] :=
 2*Pi*I*Total[
   Map[
    Function[{p}, (Numerator[Together[function]]/
        D[Denominator[Together[function]], variable]) /. {variable ->
        p}],
    Select[
     ReplaceList[variable,
      Assuming[assumptions,
       Flatten[FullSimplify[
         Solve[Denominator[Together[function]] == 0, variable,
          Complexes]]]]],
     Function[{r}, Simplify[ComplexExpand[Im[r]] > 0, assumptions]]]]]

IteratedContourIntegrate[function_, variables_, assumptions_] :=
 Fold[ContourIntegrate[Together[#1], #2, assumptions] &, function,
  variables]

integrals = Map[
  NIntegrate[
     Simplify[
      IteratedContourIntegrate[W, realvariables, 
       basicAssumptions && #1[[1]] < #1[[2]] < #1[[3]] < #1[[4]]]
     TimeConstraint -> Infinity],
     Evaluate[
      Sequence @@
       {{#1[[1]], 0, Infinity}, {#1[[3]], #1[[1]], 
         Infinity}, {#1[[2]], #1[[1]], #1[[3]]}, {#1[[4]], #1[[3]], 
         Infinity}}
      ],
     Method -> {GlobalAdaptive, MaxErrorIncreases -> 10^4}
     ] &, Permutations[imaginaryvariables]]

Print[integrals]
Print[Total[integrals]]
Print[Total[integrals]/N[(2 Pi)^8]]
\end{verbatim}
\end{implement}

\begin{rem}
This strategy allows effective numerical integration of all weights up to order $3$.
At the order $4$, it works for some weights but not others: see Tables \ref{TableVerified} and \ref{TableConjectured}.
The call(s) to {\tt Map} may be replaced by {\tt ParallelMap} to parallelize the computation.
\end{rem}

%
\begin{example}
The second Bernoulli graph
\text{\raisebox{-16pt}{
\unitlength=0.70mm
\linethickness{0.4pt}
\begin{picture}(15.00,23.33)
\put(5.00,5.00){\circle*{1.33}}
\put(16.00,5.00){\circle*{1.33}}
\put(-2.00,17.00){\circle*{1.33}}
\put(-2.00,17.00){\vector(1,0){6.33}}
\put(-2.00,17.00){\vector(1,-2){6.23}}
\put(5.00,17.00){\circle*{1.33}}
\put(5.00,17.00){\vector(0,-1){11.33}}
\put(5.00,17.00){\vector(1,-1){5.33}}
\put(10.33,11.33){\circle*{1.33}}
\put(10.33,11.33){\vector(1,-1){5.33}}
\put(10.33,11.33){\vector(-1,-1){5.33}}
\end{picture}
}}
\cite{WillwacherFelderIrrationality}
has the weight integrand
{\tiny \[
\frac{64 b f d \left(c \left((a-c)^2+b^2\right)+d^2 (c-2 a)\right) \left(f^2 (e-2 c)+e \left((e-c)^2+d^2\right)\right)}{\left(a^2+b^2\right) \left(f^2+(e-1)^2\right) \left(f^2+e^2\right) \left(c^2+d^2\right) \left((a-c)^2+(b-d)^2\right) \left((a-c)^2+(b+d)^2\right) \left((f+d)^2+(e-c)^2\right)}
\]}
\noindent
The residue calculation followed by the numerical integration leads to the estimate $5.71871 \times 10^{-9} - 5.92495 \times 10^{-21} i$ of the weight; this leads to the guess that it is zero and in fact it is true.
\end{example}

\begin{table}[htb]
\caption{Verified values}
\label{TableVerified}
\begin{tabular}{l | r | r c r}
Weight & Approximation & \multicolumn{3}{c}{True value} \\
\hline
{\tt w\symbol{"5F}4\symbol{"5F}1} & $-0.0069444401170 \pm 0.000000906189$ & $-1/144$ & $\approx$ & $-0.00694444$
\end{tabular}
\end{table}

\begin{table}[htb]
\caption{Conjectured values}
\label{TableConjectured}
\begin{tabular}{l | r | r c r}
Weight & Approximation & \multicolumn{3}{c}{Conjectured true value} \\
\hline
{\tt w\symbol{"5F}4\symbol{"5F}103} & $-0.000086894703 \pm 0.000000681076$ & $-1/11520$ & $\approx$ & $0.000086805$ \\
{\tt w\symbol{"5F}4\symbol{"5F}104} &  $0.000347214860 \pm 0.000000371598$ & $1/2880$ & $\approx$ & $0.000347222$ \\
{\tt w\symbol{"5F}4\symbol{"5F}112} & $-0.000347219933 \pm 0.000000042901$ & $-1/2880$ & $\approx$ & $-0.000347222$ \\
{\tt w\symbol{"5F}4\symbol{"5F}113} &  $0.000694441623 \pm 0.000000093136$ & $1/1440$ & $\approx$ & $0.000694444$ \\
{\tt w\symbol{"5F}4\symbol{"5F}133} &  $0.000694443060 \pm 0.000000078774$ & $1/1440$ & $\approx$ & $0.000694444$ \\
{\tt w\symbol{"5F}4\symbol{"5F}138} & $-0.001041664533 \pm 0.000000095465$ & $-1/960$ & $\approx$ & $-0.001041666$ \\
{\tt w\symbol{"5F}4\symbol{"5F}147} & $-0.000043376821 \pm 0.000000095465$ & $-1/23040$ & $\approx$ & $-0.000043402$ \\
{\tt w\symbol{"5F}4\symbol{"5F}148} &  $0.000173611294 \pm 0.000000015063$ & $1/5760$ & $\approx$ & $0.000173611$
\end{tabular}
\end{table}

In particular, this table lists the approximate value of the master\/-\/parameters $p_4={}${\tt w\symbol{"5F}4\symbol{"5F}103} and $p_5={}${\tt w\symbol{"5F}4\symbol{"5F}104}.
The relation {\tt w\symbol{"5F}4\symbol{"5F}133}${}=2\cdot{}${\tt w\symbol{"5F}4\symbol{"5F}104} which was found in Theorem \ref{ThmBig} and listed in Table \ref{Table149via10} of Appendix \ref{AppStarEncoding} is satisfied approximately.
Furthermore, the relation \verb"w_4_103"${}=2\cdot{}$\verb"w_4_147" seems to hold approximately.

\newpage

\subsection{Claimed values of the 10 master-parameters}
\label{App10Pym}

By using a different technique 
B. Pym and E. Panzer have obtained the exact values of the ten master-parameters.

\begin{claimNo}[\cite{PymPrivateComm}]
The values of ten master-parameters \textup{(}which are the weights of ten graphs in Figure~\ref{10Graphs} on p.~\pageref{10Graphs}\textup{)} are given in Table~\ref{Tab10Pym} below.
\end{claimNo}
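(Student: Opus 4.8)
The plan is to establish the ten values by evaluating directly the Kontsevich weight integrals~\eqref{EqWeight} for the ten graphs of Fig.~\ref{10Graphs}, each of which has $k=4$ internal vertices. First I would pass to Cartesian coordinates on~$\mathbb{H}^4$ as in Appendix~\ref{AppNumericalWeights}: by~\eqref{eq:cartesianphi} every integrand is the Jacobian of the Gauss map~$\Phi_\Gamma$ and hence a rational function of the four real coordinates, with the four imaginary coordinates entering as parameters. Since the integral converges absolutely~\cite{MK97}, Fubini's theorem licenses iterated integration in any order, and discarding the measure\/-\/zero coincidence loci lets me take the full domain~$\mathbb{H}^4$.

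Second, I would integrate out the four real coordinates one at a time by the residue theorem, exactly as in the appendix, splitting~$\mathbb{H}^4$ into the $4!$ chambers cut out by the orderings of the imaginary coordinates so that in each chamber the location of the poles in the upper half\/-\/plane is unambiguous. This halves the dimension and leaves, in each chamber, an integral of a rational function over a simplicial region in the four imaginary coordinates.

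The decisive third step is to carry out the remaining four integrations \emph{symbolically} rather than numerically. The point is that, after the residue reduction, a primitive of the integrand with respect to one variable lies in the algebra of hyperlogarithms; integrating iteratively in a suitable order keeps one inside this class and yields, after all four integrations and division by~$(2\pi)^{8}$, a~$\mathbb{Q}$\/-\/linear combination of products of multiple zeta values and powers of~$\pi$ (cf.\ the rationality discussion and~\cite{WillwacherFelderIrrationality} quoted earlier). The entire content of the claim is then that for these ten particular graphs the transcendental $\zeta$\/-\/contributions cancel chamber by chamber, collapsing the answer to the value tabulated in Table~\ref{Tab10Pym}. As an independent safeguard I would verify that the ten numbers satisfy every linear relation among weights already derived --- the cyclic relations~\eqref{EqCyclic}, the associativity constraints of Methods~1--3, and the substitution rules of Theorem~\ref{ThmBig} such as $\texttt{w\_4\_133}=2p_5$ --- and that they agree with the numerical approximations of Tables~\ref{TableVerified} and~\ref{TableConjectured}; these checks do not prove the values but eliminate arithmetic slips.

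The hard part will be the symbolic integration itself. Because the angle form $\mathrm{d}\varphi$ is the differential of an arctangent, the primitives are genuinely logarithmic and polylogarithmic, not rational, so to stay within a class closed under integration I would first rationalise the arctangents (e.g.\ by a tangent half\/-\/angle substitution) and then track with discipline the boundary terms arising at the chamber walls $y_i=y_j$ and at $0$ and~$\infty$. The final collapse of the a~priori multiple\/-\/zeta\/-\/valued expression to the simple stated value is exactly the step that a dedicated period/hyperlogarithm technique --- the ``different technique'' of~\cite{PymPrivateComm} --- is engineered to make tractable, and reproducing it by hand for all ten graphs at once is what makes the computation delicate.
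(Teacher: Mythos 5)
There is a mismatch here that you should be aware of: the paper does \emph{not} prove this statement. It is stated as a \textit{Claim} attributed to the private communication \cite{PymPrivateComm}, and the authors explicitly emphasize that ``these ten values are conjectured via a use of software which is currently under development.'' The only support offered in the paper is exactly the kind of cross\/-\/checking you relegate to a ``safeguard'': agreement with the numerical approximations of Tables~\ref{TableVerified} and~\ref{TableConjectured}, and compatibility with the linear relations of Table~\ref{Table149via10} (e.g.\ $\texttt{w\_4\_133}=2p_5$ and $\texttt{w\_4\_103}=2\,\texttt{w\_4\_147}$). So there is no proof in the paper against which to compare yours; you are proposing to supply one.

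As a proof, however, your proposal has a genuine gap at its decisive step. After the residue reduction over the four real coordinates, you assert that iterated integration over the imaginary coordinates ``keeps one inside'' the algebra of hyperlogarithms and that the multiple\/-\/zeta contributions ``cancel chamber by chamber,'' but neither is established. Staying within the hyperlogarithm class under iterated integration requires a linear\/-\/reducibility property of the denominators that must be verified graph by graph and order by order; it is not automatic, and the chamber decomposition introduces boundary polylogarithms at the walls $y_i=y_j$ that you do not control. More importantly, the collapse of the resulting period to a rational number is precisely the nontrivial content of the claim --- the paper itself recalls that at $n=7$ a weight equals $p\cdot\zeta(3)^2/\pi^6+q$ \cite{WillwacherFelderIrrationality}, so rationality of four\/-\/vertex weights cannot be taken for granted. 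Your argument therefore reduces the claim to ``the symbolic integration works out to the tabulated value,'' which is an announcement of a computation rather than the computation. Until those ten integrations are actually carried out (or an independent argument pins the values down, e.g.\ by showing the known linear relations plus associativity determine them uniquely --- which Theorem~\ref{ThmBig} shows they do \emph{not}, since ten free parameters survive), the statement remains, as in the paper, a conjecture supported by numerics.
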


\begin{table}[htb]
\caption{Recently suggested values of the master-parameters \cite{PymPrivateComm}.}
\label{Tab10Pym}
\begin{tabular}{l l}
Master-parameter & Value \\
\hline
$p_1 = {}$\verb"w_4_100" & $1/1440$ \\
$p_2 = {}$\verb"w_4_101" & $1/2880$ \\
$p_3 = {}$\verb"w_4_102" & $1/5760$ \\
$p_4 = {}$\verb"w_4_103" & $-1/11520$ \\
$p_5 = {}$\verb"w_4_104" & $1/2880$ \\
$p_6 = {}$\verb"w_4_107" & $13/2880$ \\
$p_7 = {}$\verb"w_4_108" & $-17/2880$ \\
$p_8 = {}$\verb"w_4_109" & $-1/1152$ \\
$p_9 = {}$\verb"w_4_119" & $-1/1280$ \\
$p_{10} = {}$\verb"w_4_125" & $-1/960$
\end{tabular}
\end{table}

Let it be emphasized that these ten values are conjectured via a use of software which is currently under development. 

\begin{rem}
The exact values of two master-parameters \verb"w_4_103" and \verb"w_4_104" reproduce the values which had been conjectured in Table~\ref{TableConjectured}.
We also note that all the weights of graphs in $\star$ mod $\bar{o}(\hbar^4)$ are rational numbers.
Thirdly, the values of non-master parameters (namely, \verb"w_4_112", \verb"w_4_113", \verb"w_4_133", \verb"w_4_138", \verb"w_4_147", and \verb"w_4_148") in Table~\ref{TableConjectured}, whenever recalculated on the basis of conjectured values from Table~\ref{Tab10Pym}, do all match the numerical approximations in Table~\ref{TableConjectured}, reproducing our conjectured rational values in its rightmost column.
\end{rem}

In conclusion, provided that all the ten values in Table~\ref{Tab10Pym} are true, this is the authentic Kontsevich star-product up to $\bar{o}(\hbar^4)$:
\label{EqStarWith10PymStart}

\begin{multline*}
f \star g = f \times g
+\hbar\cP^{ij} \partial_{i} f \partial_{j} g 
+\hbar^{2}\big(
-\tfrac{1}{6} \partial_{\ell} \cP^{ij} \partial_{j} \cP^{k\ell} \partial_{i} f \partial_{k} g 
-\tfrac{1}{3} \partial_{\ell} \cP^{ij} \cP^{k\ell} \partial_{i} f \partial_{k} \partial_{j} g \\
+\tfrac{1}{3} \partial_{\ell} \cP^{ij} \cP^{k\ell} \partial_{k} \partial_{i} f \partial_{j} g 
+\tfrac{1}{2} \cP^{ij} \cP^{k\ell} \partial_{k} \partial_{i} f \partial_{\ell} \partial_{j} g 
\big)
+\hbar^{3}\big(
-\tfrac{1}{6} \partial_{m} \partial_{\ell} \cP^{ij} \partial_{n} \partial_{j} \cP^{k\ell} \cP^{mn} \partial_{i} f \partial_{k} g \\
+\tfrac{1}{6} \partial_{n} \partial_{\ell} \cP^{ij} \cP^{k\ell} \cP^{mn} \partial_{i} f \partial_{m} \partial_{k} \partial_{j} g 
-\tfrac{1}{3} \partial_{n} \cP^{ij} \cP^{k\ell} \cP^{mn} \partial_{k} \partial_{i} f \partial_{m} \partial_{\ell} \partial_{j} g \\
+\tfrac{1}{6} \partial_{n} \partial_{\ell} \cP^{ij} \cP^{k\ell} \cP^{mn} \partial_{m} \partial_{k} \partial_{i} f \partial_{j} g 
+\tfrac{1}{3} \partial_{n} \cP^{ij} \cP^{k\ell} \cP^{mn} \partial_{m} \partial_{k} \partial_{i} f \partial_{\ell} \partial_{j} g \\
+\tfrac{1}{6} \cP^{ij} \cP^{k\ell} \cP^{mn} \partial_{m} \partial_{k} \partial_{i} f \partial_{n} \partial_{\ell} \partial_{j} g 
-\tfrac{1}{6} \partial_{m} \partial_{\ell} \cP^{ij} \partial_{n} \cP^{k\ell} \cP^{mn} \partial_{i} f \partial_{k} \partial_{j} g \\
+\tfrac{1}{6} \partial_{n} \partial_{\ell} \cP^{ij} \partial_{j} \cP^{k\ell} \cP^{mn} \partial_{i} f \partial_{m} \partial_{k} g 
-\tfrac{1}{6} \partial_{m} \partial_{\ell} \cP^{ij} \partial_{n} \cP^{k\ell} \cP^{mn} \partial_{k} \partial_{i} f \partial_{j} g \\
-\tfrac{1}{6} \partial_{\ell} \cP^{ij} \partial_{n} \partial_{j} \cP^{k\ell} \cP^{mn} \partial_{m} \partial_{i} f \partial_{k} g 
-\tfrac{1}{6} \cP^{ij} \partial_{n} \cP^{k\ell} \partial_{\ell} \cP^{mn} \partial_{k} \partial_{i} f \partial_{m} \partial_{j} g \\
-\tfrac{1}{6} \partial_{n} \cP^{ij} \cP^{k\ell} \partial_{\ell} \cP^{mn} \partial_{k} \partial_{i} f \partial_{m} \partial_{j} g 
-\tfrac{1}{6} \partial_{\ell} \cP^{ij} \partial_{n} \cP^{k\ell} \cP^{mn} \partial_{k} \partial_{i} f \partial_{m} \partial_{j} g 
\big) + {}
\end{multline*}
\begin{multline*}
+\hbar^{4}\big(
-\tfrac{1}{6} \partial_{q} \cP^{ij} \cP^{k\ell} \cP^{mn} \cP^{pq} \partial_{m} \partial_{k} \partial_{i} f \partial_{p} \partial_{n} \partial_{\ell} \partial_{j} g 
+\tfrac{1}{6} \partial_{q} \cP^{ij} \cP^{k\ell} \cP^{mn} \cP^{pq} \partial_{p} \partial_{m} \partial_{k} \partial_{i} f \partial_{n} \partial_{\ell} \partial_{j} g \\
+\tfrac{1}{24} \cP^{ij} \cP^{k\ell} \cP^{mn} \cP^{pq} \partial_{p} \partial_{m} \partial_{k} \partial_{i} f \partial_{q} \partial_{n} \partial_{\ell} \partial_{j} g 
+\tfrac{1}{6} \partial_{q} \partial_{n} \cP^{ij} \cP^{k\ell} \cP^{mn} \cP^{pq} \partial_{k} \partial_{i} f \partial_{p} \partial_{m} \partial_{\ell} \partial_{j} g \\
+\tfrac{1}{18} \partial_{n} \cP^{ij} \partial_{q} \cP^{k\ell} \cP^{mn} \cP^{pq} \partial_{k} \partial_{i} f \partial_{p} \partial_{m} \partial_{\ell} \partial_{j} g 
+\tfrac{1}{6} \partial_{q} \partial_{n} \cP^{ij} \cP^{k\ell} \cP^{mn} \cP^{pq} \partial_{p} \partial_{m} \partial_{k} \partial_{i} f \partial_{\ell} \partial_{j} g \\
+\tfrac{1}{18} \partial_{n} \cP^{ij} \partial_{q} \cP^{k\ell} \cP^{mn} \cP^{pq} \partial_{p} \partial_{m} \partial_{k} \partial_{i} f \partial_{\ell} \partial_{j} g 
-\tfrac{1}{30} \partial_{q} \partial_{n} \partial_{\ell} \cP^{ij} \cP^{k\ell} \cP^{mn} \cP^{pq} \partial_{i} f \partial_{p} \partial_{m} \partial_{k} \partial_{j} g \\
-\tfrac{2}{45} \partial_{n} \partial_{\ell} \cP^{ij} \partial_{q} \cP^{k\ell} \cP^{mn} \cP^{pq} \partial_{i} f \partial_{p} \partial_{m} \partial_{k} \partial_{j} g 
-\tfrac{1}{30} \partial_{\ell} \cP^{ij} \partial_{q} \partial_{n} \cP^{k\ell} \cP^{mn} \cP^{pq} \partial_{i} f \partial_{p} \partial_{m} \partial_{k} \partial_{j} g \\
+\tfrac{1}{45} \partial_{\ell} \cP^{ij} \partial_{n} \cP^{k\ell} \partial_{q} \cP^{mn} \cP^{pq} \partial_{i} f \partial_{p} \partial_{m} \partial_{k} \partial_{j} g 
-\tfrac{1}{12} \cP^{ij} \cP^{k\ell} \partial_{q} \cP^{mn} \partial_{n} \cP^{pq} \partial_{m} \partial_{k} \partial_{i} f \partial_{p} \partial_{\ell} \partial_{j} g \\
-\tfrac{1}{6} \partial_{q} \cP^{ij} \cP^{k\ell} \cP^{mn} \partial_{n} \cP^{pq} \partial_{m} \partial_{k} \partial_{i} f \partial_{p} \partial_{\ell} \partial_{j} g 
-\tfrac{1}{6} \partial_{n} \cP^{ij} \cP^{k\ell} \partial_{q} \cP^{mn} \cP^{pq} \partial_{m} \partial_{k} \partial_{i} f \partial_{p} \partial_{\ell} \partial_{j} g \\
-\tfrac{1}{9} \partial_{n} \cP^{ij} \partial_{q} \cP^{k\ell} \cP^{mn} \cP^{pq} \partial_{m} \partial_{k} \partial_{i} f \partial_{p} \partial_{\ell} \partial_{j} g 
+\tfrac{1}{30} \partial_{q} \partial_{n} \partial_{\ell} \cP^{ij} \cP^{k\ell} \cP^{mn} \cP^{pq} \partial_{p} \partial_{m} \partial_{k} \partial_{i} f \partial_{j} g \\
+\tfrac{2}{45} \partial_{n} \partial_{\ell} \cP^{ij} \partial_{q} \cP^{k\ell} \cP^{mn} \cP^{pq} \partial_{p} \partial_{m} \partial_{k} \partial_{i} f \partial_{j} g 
+\tfrac{1}{30} \partial_{\ell} \cP^{ij} \partial_{q} \partial_{n} \cP^{k\ell} \cP^{mn} \cP^{pq} \partial_{p} \partial_{m} \partial_{k} \partial_{i} f \partial_{j} g \\
-\tfrac{1}{45} \partial_{\ell} \cP^{ij} \partial_{n} \cP^{k\ell} \partial_{q} \cP^{mn} \cP^{pq} \partial_{p} \partial_{m} \partial_{k} \partial_{i} f \partial_{j} g 
-\tfrac{1}{6} \partial_{p} \partial_{n} \cP^{ij} \cP^{k\ell} \partial_{q} \cP^{mn} \cP^{pq} \partial_{k} \partial_{i} f \partial_{m} \partial_{\ell} \partial_{j} g \\
+\tfrac{1}{6} \cP^{ij} \partial_{q} \partial_{n} \cP^{k\ell} \partial_{\ell} \cP^{mn} \cP^{pq} \partial_{k} \partial_{i} f \partial_{p} \partial_{m} \partial_{j} g 
+\tfrac{1}{18} \partial_{q} \cP^{ij} \partial_{n} \cP^{k\ell} \partial_{\ell} \cP^{mn} \cP^{pq} \partial_{k} \partial_{i} f \partial_{p} \partial_{m} \partial_{j} g \\
-\tfrac{1}{9} \partial_{p} \partial_{n} \cP^{ij} \partial_{q} \cP^{k\ell} \cP^{mn} \cP^{pq} \partial_{k} \partial_{i} f \partial_{m} \partial_{\ell} \partial_{j} g 
-\tfrac{1}{18} \partial_{n} \cP^{ij} \partial_{p} \cP^{k\ell} \partial_{q} \cP^{mn} \cP^{pq} \partial_{k} \partial_{i} f \partial_{m} \partial_{\ell} \partial_{j} g \\
+\tfrac{1}{30} \partial_{q} \partial_{n} \partial_{\ell} \cP^{ij} \cP^{k\ell} \cP^{mn} \cP^{pq} \partial_{k} \partial_{i} f \partial_{p} \partial_{m} \partial_{j} g 
+\tfrac{13}{90} \partial_{q} \partial_{n} \cP^{ij} \cP^{k\ell} \partial_{\ell} \cP^{mn} \cP^{pq} \partial_{k} \partial_{i} f \partial_{p} \partial_{m} \partial_{j} g \\
-\tfrac{1}{45} \partial_{n} \partial_{\ell} \cP^{ij} \cP^{k\ell} \partial_{q} \cP^{mn} \cP^{pq} \partial_{k} \partial_{i} f \partial_{p} \partial_{m} \partial_{j} g 
-\tfrac{1}{30} \partial_{n} \cP^{ij} \cP^{k\ell} \partial_{q} \partial_{\ell} \cP^{mn} \cP^{pq} \partial_{k} \partial_{i} f \partial_{p} \partial_{m} \partial_{j} g \\
+\tfrac{1}{90} \partial_{q} \cP^{ij} \cP^{k\ell} \partial_{\ell} \cP^{mn} \partial_{n} \cP^{pq} \partial_{k} \partial_{i} f \partial_{p} \partial_{m} \partial_{j} g 
+\tfrac{1}{30} \partial_{n} \partial_{\ell} \cP^{ij} \partial_{q} \cP^{k\ell} \cP^{mn} \cP^{pq} \partial_{k} \partial_{i} f \partial_{p} \partial_{m} \partial_{j} g \\
+\tfrac{1}{15} \partial_{n} \cP^{ij} \partial_{q} \cP^{k\ell} \partial_{\ell} \cP^{mn} \cP^{pq} \partial_{k} \partial_{i} f \partial_{p} \partial_{m} \partial_{j} g 
+\tfrac{1}{15} \partial_{\ell} \cP^{ij} \partial_{q} \partial_{n} \cP^{k\ell} \cP^{mn} \cP^{pq} \partial_{k} \partial_{i} f \partial_{p} \partial_{m} \partial_{j} g \\
+\tfrac{1}{90} \partial_{\ell} \cP^{ij} \partial_{n} \cP^{k\ell} \partial_{q} \cP^{mn} \cP^{pq} \partial_{k} \partial_{i} f \partial_{p} \partial_{m} \partial_{j} g 
-\tfrac{1}{6} \partial_{p} \partial_{n} \cP^{ij} \cP^{k\ell} \partial_{q} \cP^{mn} \cP^{pq} \partial_{m} \partial_{k} \partial_{i} f \partial_{\ell} \partial_{j} g \\
-\tfrac{1}{6} \cP^{ij} \partial_{n} \cP^{k\ell} \partial_{q} \partial_{\ell} \cP^{mn} \cP^{pq} \partial_{p} \partial_{k} \partial_{i} f \partial_{m} \partial_{j} g 
-\tfrac{1}{18} \partial_{\ell} \cP^{ij} \cP^{k\ell} \partial_{q} \cP^{mn} \partial_{n} \cP^{pq} \partial_{m} \partial_{k} \partial_{i} f \partial_{p} \partial_{j} g \\
-\tfrac{1}{9} \partial_{p} \partial_{n} \cP^{ij} \partial_{q} \cP^{k\ell} \cP^{mn} \cP^{pq} \partial_{m} \partial_{k} \partial_{i} f \partial_{\ell} \partial_{j} g 
-\tfrac{1}{18} \partial_{n} \cP^{ij} \partial_{p} \cP^{k\ell} \partial_{q} \cP^{mn} \cP^{pq} \partial_{m} \partial_{k} \partial_{i} f \partial_{\ell} \partial_{j} g \\
-\tfrac{1}{30} \partial_{q} \partial_{n} \partial_{\ell} \cP^{ij} \cP^{k\ell} \cP^{mn} \cP^{pq} \partial_{m} \partial_{k} \partial_{i} f \partial_{p} \partial_{j} g 
-\tfrac{13}{90} \partial_{n} \partial_{\ell} \cP^{ij} \partial_{q} \cP^{k\ell} \cP^{mn} \cP^{pq} \partial_{m} \partial_{k} \partial_{i} f \partial_{p} \partial_{j} g \\
+\tfrac{1}{45} \partial_{q} \partial_{\ell} \cP^{ij} \partial_{n} \cP^{k\ell} \cP^{mn} \cP^{pq} \partial_{m} \partial_{k} \partial_{i} f \partial_{p} \partial_{j} g 
+\tfrac{1}{30} \partial_{\ell} \cP^{ij} \partial_{q} \partial_{n} \cP^{k\ell} \cP^{mn} \cP^{pq} \partial_{m} \partial_{k} \partial_{i} f \partial_{p} \partial_{j} g \\
-\tfrac{1}{90} \partial_{\ell} \cP^{ij} \partial_{n} \cP^{k\ell} \partial_{q} \cP^{mn} \cP^{pq} \partial_{m} \partial_{k} \partial_{i} f \partial_{p} \partial_{j} g 
-\tfrac{1}{30} \partial_{q} \partial_{\ell} \cP^{ij} \cP^{k\ell} \cP^{mn} \partial_{n} \cP^{pq} \partial_{m} \partial_{k} \partial_{i} f \partial_{p} \partial_{j} g \\
-\tfrac{1}{15} \partial_{\ell} \cP^{ij} \partial_{q} \cP^{k\ell} \cP^{mn} \partial_{n} \cP^{pq} \partial_{m} \partial_{k} \partial_{i} f \partial_{p} \partial_{j} g 
-\tfrac{1}{15} \partial_{n} \cP^{ij} \cP^{k\ell} \partial_{q} \partial_{\ell} \cP^{mn} \cP^{pq} \partial_{p} \partial_{k} \partial_{i} f \partial_{m} \partial_{j} g \\
-\tfrac{1}{90} \partial_{q} \cP^{ij} \cP^{k\ell} \partial_{\ell} \cP^{mn} \partial_{n} \cP^{pq} \partial_{m} \partial_{k} \partial_{i} f \partial_{p} \partial_{j} g 
+\tfrac{7}{90} \partial_{p} \partial_{n} \partial_{\ell} \cP^{ij} \partial_{q} \cP^{k\ell} \cP^{mn} \cP^{pq} \partial_{i} f \partial_{m} \partial_{k} \partial_{j} g \\
-\tfrac{1}{90} \partial_{p} \partial_{\ell} \cP^{ij} \partial_{q} \partial_{n} \cP^{k\ell} \cP^{mn} \cP^{pq} \partial_{i} f \partial_{m} \partial_{k} \partial_{j} g 
+\tfrac{1}{30} \partial_{p} \partial_{\ell} \cP^{ij} \partial_{n} \cP^{k\ell} \partial_{q} \cP^{mn} \cP^{pq} \partial_{i} f \partial_{m} \partial_{k} \partial_{j} g \\
+\tfrac{2}{45} \partial_{\ell} \cP^{ij} \partial_{p} \partial_{n} \cP^{k\ell} \partial_{q} \cP^{mn} \cP^{pq} \partial_{i} f \partial_{m} \partial_{k} \partial_{j} g 
+\tfrac{1}{30} \partial_{p} \partial_{\ell} \cP^{ij} \partial_{q} \cP^{k\ell} \cP^{mn} \partial_{n} \cP^{pq} \partial_{i} f \partial_{m} \partial_{k} \partial_{j} g \\
+\tfrac{1}{45} \partial_{p} \partial_{\ell} \cP^{ij} \cP^{k\ell} \partial_{q} \cP^{mn} \partial_{n} \cP^{pq} \partial_{i} f \partial_{m} \partial_{k} \partial_{j} g 
-\tfrac{1}{90} \partial_{\ell} \cP^{ij} \partial_{p} \cP^{k\ell} \partial_{q} \cP^{mn} \partial_{n} \cP^{pq} \partial_{i} f \partial_{m} \partial_{k} \partial_{j} g \\
-\tfrac{1}{90} \partial_{p} \cP^{ij} \partial_{q} \cP^{k\ell} \partial_{\ell} \cP^{mn} \partial_{n} \cP^{pq} \partial_{i} f \partial_{m} \partial_{k} \partial_{j} g 
-\tfrac{1}{90} \partial_{p} \cP^{ij} \cP^{k\ell} \partial_{q} \partial_{\ell} \cP^{mn} \partial_{n} \cP^{pq} \partial_{i} f \partial_{m} \partial_{k} \partial_{j} g \\
+\tfrac{1}{30} \partial_{m} \cP^{ij} \partial_{n} \cP^{k\ell} \partial_{q} \partial_{\ell} \cP^{mn} \cP^{pq} \partial_{i} f \partial_{p} \partial_{k} \partial_{j} g 
+\tfrac{1}{90} \partial_{q} \cP^{ij} \partial_{j} \cP^{k\ell} \partial_{\ell} \cP^{mn} \partial_{n} \cP^{pq} \partial_{i} f \partial_{p} \partial_{m} \partial_{k} g \\
+\tfrac{1}{90} \partial_{n} \cP^{ij} \partial_{q} \partial_{j} \cP^{k\ell} \partial_{\ell} \cP^{mn} \cP^{pq} \partial_{i} f \partial_{p} \partial_{m} \partial_{k} g 
+\tfrac{1}{90} \partial_{\ell} \cP^{ij} \partial_{n} \partial_{j} \cP^{k\ell} \partial_{q} \cP^{mn} \cP^{pq} \partial_{i} f \partial_{p} \partial_{m} \partial_{k} g \\
-\tfrac{1}{30} \partial_{n} \cP^{ij} \partial_{j} \cP^{k\ell} \partial_{q} \partial_{\ell} \cP^{mn} \cP^{pq} \partial_{i} f \partial_{p} \partial_{m} \partial_{k} g 
-\tfrac{1}{45} \partial_{q} \partial_{n} \cP^{ij} \partial_{j} \cP^{k\ell} \partial_{\ell} \cP^{mn} \cP^{pq} \partial_{i} f \partial_{p} \partial_{m} \partial_{k} g
\end{multline*}
\begin{multline*}
-\tfrac{1}{60} \partial_{\ell} \cP^{ij} \partial_{q} \partial_{n} \partial_{j} \cP^{k\ell} \cP^{mn} \cP^{pq} \partial_{i} f \partial_{p} \partial_{m} \partial_{k} g 
-\tfrac{1}{45} \partial_{n} \partial_{\ell} \cP^{ij} \partial_{q} \partial_{j} \cP^{k\ell} \cP^{mn} \cP^{pq} \partial_{i} f \partial_{p} \partial_{m} \partial_{k} g \\
-\tfrac{1}{45} \partial_{n} \partial_{\ell} \cP^{ij} \partial_{j} \cP^{k\ell} \partial_{q} \cP^{mn} \cP^{pq} \partial_{i} f \partial_{p} \partial_{m} \partial_{k} g 
-\tfrac{1}{20} \partial_{q} \partial_{n} \partial_{\ell} \cP^{ij} \partial_{j} \cP^{k\ell} \cP^{mn} \cP^{pq} \partial_{i} f \partial_{p} \partial_{m} \partial_{k} g \\
-\tfrac{7}{90} \partial_{p} \partial_{n} \partial_{\ell} \cP^{ij} \partial_{q} \cP^{k\ell} \cP^{mn} \cP^{pq} \partial_{m} \partial_{k} \partial_{i} f \partial_{j} g 
+\tfrac{1}{90} \partial_{p} \partial_{\ell} \cP^{ij} \partial_{q} \partial_{n} \cP^{k\ell} \cP^{mn} \cP^{pq} \partial_{m} \partial_{k} \partial_{i} f \partial_{j} g \\
-\tfrac{1}{30} \partial_{p} \partial_{\ell} \cP^{ij} \partial_{n} \cP^{k\ell} \partial_{q} \cP^{mn} \cP^{pq} \partial_{m} \partial_{k} \partial_{i} f \partial_{j} g 
-\tfrac{2}{45} \partial_{\ell} \cP^{ij} \partial_{p} \partial_{n} \cP^{k\ell} \partial_{q} \cP^{mn} \cP^{pq} \partial_{m} \partial_{k} \partial_{i} f \partial_{j} g \\
-\tfrac{1}{30} \partial_{p} \partial_{\ell} \cP^{ij} \partial_{q} \cP^{k\ell} \cP^{mn} \partial_{n} \cP^{pq} \partial_{m} \partial_{k} \partial_{i} f \partial_{j} g 
-\tfrac{1}{45} \partial_{p} \partial_{\ell} \cP^{ij} \cP^{k\ell} \partial_{q} \cP^{mn} \partial_{n} \cP^{pq} \partial_{m} \partial_{k} \partial_{i} f \partial_{j} g \\
+\tfrac{1}{90} \partial_{\ell} \cP^{ij} \partial_{p} \cP^{k\ell} \partial_{q} \cP^{mn} \partial_{n} \cP^{pq} \partial_{m} \partial_{k} \partial_{i} f \partial_{j} g 
+\tfrac{1}{90} \partial_{p} \cP^{ij} \partial_{q} \cP^{k\ell} \partial_{\ell} \cP^{mn} \partial_{n} \cP^{pq} \partial_{m} \partial_{k} \partial_{i} f \partial_{j} g \\
+\tfrac{1}{90} \partial_{p} \cP^{ij} \cP^{k\ell} \partial_{q} \partial_{\ell} \cP^{mn} \partial_{n} \cP^{pq} \partial_{m} \partial_{k} \partial_{i} f \partial_{j} g 
-\tfrac{1}{30} \partial_{m} \cP^{ij} \partial_{n} \cP^{k\ell} \partial_{q} \partial_{\ell} \cP^{mn} \cP^{pq} \partial_{p} \partial_{k} \partial_{i} f \partial_{j} g \\
+\tfrac{1}{90} \partial_{q} \cP^{ij} \partial_{j} \cP^{k\ell} \partial_{\ell} \cP^{mn} \partial_{n} \cP^{pq} \partial_{m} \partial_{k} \partial_{i} f \partial_{p} g 
+\tfrac{1}{90} \cP^{ij} \partial_{q} \partial_{j} \cP^{k\ell} \partial_{\ell} \cP^{mn} \partial_{n} \cP^{pq} \partial_{m} \partial_{k} \partial_{i} f \partial_{p} g \\
+\tfrac{1}{90} \cP^{ij} \partial_{j} \cP^{k\ell} \partial_{q} \partial_{\ell} \cP^{mn} \partial_{n} \cP^{pq} \partial_{m} \partial_{k} \partial_{i} f \partial_{p} g 
-\tfrac{1}{30} \partial_{n} \cP^{ij} \partial_{q} \partial_{j} \cP^{k\ell} \partial_{\ell} \cP^{mn} \cP^{pq} \partial_{p} \partial_{k} \partial_{i} f \partial_{m} g \\
-\tfrac{1}{45} \partial_{n} \cP^{ij} \partial_{j} \cP^{k\ell} \partial_{q} \partial_{\ell} \cP^{mn} \cP^{pq} \partial_{p} \partial_{k} \partial_{i} f \partial_{m} g 
-\tfrac{1}{60} \cP^{ij} \partial_{q} \partial_{n} \partial_{j} \cP^{k\ell} \partial_{\ell} \cP^{mn} \cP^{pq} \partial_{p} \partial_{k} \partial_{i} f \partial_{m} g \\
-\tfrac{1}{45} \cP^{ij} \partial_{n} \partial_{j} \cP^{k\ell} \partial_{q} \partial_{\ell} \cP^{mn} \cP^{pq} \partial_{p} \partial_{k} \partial_{i} f \partial_{m} g 
-\tfrac{1}{45} \cP^{ij} \partial_{j} \cP^{k\ell} \partial_{q} \partial_{\ell} \cP^{mn} \partial_{n} \cP^{pq} \partial_{p} \partial_{k} \partial_{i} f \partial_{m} g \\
-\tfrac{1}{20} \partial_{\ell} \cP^{ij} \partial_{q} \partial_{n} \partial_{j} \cP^{k\ell} \cP^{mn} \cP^{pq} \partial_{p} \partial_{m} \partial_{i} f \partial_{k} g 
-\tfrac{1}{40} \partial_{p} \partial_{m} \partial_{\ell} \cP^{ij} \partial_{q} \partial_{n} \partial_{j} \cP^{k\ell} \cP^{mn} \cP^{pq} \partial_{i} f \partial_{k} g \\
-\tfrac{1}{72} \partial_{p} \partial_{m} \partial_{\ell} \cP^{ij} \partial_{n} \partial_{j} \cP^{k\ell} \partial_{q} \cP^{mn} \cP^{pq} \partial_{i} f \partial_{k} g 
+\tfrac{1}{72} \partial_{m} \partial_{\ell} \cP^{ij} \partial_{p} \partial_{n} \partial_{j} \cP^{k\ell} \partial_{q} \cP^{mn} \cP^{pq} \partial_{i} f \partial_{k} g \\
+\tfrac{1}{360} \partial_{m} \partial_{\ell} \cP^{ij} \partial_{p} \partial_{j} \cP^{k\ell} \partial_{q} \cP^{mn} \partial_{n} \cP^{pq} \partial_{i} f \partial_{k} g 
-\tfrac{1}{60} \partial_{p} \partial_{m} \cP^{ij} \partial_{q} \partial_{n} \partial_{j} \cP^{k\ell} \partial_{\ell} \cP^{mn} \cP^{pq} \partial_{i} f \partial_{k} g \\
-\tfrac{1}{60} \partial_{p} \partial_{n} \partial_{k} \cP^{ij} \partial_{j} \cP^{k\ell} \partial_{q} \partial_{\ell} \cP^{mn} \cP^{pq} \partial_{i} f \partial_{m} g 
+\tfrac{17}{720} \partial_{m} \cP^{ij} \partial_{p} \partial_{n} \partial_{j} \cP^{k\ell} \partial_{q} \partial_{\ell} \cP^{mn} \cP^{pq} \partial_{i} f \partial_{k} g \\
-\tfrac{17}{720} \partial_{p} \partial_{n} \partial_{k} \cP^{ij} \partial_{q} \partial_{j} \cP^{k\ell} \partial_{\ell} \cP^{mn} \cP^{pq} \partial_{i} f \partial_{m} g 
-\tfrac{1}{180} \partial_{p} \partial_{m} \cP^{ij} \partial_{q} \partial_{j} \cP^{k\ell} \partial_{\ell} \cP^{mn} \partial_{n} \cP^{pq} \partial_{i} f \partial_{k} g \\
+\tfrac{1}{180} \partial_{p} \partial_{n} \cP^{ij} \partial_{j} \cP^{k\ell} \partial_{q} \partial_{k} \cP^{mn} \partial_{\ell} \cP^{pq} \partial_{i} f \partial_{m} g 
+\tfrac{1}{360} \partial_{p} \partial_{m} \cP^{ij} \partial_{j} \cP^{k\ell} \partial_{q} \partial_{\ell} \cP^{mn} \partial_{n} \cP^{pq} \partial_{i} f \partial_{k} g \\
-\tfrac{1}{360} \partial_{n} \cP^{ij} \partial_{p} \partial_{j} \cP^{k\ell} \partial_{q} \partial_{k} \cP^{mn} \partial_{\ell} \cP^{pq} \partial_{i} f \partial_{m} g 
+\tfrac{1}{160} \partial_{m} \cP^{ij} \partial_{p} \partial_{j} \cP^{k\ell} \partial_{q} \partial_{\ell} \cP^{mn} \partial_{n} \cP^{pq} \partial_{i} f \partial_{k} g \\
+\tfrac{1}{160} \partial_{p} \partial_{n} \cP^{ij} \partial_{q} \partial_{j} \cP^{k\ell} \partial_{k} \cP^{mn} \partial_{\ell} \cP^{pq} \partial_{i} f \partial_{m} g 
-\tfrac{17}{1440} \partial_{p} \cP^{ij} \partial_{q} \partial_{m} \partial_{j} \cP^{k\ell} \partial_{\ell} \cP^{mn} \partial_{n} \cP^{pq} \partial_{i} f \partial_{k} g \\
+\tfrac{17}{1440} \partial_{q} \partial_{m} \partial_{k} \cP^{ij} \partial_{j} \cP^{k\ell} \partial_{\ell} \cP^{mn} \partial_{n} \cP^{pq} \partial_{i} f \partial_{p} g 
-\tfrac{1}{360} \partial_{p} \cP^{ij} \partial_{m} \partial_{j} \cP^{k\ell} \partial_{q} \partial_{\ell} \cP^{mn} \partial_{n} \cP^{pq} \partial_{i} f \partial_{k} g \\
+\tfrac{1}{360} \partial_{q} \partial_{k} \cP^{ij} \partial_{m} \partial_{j} \cP^{k\ell} \partial_{\ell} \cP^{mn} \partial_{n} \cP^{pq} \partial_{i} f \partial_{p} g 
-\tfrac{13}{720} \partial_{p} \partial_{k} \cP^{ij} \partial_{q} \partial_{n} \partial_{j} \cP^{k\ell} \partial_{\ell} \cP^{mn} \cP^{pq} \partial_{i} f \partial_{m} g \\
-\tfrac{13}{720} \partial_{k} \cP^{ij} \partial_{p} \partial_{n} \partial_{j} \cP^{k\ell} \partial_{q} \partial_{\ell} \cP^{mn} \cP^{pq} \partial_{i} f \partial_{m} g 
-\tfrac{1}{60} \partial_{p} \partial_{k} \cP^{ij} \partial_{n} \partial_{j} \cP^{k\ell} \partial_{q} \partial_{\ell} \cP^{mn} \cP^{pq} \partial_{i} f \partial_{m} g \\
-\tfrac{7}{720} \partial_{p} \partial_{k} \cP^{ij} \partial_{q} \partial_{j} \cP^{k\ell} \partial_{\ell} \cP^{mn} \partial_{n} \cP^{pq} \partial_{i} f \partial_{m} g 
+\tfrac{7}{720} \partial_{p} \cP^{ij} \partial_{j} \cP^{k\ell} \partial_{q} \partial_{k} \cP^{mn} \partial_{n} \partial_{\ell} \cP^{pq} \partial_{i} f \partial_{m} g \\
-\tfrac{1}{180} \partial_{p} \partial_{k} \cP^{ij} \partial_{j} \cP^{k\ell} \partial_{q} \partial_{\ell} \cP^{mn} \partial_{n} \cP^{pq} \partial_{i} f \partial_{m} g 
+\tfrac{1}{160} \partial_{k} \cP^{ij} \partial_{p} \partial_{j} \cP^{k\ell} \partial_{q} \partial_{\ell} \cP^{mn} \partial_{n} \cP^{pq} \partial_{i} f \partial_{m} g \\
+\tfrac{1}{160} \partial_{m} \partial_{k} \cP^{ij} \partial_{j} \cP^{k\ell} \partial_{q} \partial_{\ell} \cP^{mn} \partial_{n} \cP^{pq} \partial_{i} f \partial_{p} g 
+\tfrac{13}{1440} \partial_{m} \partial_{k} \cP^{ij} \partial_{q} \partial_{j} \cP^{k\ell} \partial_{\ell} \cP^{mn} \partial_{n} \cP^{pq} \partial_{i} f \partial_{p} g \\
-\tfrac{13}{1440} \partial_{p} \cP^{ij} \partial_{n} \partial_{j} \cP^{k\ell} \partial_{q} \partial_{k} \cP^{mn} \partial_{\ell} \cP^{pq} \partial_{i} f \partial_{m} g 
-\tfrac{1}{1440} \partial_{k} \cP^{ij} \partial_{q} \partial_{m} \partial_{j} \cP^{k\ell} \partial_{\ell} \cP^{mn} \partial_{n} \cP^{pq} \partial_{i} f \partial_{p} g \\
+\tfrac{1}{1440} \partial_{p} \cP^{ij} \partial_{q} \partial_{n} \partial_{j} \cP^{k\ell} \partial_{k} \cP^{mn} \partial_{\ell} \cP^{pq} \partial_{i} f \partial_{m} g 
+\tfrac{1}{360} \partial_{k} \cP^{ij} \partial_{m} \partial_{j} \cP^{k\ell} \partial_{q} \partial_{\ell} \cP^{mn} \partial_{n} \cP^{pq} \partial_{i} f \partial_{p} g \\
+\tfrac{1}{240} \partial_{p} \cP^{ij} \partial_{q} \partial_{j} \cP^{k\ell} \partial_{k} \cP^{mn} \partial_{n} \partial_{\ell} \cP^{pq} \partial_{i} f \partial_{m} g 
-\tfrac{13}{360} \partial_{p} \partial_{m} \partial_{\ell} \cP^{ij} \partial_{q} \partial_{n} \cP^{k\ell} \cP^{mn} \cP^{pq} \partial_{i} f \partial_{k} \partial_{j} g \\
-\tfrac{1}{720} \partial_{p} \partial_{m} \partial_{\ell} \cP^{ij} \partial_{n} \cP^{k\ell} \partial_{q} \cP^{mn} \cP^{pq} \partial_{i} f \partial_{k} \partial_{j} g 
+\tfrac{1}{30} \partial_{m} \partial_{\ell} \cP^{ij} \partial_{p} \partial_{n} \cP^{k\ell} \partial_{q} \cP^{mn} \cP^{pq} \partial_{i} f \partial_{k} \partial_{j} g \\
-\tfrac{1}{720} \partial_{m} \partial_{\ell} \cP^{ij} \partial_{p} \cP^{k\ell} \partial_{q} \cP^{mn} \partial_{n} \cP^{pq} \partial_{i} f \partial_{k} \partial_{j} g 
-\tfrac{19}{720} \partial_{p} \partial_{m} \cP^{ij} \partial_{q} \partial_{n} \cP^{k\ell} \partial_{\ell} \cP^{mn} \cP^{pq} \partial_{i} f \partial_{k} \partial_{j} g \\
+\tfrac{1}{180} \partial_{p} \partial_{m} \cP^{ij} \partial_{n} \cP^{k\ell} \partial_{q} \partial_{\ell} \cP^{mn} \cP^{pq} \partial_{i} f \partial_{k} \partial_{j} g 
+\tfrac{13}{360} \partial_{m} \cP^{ij} \partial_{p} \partial_{n} \cP^{k\ell} \partial_{q} \partial_{\ell} \cP^{mn} \cP^{pq} \partial_{i} f \partial_{k} \partial_{j} g
\end{multline*}
\begin{multline*}
-\tfrac{1}{720} \partial_{p} \partial_{m} \cP^{ij} \partial_{q} \cP^{k\ell} \partial_{\ell} \cP^{mn} \partial_{n} \cP^{pq} \partial_{i} f \partial_{k} \partial_{j} g 
+\tfrac{1}{360} \partial_{p} \partial_{m} \cP^{ij} \cP^{k\ell} \partial_{q} \partial_{\ell} \cP^{mn} \partial_{n} \cP^{pq} \partial_{i} f \partial_{k} \partial_{j} g \\
+\tfrac{1}{720} \partial_{m} \cP^{ij} \partial_{p} \cP^{k\ell} \partial_{q} \partial_{\ell} \cP^{mn} \partial_{n} \cP^{pq} \partial_{i} f \partial_{k} \partial_{j} g 
-\tfrac{1}{80} \partial_{p} \cP^{ij} \partial_{q} \partial_{m} \cP^{k\ell} \partial_{\ell} \cP^{mn} \partial_{n} \cP^{pq} \partial_{i} f \partial_{k} \partial_{j} g \\
-\tfrac{1}{180} \partial_{p} \cP^{ij} \partial_{m} \cP^{k\ell} \partial_{q} \partial_{\ell} \cP^{mn} \partial_{n} \cP^{pq} \partial_{i} f \partial_{k} \partial_{j} g 
-\tfrac{1}{36} \partial_{n} \cP^{ij} \partial_{p} \partial_{j} \cP^{k\ell} \partial_{q} \partial_{\ell} \cP^{mn} \cP^{pq} \partial_{i} f \partial_{m} \partial_{k} g \\
+\tfrac{1}{60} \partial_{p} \partial_{n} \cP^{ij} \partial_{q} \partial_{j} \cP^{k\ell} \partial_{\ell} \cP^{mn} \cP^{pq} \partial_{i} f \partial_{m} \partial_{k} g 
-\tfrac{1}{720} \partial_{p} \partial_{n} \cP^{ij} \partial_{j} \cP^{k\ell} \partial_{q} \partial_{\ell} \cP^{mn} \cP^{pq} \partial_{i} f \partial_{m} \partial_{k} g \\
+\tfrac{1}{45} \partial_{\ell} \cP^{ij} \partial_{p} \partial_{n} \partial_{j} \cP^{k\ell} \partial_{q} \cP^{mn} \cP^{pq} \partial_{i} f \partial_{m} \partial_{k} g 
+\tfrac{17}{720} \partial_{p} \partial_{\ell} \cP^{ij} \partial_{n} \partial_{j} \cP^{k\ell} \partial_{q} \cP^{mn} \cP^{pq} \partial_{i} f \partial_{m} \partial_{k} g \\
+\tfrac{13}{360} \partial_{p} \partial_{\ell} \cP^{ij} \partial_{q} \partial_{n} \partial_{j} \cP^{k\ell} \cP^{mn} \cP^{pq} \partial_{i} f \partial_{m} \partial_{k} g 
+\tfrac{1}{120} \partial_{n} \cP^{ij} \partial_{j} \cP^{k\ell} \partial_{q} \partial_{k} \cP^{mn} \partial_{\ell} \cP^{pq} \partial_{i} f \partial_{p} \partial_{m} g \\
+\tfrac{1}{240} \partial_{q} \partial_{k} \cP^{ij} \partial_{j} \cP^{k\ell} \partial_{\ell} \cP^{mn} \partial_{n} \cP^{pq} \partial_{i} f \partial_{p} \partial_{m} g 
-\tfrac{1}{80} \partial_{n} \partial_{k} \cP^{ij} \partial_{j} \cP^{k\ell} \partial_{q} \partial_{\ell} \cP^{mn} \cP^{pq} \partial_{i} f \partial_{p} \partial_{m} g \\
-\tfrac{1}{72} \partial_{p} \cP^{ij} \partial_{q} \partial_{j} \cP^{k\ell} \partial_{\ell} \cP^{mn} \partial_{n} \cP^{pq} \partial_{i} f \partial_{m} \partial_{k} g 
-\tfrac{1}{90} \partial_{m} \cP^{ij} \partial_{q} \partial_{j} \cP^{k\ell} \partial_{\ell} \cP^{mn} \partial_{n} \cP^{pq} \partial_{i} f \partial_{p} \partial_{k} g \\
-\tfrac{1}{180} \partial_{k} \cP^{ij} \partial_{q} \partial_{j} \cP^{k\ell} \partial_{\ell} \cP^{mn} \partial_{n} \cP^{pq} \partial_{i} f \partial_{p} \partial_{m} g 
-\tfrac{1}{360} \partial_{p} \cP^{ij} \partial_{j} \cP^{k\ell} \partial_{q} \partial_{\ell} \cP^{mn} \partial_{n} \cP^{pq} \partial_{i} f \partial_{m} \partial_{k} g \\
+\tfrac{1}{720} \partial_{m} \cP^{ij} \partial_{q} \partial_{n} \partial_{j} \cP^{k\ell} \partial_{\ell} \cP^{mn} \cP^{pq} \partial_{i} f \partial_{p} \partial_{k} g 
-\tfrac{1}{180} \partial_{k} \cP^{ij} \partial_{n} \partial_{j} \cP^{k\ell} \partial_{q} \partial_{\ell} \cP^{mn} \cP^{pq} \partial_{i} f \partial_{p} \partial_{m} g \\
+\tfrac{17}{180} \partial_{p} \partial_{n} \partial_{\ell} \cP^{ij} \partial_{q} \partial_{j} \cP^{k\ell} \cP^{mn} \cP^{pq} \partial_{i} f \partial_{m} \partial_{k} g 
-\tfrac{1}{72} \partial_{n} \partial_{\ell} \cP^{ij} \partial_{p} \partial_{j} \cP^{k\ell} \partial_{q} \cP^{mn} \cP^{pq} \partial_{i} f \partial_{m} \partial_{k} g \\
+\tfrac{7}{180} \partial_{p} \partial_{n} \partial_{\ell} \cP^{ij} \partial_{j} \cP^{k\ell} \partial_{q} \cP^{mn} \cP^{pq} \partial_{i} f \partial_{m} \partial_{k} g 
+\tfrac{7}{180} \partial_{p} \partial_{\ell} \cP^{ij} \partial_{q} \partial_{j} \cP^{k\ell} \cP^{mn} \partial_{n} \cP^{pq} \partial_{i} f \partial_{m} \partial_{k} g \\
-\tfrac{1}{180} \partial_{\ell} \cP^{ij} \partial_{p} \partial_{j} \cP^{k\ell} \partial_{q} \cP^{mn} \partial_{n} \cP^{pq} \partial_{i} f \partial_{m} \partial_{k} g 
+\tfrac{1}{90} \partial_{p} \partial_{\ell} \cP^{ij} \partial_{j} \cP^{k\ell} \partial_{q} \cP^{mn} \partial_{n} \cP^{pq} \partial_{i} f \partial_{m} \partial_{k} g \\
+\tfrac{1}{80} \partial_{q} \partial_{m} \cP^{ij} \partial_{j} \cP^{k\ell} \partial_{\ell} \cP^{mn} \partial_{n} \cP^{pq} \partial_{i} f \partial_{p} \partial_{k} g 
+\tfrac{1}{120} \partial_{q} \cP^{ij} \partial_{m} \partial_{j} \cP^{k\ell} \partial_{\ell} \cP^{mn} \partial_{n} \cP^{pq} \partial_{i} f \partial_{p} \partial_{k} g \\
+\tfrac{1}{40} \partial_{q} \partial_{m} \cP^{ij} \partial_{n} \partial_{j} \cP^{k\ell} \partial_{\ell} \cP^{mn} \cP^{pq} \partial_{i} f \partial_{p} \partial_{k} g 
-\tfrac{1}{360} \partial_{q} \partial_{n} \partial_{k} \cP^{ij} \partial_{j} \cP^{k\ell} \partial_{\ell} \cP^{mn} \cP^{pq} \partial_{i} f \partial_{p} \partial_{m} g \\
-\tfrac{11}{720} \partial_{n} \partial_{k} \cP^{ij} \partial_{q} \partial_{j} \cP^{k\ell} \partial_{\ell} \cP^{mn} \cP^{pq} \partial_{i} f \partial_{p} \partial_{m} g 
+\tfrac{7}{240} \partial_{m} \cP^{ij} \partial_{n} \partial_{j} \cP^{k\ell} \partial_{q} \partial_{\ell} \cP^{mn} \cP^{pq} \partial_{i} f \partial_{p} \partial_{k} g \\
-\tfrac{1}{180} \partial_{n} \cP^{ij} \partial_{q} \partial_{j} \cP^{k\ell} \partial_{k} \cP^{mn} \partial_{\ell} \cP^{pq} \partial_{i} f \partial_{p} \partial_{m} g 
+\tfrac{1}{60} \partial_{m} \cP^{ij} \partial_{j} \cP^{k\ell} \partial_{q} \partial_{\ell} \cP^{mn} \partial_{n} \cP^{pq} \partial_{i} f \partial_{p} \partial_{k} g \\
+\tfrac{1}{90} \partial_{q} \partial_{k} \cP^{ij} \partial_{n} \partial_{j} \cP^{k\ell} \partial_{\ell} \cP^{mn} \cP^{pq} \partial_{i} f \partial_{p} \partial_{m} g 
+\tfrac{1}{60} \partial_{k} \cP^{ij} \partial_{q} \partial_{n} \partial_{j} \cP^{k\ell} \partial_{\ell} \cP^{mn} \cP^{pq} \partial_{i} f \partial_{p} \partial_{m} g \\
+\tfrac{13}{360} \partial_{p} \partial_{m} \partial_{\ell} \cP^{ij} \partial_{q} \partial_{n} \cP^{k\ell} \cP^{mn} \cP^{pq} \partial_{k} \partial_{i} f \partial_{j} g 
+\tfrac{1}{720} \partial_{p} \partial_{m} \partial_{\ell} \cP^{ij} \partial_{n} \cP^{k\ell} \partial_{q} \cP^{mn} \cP^{pq} \partial_{k} \partial_{i} f \partial_{j} g \\
-\tfrac{1}{30} \partial_{m} \partial_{\ell} \cP^{ij} \partial_{p} \partial_{n} \cP^{k\ell} \partial_{q} \cP^{mn} \cP^{pq} \partial_{k} \partial_{i} f \partial_{j} g 
+\tfrac{1}{720} \partial_{m} \partial_{\ell} \cP^{ij} \partial_{p} \cP^{k\ell} \partial_{q} \cP^{mn} \partial_{n} \cP^{pq} \partial_{k} \partial_{i} f \partial_{j} g \\
+\tfrac{19}{720} \partial_{p} \partial_{m} \cP^{ij} \partial_{q} \partial_{n} \cP^{k\ell} \partial_{\ell} \cP^{mn} \cP^{pq} \partial_{k} \partial_{i} f \partial_{j} g 
-\tfrac{1}{180} \partial_{p} \partial_{m} \cP^{ij} \partial_{n} \cP^{k\ell} \partial_{q} \partial_{\ell} \cP^{mn} \cP^{pq} \partial_{k} \partial_{i} f \partial_{j} g \\
-\tfrac{13}{360} \partial_{m} \cP^{ij} \partial_{p} \partial_{n} \cP^{k\ell} \partial_{q} \partial_{\ell} \cP^{mn} \cP^{pq} \partial_{k} \partial_{i} f \partial_{j} g 
+\tfrac{1}{720} \partial_{p} \partial_{m} \cP^{ij} \partial_{q} \cP^{k\ell} \partial_{\ell} \cP^{mn} \partial_{n} \cP^{pq} \partial_{k} \partial_{i} f \partial_{j} g \\
-\tfrac{1}{360} \partial_{p} \partial_{m} \cP^{ij} \cP^{k\ell} \partial_{q} \partial_{\ell} \cP^{mn} \partial_{n} \cP^{pq} \partial_{k} \partial_{i} f \partial_{j} g 
-\tfrac{1}{720} \partial_{m} \cP^{ij} \partial_{p} \cP^{k\ell} \partial_{q} \partial_{\ell} \cP^{mn} \partial_{n} \cP^{pq} \partial_{k} \partial_{i} f \partial_{j} g \\
+\tfrac{1}{80} \partial_{p} \cP^{ij} \partial_{q} \partial_{m} \cP^{k\ell} \partial_{\ell} \cP^{mn} \partial_{n} \cP^{pq} \partial_{k} \partial_{i} f \partial_{j} g 
+\tfrac{1}{180} \partial_{p} \cP^{ij} \partial_{m} \cP^{k\ell} \partial_{q} \partial_{\ell} \cP^{mn} \partial_{n} \cP^{pq} \partial_{k} \partial_{i} f \partial_{j} g \\
-\tfrac{1}{36} \partial_{p} \partial_{n} \cP^{ij} \partial_{q} \partial_{j} \cP^{k\ell} \partial_{\ell} \cP^{mn} \cP^{pq} \partial_{k} \partial_{i} f \partial_{m} g 
-\tfrac{1}{60} \partial_{p} \partial_{n} \cP^{ij} \partial_{j} \cP^{k\ell} \partial_{q} \partial_{\ell} \cP^{mn} \cP^{pq} \partial_{k} \partial_{i} f \partial_{m} g \\
+\tfrac{1}{720} \partial_{n} \cP^{ij} \partial_{p} \partial_{j} \cP^{k\ell} \partial_{q} \partial_{\ell} \cP^{mn} \cP^{pq} \partial_{k} \partial_{i} f \partial_{m} g 
-\tfrac{1}{45} \partial_{p} \cP^{ij} \partial_{q} \partial_{n} \partial_{j} \cP^{k\ell} \partial_{\ell} \cP^{mn} \cP^{pq} \partial_{k} \partial_{i} f \partial_{m} g \\
-\tfrac{17}{720} \partial_{p} \cP^{ij} \partial_{n} \partial_{j} \cP^{k\ell} \partial_{q} \partial_{\ell} \cP^{mn} \cP^{pq} \partial_{k} \partial_{i} f \partial_{m} g 
-\tfrac{13}{360} \cP^{ij} \partial_{p} \partial_{n} \partial_{j} \cP^{k\ell} \partial_{q} \partial_{\ell} \cP^{mn} \cP^{pq} \partial_{k} \partial_{i} f \partial_{m} g \\
-\tfrac{1}{120} \partial_{p} \cP^{ij} \partial_{q} \partial_{j} \cP^{k\ell} \partial_{\ell} \cP^{mn} \partial_{n} \cP^{pq} \partial_{k} \partial_{i} f \partial_{m} g 
-\tfrac{1}{240} \partial_{p} \cP^{ij} \partial_{j} \cP^{k\ell} \partial_{q} \partial_{\ell} \cP^{mn} \partial_{n} \cP^{pq} \partial_{k} \partial_{i} f \partial_{m} g \\
+\tfrac{1}{80} \cP^{ij} \partial_{p} \partial_{j} \cP^{k\ell} \partial_{q} \partial_{\ell} \cP^{mn} \partial_{n} \cP^{pq} \partial_{k} \partial_{i} f \partial_{m} g 
+\tfrac{1}{72} \partial_{q} \partial_{m} \cP^{ij} \partial_{j} \cP^{k\ell} \partial_{\ell} \cP^{mn} \partial_{n} \cP^{pq} \partial_{k} \partial_{i} f \partial_{p} g \\
+\tfrac{1}{90} \partial_{m} \cP^{ij} \partial_{q} \partial_{j} \cP^{k\ell} \partial_{\ell} \cP^{mn} \partial_{n} \cP^{pq} \partial_{k} \partial_{i} f \partial_{p} g 
+\tfrac{1}{180} \partial_{m} \cP^{ij} \partial_{j} \cP^{k\ell} \partial_{q} \partial_{\ell} \cP^{mn} \partial_{n} \cP^{pq} \partial_{k} \partial_{i} f \partial_{p} g \\
+\tfrac{1}{360} \partial_{q} \cP^{ij} \partial_{m} \partial_{j} \cP^{k\ell} \partial_{\ell} \cP^{mn} \partial_{n} \cP^{pq} \partial_{k} \partial_{i} f \partial_{p} g 
-\tfrac{1}{720} \cP^{ij} \partial_{q} \partial_{m} \partial_{j} \cP^{k\ell} \partial_{\ell} \cP^{mn} \partial_{n} \cP^{pq} \partial_{k} \partial_{i} f \partial_{p} g 
\end{multline*}
\begin{multline}
\label{EqStarWith10Pym}
+\tfrac{1}{180} \cP^{ij} \partial_{m} \partial_{j} \cP^{k\ell} \partial_{q} \partial_{\ell} \cP^{mn} \partial_{n} \cP^{pq} \partial_{k} \partial_{i} f \partial_{p} g
-\tfrac{17}{180} \partial_{p} \partial_{\ell} \cP^{ij} \partial_{q} \partial_{n} \partial_{j} \cP^{k\ell} \cP^{mn} \cP^{pq} \partial_{m} \partial_{i} f \partial_{k} g \\
-\tfrac{1}{72} \partial_{p} \partial_{\ell} \cP^{ij} \partial_{n} \partial_{j} \cP^{k\ell} \partial_{q} \cP^{mn} \cP^{pq} \partial_{m} \partial_{i} f \partial_{k} g 
+\tfrac{7}{180} \partial_{\ell} \cP^{ij} \partial_{p} \partial_{n} \partial_{j} \cP^{k\ell} \partial_{q} \cP^{mn} \cP^{pq} \partial_{m} \partial_{i} f \partial_{k} g \\
-\tfrac{7}{180} \partial_{p} \partial_{\ell} \cP^{ij} \partial_{q} \partial_{j} \cP^{k\ell} \cP^{mn} \partial_{n} \cP^{pq} \partial_{m} \partial_{i} f \partial_{k} g 
-\tfrac{1}{180} \partial_{p} \partial_{\ell} \cP^{ij} \partial_{j} \cP^{k\ell} \partial_{q} \cP^{mn} \partial_{n} \cP^{pq} \partial_{m} \partial_{i} f \partial_{k} g \\
+\tfrac{1}{90} \partial_{\ell} \cP^{ij} \partial_{p} \partial_{j} \cP^{k\ell} \partial_{q} \cP^{mn} \partial_{n} \cP^{pq} \partial_{m} \partial_{i} f \partial_{k} g 
-\tfrac{1}{80} \partial_{p} \cP^{ij} \partial_{q} \partial_{j} \cP^{k\ell} \partial_{\ell} \cP^{mn} \partial_{n} \cP^{pq} \partial_{m} \partial_{i} f \partial_{k} g \\
-\tfrac{1}{120} \partial_{p} \cP^{ij} \partial_{j} \cP^{k\ell} \partial_{q} \partial_{\ell} \cP^{mn} \partial_{n} \cP^{pq} \partial_{m} \partial_{i} f \partial_{k} g 
+\tfrac{1}{40} \cP^{ij} \partial_{p} \partial_{j} \cP^{k\ell} \partial_{q} \partial_{\ell} \cP^{mn} \partial_{n} \cP^{pq} \partial_{m} \partial_{i} f \partial_{k} g \\
+\tfrac{1}{360} \partial_{m} \cP^{ij} \partial_{q} \partial_{n} \partial_{j} \cP^{k\ell} \partial_{\ell} \cP^{mn} \cP^{pq} \partial_{p} \partial_{i} f \partial_{k} g 
+\tfrac{11}{720} \partial_{m} \cP^{ij} \partial_{n} \partial_{j} \cP^{k\ell} \partial_{q} \partial_{\ell} \cP^{mn} \cP^{pq} \partial_{p} \partial_{i} f \partial_{k} g \\
-\tfrac{7}{240} \partial_{n} \partial_{k} \cP^{ij} \partial_{q} \partial_{j} \cP^{k\ell} \partial_{\ell} \cP^{mn} \cP^{pq} \partial_{p} \partial_{i} f \partial_{m} g 
-\tfrac{1}{180} \partial_{m} \cP^{ij} \partial_{j} \cP^{k\ell} \partial_{q} \partial_{\ell} \cP^{mn} \partial_{n} \cP^{pq} \partial_{p} \partial_{i} f \partial_{k} g \\
-\tfrac{1}{60} \partial_{k} \cP^{ij} \partial_{q} \partial_{j} \cP^{k\ell} \partial_{\ell} \cP^{mn} \partial_{n} \cP^{pq} \partial_{p} \partial_{i} f \partial_{m} g 
+\tfrac{1}{90} \cP^{ij} \partial_{m} \partial_{j} \cP^{k\ell} \partial_{q} \partial_{\ell} \cP^{mn} \partial_{n} \cP^{pq} \partial_{p} \partial_{i} f \partial_{k} g \\
-\tfrac{1}{60} \partial_{k} \cP^{ij} \partial_{q} \partial_{n} \partial_{j} \cP^{k\ell} \partial_{\ell} \cP^{mn} \cP^{pq} \partial_{p} \partial_{i} f \partial_{m} g 
-\tfrac{1}{6} \cP^{ij} \partial_{p} \partial_{n} \cP^{k\ell} \partial_{q} \partial_{\ell} \cP^{mn} \cP^{pq} \partial_{k} \partial_{i} f \partial_{m} \partial_{j} g \\
+\tfrac{1}{72} \partial_{\ell} \cP^{ij} \partial_{j} \cP^{k\ell} \partial_{q} \cP^{mn} \partial_{n} \cP^{pq} \partial_{m} \partial_{i} f \partial_{p} \partial_{k} g 
+\tfrac{17}{360} \partial_{p} \partial_{m} \cP^{ij} \partial_{q} \partial_{n} \cP^{k\ell} \cP^{mn} \cP^{pq} \partial_{k} \partial_{i} f \partial_{\ell} \partial_{j} g \\
+\tfrac{1}{24} \partial_{p} \partial_{m} \cP^{ij} \partial_{n} \cP^{k\ell} \partial_{q} \cP^{mn} \cP^{pq} \partial_{k} \partial_{i} f \partial_{\ell} \partial_{j} g 
+\tfrac{1}{180} \partial_{m} \cP^{ij} \partial_{p} \cP^{k\ell} \partial_{q} \cP^{mn} \partial_{n} \cP^{pq} \partial_{k} \partial_{i} f \partial_{\ell} \partial_{j} g \\
+\tfrac{2}{45} \partial_{p} \partial_{n} \partial_{\ell} \cP^{ij} \partial_{q} \cP^{k\ell} \cP^{mn} \cP^{pq} \partial_{k} \partial_{i} f \partial_{m} \partial_{j} g 
-\tfrac{2}{45} \partial_{p} \partial_{n} \partial_{\ell} \cP^{ij} \cP^{k\ell} \partial_{q} \cP^{mn} \cP^{pq} \partial_{k} \partial_{i} f \partial_{m} \partial_{j} g \\
-\tfrac{1}{30} \partial_{n} \partial_{\ell} \cP^{ij} \partial_{p} \cP^{k\ell} \partial_{q} \cP^{mn} \cP^{pq} \partial_{k} \partial_{i} f \partial_{m} \partial_{j} g 
+\tfrac{1}{8} \partial_{p} \partial_{\ell} \cP^{ij} \partial_{q} \partial_{n} \cP^{k\ell} \cP^{mn} \cP^{pq} \partial_{k} \partial_{i} f \partial_{m} \partial_{j} g \\
-\tfrac{1}{8} \partial_{p} \partial_{n} \cP^{ij} \cP^{k\ell} \partial_{q} \partial_{\ell} \cP^{mn} \cP^{pq} \partial_{k} \partial_{i} f \partial_{m} \partial_{j} g 
+\tfrac{1}{720} \partial_{p} \partial_{\ell} \cP^{ij} \partial_{n} \cP^{k\ell} \partial_{q} \cP^{mn} \cP^{pq} \partial_{k} \partial_{i} f \partial_{m} \partial_{j} g \\
-\tfrac{1}{720} \partial_{p} \partial_{n} \cP^{ij} \partial_{q} \cP^{k\ell} \partial_{\ell} \cP^{mn} \cP^{pq} \partial_{k} \partial_{i} f \partial_{m} \partial_{j} g 
-\tfrac{11}{180} \partial_{\ell} \cP^{ij} \partial_{p} \partial_{n} \cP^{k\ell} \partial_{q} \cP^{mn} \cP^{pq} \partial_{k} \partial_{i} f \partial_{m} \partial_{j} g \\
-\tfrac{11}{180} \partial_{n} \cP^{ij} \partial_{p} \cP^{k\ell} \partial_{q} \partial_{\ell} \cP^{mn} \cP^{pq} \partial_{k} \partial_{i} f \partial_{m} \partial_{j} g 
+\tfrac{1}{36} \partial_{p} \partial_{\ell} \cP^{ij} \partial_{q} \cP^{k\ell} \cP^{mn} \partial_{n} \cP^{pq} \partial_{k} \partial_{i} f \partial_{m} \partial_{j} g \\
-\tfrac{1}{36} \partial_{q} \partial_{m} \cP^{ij} \cP^{k\ell} \partial_{\ell} \cP^{mn} \partial_{n} \cP^{pq} \partial_{k} \partial_{i} f \partial_{p} \partial_{j} g 
+\tfrac{1}{90} \partial_{p} \partial_{\ell} \cP^{ij} \cP^{k\ell} \partial_{q} \cP^{mn} \partial_{n} \cP^{pq} \partial_{k} \partial_{i} f \partial_{m} \partial_{j} g \\
-\tfrac{1}{90} \partial_{q} \partial_{m} \cP^{ij} \partial_{n} \cP^{k\ell} \partial_{\ell} \cP^{mn} \cP^{pq} \partial_{k} \partial_{i} f \partial_{p} \partial_{j} g 
-\tfrac{1}{180} \partial_{\ell} \cP^{ij} \partial_{p} \cP^{k\ell} \partial_{q} \cP^{mn} \partial_{n} \cP^{pq} \partial_{k} \partial_{i} f \partial_{m} \partial_{j} g \\
-\tfrac{1}{180} \partial_{q} \cP^{ij} \partial_{m} \cP^{k\ell} \partial_{\ell} \cP^{mn} \partial_{n} \cP^{pq} \partial_{k} \partial_{i} f \partial_{p} \partial_{j} g 
+\tfrac{1}{18} \partial_{p} \cP^{ij} \partial_{q} \partial_{n} \cP^{k\ell} \partial_{\ell} \cP^{mn} \cP^{pq} \partial_{k} \partial_{i} f \partial_{m} \partial_{j} g \\
-\tfrac{1}{18} \partial_{p} \cP^{ij} \partial_{n} \cP^{k\ell} \partial_{q} \partial_{\ell} \cP^{mn} \cP^{pq} \partial_{k} \partial_{i} f \partial_{m} \partial_{j} g 
+\tfrac{1}{144} \partial_{p} \cP^{ij} \partial_{q} \cP^{k\ell} \partial_{\ell} \cP^{mn} \partial_{n} \cP^{pq} \partial_{k} \partial_{i} f \partial_{m} \partial_{j} g \\
-\tfrac{1}{144} \partial_{m} \cP^{ij} \partial_{q} \cP^{k\ell} \partial_{\ell} \cP^{mn} \partial_{n} \cP^{pq} \partial_{k} \partial_{i} f \partial_{p} \partial_{j} g 
-\tfrac{1}{90} \partial_{p} \cP^{ij} \cP^{k\ell} \partial_{q} \partial_{\ell} \cP^{mn} \partial_{n} \cP^{pq} \partial_{k} \partial_{i} f \partial_{m} \partial_{j} g \\
+\tfrac{1}{90} \partial_{m} \cP^{ij} \partial_{q} \partial_{n} \cP^{k\ell} \partial_{\ell} \cP^{mn} \cP^{pq} \partial_{k} \partial_{i} f \partial_{p} \partial_{j} g 
-\tfrac{1}{60} \partial_{m} \cP^{ij} \partial_{n} \cP^{k\ell} \partial_{q} \partial_{\ell} \cP^{mn} \cP^{pq} \partial_{k} \partial_{i} f \partial_{p} \partial_{j} g \\
+\tfrac{1}{60} \partial_{m} \cP^{ij} \cP^{k\ell} \partial_{q} \partial_{\ell} \cP^{mn} \partial_{n} \cP^{pq} \partial_{k} \partial_{i} f \partial_{p} \partial_{j} g 
-\tfrac{1}{240} \partial_{q} \partial_{n} \cP^{ij} \partial_{j} \cP^{k\ell} \partial_{\ell} \cP^{mn} \cP^{pq} \partial_{k} \partial_{i} f \partial_{p} \partial_{m} g \\
-\tfrac{1}{240} \partial_{n} \cP^{ij} \partial_{q} \partial_{j} \cP^{k\ell} \partial_{\ell} \cP^{mn} \cP^{pq} \partial_{p} \partial_{i} f \partial_{m} \partial_{k} g 
-\tfrac{13}{720} \partial_{n} \cP^{ij} \partial_{q} \partial_{j} \cP^{k\ell} \partial_{\ell} \cP^{mn} \cP^{pq} \partial_{k} \partial_{i} f \partial_{p} \partial_{m} g \\
-\tfrac{13}{720} \cP^{ij} \partial_{q} \partial_{j} \cP^{k\ell} \partial_{\ell} \cP^{mn} \partial_{n} \cP^{pq} \partial_{m} \partial_{i} f \partial_{p} \partial_{k} g 
-\tfrac{1}{90} \partial_{n} \cP^{ij} \partial_{j} \cP^{k\ell} \partial_{q} \partial_{\ell} \cP^{mn} \cP^{pq} \partial_{k} \partial_{i} f \partial_{p} \partial_{m} g \\
-\tfrac{1}{90} \cP^{ij} \partial_{q} \partial_{j} \cP^{k\ell} \partial_{\ell} \cP^{mn} \partial_{n} \cP^{pq} \partial_{k} \partial_{i} f \partial_{p} \partial_{m} g 
+\tfrac{1}{60} \partial_{q} \cP^{ij} \partial_{n} \partial_{j} \cP^{k\ell} \partial_{\ell} \cP^{mn} \cP^{pq} \partial_{k} \partial_{i} f \partial_{p} \partial_{m} g \\
+\tfrac{1}{60} \partial_{\ell} \cP^{ij} \partial_{q} \partial_{j} \cP^{k\ell} \cP^{mn} \partial_{n} \cP^{pq} \partial_{m} \partial_{i} f \partial_{p} \partial_{k} g 
+\tfrac{1}{30} \cP^{ij} \partial_{q} \partial_{n} \partial_{j} \cP^{k\ell} \partial_{\ell} \cP^{mn} \cP^{pq} \partial_{k} \partial_{i} f \partial_{p} \partial_{m} g \\
+\tfrac{1}{30} \partial_{\ell} \cP^{ij} \partial_{q} \partial_{n} \partial_{j} \cP^{k\ell} \cP^{mn} \cP^{pq} \partial_{m} \partial_{i} f \partial_{p} \partial_{k} g 
-\tfrac{1}{90} \cP^{ij} \partial_{n} \partial_{j} \cP^{k\ell} \partial_{q} \partial_{\ell} \cP^{mn} \cP^{pq} \partial_{k} \partial_{i} f \partial_{p} \partial_{m} g \\
+\tfrac{1}{360} \partial_{q} \cP^{ij} \partial_{j} \cP^{k\ell} \partial_{\ell} \cP^{mn} \partial_{n} \cP^{pq} \partial_{k} \partial_{i} f \partial_{p} \partial_{m} g 
+\tfrac{13}{90} \partial_{q} \partial_{\ell} \cP^{ij} \partial_{n} \partial_{j} \cP^{k\ell} \cP^{mn} \cP^{pq} \partial_{m} \partial_{i} f \partial_{p} \partial_{k} g \\
+\tfrac{13}{180} \partial_{\ell} \cP^{ij} \partial_{n} \partial_{j} \cP^{k\ell} \partial_{q} \cP^{mn} \cP^{pq} \partial_{m} \partial_{i} f \partial_{p} \partial_{k} g 
+\tfrac{13}{180} \partial_{q} \partial_{\ell} \cP^{ij} \partial_{j} \cP^{k\ell} \cP^{mn} \partial_{n} \cP^{pq} \partial_{m} \partial_{i} f \partial_{p} \partial_{k} g \\
+\tfrac{1}{72} \partial_{q} \cP^{ij} \partial_{j} \cP^{k\ell} \partial_{\ell} \cP^{mn} \partial_{n} \cP^{pq} \partial_{m} \partial_{i} f \partial_{p} \partial_{k} g 
\big)
+ \bar{o}(\hbar^4).
\end{multline}
\label{EqStarWith10PymEnd}%
Out of $247$ graphs at $\hbar^4$, as many as $138$ contain two-cycles (or ``eyes'', as in Fig.~\ref{FigTadpoleEye} on p.~\pageref{FigTadpoleEye}), cf. expansion~\eqref{EqStarOrd3} up to $\bar{o}(\hbar^3)$ on p.~\pageref{EqStarOrd3}.
\label{LoopsDominant}


\newpage

\setcounter{page}{1}
\renewcommand{\thepage}{\roman{page}}

\section{\texttt{C++} classes and methods}\label{AppCPP}


\centerline{\textsc{Class} {\tt KontsevichGraph}}
\vskip 1em

\noindent
Summary: a (signed) Kontsevich graph.

\vskip 1em

\noindent
Data members (private):
\begin{verbatim}
    size_t d_internal = 0;
    size_t d_external = 0;
    std::vector< std::pair<char, char> > d_targets;
    int d_sign = 1;
\end{verbatim}

\noindent
Public typedefs:
\begin{verbatim}
    typedef char Vertex;
    typedef std::pair<Vertex, Vertex> VertexPair;
\end{verbatim}

\noindent
Constructors:
\begin{verbatim}
    KontsevichGraph() = default;
    KontsevichGraph(size_t internal, size_t external,
                    std::vector<VertexPair> targets,
                    int sign = 1, bool normalized = false);
\end{verbatim}

\noindent
Accessor methods:
\begin{verbatim}
    std::vector<VertexPair> targets() const;
    VertexPair targets(Vertex internal_vertex) const;
    int sign() const;
    int sign(int new_sign);
    size_t internal() const;
    size_t external() const;
\end{verbatim}

\noindent
Methods to obtain numerical information:
\begin{verbatim}
    size_t vertices() const;
    std::vector<Vertex> internal_vertices() const;
    std::pair< size_t, std::vector<VertexPair> > abs() const;
    size_t multiplicity() const;
    size_t in_degree(KontsevichGraph::Vertex vertex) const;
    std::vector<size_t> in_degrees() const;
    std::vector<Vertex> neighbors_in(Vertex vertex) const;
    KontsevichGraph mirror_image() const;
    std::string as_sage_expression() const;
    std::string encoding() const;
    std::vector< std::tuple<KontsevichGraph, int, int> > permutations() const;
\end{verbatim}

\noindent
Methods that modify the graph:
\begin{verbatim}
    void normalize();
    KontsevichGraph& operator*=(const KontsevichGraph& rhs);
\end{verbatim}

\noindent
Methods that test for graph properties:
\begin{verbatim}
    bool operator<(const KontsevichGraph& rhs) const;
    bool is_zero() const;
    bool is_prime() const;
    bool positive_differential_order() const;
    bool has_cycles() const;
    bool has_tadpoles() const;
    bool has_multiple_edges() const;
    bool has_max_internal_indegree(size_t max_indegree) const;
\end{verbatim}

\noindent
Static methods:
\begin{verbatim}
    static std::set<KontsevichGraph> graphs(size_t internal,
        size_t external = 2, bool modulo_signs = false,
        bool modulo_mirror_images = false,
        std::function<void(KontsevichGraph)> const& callback = nullptr,
        std::function<bool(KontsevichGraph)> const& filter = nullptr);
\end{verbatim}

\noindent
Private methods:
\begin{verbatim}
    friend std::ostream& operator<<(std::ostream &os, const KontsevichGraph& g);
    friend std::istream& operator>>(std::istream& is, KontsevichGraph& g);
    friend bool operator==(const KontsevichGraph &lhs, const KontsevichGraph& rhs);
    friend bool operator!=(const KontsevichGraph &lhs, const KontsevichGraph& rhs);
\end{verbatim}

\noindent
Functions defined outside the class:
\begin{verbatim}
    KontsevichGraph operator*(KontsevichGraph lhs, const KontsevichGraph& rhs);
    std::ostream& operator<<(std::ostream &os, const KontsevichGraph::Vertex v);
\end{verbatim}

\vskip 1em

\centerline{\textsc{Class} {\tt KontsevichGraphSum<T>}}
\vskip 1em

$\bullet$ Template parameter {\tt T}: type of the coefficients (e.g. {\tt KontsevichGraphSum<int>}).

$\bullet$ Publically extends: {\tt std::vector< std::pair<T, KontsevichGraph> >}.

\vskip 1em

\noindent
Summary: a sum of Kontsevich graphs, with method to reduce modulo skew-symmetry.

\vskip 1em

\noindent
Data members: inherited.

\noindent
Public typedefs:
\begin{verbatim}
    typedef std::pair<T, KontsevichGraph> Term;
\end{verbatim}

\noindent
Constructors (inherited):
\begin{verbatim}
    using std::vector< std::pair<T, KontsevichGraph> >::vector;
\end{verbatim}

\noindent
Accessor methods:
\begin{verbatim}
    using std::vector< std::pair<T, KontsevichGraph> >::operator[];
    KontsevichGraphSum<T> operator[](std::vector<size_t> indegrees) const;
    T operator[](KontsevichGraph) const;
\end{verbatim}

\noindent
Arithmetic operators:
\begin{verbatim}
    KontsevichGraphSum<T> operator()(std::vector< KontsevichGraphSum<T> >) const;
    KontsevichGraphSum<T>& operator+=(const KontsevichGraphSum<T>& rhs);
    KontsevichGraphSum<T>& operator-=(const KontsevichGraphSum<T>& rhs);
    KontsevichGraphSum<T>& operator=(const KontsevichGraphSum<T>&) = default;
\end{verbatim}

\noindent
Methods:
\begin{verbatim}
    std::vector< std::vector<size_t> > in_degrees(bool ascending = false) const;
    KontsevichGraphSum<T> skew_symmetrization() const;
\end{verbatim}

\noindent
Methods that modify the graph sum:
\begin{verbatim}
    void reduce_mod_skew();
\end{verbatim}

\noindent
Comparison operators:
\begin{verbatim}
    bool operator==(const KontsevichGraphSum<T>& other) const;
    bool operator==(int other) const;
    bool operator!=(const KontsevichGraphSum<T>& other) const;
    bool operator!=(int other) const;
\end{verbatim}

\noindent
Friend operators:
\begin{verbatim}
    friend std::ostream& operator<< <>(std::ostream& os,
                                       const KontsevichGraphSum<T>::Term& term);
    friend std::ostream& operator<< <>(std::ostream& os,
                                       const KontsevichGraphSum<T>& gs);
    friend std::istream& operator>> <>(std::istream& is,
                                       KontsevichGraphSum<T>& sum);
\end{verbatim}

\noindent
Functions defined outside the class:
\begin{verbatim}
    KontsevichGraphSum<T> operator+(KontsevichGraphSum<T> lhs,
                                    const KontsevichGraphSum<T>& rhs);
    KontsevichGraphSum<T> operator-(KontsevichGraphSum<T> lhs,
                                    const KontsevichGraphSum<T>& rhs);
    KontsevichGraphSum<T> operator*(T lhs,
                                    KontsevichGraphSum<T> rhs);
    std::ostream& operator<<(std::ostream&, const std::pair<T, KontsevichGraph>&);
    std::ostream& operator<<(std::ostream&, const KontsevichGraphSum<T>&);
    std::istream& operator>>(std::istream&, KontsevichGraphSum<T>&);
\end{verbatim}

\vskip 1em

\centerline{\textsc{Class} {\tt KontsevichGraphSeries<T>}}

\vskip 1em

$\bullet$ Template parameter {\tt T}: type of the coefficients (e.g. {\tt KontsevichGraphSeries<int>}).

$\bullet$ Publically extends: {\tt std::map< size\symbol{"5F}t, KontsevichGraphSum<T> >}

\vskip 1em

\noindent
Summary: a formal power series expansion; sums of Kontsevich graphs as coefficients.

\vskip 1em

\noindent
Data members: inherited, plus (private):
\begin{verbatim}
    size_t d_precision = std::numeric_limits<std::size_t>::max();
\end{verbatim}

\noindent
Constructors (inherited):
\begin{verbatim}
    using std::map< size_t, KontsevichGraphSum<T> >::map;
\end{verbatim}

\noindent
Accessor methods:
\begin{verbatim}
    size_t precision() const;
    size_t precision(size_t new_precision);
\end{verbatim}

\noindent
Arithmetic operators:
\begin{verbatim}
    KontsevichGraphSeries<T> operator()(std::vector< KontsevichGraphSeries<T> >)
                                                                          const;
    KontsevichGraphSeries<T>& operator+=(const KontsevichGraphSeries<T>& rhs);
    KontsevichGraphSeries<T>& operator-=(const KontsevichGraphSeries<T>& rhs);
\end{verbatim}

\noindent
Methods:
\begin{verbatim}
    KontsevichGraphSeries<T> skew_symmetrization() const;
    KontsevichGraphSeries<T> inverse() const;
    KontsevichGraphSeries<T> gauge_transform(const KontsevichGraphSeries<T>& gauge);
\end{verbatim}

\noindent
Comparison operators:
\begin{verbatim}
    bool operator==(int other) const;
    bool operator!=(int other) const;
\end{verbatim}

\noindent
Methods that modify the graph series:
\begin{verbatim}
    void reduce_mod_skew();
\end{verbatim}

\noindent
Static methods:
\begin{verbatim}
    static KontsevichGraphSeries<T> from_istream(std::istream& is,
        std::function<T(std::string)> const& parser,
        std::function<bool(KontsevichGraph, size_t)> const& filter = nullptr);
\end{verbatim}

\noindent
Friend methods:
\begin{verbatim}
    friend std::ostream& operator<< <>(std::ostream& os,
                                       const KontsevichGraphSeries<T>& series);
\end{verbatim}

\noindent
Functions defined outside the class:
\begin{verbatim}
    KontsevichGraphSeries<T> operator+(KontsevichGraphSeries<T> lhs,
                                       const KontsevichGraphSeries<T>& rhs);
    KontsevichGraphSeries<T> operator-(KontsevichGraphSeries<T> lhs,
                                       const KontsevichGraphSeries<T>& rhs);
    std::ostream& operator<<(std::ostream&, const KontsevichGraphSeries<T>&);
\end{verbatim}

\newpage

\section{Encoding of the entire $\star$-\/product modulo $\bar{o}(\hbar^4)$}\label{AppStarEncoding}

In the following two tables, containing the sets of basic graphs and the $\star$-product expansion respectively, encodings of graphs (see Implementation \ref{DefEncoding} on p. \pageref{DefEncoding}) are followed by their coefficients.

\begin{table}[h]
\caption{Basic sets of Kontsevich graphs, up to order $4$, including zero graphs.}
\label{TblBasic}
\renewcommand{\baselinestretch}{0.86}
\begin{tabular}{p{0.33\textwidth} | p{0.33\textwidth} | p{0.33\textwidth}}
\tiny
\begin{verbatim}
h^0:
2 0 1                      1
h^1:
2 1 1   0 1                1/2
h^2:
2 2 1   0 1 0 2            1/12
2 2 1   0 3 1 2            -1/24
h^3:
2 3 0   0 1 0 1 2 3        0
2 3 1   0 1 0 2 0 2        1/24
2 3 1   0 1 0 2 0 3        0
2 3 1   0 1 0 2 1 2        0
2 3 1   0 1 0 2 1 3        -1/48
2 3 1   0 1 0 2 2 3        -1/48
2 3 1   0 1 0 4 2 3        0
2 3 1   0 1 2 4 2 3        0
2 3 1   0 3 0 2 1 2        0
2 3 1   0 3 0 4 1 2        0
2 3 1   0 3 0 4 1 3        0
2 3 1   0 3 1 2 0 3        -1/48
2 3 1   0 3 1 2 2 3        -1/48
2 3 1   0 3 1 4 2 3        0
2 3 1   0 3 2 4 1 3        0
h^4:
2 4 1   0 1 0 1 0 2 2 3    w_4_1
2 4 1   0 1 0 1 0 2 3 4    w_4_2
2 4 0   0 1 0 1 0 5 2 3    0
2 4 1   0 1 0 1 2 3 2 3    w_4_3
2 4 1   0 1 0 1 2 3 2 4    w_4_4
2 4 1   0 1 0 1 2 5 3 4    w_4_5
2 4 1   0 1 0 2 0 2 0 2    w_4_6
2 4 1   0 1 0 2 0 2 0 3    w_4_7
2 4 1   0 1 0 2 0 2 1 2    w_4_8
2 4 1   0 1 0 2 0 2 1 3    w_4_9
2 4 1   0 1 0 2 0 2 2 3    w_4_10
2 4 0   0 1 0 2 0 2 3 4    0
2 4 1   0 1 0 2 0 3 0 3    w_4_11
2 4 1   0 1 0 2 0 3 0 4    w_4_12
2 4 1   0 1 0 2 0 3 1 2    w_4_13
2 4 1   0 1 0 2 0 3 1 3    w_4_14
2 4 1   0 1 0 2 0 3 1 4    w_4_15
2 4 1   0 1 0 2 0 3 2 3    w_4_16
2 4 1   0 1 0 2 0 3 2 4    w_4_17
2 4 1   0 1 0 2 0 3 3 4    w_4_18
2 4 1   0 1 0 2 0 5 1 2    w_4_19
2 4 1   0 1 0 2 0 5 1 3    w_4_20
2 4 1   0 1 0 2 0 5 2 3    w_4_21
2 4 1   0 1 0 2 0 5 2 4    w_4_22
2 4 1   0 1 0 2 0 5 3 4    w_4_23
2 4 1   0 1 0 2 1 2 2 3    w_4_24
2 4 1   0 1 0 2 1 2 3 4    w_4_25
2 4 1   0 1 0 2 1 3 1 3    w_4_26
2 4 1   0 1 0 2 1 3 1 4    w_4_27
2 4 1   0 1 0 2 1 3 2 3    w_4_28
2 4 1   0 1 0 2 1 3 2 4    w_4_29
2 4 1   0 1 0 2 1 3 3 4    w_4_30
2 4 1   0 1 0 2 1 5 2 3    w_4_31
2 4 1   0 1 0 2 1 5 2 4    w_4_32
2 4 1   0 1 0 2 1 5 3 4    w_4_33
2 4 1   0 1 0 2 2 3 2 3    w_4_34
\end{verbatim}
&
\tiny
\begin{verbatim}
2 4 1   0 1 0 2 2 3 2 4    w_4_35
2 4 1   0 1 0 2 2 3 3 4    w_4_36
2 4 1   0 1 0 2 2 5 2 4    w_4_37
2 4 1   0 1 0 2 2 5 3 4    w_4_38
2 4 1   0 1 0 2 3 5 3 4    w_4_39
2 4 1   0 1 0 4 0 3 2 3    w_4_40
2 4 1   0 1 0 4 0 5 2 3    w_4_41
2 4 1   0 1 0 4 0 5 2 4    w_4_42
2 4 1   0 1 0 4 1 3 2 3    w_4_43
2 4 1   0 1 0 4 1 5 2 3    w_4_44
2 4 1   0 1 0 4 1 5 2 4    w_4_45
2 4 1   0 1 0 4 2 3 0 4    w_4_46
2 4 1   0 1 0 4 2 3 1 4    w_4_47
2 4 1   0 1 0 4 2 3 2 3    w_4_48
2 4 1   0 1 0 4 2 3 2 4    w_4_49
2 4 1   0 1 0 4 2 3 3 4    w_4_50
2 4 1   0 1 0 4 2 5 2 3    w_4_51
2 4 1   0 1 0 4 2 5 2 4    w_4_52
2 4 1   0 1 0 4 2 5 3 4    w_4_53
2 4 1   0 1 0 4 3 5 2 3    w_4_54
2 4 1   0 1 0 4 3 5 2 4    w_4_55
2 4 1   0 1 2 4 2 3 2 3    w_4_56
2 4 0   0 1 2 4 2 3 3 4    0
2 4 1   0 1 2 4 2 5 2 3    w_4_57
2 4 1   0 1 2 4 2 5 3 4    w_4_58
2 4 0   0 1 2 4 3 5 2 4    0
2 4 1   0 1 2 4 3 5 3 4    w_4_59
2 4 1   0 3 0 2 0 2 1 2    w_4_60
2 4 1   0 3 0 2 0 2 1 3    w_4_61
2 4 1   0 3 0 2 0 2 1 4    w_4_62
2 4 1   0 3 0 2 0 5 1 2    w_4_63
2 4 1   0 3 0 2 1 2 1 2    w_4_64
2 4 1   0 3 0 2 1 2 1 3    w_4_65
2 4 1   0 3 0 2 1 2 1 4    w_4_66
2 4 1   0 3 0 2 1 2 2 3    w_4_67
2 4 1   0 3 0 2 1 2 2 4    w_4_68
2 4 1   0 3 0 2 1 2 3 4    w_4_69
2 4 0   0 3 0 2 1 5 2 3    0
2 4 1   0 3 0 2 1 5 2 4    w_4_70
2 4 1   0 3 0 4 0 2 1 2    w_4_71
2 4 1   0 3 0 4 0 5 1 2    w_4_72
2 4 1   0 3 0 4 0 5 1 3    w_4_73
2 4 1   0 3 0 4 0 5 1 4    w_4_74
2 4 1   0 3 0 4 1 2 0 3    w_4_75
2 4 1   0 3 0 4 1 2 0 4    w_4_76
2 4 1   0 3 0 4 1 2 1 2    w_4_77
2 4 1   0 3 0 4 1 2 1 3    w_4_78
2 4 1   0 3 0 4 1 2 1 4    w_4_79
2 4 1   0 3 0 4 1 2 2 3    w_4_80
2 4 1   0 3 0 4 1 2 2 4    w_4_81
2 4 1   0 3 0 4 1 2 3 4    w_4_82
2 4 1   0 3 0 4 1 3 0 3    w_4_83
2 4 1   0 3 0 4 1 3 0 4    w_4_84
2 4 1   0 3 0 4 1 3 1 2    w_4_85
2 4 1   0 3 0 4 1 3 1 3    w_4_86
2 4 1   0 3 0 4 1 3 1 4    w_4_87
2 4 1   0 3 0 4 1 3 2 3    w_4_88
2 4 1   0 3 0 4 1 3 2 4    w_4_89
2 4 1   0 3 0 4 1 3 3 4    w_4_90
2 4 1   0 3 0 4 1 5 0 4    w_4_91
\end{verbatim}
&
\tiny
\begin{verbatim}
2 4 1   0 3 0 4 1 5 1 2    w_4_92
2 4 1   0 3 0 4 1 5 2 3    w_4_93
2 4 1   0 3 0 4 1 5 2 4    w_4_94
2 4 1   0 3 0 4 1 5 3 4    w_4_95
2 4 1   0 3 0 4 2 3 1 2    w_4_96
2 4 1   0 3 0 4 2 3 1 3    w_4_97
2 4 1   0 3 0 4 2 3 1 4    w_4_98
2 4 1   0 3 0 4 2 5 1 2    w_4_99
2 4 1   0 3 0 4 2 5 1 3    w_4_100
2 4 1   0 3 0 4 2 5 1 4    w_4_101
2 4 1   0 3 0 4 3 5 1 2    w_4_102
2 4 1   0 3 0 4 3 5 1 3    w_4_103
2 4 1   0 3 0 4 3 5 1 4    w_4_104
2 4 1   0 3 1 2 0 3 0 3    w_4_105
2 4 1   0 3 1 2 0 3 1 2    w_4_106
2 4 1   0 3 1 2 0 3 1 4    w_4_107
2 4 1   0 3 1 2 0 3 2 3    w_4_108
2 4 1   0 3 1 2 0 3 2 4    w_4_109
2 4 1   0 3 1 2 0 3 3 4    w_4_110
2 4 1   0 3 1 2 0 5 2 3    w_4_111
2 4 1   0 3 1 2 0 5 2 4    w_4_112
2 4 1   0 3 1 2 0 5 3 4    w_4_113
2 4 1   0 3 1 2 2 3 2 3    w_4_114
2 4 1   0 3 1 2 2 3 2 4    w_4_115
2 4 1   0 3 1 2 2 5 2 4    w_4_116
2 4 1   0 3 1 2 2 5 3 4    w_4_117
2 4 1   0 3 1 4 0 5 1 2    w_4_118
2 4 1   0 3 1 4 0 5 2 3    w_4_119
2 4 1   0 3 1 4 0 5 2 4    w_4_120
2 4 1   0 3 1 4 0 5 3 4    w_4_121
2 4 1   0 3 1 4 2 3 0 3    w_4_122
2 4 1   0 3 1 4 2 3 0 4    w_4_123
2 4 1   0 3 1 4 2 3 1 4    w_4_124
2 4 1   0 3 1 4 2 3 2 3    w_4_125
2 4 1   0 3 1 4 2 3 2 4    w_4_126
2 4 1   0 3 1 4 2 3 3 4    w_4_127
2 4 0   0 3 1 4 2 5 0 3    0
2 4 1   0 3 1 4 2 5 0 4    w_4_128
2 4 1   0 3 1 4 2 5 1 4    w_4_129
2 4 1   0 3 1 4 2 5 2 3    w_4_130
2 4 1   0 3 1 4 2 5 2 4    w_4_131
2 4 1   0 3 1 4 2 5 3 4    w_4_132
2 4 1   0 3 1 4 3 5 0 4    w_4_133
2 4 1   0 3 1 4 3 5 1 4    w_4_134
2 4 1   0 3 1 4 3 5 2 3    w_4_135
2 4 1   0 3 1 4 3 5 2 4    w_4_136
2 4 1   0 3 1 4 3 5 3 4    w_4_137
2 4 0   0 3 2 4 0 3 1 3    0
2 4 1   0 3 2 4 1 3 0 3    w_4_138
2 4 1   0 3 2 4 1 3 2 3    w_4_139
2 4 1   0 3 2 4 1 3 2 4    w_4_140
2 4 1   0 3 2 4 1 5 2 3    w_4_141
2 4 1   0 3 2 4 1 5 2 4    w_4_142
2 4 1   0 3 2 4 1 5 3 4    w_4_143
2 4 1   0 3 2 4 2 3 1 3    w_4_144
2 4 1   0 3 2 4 2 3 1 4    w_4_145
2 4 1   0 3 2 4 2 5 1 3    w_4_146
2 4 1   0 3 2 4 3 5 1 3    w_4_147
2 4 1   0 3 2 4 3 5 1 4    w_4_148
2 4 1   0 3 4 5 1 5 2 3    w_4_149
\end{verbatim}
\end{tabular}
\end{table}

\begin{table}[h!]
\caption{Kontsevich's star product up to order $4$.}
\label{TableStar}
\renewcommand{\baselinestretch}{0.7}
\begin{tabular}{p{0.5\textwidth} | p{0.5\textwidth}}
\tiny
\begin{verbatim}
h^0:
2 0 1                      1
h^1:
2 1 1   0 1                1
h^2:
2 2 1   0 1 0 1            1/2
2 2 1   0 1 0 2            1/3
2 2 1   0 1 1 2            -1/3
2 2 1   0 3 1 2            -1/6
h^3:
2 3 1   0 1 0 1 0 1        1/6
2 3 1   0 1 0 1 0 2        1/3
2 3 1   0 1 0 1 1 2        -1/3
2 3 1   0 1 0 4 1 3        -1/6
2 3 1   0 1 0 2 0 2        1/6
2 3 1   0 1 1 2 1 2        1/6
2 3 1   0 1 0 2 2 3        -1/6
2 3 1   0 1 1 2 2 3        -1/6
2 3 1   0 1 0 4 1 2        -1/6
2 3 1   0 1 0 2 1 3        -1/6
2 3 1   0 3 1 2 1 2        1/6
2 3 1   0 3 1 2 0 3        -1/6
2 3 1   0 3 1 2 2 3        -1/6
h^4:
2 4 1   0 1 0 1 0 1 0 1    1/24
2 4 1   0 1 0 1 0 1 0 2    1/6
2 4 1   0 1 0 1 0 1 1 2    -1/6
2 4 1   0 1 0 1 0 5 1 4    -1/12
2 4 1   0 1 0 1 0 2 0 2    1/6
2 4 1   0 1 0 1 1 2 1 2    1/6
2 4 1   0 1 0 1 0 2 2 4    -1/6
2 4 1   0 1 0 1 1 2 2 4    -1/6
2 4 1   0 1 0 1 0 5 1 2    -1/6
2 4 1   0 1 0 1 0 2 1 4    -1/6
2 4 1   0 1 0 4 1 3 1 3    1/6
2 4 1   0 1 0 4 1 3 0 4    -1/6
2 4 1   0 1 0 4 1 3 3 4    -1/6
2 4 1   0 1 0 1 0 2 0 3    1/18
2 4 1   0 1 0 1 0 2 1 3    -1/9
2 4 1   0 1 0 2 0 5 1 4    -1/18
2 4 1   0 1 0 1 1 2 1 3    1/18
2 4 1   0 1 0 4 1 3 1 2    1/18
2 4 1   0 3 1 2 0 5 1 4    1/72
2 4 1   0 1 0 1 0 2 2 3    16*w_4_1
2 4 1   0 1 0 1 1 2 2 3    16*w_4_1
2 4 1   0 1 0 1 0 2 3 4    16*w_4_2
2 4 1   0 1 0 1 1 2 3 4    16*w_4_2
2 4 1   0 1 0 1 2 3 2 3    4*w_4_3
2 4 1   0 1 0 1 2 3 2 4    16*w_4_4
2 4 1   0 1 0 1 2 5 3 4    8*w_4_5
2 4 1   0 1 0 2 0 2 0 2    8/3*w_4_6
2 4 1   0 1 1 2 1 2 1 2    -8/3*w_4_6
2 4 1   0 1 0 2 0 2 0 3    16*w_4_7
2 4 1   0 1 1 2 1 2 1 3    -16*w_4_7
2 4 1   0 1 0 2 0 2 1 2    8*w_4_8
2 4 1   0 1 0 2 1 2 1 2    -8*w_4_8
2 4 1   0 1 0 2 0 2 1 3    16*w_4_9
2 4 1   0 1 0 4 1 2 1 2    -16*w_4_9
2 4 1   0 1 0 2 0 2 2 3    16*w_4_10
2 4 1   0 1 1 2 1 2 2 3    -16*w_4_10
2 4 1   0 1 0 2 0 3 0 3    8*w_4_11
2 4 1   0 1 1 2 1 3 1 3    -8*w_4_11
2 4 1   0 1 0 2 0 3 0 4    16*w_4_12
2 4 1   0 1 1 2 1 3 1 4    -16*w_4_12
2 4 1   0 1 0 2 0 3 1 2    16*w_4_13
2 4 1   0 1 0 2 1 2 1 4    -16*w_4_13
2 4 1   0 1 0 2 0 3 1 3    16*w_4_14
2 4 1   0 1 0 4 1 2 1 4    -16*w_4_14
2 4 1   0 1 0 2 0 3 1 4    16*w_4_15
2 4 1   0 1 0 4 1 5 1 2    -16*w_4_15
2 4 1   0 1 0 2 0 3 2 3    16*w_4_16
2 4 1   0 1 1 2 1 3 2 3    -16*w_4_16
2 4 1   0 1 0 2 0 3 2 4    16*w_4_17
2 4 1   0 1 1 2 1 3 2 4    -16*w_4_17
2 4 1   0 1 0 2 0 3 3 4    16*w_4_18
2 4 1   0 1 1 2 1 3 3 4    -16*w_4_18
2 4 1   0 1 0 2 0 5 1 2    16*w_4_19
2 4 1   0 1 0 2 1 2 1 3    -16*w_4_19
2 4 1   0 1 0 2 0 5 1 3    16*w_4_20
2 4 1   0 1 0 4 1 2 1 3    -16*w_4_20
2 4 1   0 1 0 2 0 5 2 3    16*w_4_21
\end{verbatim}
&
\tiny
\begin{verbatim}
2 4 1   0 1 1 2 1 5 2 3    -16*w_4_21
2 4 1   0 1 0 2 0 5 2 4    16*w_4_22
2 4 1   0 1 1 2 1 5 2 4    -16*w_4_22
2 4 1   0 1 0 2 0 5 3 4    16*w_4_23
2 4 1   0 1 1 2 1 5 3 4    -16*w_4_23
2 4 1   0 1 0 2 1 2 2 3    16*w_4_24
2 4 1   0 1 0 2 1 2 2 4    -16*w_4_24
2 4 1   0 1 0 2 1 2 3 4    16*w_4_25
2 4 1   0 1 0 2 1 3 1 3    8*w_4_26
2 4 1   0 1 0 4 1 2 0 4    -8*w_4_26
2 4 1   0 1 0 2 1 3 1 4    16*w_4_27
2 4 1   0 1 0 4 0 5 1 2    -16*w_4_27
2 4 1   0 1 0 2 1 3 2 3    16*w_4_28
2 4 1   0 1 0 4 1 2 2 4    -16*w_4_28
2 4 1   0 1 0 2 1 3 2 4    16*w_4_29
2 4 1   0 1 0 4 1 2 2 3    -16*w_4_29
2 4 1   0 1 0 2 1 3 3 4    16*w_4_30
2 4 1   0 1 0 4 1 2 3 4    16*w_4_30
2 4 1   0 1 0 2 1 5 2 3    16*w_4_31
2 4 1   0 1 0 4 2 5 1 2    -16*w_4_31
2 4 1   0 1 0 2 1 5 2 4    16*w_4_32
2 4 1   0 1 0 4 2 3 1 2    -16*w_4_32
2 4 1   0 1 0 2 1 5 3 4    16*w_4_33
2 4 1   0 1 0 4 3 5 1 2    16*w_4_33
2 4 1   0 1 0 2 2 3 2 3    8*w_4_34
2 4 1   0 1 1 2 2 3 2 3    -8*w_4_34
2 4 1   0 1 0 2 2 3 2 4    16*w_4_35
2 4 1   0 1 1 2 2 3 2 4    -16*w_4_35
2 4 1   0 1 0 2 2 3 3 4    16*w_4_36
2 4 1   0 1 1 2 2 3 3 4    -16*w_4_36
2 4 1   0 1 0 2 2 5 2 4    8*w_4_37
2 4 1   0 1 1 2 2 5 2 4    -8*w_4_37
2 4 1   0 1 0 2 2 5 3 4    16*w_4_38
2 4 1   0 1 1 2 2 5 3 4    -16*w_4_38
2 4 1   0 1 0 2 3 5 3 4    8*w_4_39
2 4 1   0 1 1 2 3 5 3 4    -8*w_4_39
2 4 1   0 1 0 4 0 3 2 3    16*w_4_40
2 4 1   0 1 1 4 1 3 2 3    -16*w_4_40
2 4 1   0 1 0 4 0 5 2 3    16*w_4_41
2 4 1   0 1 1 4 1 5 2 3    -16*w_4_41
2 4 1   0 1 0 4 0 5 2 4    16*w_4_42
2 4 1   0 1 1 4 1 5 2 4    -16*w_4_42
2 4 1   0 1 0 4 1 3 2 3    16*w_4_43
2 4 1   0 1 0 4 1 3 2 4    -16*w_4_43
2 4 1   0 1 0 4 1 5 2 3    16*w_4_44
2 4 1   0 1 0 4 2 5 1 3    -16*w_4_44
2 4 1   0 1 0 4 1 5 2 4    16*w_4_45
2 4 1   0 1 0 4 2 3 1 3    -16*w_4_45
2 4 1   0 1 0 4 2 3 0 4    16*w_4_46
2 4 1   0 1 1 4 2 3 1 4    -16*w_4_46
2 4 1   0 1 0 4 2 3 1 4    16*w_4_47
2 4 1   0 1 0 4 2 5 1 4    -16*w_4_47
2 4 1   0 1 0 4 2 3 2 3    16*w_4_48
2 4 1   0 1 1 4 2 3 2 3    -16*w_4_48
2 4 1   0 1 0 4 2 3 2 4    16*w_4_49
2 4 1   0 1 1 4 2 3 2 4    -16*w_4_49
2 4 1   0 1 0 4 2 3 3 4    16*w_4_50
2 4 1   0 1 1 4 2 3 3 4    -16*w_4_50
2 4 1   0 1 0 4 2 5 2 3    16*w_4_51
2 4 1   0 1 1 4 2 5 2 3    -16*w_4_51
2 4 1   0 1 0 4 2 5 2 4    16*w_4_52
2 4 1   0 1 1 4 2 5 2 4    -16*w_4_52
2 4 1   0 1 0 4 2 5 3 4    16*w_4_53
2 4 1   0 1 1 4 2 5 3 4    -16*w_4_53
2 4 1   0 1 0 4 3 5 2 3    16*w_4_54
2 4 1   0 1 1 4 3 5 2 3    -16*w_4_54
2 4 1   0 1 0 4 3 5 2 4    16*w_4_55
2 4 1   0 1 1 4 3 5 2 4    -16*w_4_55
2 4 1   0 1 2 4 2 3 2 3    16*w_4_56
2 4 1   0 1 2 4 2 5 2 3    16/3*w_4_57
2 4 1   0 1 2 4 2 5 3 4    16*w_4_58
2 4 1   0 1 2 4 3 5 3 4    16*w_4_59
2 4 1   0 3 0 2 0 2 1 2    16*w_4_60
2 4 1   0 3 1 4 1 3 1 3    16*w_4_60
2 4 1   0 3 0 2 0 2 1 3    16*w_4_61
2 4 1   0 3 1 4 1 3 1 4    16*w_4_61
2 4 1   0 3 0 2 0 2 1 4    16*w_4_62
2 4 1   0 3 1 4 1 5 1 4    16*w_4_62
2 4 1   0 3 0 2 0 5 1 2    16*w_4_63
2 4 1   0 3 1 4 1 3 1 2    16*w_4_63
2 4 1   0 3 0 2 1 2 1 2    8*w_4_64
2 4 1   0 3 1 4 1 3 0 3    8*w_4_64
\end{verbatim}
\end{tabular}
\end{table}
\begin{table}
\renewcommand{\baselinestretch}{0.7}
\centerline{\textsc{Table}~\ref{TableStar} (continued).}\label{TableStar2}
\begin{tabular}{p{0.5\textwidth} | p{0.5\textwidth}}
\tiny
\begin{verbatim}
2 4 1   0 3 0 2 1 2 1 3    8*w_4_65
2 4 1   0 3 1 4 1 3 0 4    8*w_4_65
2 4 1   0 3 0 2 1 2 1 4    16*w_4_66
2 4 1   0 3 0 4 1 5 1 4    16*w_4_66
2 4 1   0 3 0 2 1 2 2 3    16*w_4_67
2 4 1   0 3 1 4 1 3 3 4    16*w_4_67
2 4 1   0 3 0 2 1 2 2 4    16*w_4_68
2 4 1   0 3 1 4 1 3 2 3    -16*w_4_68
2 4 1   0 3 0 2 1 2 3 4    16*w_4_69
2 4 1   0 3 1 4 1 3 2 4    -16*w_4_69
2 4 1   0 3 0 2 1 5 2 4    16*w_4_70
2 4 1   0 3 2 4 1 5 1 4    -16*w_4_70
2 4 1   0 3 0 4 0 2 1 2    16*w_4_71
2 4 1   0 3 1 4 1 5 1 3    16*w_4_71
2 4 1   0 3 0 4 0 5 1 2    16*w_4_72
2 4 1   0 3 1 4 1 5 1 2    16*w_4_72
2 4 1   0 3 0 4 0 5 1 3    16*w_4_73
2 4 1   0 3 1 4 1 2 1 3    16*w_4_73
2 4 1   0 3 0 4 0 5 1 4    16*w_4_74
2 4 1   0 3 1 2 1 3 1 4    16*w_4_74
2 4 1   0 3 0 4 1 2 0 3    16*w_4_75
2 4 1   0 3 1 4 1 2 1 4    16*w_4_75
2 4 1   0 3 0 4 1 2 0 4    16*w_4_76
2 4 1   0 3 1 4 1 2 1 2    16*w_4_76
2 4 1   0 3 0 4 1 2 1 2    16*w_4_77
2 4 1   0 3 1 4 1 2 0 3    16*w_4_77
2 4 1   0 3 0 4 1 2 1 3    16*w_4_78
2 4 1   0 3 1 4 0 5 1 3    16*w_4_78
2 4 1   0 3 0 4 1 2 1 4    16*w_4_79
2 4 1   0 3 0 4 1 5 1 3    16*w_4_79
2 4 1   0 3 0 4 1 2 2 3    16*w_4_80
2 4 1   0 3 1 4 1 2 3 4    16*w_4_80
2 4 1   0 3 0 4 1 2 2 4    16*w_4_81
2 4 1   0 3 1 4 1 2 2 3    -16*w_4_81
2 4 1   0 3 0 4 1 2 3 4    16*w_4_82
2 4 1   0 3 1 4 1 2 2 4    -16*w_4_82
2 4 1   0 3 0 4 1 3 0 3    8*w_4_83
2 4 1   0 3 1 2 1 3 1 3    8*w_4_83
2 4 1   0 3 0 4 1 3 0 4    16*w_4_84
2 4 1   0 3 1 2 1 2 1 3    16*w_4_84
2 4 1   0 3 0 4 1 3 1 2    16*w_4_85
2 4 1   0 3 1 2 0 5 1 3    16*w_4_85
2 4 1   0 3 0 4 1 3 1 3    16*w_4_86
2 4 1   0 3 1 2 0 3 1 3    16*w_4_86
2 4 1   0 3 0 4 1 3 1 4    16*w_4_87
2 4 1   0 3 0 4 1 3 2 3    16*w_4_88
2 4 1   0 3 1 2 1 3 3 4    -16*w_4_88
2 4 1   0 3 0 4 1 3 2 4    16*w_4_89
2 4 1   0 3 1 2 1 3 2 4    -16*w_4_89
2 4 1   0 3 0 4 1 3 3 4    16*w_4_90
2 4 1   0 3 1 2 1 3 2 3    -16*w_4_90
2 4 1   0 3 0 4 1 5 0 4    16*w_4_91
2 4 1   0 3 1 2 1 2 1 4    16*w_4_91
2 4 1   0 3 0 4 1 5 1 2    16*w_4_92
2 4 1   0 3 0 4 1 5 2 3    16*w_4_93
2 4 1   0 3 4 5 1 2 1 4    -16*w_4_93
2 4 1   0 3 0 4 1 5 2 4    16*w_4_94
2 4 1   0 3 2 4 1 5 1 2    -16*w_4_94
2 4 1   0 3 0 4 1 5 3 4    16*w_4_95
2 4 1   0 3 2 4 1 2 1 4    -16*w_4_95
2 4 1   0 3 0 4 2 3 1 2    16*w_4_96
2 4 1   0 3 1 4 1 5 3 4    16*w_4_96
2 4 1   0 3 0 4 2 3 1 3    16*w_4_97
2 4 1   0 3 1 4 3 5 1 3    -16*w_4_97
2 4 1   0 3 0 4 2 3 1 4    16*w_4_98
2 4 1   0 3 4 5 1 3 1 4    -16*w_4_98
2 4 1   0 3 0 4 2 5 1 2    16*w_4_99
2 4 1   0 3 1 4 1 5 2 3    -16*w_4_99
2 4 1   0 3 0 4 2 5 1 3    16*w_4_100
2 4 1   0 3 1 4 2 5 1 3    -16*w_4_100
2 4 1   0 3 0 4 2 5 1 4    16*w_4_101
2 4 1   0 3 2 4 1 5 1 3    -16*w_4_101
2 4 1   0 3 0 4 3 5 1 2    16*w_4_102
2 4 1   0 3 1 4 1 5 2 4    -16*w_4_102
2 4 1   0 3 0 4 3 5 1 3    16*w_4_103
2 4 1   0 3 1 4 2 3 1 3    -16*w_4_103
2 4 1   0 3 0 4 3 5 1 4    16*w_4_104
2 4 1   0 3 2 4 1 3 1 4    -16*w_4_104
2 4 1   0 3 1 2 0 3 0 3    8*w_4_105
2 4 1   0 3 1 2 1 2 1 2    8*w_4_105
2 4 1   0 3 1 2 0 3 1 2    16*w_4_106
2 4 1   0 3 1 2 0 3 1 4    16*w_4_107
\end{verbatim}
&
\tiny
\begin{verbatim}
2 4 1   0 3 1 2 0 5 1 2    16*w_4_107
2 4 1   0 3 1 2 0 3 2 3    16*w_4_108
2 4 1   0 3 1 2 1 2 2 3    -16*w_4_108
2 4 1   0 3 1 2 0 3 2 4    16*w_4_109
2 4 1   0 3 1 2 1 2 3 4    16*w_4_109
2 4 1   0 3 1 2 0 3 3 4    16*w_4_110
2 4 1   0 3 1 2 1 2 2 4    16*w_4_110
2 4 1   0 3 1 2 0 5 2 3    16*w_4_111
2 4 1   0 3 1 2 1 5 2 3    -16*w_4_111
2 4 1   0 3 1 2 0 5 2 4    16*w_4_112
2 4 1   0 3 1 2 1 5 3 4    16*w_4_112
2 4 1   0 3 1 2 0 5 3 4    16*w_4_113
2 4 1   0 3 1 2 1 5 2 4    16*w_4_113
2 4 1   0 3 1 2 2 3 2 3    8*w_4_114
2 4 1   0 3 1 2 2 3 2 4    16*w_4_115
2 4 1   0 3 1 2 2 3 3 4    -16*w_4_115
2 4 1   0 3 1 2 2 5 2 4    8*w_4_116
2 4 1   0 3 1 2 3 5 3 4    8*w_4_116
2 4 1   0 3 1 2 2 5 3 4    16*w_4_117
2 4 1   0 3 1 4 0 5 1 2    8*w_4_118
2 4 1   0 3 1 4 0 5 2 3    16*w_4_119
2 4 1   0 3 1 4 2 5 1 2    -16*w_4_119
2 4 1   0 3 1 4 0 5 2 4    16*w_4_120
2 4 1   0 3 1 4 3 5 1 2    -16*w_4_120
2 4 1   0 3 1 4 0 5 3 4    16*w_4_121
2 4 1   0 3 1 4 2 3 1 2    16*w_4_121
2 4 1   0 3 1 4 2 3 0 3    16*w_4_122
2 4 1   0 3 2 4 1 2 1 2    -16*w_4_122
2 4 1   0 3 1 4 2 3 0 4    16*w_4_123
2 4 1   0 3 2 4 1 2 1 3    -16*w_4_123
2 4 1   0 3 1 4 2 3 1 4    16*w_4_124
2 4 1   0 3 2 4 1 2 0 3    -16*w_4_124
2 4 1   0 3 1 4 2 3 2 3    16*w_4_125
2 4 1   0 3 2 4 1 2 2 4    16*w_4_125
2 4 1   0 3 1 4 2 3 2 4    16*w_4_126
2 4 1   0 3 2 4 1 2 3 4    16*w_4_126
2 4 1   0 3 1 4 2 3 3 4    16*w_4_127
2 4 1   0 3 2 4 1 2 2 3    -16*w_4_127
2 4 1   0 3 1 4 2 5 0 4    16*w_4_128
2 4 1   0 3 4 5 1 2 1 3    16*w_4_128
2 4 1   0 3 1 4 2 5 1 4    16*w_4_129
2 4 1   0 3 2 4 1 5 0 3    -16*w_4_129
2 4 1   0 3 1 4 2 5 2 3    16*w_4_130
2 4 1   0 3 4 5 1 2 2 4    -16*w_4_130
2 4 1   0 3 1 4 2 5 2 4    16*w_4_131
2 4 1   0 3 4 5 1 2 3 4    -16*w_4_131
2 4 1   0 3 1 4 2 5 3 4    16*w_4_132
2 4 1   0 3 4 5 1 2 2 3    16*w_4_132
2 4 1   0 3 1 4 3 5 0 4    16*w_4_133
2 4 1   0 3 2 4 1 3 1 2    16*w_4_133
2 4 1   0 3 1 4 3 5 1 4    16*w_4_134
2 4 1   0 3 2 4 0 3 1 2    16*w_4_134
2 4 1   0 3 1 4 3 5 2 3    16*w_4_135
2 4 1   0 3 2 4 2 5 1 2    -16*w_4_135
2 4 1   0 3 1 4 3 5 2 4    16*w_4_136
2 4 1   0 3 2 4 3 5 1 2    -16*w_4_136
2 4 1   0 3 1 4 3 5 3 4    16*w_4_137
2 4 1   0 3 2 4 2 3 1 2    16*w_4_137
2 4 1   0 3 2 4 1 3 0 3    16*w_4_138
2 4 1   0 3 2 4 1 3 1 3    -16*w_4_138
2 4 1   0 3 2 4 1 3 2 3    16*w_4_139
2 4 1   0 3 2 4 1 3 3 4    16*w_4_139
2 4 1   0 3 2 4 1 3 2 4    16*w_4_140
2 4 1   0 3 2 4 1 5 2 3    16*w_4_141
2 4 1   0 3 4 5 1 5 2 4    -16*w_4_141
2 4 1   0 3 2 4 1 5 2 4    16*w_4_142
2 4 1   0 3 2 4 1 5 3 4    16*w_4_143
2 4 1   0 3 2 4 2 5 1 4    16*w_4_143
2 4 1   0 3 2 4 2 3 1 3    16*w_4_144
2 4 1   0 3 4 5 1 3 3 4    -16*w_4_144
2 4 1   0 3 2 4 2 3 1 4    16*w_4_145
2 4 1   0 3 4 5 1 5 3 4    -16*w_4_145
2 4 1   0 3 2 4 2 5 1 3    16*w_4_146
2 4 1   0 3 4 5 1 3 2 4    -16*w_4_146
2 4 1   0 3 2 4 3 5 1 3    16*w_4_147
2 4 1   0 3 4 5 1 3 2 3    -16*w_4_147
2 4 1   0 3 2 4 3 5 1 4    16*w_4_148
2 4 1   0 3 4 5 1 5 2 3    16*w_4_149
\end{verbatim}
\end{tabular}
\end{table}

\begin{table}[h!]
\caption{Relations between weights of $\hbar^4$-basic graphs: 149 via 10.}
\renewcommand{\baselinestretch}{0.7}
\label{Table149via10}
\tiny\begin{verbatim}
w_4_1==-1/144
w_4_2==-1/288
w_4_3==17/360 + 6*w_4_108
w_4_4==49/2880 - 3*w_4_104 - w_4_107 + (3*w_4_108)/2
w_4_5==-1/96 + 6*w_4_104 + 2*w_4_107
w_4_6==1/80
w_4_7==1/360
w_4_8==-1/240
w_4_9==-13/1440
w_4_10==-7/1440
w_4_11==1/240
w_4_12==-1/720
w_4_13==1/720
w_4_14==1/480
w_4_15==-1/1440
w_4_16==1/1440
w_4_17==-1/480
w_4_18==-1/360
w_4_19==-1/480
w_4_20==-1/240
w_4_21==-1/480
w_4_22==-1/720
w_4_23==1/1440
w_4_24==1/360
w_4_25==53/1440 + 3*w_4_100 + 12*w_4_103 - 15*w_4_104 - w_4_107 + 6*w_4_108 - 6*w_4_109
w_4_26==1/120
w_4_27==1/1440
w_4_28==-1/960 - (3*w_4_108)/2
w_4_29==-49/1440 - (3*w_4_100)/2 - 9*w_4_103 + (21*w_4_104)/2 + (3*w_4_107)/2 - (9*w_4_108)/2 + 3*w_4_109
w_4_30==1/72 + 6*w_4_103 - 6*w_4_104 + 3*w_4_108 - 3*w_4_109
w_4_31==61/2880 + (3*w_4_100)/2 + 6*w_4_103 - (15*w_4_104)/2 - w_4_107/2 + 3*w_4_108 - 3*w_4_109
w_4_32==1/1440
w_4_33==5/288 + 6*w_4_103 - 6*w_4_104 + 3*w_4_108 - 3*w_4_109
w_4_34==1/96 + w_4_108
w_4_35==-w_4_103
w_4_36==-13/2880 - w_4_100/2 + (3*w_4_104)/2 + w_4_107/2
w_4_37==0
w_4_38==1/1440 - w_4_100/2 + w_4_103 + (3*w_4_104)/2 + w_4_107/2 + w_4_108/2
w_4_39==0
w_4_40==0
w_4_41==1/1440
w_4_42==1/1440
w_4_43==37/1440 + 6*w_4_103 - 6*w_4_104 - w_4_107 + 3*w_4_108 - 3*w_4_109
w_4_44==17/360 + 15*w_4_103 - 18*w_4_104 - 2*w_4_107 + 6*w_4_108 - 6*w_4_109
w_4_45==7/1440 - 3*w_4_104 - w_4_107
w_4_46==-1/480
w_4_47==1/60 + 6*w_4_103 - 6*w_4_104 + 3*w_4_108 - 3*w_4_109
w_4_48==11/1440 - w_4_100/2 - w_4_103 + (5*w_4_104)/2 + w_4_107/2 + (3*w_4_108)/2
w_4_49==-w_4_104
w_4_50==-1/192 - w_4_108/2
w_4_51==-w_4_103
w_4_52==-1/1440 + w_4_100/2 - (3*w_4_104)/2 - w_4_107/2 - w_4_108/2
w_4_53==w_4_103
w_4_54==-1/576 + w_4_103 - w_4_104 - w_4_108/2
w_4_55==w_4_104
w_4_56==0
w_4_57==0
w_4_58==0
w_4_59==0
w_4_60==0
w_4_61==0
w_4_62==0
w_4_63==0
w_4_64==0
w_4_65==0
w_4_66==0
w_4_67==0
w_4_68==0
w_4_69==0
w_4_70==0
w_4_71==0
w_4_72==1/1440
w_4_73==1/1440
w_4_74==1/1440
w_4_75==-1/480
w_4_76==-1/720
w_4_77==1/180 + 3*w_4_103 - 3*w_4_104 - w_4_107
w_4_78==-1/144 - 3*w_4_103 + 3*w_4_104 + w_4_107
w_4_79==-1/1440
w_4_80==1/80 + w_4_100 - 3*w_4_104 + 3*w_4_108 - 2*w_4_109 - 2*w_4_125
w_4_81==1/480 - w_4_100/2 + 2*w_4_103 + w_4_104/2 + w_4_107/2 + w_4_108/2 + 2*w_4_125
\end{verbatim}
\end{table}

\begin{table}[h!]
\centerline{\textsc{Table}~\ref{Table149via10} (continued).}\label{Table149via10-part2}
\renewcommand{\baselinestretch}{0.7}
\tiny\begin{verbatim}
w_4_82==1/2880 - w_4_103 - w_4_104 + w_4_108/2 - w_4_109 - 2*w_4_125
w_4_83==-1/480
w_4_84==-1/720
w_4_85==1/180 - w_4_107
w_4_86==1/480
w_4_87==-1/1440
w_4_88==1/96 + 4*w_4_103 - 4*w_4_104 + 2*w_4_108 - 2*w_4_109
w_4_89==1/240 + (3*w_4_100)/2 + 3*w_4_103 - (9*w_4_104)/2 + w_4_107/2 + (3*w_4_108)/2 - 2*w_4_109
w_4_90==-1/192 - w_4_108/2
w_4_91==-1/720
w_4_92==17/1440 + 6*w_4_103 - 6*w_4_104 - 2*w_4_107
w_4_93==3/320 - w_4_100/2 + w_4_103 - (5*w_4_104)/2 + w_4_107/2 + 2*w_4_108 - 2*w_4_109 + w_4_119
w_4_94==1/1440 - w_4_100/2 - w_4_102 + w_4_103 - (3*w_4_104)/2 + w_4_107/2 + w_4_108/2 - w_4_109
w_4_95==-1/576 + w_4_103 - w_4_104 - w_4_108/2
w_4_96==0
w_4_97==0
w_4_98==0
w_4_99==-7/2880 - w_4_100/2 + w_4_103 + w_4_104/2 - w_4_107/2 - w_4_108 + w_4_109 - w_4_119
w_4_105==-1/160
w_4_106==13/1440
w_4_110==-1/288 - 2*w_4_103 + 2*w_4_104 - w_4_108 + w_4_109
w_4_111==-17/2880 - w_4_100/2 - 2*w_4_103 + (5*w_4_104)/2 - w_4_107/2 - w_4_108 + w_4_109
w_4_112==-7/576 - 4*w_4_103 + 4*w_4_104 - 2*w_4_108 + 2*w_4_109
w_4_113==-1/192 - 2*w_4_103 + 2*w_4_104 - w_4_108 + w_4_109
w_4_114==1/360 + w_4_108
w_4_115==23/5760 - w_4_100/2 + w_4_103 + (3*w_4_108)/4
w_4_116==0
w_4_117==-19/2880 + w_4_100 - 2*w_4_103 - w_4_108
w_4_118==-31/1440 - 12*w_4_103 + 12*w_4_104 + 4*w_4_107
w_4_120==-1/96 - w_4_100 - w_4_102 + 2*w_4_103 - 2*w_4_108 + w_4_109
w_4_121==-1/288 + 2*w_4_103 - 2*w_4_104 - w_4_108
w_4_122==-2*w_4_103
w_4_123==-7/2880 + w_4_100/2 - w_4_103 + w_4_104/2 - w_4_107/2 - w_4_108 + w_4_109
w_4_124==1/144 + w_4_100 + w_4_103 - 2*w_4_104 + w_4_108 - w_4_109
w_4_126==29/5760 + w_4_100/2 - w_4_103 + (5*w_4_108)/4 - w_4_109 - w_4_125
w_4_127==-1/640 + w_4_103 + w_4_104/2 - (3*w_4_108)/4 + w_4_109/2 + w_4_125
w_4_128==-1/144 + w_4_101 - 2*w_4_103 + 3*w_4_104 - w_4_108 + w_4_109
w_4_129==1/144 + w_4_101 + 2*w_4_103 - 3*w_4_104 + w_4_108 - w_4_109
w_4_130==7/1920 - w_4_100/2 + w_4_103 + w_4_107/2 + (3*w_4_108)/4 - w_4_109/2 + w_4_119 + w_4_125
w_4_131==23/5760 - w_4_100/4 + w_4_101/2 - w_4_102/2 + w_4_103/2 - (5*w_4_104)/4 + w_4_107/4 + w_4_108 - w_4_109 +
         w_4_119/2 - w_4_125
w_4_132==-1/240 + w_4_101/2 + w_4_103/2 - w_4_108 + w_4_109/2 + w_4_125
w_4_133==2*w_4_104
w_4_134==0
w_4_135==-1/360 - w_4_100/4 - w_4_102/2 + w_4_104/4 + w_4_107/4 - w_4_108/4 + w_4_119/2
w_4_136==-7/1440 - w_4_100/2 - w_4_102 + w_4_103 - w_4_104/2 - w_4_108 + w_4_109/2
w_4_137==0
w_4_138==-1/144 - 2*w_4_103 + 2*w_4_104 - w_4_108 + w_4_109
w_4_139==1/1920 + w_4_103 - (3*w_4_104)/2 + w_4_108/4 - w_4_109/2
w_4_140==-1/1440 + w_4_100 + 2*w_4_103 - 5*w_4_104 - w_4_109
w_4_141==-w_4_100/4 - w_4_101/2 - w_4_102/2 - w_4_103/2 + w_4_104/4 + w_4_107/4 + w_4_108/4 - w_4_109/2 + w_4_119/2
w_4_142==1/5760 - w_4_102 + 2*w_4_103 - 3*w_4_104 - w_4_109
w_4_143==7/1440 + w_4_101/2 + (5*w_4_103)/2 - (5*w_4_104)/2 + (3*w_4_108)/4 - w_4_109
w_4_144==0
w_4_145==0
w_4_146==13/5760 + w_4_100/2 - w_4_101/2 + (3*w_4_103)/2 - 2*w_4_104 + w_4_108/4 - w_4_109/2
w_4_147==1/320 - w_4_101/2 + (3*w_4_103)/2 - w_4_104 + w_4_108/2 - w_4_109/2
w_4_148==11/1920 + 2*w_4_103 - w_4_104 + w_4_108 - w_4_109
w_4_149==-11/2880 + w_4_100/2 + w_4_104/2 - w_4_107/2 - w_4_108 + w_4_109 - w_4_119
\end{verbatim}
\end{table}

\clearpage

\section{Encoding of the associator of the $\star$-product modulo $\bar{o}(\hbar^4)$}
\label{AppAssocEncoding}

Encodings of graphs (see Implementation \ref{DefEncoding} on p. \pageref{DefEncoding}) are followed by their coefficients, in the following table containing the expansion of the associator $(f\star g)\star h - f \star (g\star h)$. 
\begin{table}[h!]
\caption{The associator of $\star$ up to order $4$ in terms of $149$ parameters.}
\label{TableAssoc4}
\begin{tabular}{p{0.33\textwidth} | p{0.33\textwidth} | p{0.33\textwidth}}
\renewcommand{\baselinestretch}{0.7}
\tiny\begin{verbatim}
h^0:
h^1:
h^2:
# 1 1 1 
3 2 1   0 1 2 3    -2/3
3 2 1   0 2 1 3    2/3
3 2 1   0 4 1 2    -2/3
h^3:
# 1 2 2 
3 3 1   0 1 1 2 2 3    -2/3
3 3 1   0 2 1 2 1 3    2/3
3 3 1   0 4 1 2 1 2    -2/3
# 2 1 2 
3 3 1   0 1 0 2 2 3    -2/3
3 3 1   0 2 0 2 1 3    2/3
3 3 1   0 2 0 5 1 2    -2/3
# 2 2 1 
3 3 1   0 1 0 1 2 3    -2/3
3 3 1   0 1 0 2 1 4    2/3
3 3 1   0 1 0 5 1 2    -2/3
# 1 2 1 
3 3 1   0 1 1 3 2 4    1/3
3 3 1   0 1 1 5 2 3    -1/3
3 3 1   0 4 1 5 1 2    -1/3
3 3 1   0 4 1 2 1 3    1/3
# 1 1 1 
3 3 1   0 4 1 3 2 4    1/6
3 3 1   0 1 2 3 3 4    -1/6
3 3 1   0 1 2 5 3 4    -1/6
3 3 1   0 4 1 2 3 4    -1/6
3 3 1   0 4 2 3 1 4    -1/6
3 3 1   0 4 1 5 2 4    1/6
3 3 1   0 4 3 5 1 2    -1/6
3 3 1   0 4 2 3 1 3    -1/6
# 1 1 2 
3 3 1   0 1 2 3 2 3    1/3
3 3 1   0 1 2 3 2 4    1/3
3 3 1   0 2 1 5 2 3    -1/6
3 3 1   0 4 2 5 1 2    1/6
3 3 1   0 2 1 3 2 4    -1/6
3 3 1   0 4 1 2 2 3    1/6
3 3 1   0 4 1 2 2 4    1/3
3 3 1   0 2 1 2 3 4    -1/3
3 3 1   0 2 1 3 2 3    -1/3
# 2 1 1 
3 3 1   0 1 0 3 2 3    -1/3
3 3 1   0 1 0 3 2 4    -1/6
3 3 1   0 1 0 2 3 4    1/3
3 3 1   0 1 0 5 2 3    -1/6
3 3 1   0 2 0 3 1 3    1/3
3 3 1   0 4 1 2 0 4    -1/3
3 3 1   0 4 0 5 1 2    -1/3
3 3 1   0 2 0 5 1 3    1/6
3 3 1   0 2 0 3 1 4    1/6
h^4:
# 3 3 1 
3 4 1   0 1 0 1 0 1 2 3    -1/3
3 4 1   0 1 0 1 0 2 1 5    1/3
3 4 1   0 1 0 1 0 6 1 2    -1/3
# 3 2 2 
3 4 1   0 1 0 1 0 2 2 3    -2/3
3 4 1   0 1 0 2 0 2 1 4    2/3
3 4 1   0 1 0 2 0 6 1 2    -2/3
# 3 1 3 
3 4 1   0 1 0 2 0 2 2 3    -1/3
3 4 1   0 2 0 2 0 2 1 3    1/3
3 4 1   0 2 0 2 0 6 1 2    -1/3
# 2 3 2 
3 4 1   0 1 0 1 1 2 2 3    -2/3
3 4 1   0 1 0 2 1 2 1 4    2/3
3 4 1   0 1 0 5 1 2 1 2    -2/3
# 1 3 3 
3 4 1   0 1 1 2 1 2 2 3    -1/3
\end{verbatim}
&
\renewcommand{\baselinestretch}{0.7}
\tiny\begin{verbatim}
3 4 1   0 2 1 2 1 2 1 3    1/3
3 4 1   0 4 1 2 1 2 1 2    -1/3
# 2 2 3 
3 4 1   0 1 0 2 1 2 2 3    -2/3
3 4 1   0 2 0 2 1 2 1 3    2/3
3 4 1   0 2 0 5 1 2 1 2    -2/3
# 1 3 2 
3 4 1   0 1 1 2 1 3 2 5    1/3
3 4 1   0 1 1 2 1 4 2 3    -2/9
3 4 1   0 1 1 2 1 6 2 3    -1/3
3 4 1   0 2 1 2 1 3 1 4    2/9
3 4 1   0 4 1 2 1 2 1 5    -2/9
3 4 1   0 4 1 5 1 2 1 2    -1/3
3 4 1   0 4 1 2 1 2 1 3    1/3
# 2 3 1 
3 4 1   0 1 0 1 1 3 2 4    2/9
3 4 1   0 1 0 1 1 3 2 5    1/3
3 4 1   0 1 0 1 1 6 2 3    -1/3
3 4 1   0 1 0 2 1 3 1 4    -2/9
3 4 1   0 1 0 5 1 2 1 3    2/9
3 4 1   0 1 0 5 1 6 1 2    -1/3
3 4 1   0 1 0 5 1 2 1 4    1/3
# 2 1 3 
3 4 1   0 1 0 2 2 3 2 3    1/3
3 4 1   0 1 0 2 2 3 2 4    2/9
3 4 1   0 1 0 2 2 3 2 5    1/3
3 4 1   0 2 0 2 1 6 2 3    -1/6
3 4 1   0 2 0 5 2 6 1 2    1/6
3 4 1   0 2 0 2 1 3 2 5    -1/6
3 4 1   0 2 0 5 1 2 2 4    1/6
3 4 1   0 2 0 2 1 3 2 4    -2/9
3 4 1   0 2 0 5 1 2 2 3    2/9
3 4 1   0 2 0 5 1 2 2 5    1/3
3 4 1   0 2 0 2 1 2 3 5    -1/3
3 4 1   0 2 0 2 1 3 2 3    -1/3
# 3 2 1 
3 4 1   0 1 0 1 0 3 2 3    -1/3
3 4 1   0 1 0 1 0 3 2 4    -2/9
3 4 1   0 1 0 1 0 3 2 5    -1/6
3 4 1   0 1 0 1 0 2 3 5    1/3
3 4 1   0 1 0 1 0 6 2 3    -1/6
3 4 1   0 1 0 2 0 3 1 4    2/9
3 4 1   0 1 0 3 0 6 1 2    -2/9
3 4 1   0 1 0 2 0 4 1 4    1/3
3 4 1   0 1 0 5 1 2 0 5    -1/3
3 4 1   0 1 0 5 0 6 1 2    -1/3
3 4 1   0 1 0 2 0 6 1 4    1/6
3 4 1   0 1 0 2 0 4 1 5    1/6
# 3 1 2 
3 4 1   0 1 0 2 0 3 2 3    -1/3
3 4 1   0 1 0 2 0 3 2 5    -1/6
3 4 1   0 1 0 2 0 2 3 4    1/3
3 4 1   0 1 0 2 0 4 2 3    -2/9
3 4 1   0 1 0 2 0 6 2 3    -1/6
3 4 1   0 2 0 2 0 3 1 3    1/3
3 4 1   0 2 0 5 1 2 0 5    -1/3
3 4 1   0 2 0 2 0 3 1 4    2/9
3 4 1   0 2 0 3 0 6 1 2    -2/9
3 4 1   0 2 0 5 0 6 1 2    -1/3
3 4 1   0 2 0 2 0 6 1 3    1/6
3 4 1   0 2 0 2 0 3 1 5    1/6
# 1 2 3 
3 4 1   0 1 1 2 2 3 2 3    1/3
3 4 1   0 1 1 2 2 3 2 4    2/9
3 4 1   0 1 1 2 2 3 2 5    1/3
3 4 1   0 2 1 2 1 6 2 3    -1/6
3 4 1   0 4 2 5 1 2 1 2    1/6
3 4 1   0 2 1 2 1 3 2 5    -1/6
3 4 1   0 4 1 2 1 2 2 3    1/6
3 4 1   0 2 1 2 1 3 2 4    -2/9
3 4 1   0 4 1 2 1 2 2 5    2/9
3 4 1   0 4 1 2 1 2 2 4    1/3
3 4 1   0 2 1 2 1 2 3 4    -1/3
\end{verbatim}
&
\renewcommand{\baselinestretch}{0.7}
\tiny\begin{verbatim}
3 4 1   0 2 1 2 1 3 2 3    -1/3
# 2 2 2 
3 4 1   0 1 0 1 2 3 2 3    1/3
3 4 1   0 1 0 1 2 3 2 4    4/9
3 4 1   0 1 0 3 1 2 2 3    -1/3
3 4 1   0 1 0 3 1 2 2 4    -1/6
3 4 1   0 1 0 2 1 3 2 5    1/3
3 4 1   0 1 0 1 2 3 2 5    1/3
3 4 1   0 1 0 2 1 2 3 4    1/3
3 4 1   0 1 0 2 1 4 2 3    -4/9
3 4 1   0 1 0 2 1 6 2 3    -1/3
3 4 1   0 1 0 5 2 3 1 2    -1/6
3 4 1   0 1 0 2 1 6 2 4    -1/6
3 4 1   0 1 0 5 2 6 1 2    1/6
3 4 1   0 1 0 2 1 4 2 5    -1/6
3 4 1   0 1 0 5 1 2 2 4    1/6
3 4 1   0 2 0 3 1 2 1 3    1/3
3 4 1   0 4 1 2 0 4 1 2    -1/3
3 4 1   0 2 0 5 1 2 1 3    4/9
3 4 1   0 4 1 2 0 6 1 2    -4/9
3 4 1   0 2 0 5 1 6 1 2    -1/3
3 4 1   0 1 0 5 1 2 2 5    1/3
3 4 1   0 4 0 5 1 2 1 2    -1/3
3 4 1   0 1 0 2 1 2 4 5    -1/3
3 4 1   0 2 0 5 1 2 1 4    1/3
3 4 1   0 2 0 5 1 3 1 2    1/6
3 4 1   0 2 0 3 1 2 1 4    1/6
3 4 1   0 1 0 2 1 4 2 4    -1/3
# 1 3 1 
3 4 1   0 1 1 3 1 3 2 3    -1/6+8*w_4_6
3 4 1   0 1 1 3 1 3 2 4    -1/3+16*w_4_7
3 4 1   0 1 1 2 1 3 3 4    2/9
3 4 1   0 1 1 2 1 3 4 5    1/9
3 4 1   0 1 1 3 1 6 2 3    1/9+16*w_4_7
3 4 1   0 1 1 3 1 6 2 4    1/9+16*w_4_12
3 4 1   0 1 1 2 1 4 3 4    2/9
3 4 1   0 1 1 5 2 3 1 5    -1/6+8*w_4_11
3 4 1   0 2 1 3 1 3 1 3    16/3*w_4_6
3 4 1   0 4 1 2 1 4 1 4    -1/6+8*w_4_6
3 4 1   0 2 1 3 1 3 1 4    32*w_4_7
3 4 1   0 4 1 2 1 3 1 4    1/9+16*w_4_7
3 4 1   0 4 1 2 1 4 1 5    16*w_4_7
3 4 1   0 4 1 5 1 2 1 5    -1/3+16*w_4_7
3 4 1   0 2 1 3 1 4 1 4    16*w_4_11
3 4 1   0 4 1 2 1 3 1 3    -1/6+8*w_4_11
3 4 1   0 4 1 5 1 2 1 4    16*w_4_11
3 4 1   0 2 1 3 1 4 1 5    32*w_4_12
3 4 1   0 4 1 2 1 3 1 5    16*w_4_12
3 4 1   0 4 1 5 1 2 1 3    1/9+16*w_4_12
3 4 1   0 4 1 5 1 6 1 2    16*w_4_12
3 4 1   0 1 1 2 1 4 3 5    -1/9
3 4 1   0 1 1 3 1 4 2 3    16*w_4_7
3 4 1   0 1 1 3 1 4 2 4    16*w_4_11
3 4 1   0 1 1 3 1 4 2 5    16*w_4_12
3 4 1   0 1 1 5 1 6 2 3    16*w_4_12
# 3 1 1 
3 4 1   0 1 0 3 0 3 2 3    -1/6-8*w_4_8
3 4 1   0 1 0 3 0 3 2 4    -1/3-16*w_4_9
3 4 1   0 1 0 2 0 3 3 4    1/9-16*w_4_1
3 4 1   0 1 0 2 0 3 4 5    -1/9-16*w_4_2
3 4 1   0 1 0 3 0 6 2 3    -1/9-16*w_4_19
3 4 1   0 1 0 3 0 6 2 4    -1/9-16*w_4_20
3 4 1   0 1 0 2 0 4 3 4    1/3+16*w_4_1
3 4 1   0 1 0 5 2 3 0 5    -1/6+8*w_4_26
3 4 1   0 2 0 3 0 3 1 3    -8*w_4_8+8*w_4_6
3 4 1   0 4 1 2 0 4 0 4    -1/6+8/3*w_4_6
3 4 1   0 2 0 3 0 3 1 4    -16*w_4_9+16*w_4_7
3 4 1   0 2 0 3 0 4 1 3    -16*w_4_13+16*w_4_7
3 4 1   0 2 0 3 0 6 1 3    -16*w_4_19+16*w_4_7
3 4 1   0 4 0 5 1 2 0 5    -1/3+16*w_4_7
3 4 1   0 2 0 3 0 4 1 4    16*w_4_11-16*w_4_14
3 4 1   0 2 0 5 1 3 0 5    8*w_4_11+8*w_4_26
3 4 1   0 4 0 5 1 2 0 4    -1/6+8*w_4_11
\end{verbatim}
\end{tabular}
\end{table}

\begin{table}
\centerline{\textsc{Table}~\ref{TableAssoc4} (part 2).}\label{TableAssoc4-part2}
\vskip 1em
\begin{tabular}{p{0.5\textwidth} | p{0.5\textwidth}}
\renewcommand{\baselinestretch}{0.7}
\tiny\begin{verbatim}
3 4 1   0 2 0 3 0 4 1 5    -16*w_4_15+16*w_4_12
3 4 1   0 2 0 3 0 6 1 4    16*w_4_12-16*w_4_20
3 4 1   0 2 0 5 0 6 1 3    16*w_4_12+16*w_4_27
3 4 1   0 4 0 5 0 6 1 2    16*w_4_12
3 4 1   0 1 0 2 0 4 3 5    -16*w_4_2
3 4 1   0 1 0 3 0 4 2 3    -16*w_4_13
3 4 1   0 1 0 3 0 4 2 4    -16*w_4_14
3 4 1   0 1 0 3 0 4 2 5    -16*w_4_15
3 4 1   0 1 0 5 0 6 2 3    16*w_4_27
# 1 1 3 
3 4 1   0 1 2 3 2 3 2 3    -1/6+8/3*w_4_6
3 4 1   0 1 2 3 2 3 2 4    -1/3+16*w_4_7
3 4 1   0 1 2 3 2 4 2 4    -1/6+8*w_4_11
3 4 1   0 2 1 2 2 3 3 4    1/3+16*w_4_1
3 4 1   0 2 1 2 2 4 3 4    1/9-16*w_4_1
3 4 1   0 2 1 2 2 3 4 5    16*w_4_2
3 4 1   0 2 1 2 2 4 3 5    1/9+16*w_4_2
3 4 1   0 2 1 3 2 3 2 3    -8*w_4_8+8*w_4_6
3 4 1   0 4 1 2 2 4 2 4    -1/6-8*w_4_8
3 4 1   0 2 1 5 2 3 2 3    -16*w_4_9+16*w_4_7
3 4 1   0 4 2 5 1 2 2 5    -1/3-16*w_4_9
3 4 1   0 2 1 3 2 3 2 5    -16*w_4_13+16*w_4_7
3 4 1   0 4 1 2 2 4 2 5    -16*w_4_13
3 4 1   0 2 1 5 2 3 2 5    16*w_4_11-16*w_4_14
3 4 1   0 4 2 5 1 2 2 4    -16*w_4_14
3 4 1   0 2 1 5 2 6 2 3    -16*w_4_15+16*w_4_12
3 4 1   0 4 2 5 2 6 1 2    -16*w_4_15
3 4 1   0 2 1 3 2 3 2 4    -16*w_4_19+16*w_4_7
3 4 1   0 4 1 2 2 3 2 4    -1/9-16*w_4_19
3 4 1   0 2 1 5 2 3 2 4    16*w_4_12-16*w_4_20
3 4 1   0 4 2 5 1 2 2 3    -1/9-16*w_4_20
3 4 1   0 2 1 3 2 4 2 4    8*w_4_11+8*w_4_26
3 4 1   0 4 1 2 2 3 2 3    -1/6+8*w_4_26
3 4 1   0 2 1 3 2 4 2 5    16*w_4_12+16*w_4_27
3 4 1   0 4 1 2 2 3 2 5    16*w_4_27
3 4 1   0 1 2 3 2 4 2 5    16*w_4_12
# 2 2 1 
3 4 1   0 1 0 5 1 4 2 3    1/9
3 4 1   0 1 0 5 1 4 2 5    1/6
3 4 1   0 1 0 5 1 3 2 3    1/6+16*w_4_9
3 4 1   0 1 0 5 1 3 2 4    1/6+16*w_4_20
3 4 1   0 1 0 5 1 3 2 5    1/6+16*w_4_14
3 4 1   0 1 0 3 1 4 2 3    1/6+16*w_4_19
3 4 1   0 1 0 3 1 4 2 4    1/6-16*w_4_26
3 4 1   0 1 0 3 1 4 2 5    1/6-16*w_4_27
3 4 1   0 1 0 1 2 3 3 5    -1/6
3 4 1   0 1 0 1 2 3 4 5    -1/3-16*w_4_2
3 4 1   0 1 0 3 1 2 3 5    1/9
3 4 1   0 1 0 3 1 2 4 5    -1/18
3 4 1   0 1 0 3 1 6 2 3    -1/9+16*w_4_13
3 4 1   0 1 0 3 1 6 2 4    -1/9-16*w_4_27
3 4 1   0 1 0 2 1 3 3 4    -1/9-16*w_4_1
3 4 1   0 1 0 2 1 3 4 5    1/9-16*w_4_2
3 4 1   0 1 0 5 2 3 1 3    1/9+16*w_4_9
3 4 1   0 1 0 5 2 6 1 3    1/9+16*w_4_15
3 4 1   0 2 0 5 1 4 1 3    -1/9
3 4 1   0 4 1 2 0 6 1 5    1/9
3 4 1   0 1 0 1 2 6 3 5    -1/6
3 4 1   0 1 0 2 1 4 3 4    1/3+16*w_4_1
3 4 1   0 1 0 5 2 3 1 5    -1/3+16*w_4_14
3 4 1   0 1 0 5 1 2 4 5    -1/6
3 4 1   0 1 0 5 2 4 1 5    -1/6
3 4 1   0 2 0 3 1 3 1 3    8*w_4_8+8*w_4_6
3 4 1   0 4 1 2 0 4 1 4    -1/6+8*w_4_6
3 4 1   0 2 0 3 1 3 1 4    16*w_4_19+16*w_4_7
3 4 1   0 2 0 3 1 3 1 5    16*w_4_13+16*w_4_7
3 4 1   0 2 0 5 1 3 1 3    16*w_4_9+16*w_4_7
3 4 1   0 4 1 2 0 4 1 3    1/3+16*w_4_7
3 4 1   0 4 0 5 1 2 1 5    -1/9+16*w_4_7
3 4 1   0 4 1 2 0 6 1 4    -1/3+16*w_4_7
3 4 1   0 2 0 3 1 4 1 4    8*w_4_11-8*w_4_26
3 4 1   0 2 0 5 1 3 1 5    16*w_4_11+16*w_4_14
3 4 1   0 4 0 5 1 2 1 4    16*w_4_11
3 4 1   0 4 1 5 1 2 0 4    -1/6+8*w_4_11
3 4 1   0 2 0 3 1 4 1 5    16*w_4_12-16*w_4_27
3 4 1   0 2 0 5 1 3 1 4    16*w_4_12+16*w_4_20
3 4 1   0 2 0 5 1 6 1 3    16*w_4_15+16*w_4_12
3 4 1   0 4 0 5 1 2 1 3    1/6+16*w_4_12
3 4 1   0 4 1 2 0 6 1 3    1/6+16*w_4_12
3 4 1   0 4 0 5 1 6 1 2    -1/9+16*w_4_12
3 4 1   0 1 0 5 1 6 2 5    1/6
3 4 1   0 1 0 5 4 6 1 2    -1/6
3 4 1   0 1 0 5 3 6 1 2    -1/6
3 4 1   0 1 0 5 2 4 1 4    -1/6
\end{verbatim}
&
\renewcommand{\baselinestretch}{0.7}
\tiny\begin{verbatim}
3 4 1   0 1 0 1 2 3 3 4    -16*w_4_1
3 4 1   0 1 0 2 1 4 3 5    -16*w_4_2
3 4 1   0 1 0 3 1 3 2 3    16*w_4_8
3 4 1   0 1 0 3 1 3 2 5    16*w_4_13
3 4 1   0 1 0 5 1 6 2 3    16*w_4_15
3 4 1   0 1 0 3 1 3 2 4    16*w_4_19
3 4 1   0 1 0 5 2 3 1 4    16*w_4_20
# 1 2 2 
3 4 1   0 1 1 3 2 3 2 3    -1/6+8*w_4_6
3 4 1   0 1 1 3 2 3 2 4    -1/3+16*w_4_7
3 4 1   0 1 1 3 2 4 2 4    -1/6+8*w_4_11
3 4 1   0 4 1 3 1 2 2 4    1/6
3 4 1   0 1 1 3 2 3 2 5    -1/9+16*w_4_7
3 4 1   0 1 1 3 2 4 2 5    -1/9+16*w_4_12
3 4 1   0 1 1 2 2 3 3 5    -1/6
3 4 1   0 1 1 5 2 3 2 3    1/3+16*w_4_7
3 4 1   0 1 1 2 2 6 3 5    -1/6
3 4 1   0 1 1 5 2 4 2 3    1/9
3 4 1   0 1 1 2 2 4 3 5    -1/18
3 4 1   0 1 1 5 2 6 2 3    1/6+16*w_4_12
3 4 1   0 1 1 2 2 6 3 4    -1/6
3 4 1   0 1 1 5 2 3 2 4    1/6+16*w_4_12
3 4 1   0 4 1 2 1 2 3 4    -1/6
3 4 1   0 4 2 3 1 2 1 4    -1/6
3 4 1   0 2 1 3 1 6 2 5    -1/9
3 4 1   0 4 1 2 1 6 2 5    1/9
3 4 1   0 2 1 2 1 3 3 4    1/3+16*w_4_1
3 4 1   0 2 1 2 1 4 3 4    -1/9-16*w_4_1
3 4 1   0 4 1 2 1 2 4 5    16*w_4_1
3 4 1   0 2 1 2 1 3 4 5    16*w_4_2
3 4 1   0 2 1 2 1 4 3 5    -1/9+16*w_4_2
3 4 1   0 4 1 2 1 2 3 5    -1/3-16*w_4_2
3 4 1   0 2 1 3 1 3 2 3    8*w_4_8+8*w_4_6
3 4 1   0 4 1 2 1 4 2 4    16*w_4_8
3 4 1   0 2 1 3 1 3 2 4    16*w_4_9+16*w_4_7
3 4 1   0 4 1 2 1 4 2 3    1/9+16*w_4_9
3 4 1   0 4 1 2 1 4 2 5    1/6+16*w_4_9
3 4 1   0 2 1 3 1 4 2 3    16*w_4_13+16*w_4_7
3 4 1   0 4 1 2 1 3 2 4    -1/9+16*w_4_13
3 4 1   0 4 1 5 1 2 2 5    16*w_4_13
3 4 1   0 2 1 3 1 4 2 4    16*w_4_11+16*w_4_14
3 4 1   0 4 1 2 1 3 2 3    -1/3+16*w_4_14
3 4 1   0 4 1 5 1 2 2 4    1/6+16*w_4_14
3 4 1   0 2 1 3 1 4 2 5    16*w_4_15+16*w_4_12
3 4 1   0 4 1 2 1 3 2 5    16*w_4_15
3 4 1   0 4 1 5 1 2 2 3    1/9+16*w_4_15
3 4 1   0 2 1 3 1 6 2 3    16*w_4_19+16*w_4_7
3 4 1   0 4 1 2 1 6 2 4    1/6+16*w_4_19
3 4 1   0 4 2 5 1 2 1 5    16*w_4_19
3 4 1   0 2 1 3 1 6 2 4    16*w_4_12+16*w_4_20
3 4 1   0 4 1 2 1 6 2 3    16*w_4_20
3 4 1   0 4 2 5 1 6 1 2    1/6+16*w_4_20
3 4 1   0 2 1 5 2 3 1 5    8*w_4_11-8*w_4_26
3 4 1   0 4 2 5 1 2 1 4    1/6-16*w_4_26
3 4 1   0 2 1 5 1 6 2 3    16*w_4_12-16*w_4_27
3 4 1   0 4 1 5 2 6 1 2    1/6-16*w_4_27
3 4 1   0 4 2 5 1 2 1 3    -1/9-16*w_4_27
3 4 1   0 4 1 5 2 4 1 2    1/6
3 4 1   0 1 1 2 2 4 3 4    1/9
3 4 1   0 4 3 5 1 2 1 2    -1/6
3 4 1   0 4 2 3 1 2 1 3    -1/6
3 4 1   0 1 1 5 2 3 2 5    16*w_4_11
# 2 1 2 
3 4 1   0 1 0 3 2 3 2 3    1/6+8*w_4_8
3 4 1   0 1 0 3 2 3 2 4    1/3+16*w_4_19
3 4 1   0 1 0 3 2 4 2 4    1/6-8*w_4_26
3 4 1   0 2 0 5 1 4 2 5    1/6
3 4 1   0 1 0 3 2 3 2 5    1/9+16*w_4_13
3 4 1   0 1 0 3 2 4 2 5    1/9-16*w_4_27
3 4 1   0 1 0 2 2 3 3 4    -1/3-16*w_4_1
3 4 1   0 1 0 2 2 3 3 5    -1/6
3 4 1   0 1 0 5 2 3 2 3    1/3+16*w_4_9
3 4 1   0 1 0 2 2 6 3 5    -1/6
3 4 1   0 1 0 5 2 4 2 3    1/9
3 4 1   0 1 0 2 2 4 3 5    -1/6-16*w_4_2
3 4 1   0 1 0 5 2 6 2 3    1/6+16*w_4_15
3 4 1   0 1 0 2 2 6 3 4    -1/6
3 4 1   0 1 0 5 2 3 2 4    1/6+16*w_4_20
3 4 1   0 2 0 5 1 2 4 5    -1/6
3 4 1   0 2 0 5 2 4 1 5    -1/6
3 4 1   0 2 0 5 2 4 1 3    -1/9
3 4 1   0 4 1 2 0 6 2 5    1/9
3 4 1   0 2 0 3 1 2 3 5    16*w_4_1
3 4 1   0 2 0 5 1 2 3 5    -1/3-16*w_4_1
\end{verbatim}
\end{tabular}
\end{table}

\begin{table}
\centerline{\textsc{Table}~\ref{TableAssoc4} (part 3).}\label{TableAssoc4-part3}
\vskip 1em
\begin{tabular}{p{0.5\textwidth} | p{0.5\textwidth}}
\renewcommand{\baselinestretch}{0.7}
\tiny\begin{verbatim}
3 4 1   0 2 0 3 1 2 4 5    -1/6-16*w_4_2
3 4 1   0 2 0 5 1 2 3 4    16*w_4_2
3 4 1   0 2 0 3 1 3 2 3    32*w_4_8
3 4 1   0 4 1 2 0 4 2 4    1/6+8*w_4_8
3 4 1   0 2 0 3 1 3 2 4    16*w_4_9+16*w_4_19
3 4 1   0 2 0 3 1 3 2 5    16*w_4_9+16*w_4_13
3 4 1   0 4 1 2 0 4 2 3    1/3+16*w_4_9
3 4 1   0 2 0 3 1 4 2 3    16*w_4_19+16*w_4_13
3 4 1   0 2 0 5 1 3 2 3    16*w_4_9+16*w_4_13
3 4 1   0 4 0 5 1 2 2 5    1/9+16*w_4_13
3 4 1   0 2 0 3 1 4 2 4    -16*w_4_26+16*w_4_14
3 4 1   0 2 0 5 1 3 2 5    32*w_4_14
3 4 1   0 4 0 5 1 2 2 4    16*w_4_14
3 4 1   0 2 0 3 1 4 2 5    16*w_4_15-16*w_4_27
3 4 1   0 2 0 5 1 3 2 4    16*w_4_15+16*w_4_20
3 4 1   0 4 0 5 1 2 2 3    1/6+16*w_4_15
3 4 1   0 2 0 3 1 6 2 3    16*w_4_19+16*w_4_13
3 4 1   0 2 0 5 2 3 1 3    16*w_4_9+16*w_4_19
3 4 1   0 4 1 2 0 6 2 4    1/3+16*w_4_19
3 4 1   0 2 0 3 1 6 2 4    16*w_4_20-16*w_4_27
3 4 1   0 2 0 5 2 6 1 3    16*w_4_15+16*w_4_20
3 4 1   0 4 1 2 0 6 2 3    1/6+16*w_4_20
3 4 1   0 2 0 5 2 3 1 5    -16*w_4_26+16*w_4_14
3 4 1   0 4 2 5 1 2 0 4    1/6-8*w_4_26
3 4 1   0 2 0 5 1 6 2 3    16*w_4_15-16*w_4_27
3 4 1   0 2 0 5 2 3 1 4    16*w_4_20-16*w_4_27
3 4 1   0 4 0 5 2 6 1 2    1/9-16*w_4_27
3 4 1   0 2 0 5 1 6 2 5    1/6
3 4 1   0 2 0 5 4 6 1 2    -1/6
3 4 1   0 2 0 5 3 6 1 2    -1/6
3 4 1   0 2 0 5 2 4 1 4    -1/6
3 4 1   0 1 0 2 2 4 3 4    16*w_4_1
3 4 1   0 1 0 2 2 3 4 5    -16*w_4_2
3 4 1   0 1 0 5 2 3 2 5    16*w_4_14
# 1 2 1 
3 4 1   0 1 1 3 2 3 3 4    1/6+16*w_4_10
3 4 1   0 1 1 3 2 4 3 4    1/6+16*w_4_16
3 4 1   0 1 1 3 2 6 3 4    1/6+16*w_4_21
3 4 1   0 4 1 3 1 3 2 3    -1/6-16*w_4_105+16*w_4_60
3 4 1   0 4 1 3 1 3 2 4    -1/6+16*w_4_61-16*w_4_84
3 4 1   0 4 1 3 1 3 2 5    -1/6-16*w_4_91+16*w_4_62
3 4 1   0 1 1 3 2 3 3 5    1/9+16*w_4_10
3 4 1   0 1 1 3 2 3 4 5    1/9
3 4 1   0 1 1 3 2 4 3 5    1/9+16*w_4_17
3 4 1   0 1 1 3 2 4 4 5    1/9+16*w_4_18
3 4 1   0 4 1 3 1 2 3 5    1/9-16*w_4_40
3 4 1   0 4 1 3 1 2 4 5    -1/18-16*w_4_40
3 4 1   0 4 1 3 1 6 2 3    1/18+16*w_4_63-16*w_4_91
3 4 1   0 4 1 3 1 6 2 4    1/18+16*w_4_63-16*w_4_74
3 4 1   0 1 1 3 2 6 3 5    1/18+16*w_4_22
3 4 1   0 1 1 3 2 6 4 5    1/18+16*w_4_23
3 4 1   0 1 1 5 2 3 3 5    -1/3+16*w_4_16
3 4 1   0 1 1 2 3 4 4 5    -1/6
3 4 1   0 1 1 5 2 3 4 5    -1/6-16*w_4_18
3 4 1   0 1 1 5 2 4 3 5    -1/9+16*w_4_40
3 4 1   0 1 1 5 2 6 3 5    -1/6+16*w_4_42
3 4 1   0 2 1 3 1 3 3 4    32*w_4_10
3 4 1   0 4 1 2 1 4 3 4    -1/9-16*w_4_10
3 4 1   0 4 1 2 1 4 4 5    1/6+16*w_4_10
3 4 1   0 2 1 3 1 4 3 4    32*w_4_16
3 4 1   0 4 1 2 1 3 3 4    1/3-16*w_4_16
3 4 1   0 4 1 5 1 2 4 5    -1/6-16*w_4_16
3 4 1   0 2 1 3 1 4 3 5    32*w_4_17
3 4 1   0 4 1 2 1 3 4 5    16*w_4_17
3 4 1   0 4 1 5 1 2 3 5    -1/9-16*w_4_17
3 4 1   0 2 1 3 1 4 4 5    32*w_4_18
3 4 1   0 4 1 2 1 3 3 5    1/6+16*w_4_18
3 4 1   0 4 1 5 1 2 3 4    -1/9-16*w_4_18
3 4 1   0 2 1 3 1 6 3 4    32*w_4_21
3 4 1   0 4 1 2 1 6 3 4    -16*w_4_21
3 4 1   0 4 5 6 1 2 1 5    1/6+16*w_4_21
3 4 1   0 2 1 3 1 6 3 5    32*w_4_22
3 4 1   0 4 1 2 1 6 4 5    16*w_4_22
3 4 1   0 4 3 5 1 2 1 5    -1/18-16*w_4_22
3 4 1   0 2 1 3 1 6 4 5    32*w_4_23
3 4 1   0 4 1 2 1 6 3 5    16*w_4_23
3 4 1   0 4 3 5 1 6 1 2    -1/18-16*w_4_23
3 4 1   0 2 1 5 1 4 3 4    32*w_4_40
3 4 1   0 2 1 5 1 6 3 4    32*w_4_41
3 4 1   0 4 1 5 3 6 1 2    -16*w_4_41
3 4 1   0 4 5 6 1 2 1 3    16*w_4_41
3 4 1   0 2 1 5 1 6 3 5    32*w_4_42
3 4 1   0 4 1 5 4 6 1 2    -16*w_4_42
3 4 1   0 4 3 5 1 2 1 3    1/6-16*w_4_42
\end{verbatim}
&
\renewcommand{\baselinestretch}{0.7}
\tiny\begin{verbatim}
3 4 1   0 2 1 5 3 4 1 5    32*w_4_46
3 4 1   0 4 3 5 1 2 1 4    -16*w_4_46
3 4 1   0 4 5 6 1 2 1 4    16*w_4_46
3 4 1   0 4 1 3 1 4 2 4    16*w_4_60-16*w_4_83
3 4 1   0 4 1 3 1 4 2 3    16*w_4_61-16*w_4_84
3 4 1   0 4 1 3 1 4 2 5    -16*w_4_74+16*w_4_62
3 4 1   0 4 1 5 1 4 2 3    -16*w_4_63+16*w_4_62
3 4 1   0 4 2 5 1 6 1 5    16*w_4_63-16*w_4_62
3 4 1   0 4 1 5 1 3 2 3    -16*w_4_76+16*w_4_71
3 4 1   0 4 1 5 1 3 2 5    16*w_4_71-16*w_4_75
3 4 1   0 4 1 5 1 3 2 4    -16*w_4_73+16*w_4_71
3 4 1   0 4 1 5 1 6 2 4    16*w_4_73-16*w_4_71
3 4 1   0 4 1 5 2 3 1 3    -16*w_4_76+16*w_4_73
3 4 1   0 4 2 5 1 3 1 5    16*w_4_73-16*w_4_75
3 4 1   0 4 1 5 1 6 2 5    16*w_4_74-16*w_4_62
3 4 1   0 4 1 5 2 4 1 3    -1/18-16*w_4_63+16*w_4_74
3 4 1   0 4 2 3 1 3 1 5    16*w_4_74-16*w_4_91
3 4 1   0 4 1 5 2 3 1 4    -16*w_4_73+16*w_4_75
3 4 1   0 4 2 5 1 3 1 3    -16*w_4_76+16*w_4_75
3 4 1   0 4 1 5 2 6 1 4    -16*w_4_71+16*w_4_75
3 4 1   0 4 1 5 2 3 1 5    16*w_4_76-16*w_4_75
3 4 1   0 4 2 5 1 3 1 4    16*w_4_76-16*w_4_73
3 4 1   0 4 2 5 1 6 1 4    16*w_4_76-16*w_4_71
3 4 1   0 4 1 5 2 4 1 4    -16*w_4_60+16*w_4_83
3 4 1   0 4 2 3 1 3 1 3    -8*w_4_105+8*w_4_83
3 4 1   0 4 1 5 2 4 1 5    1/6-16*w_4_61+16*w_4_84
3 4 1   0 4 2 5 1 4 1 5    -16*w_4_61+16*w_4_84
3 4 1   0 4 1 5 2 6 1 5    1/6+16*w_4_91-16*w_4_62
3 4 1   0 4 2 5 1 4 1 3    -1/18-16*w_4_63+16*w_4_91
3 4 1   0 4 2 3 1 4 1 5    -16*w_4_74+16*w_4_91
3 4 1   0 4 2 3 1 4 1 4    8*w_4_105-8*w_4_83
3 4 1   0 4 2 5 1 4 1 4    1/6+16*w_4_105-16*w_4_60
3 4 1   0 4 1 2 1 4 3 5    -1/9
3 4 1   0 1 1 5 2 4 3 4    1/18+16*w_4_40
3 4 1   0 1 1 2 3 4 3 5    -1/6
3 4 1   0 1 1 5 2 3 3 4    16*w_4_17
3 4 1   0 1 1 5 3 6 2 3    16*w_4_21
3 4 1   0 1 1 5 3 4 2 3    16*w_4_22
3 4 1   0 1 1 5 4 6 2 3    -16*w_4_23
3 4 1   0 1 1 5 2 6 3 4    16*w_4_41
3 4 1   0 1 1 5 3 6 2 4    16*w_4_41
3 4 1   0 1 1 5 3 4 2 4    16*w_4_42
3 4 1   0 1 1 5 3 4 2 5    16*w_4_46
3 4 1   0 1 1 5 3 6 2 5    16*w_4_46
# 2 1 1 
3 4 1   0 1 0 3 2 3 3 4    1/6-16*w_4_24
3 4 1   0 1 0 3 2 4 3 4    1/6-16*w_4_28
3 4 1   0 1 0 3 2 6 3 4    1/6-16*w_4_31
3 4 1   0 4 1 3 0 4 2 3    1/6-16*w_4_106+16*w_4_61
3 4 1   0 4 1 3 0 4 2 4    1/6+16*w_4_60-16*w_4_86
3 4 1   0 4 1 3 0 4 2 5    1/6-16*w_4_107+16*w_4_62
3 4 1   0 1 0 3 2 3 3 5    -1/9+16*w_4_24
3 4 1   0 1 0 3 2 3 4 5    -1/9-16*w_4_25
3 4 1   0 1 0 3 2 4 3 5    -1/9-16*w_4_29
3 4 1   0 1 0 3 2 4 4 5    -1/9-16*w_4_30
3 4 1   0 2 0 5 1 4 3 4    1/18+16*w_4_40-16*w_4_43
3 4 1   0 2 0 5 1 4 3 5    1/18+16*w_4_40+16*w_4_43
3 4 1   0 4 1 3 0 6 2 3    1/18+16*w_4_63-16*w_4_107
3 4 1   0 4 1 3 0 6 2 4    1/18+16*w_4_63-16*w_4_85
3 4 1   0 1 0 3 2 6 3 5    -1/18-16*w_4_32
3 4 1   0 1 0 3 2 6 4 5    -1/18-16*w_4_33
3 4 1   0 1 0 2 3 4 3 4    1/6-8*w_4_3
3 4 1   0 1 0 5 2 3 3 5    -1/3+16*w_4_28
3 4 1   0 1 0 2 3 4 4 5    -1/6+16*w_4_4
3 4 1   0 1 0 5 2 3 4 5    -1/6-16*w_4_30
3 4 1   0 1 0 5 2 4 3 5    -1/6+16*w_4_43
3 4 1   0 1 0 5 2 6 3 5    -1/6-16*w_4_45
3 4 1   0 2 0 3 1 3 3 4    -16*w_4_24+16*w_4_10
3 4 1   0 2 0 3 1 3 3 5    16*w_4_24+16*w_4_10
3 4 1   0 4 1 2 0 4 3 4    -1/3-16*w_4_10
3 4 1   0 2 0 3 1 4 3 4    -16*w_4_28+16*w_4_16
3 4 1   0 2 0 5 1 3 3 5    16*w_4_28+16*w_4_16
3 4 1   0 4 0 5 1 2 4 5    -16*w_4_16
3 4 1   0 2 0 3 1 4 3 5    -16*w_4_29+16*w_4_17
3 4 1   0 2 0 5 1 3 3 4    16*w_4_29+16*w_4_17
3 4 1   0 4 0 5 1 2 3 5    -1/6-16*w_4_17
3 4 1   0 2 0 3 1 4 4 5    16*w_4_18-16*w_4_30
3 4 1   0 2 0 5 1 3 4 5    -16*w_4_18-16*w_4_30
3 4 1   0 4 0 5 1 2 3 4    -1/6-16*w_4_18
3 4 1   0 2 0 3 1 6 3 4    16*w_4_21-16*w_4_31
3 4 1   0 2 0 5 3 6 1 3    16*w_4_21+16*w_4_31
3 4 1   0 4 1 2 0 6 3 4    -1/6-16*w_4_21
3 4 1   0 2 0 3 1 6 3 5    16*w_4_22-16*w_4_32
3 4 1   0 2 0 5 3 4 1 3    16*w_4_22+16*w_4_32
3 4 1   0 4 1 2 0 6 4 5    1/6+16*w_4_22
\end{verbatim}
\end{tabular}
\end{table}

\begin{table}
\centerline{\textsc{Table}~\ref{TableAssoc4} (part 4).}\label{TableAssoc4-part4}
\vskip 1em
\begin{tabular}{p{0.5\textwidth} | p{0.5\textwidth}}
\renewcommand{\baselinestretch}{0.7}
\tiny\begin{verbatim}
3 4 1   0 2 0 3 1 6 4 5    16*w_4_23-16*w_4_33
3 4 1   0 2 0 5 4 6 1 3    -16*w_4_23-16*w_4_33
3 4 1   0 4 1 2 0 6 3 5    16*w_4_23
3 4 1   0 4 0 3 1 2 3 5    -16*w_4_40
3 4 1   0 2 0 5 1 6 3 4    -16*w_4_44+16*w_4_41
3 4 1   0 2 0 5 3 6 1 4    16*w_4_44+16*w_4_41
3 4 1   0 4 0 5 3 6 1 2    -16*w_4_41
3 4 1   0 2 0 5 1 6 3 5    -16*w_4_45+16*w_4_42
3 4 1   0 2 0 5 3 4 1 4    16*w_4_45+16*w_4_42
3 4 1   0 4 0 5 4 6 1 2    -16*w_4_42
3 4 1   0 2 0 5 3 4 1 5    -16*w_4_47+16*w_4_46
3 4 1   0 2 0 5 3 6 1 5    16*w_4_47+16*w_4_46
3 4 1   0 4 3 5 1 2 0 4    -1/6-16*w_4_46
3 4 1   0 4 0 3 1 3 2 3    16*w_4_60-16*w_4_64
3 4 1   0 4 0 5 1 4 2 4    16*w_4_60-16*w_4_86
3 4 1   0 4 0 3 1 3 2 4    16*w_4_61-16*w_4_65
3 4 1   0 4 0 5 1 4 2 5    16*w_4_61-16*w_4_87
3 4 1   0 4 0 3 1 3 2 5    -16*w_4_66+16*w_4_62
3 4 1   0 4 0 5 1 4 2 3    16*w_4_62-16*w_4_85
3 4 1   0 4 0 3 1 6 2 3    -16*w_4_66+16*w_4_63
3 4 1   0 4 0 5 1 3 2 3    -16*w_4_77+16*w_4_71
3 4 1   0 4 0 5 1 3 2 4    16*w_4_71-16*w_4_78
3 4 1   0 4 0 5 1 3 2 5    -16*w_4_79+16*w_4_71
3 4 1   0 4 0 5 1 6 2 3    -16*w_4_92+16*w_4_72
3 4 1   0 4 1 5 0 6 2 3    -16*w_4_118+16*w_4_72
3 4 1   0 4 0 5 2 6 1 3    -16*w_4_92+16*w_4_72
3 4 1   0 4 0 5 1 6 2 4    -16*w_4_79+16*w_4_73
3 4 1   0 4 1 5 0 6 2 4    16*w_4_73-16*w_4_78
3 4 1   0 4 0 5 2 3 1 3    16*w_4_73-16*w_4_77
3 4 1   0 4 0 5 1 6 2 5    1/18-16*w_4_66+16*w_4_74
3 4 1   0 4 1 5 0 6 2 5    16*w_4_74-16*w_4_107
3 4 1   0 4 0 5 2 4 1 3    16*w_4_74-16*w_4_85
3 4 1   0 4 0 5 2 3 1 4    -16*w_4_78+16*w_4_75
3 4 1   0 4 1 5 2 3 0 4    -16*w_4_77+16*w_4_75
3 4 1   0 4 0 5 2 6 1 4    -16*w_4_79+16*w_4_75
3 4 1   0 4 0 5 2 3 1 5    -16*w_4_79+16*w_4_76
3 4 1   0 4 1 5 2 3 0 5    16*w_4_76-16*w_4_78
3 4 1   0 4 2 5 1 3 0 4    16*w_4_76-16*w_4_77
3 4 1   0 4 0 5 2 4 1 4    -16*w_4_86+16*w_4_83
3 4 1   0 4 1 5 2 4 0 4    1/12+8*w_4_83-8*w_4_64
3 4 1   0 4 0 5 2 4 1 5    -16*w_4_87+16*w_4_84
3 4 1   0 4 1 5 2 4 0 5    1/6+16*w_4_84-16*w_4_65
3 4 1   0 4 2 3 0 4 1 3    -16*w_4_106+16*w_4_84
3 4 1   0 4 0 5 2 6 1 5    1/18-16*w_4_66+16*w_4_91
3 4 1   0 4 1 5 2 6 0 5    16*w_4_91-16*w_4_85
3 4 1   0 4 2 3 0 4 1 5    -16*w_4_107+16*w_4_91
3 4 1   0 4 2 3 0 4 1 4    16*w_4_105-16*w_4_86
3 4 1   0 4 2 5 1 4 0 4    1/12+8*w_4_105-8*w_4_64
3 4 1   0 1 0 2 3 4 3 5    -16*w_4_4
3 4 1   0 1 0 2 3 6 4 5    -16*w_4_5
3 4 1   0 2 0 3 1 3 4 5    -16*w_4_25
3 4 1   0 1 0 5 2 3 3 4    16*w_4_29
3 4 1   0 1 0 5 3 6 2 3    16*w_4_31
3 4 1   0 1 0 5 3 4 2 3    16*w_4_32
3 4 1   0 1 0 5 4 6 2 3    -16*w_4_33
3 4 1   0 1 0 5 2 4 3 4    -16*w_4_43
3 4 1   0 1 0 5 2 6 3 4    -16*w_4_44
3 4 1   0 1 0 5 3 6 2 4    16*w_4_44
3 4 1   0 1 0 5 3 4 2 4    16*w_4_45
3 4 1   0 1 0 5 3 4 2 5    -16*w_4_47
3 4 1   0 1 0 5 3 6 2 5    16*w_4_47
# 1 1 2 
3 4 1   0 4 1 3 2 3 2 3    -1/12-8*w_4_105+8*w_4_64
3 4 1   0 4 1 3 2 3 2 4    -1/6-16*w_4_84+16*w_4_65
3 4 1   0 4 1 3 2 4 2 4    -1/12-8*w_4_83+8*w_4_64
3 4 1   0 4 1 3 2 3 2 5    -1/18+16*w_4_66-16*w_4_91
3 4 1   0 4 1 3 2 4 2 5    -1/18+16*w_4_66-16*w_4_74
3 4 1   0 1 2 3 2 3 3 4    1/3+16*w_4_10
3 4 1   0 1 2 3 2 6 3 5    1/6+16*w_4_22
3 4 1   0 1 2 3 2 4 4 5    1/6+16*w_4_18
3 4 1   0 1 2 3 2 4 3 5    1/6+16*w_4_17
3 4 1   0 1 2 3 2 6 3 4    1/6+16*w_4_21
3 4 1   0 1 2 5 3 4 2 5    1/6+16*w_4_46
3 4 1   0 2 1 2 3 4 3 4    -1/6+8*w_4_3
3 4 1   0 2 1 2 3 4 3 5    -1/6+16*w_4_4
3 4 1   0 2 1 2 3 4 4 5    -16*w_4_4
3 4 1   0 2 1 2 3 6 4 5    16*w_4_5
3 4 1   0 2 1 3 2 3 3 4    16*w_4_24+16*w_4_10
3 4 1   0 4 1 2 2 4 3 4    1/9-16*w_4_24
3 4 1   0 2 1 3 2 3 3 5    -16*w_4_24+16*w_4_10
3 4 1   0 4 1 2 2 4 4 5    1/6-16*w_4_24
3 4 1   0 2 1 3 2 3 4 5    16*w_4_25
3 4 1   0 4 1 2 2 4 3 5    1/9+16*w_4_25
3 4 1   0 2 1 3 2 4 3 4    16*w_4_28+16*w_4_16
3 4 1   0 4 1 2 2 3 3 4    1/3-16*w_4_28
\end{verbatim}
&
\renewcommand{\baselinestretch}{0.7}
\tiny\begin{verbatim}
3 4 1   0 2 1 5 2 3 3 5    -16*w_4_28+16*w_4_16
3 4 1   0 4 2 5 1 2 4 5    -1/6+16*w_4_28
3 4 1   0 2 1 3 2 4 3 5    16*w_4_29+16*w_4_17
3 4 1   0 4 1 2 2 3 4 5    16*w_4_29
3 4 1   0 2 1 5 2 3 3 4    -16*w_4_29+16*w_4_17
3 4 1   0 4 2 5 1 2 3 5    1/9+16*w_4_29
3 4 1   0 2 1 3 2 4 4 5    16*w_4_18+16*w_4_30
3 4 1   0 4 1 2 2 3 3 5    1/6+16*w_4_30
3 4 1   0 2 1 5 2 3 4 5    -16*w_4_18+16*w_4_30
3 4 1   0 4 2 5 1 2 3 4    1/9+16*w_4_30
3 4 1   0 2 1 3 2 6 3 4    16*w_4_21+16*w_4_31
3 4 1   0 4 1 2 2 6 3 4    -16*w_4_31
3 4 1   0 2 1 5 3 6 2 3    16*w_4_21-16*w_4_31
3 4 1   0 4 5 6 1 2 2 5    1/6-16*w_4_31
3 4 1   0 2 1 3 2 6 3 5    16*w_4_22+16*w_4_32
3 4 1   0 4 1 2 2 6 4 5    16*w_4_32
3 4 1   0 2 1 5 3 4 2 3    16*w_4_22-16*w_4_32
3 4 1   0 4 3 5 1 2 2 5    1/18+16*w_4_32
3 4 1   0 2 1 3 2 6 4 5    16*w_4_23+16*w_4_33
3 4 1   0 4 1 2 2 6 3 5    16*w_4_33
3 4 1   0 2 1 5 4 6 2 3    -16*w_4_23+16*w_4_33
3 4 1   0 4 3 5 2 6 1 2    1/18+16*w_4_33
3 4 1   0 2 1 5 2 4 3 4    1/18+16*w_4_40+16*w_4_43
3 4 1   0 4 2 3 1 2 3 5    1/6-16*w_4_43
3 4 1   0 2 1 5 2 4 3 5    1/18+16*w_4_40-16*w_4_43
3 4 1   0 4 2 3 1 2 4 5    16*w_4_43
3 4 1   0 2 1 5 2 6 3 4    16*w_4_44+16*w_4_41
3 4 1   0 4 2 5 3 6 1 2    -16*w_4_44
3 4 1   0 2 1 5 3 6 2 4    -16*w_4_44+16*w_4_41
3 4 1   0 4 5 6 1 2 2 3    -16*w_4_44
3 4 1   0 2 1 5 2 6 3 5    16*w_4_45+16*w_4_42
3 4 1   0 4 2 5 4 6 1 2    -16*w_4_45
3 4 1   0 2 1 5 3 4 2 4    -16*w_4_45+16*w_4_42
3 4 1   0 4 3 5 1 2 2 3    1/6+16*w_4_45
3 4 1   0 2 1 5 3 4 2 5    16*w_4_47+16*w_4_46
3 4 1   0 4 3 5 1 2 2 4    -16*w_4_47
3 4 1   0 2 1 5 3 6 2 5    -16*w_4_47+16*w_4_46
3 4 1   0 4 5 6 1 2 2 4    -16*w_4_47
3 4 1   0 4 2 5 2 4 1 4    -16*w_4_60+16*w_4_64
3 4 1   0 4 2 5 2 4 1 5    -16*w_4_61+16*w_4_65
3 4 1   0 4 1 5 2 6 2 5    16*w_4_66-16*w_4_62
3 4 1   0 4 2 5 2 4 1 3    16*w_4_66-16*w_4_63
3 4 1   0 4 1 5 2 3 2 3    -16*w_4_76+16*w_4_77
3 4 1   0 4 2 5 1 3 2 5    16*w_4_77-16*w_4_75
3 4 1   0 4 2 5 2 3 1 4    -16*w_4_73+16*w_4_77
3 4 1   0 4 2 5 2 6 1 4    16*w_4_77-16*w_4_71
3 4 1   0 4 1 5 2 3 2 4    -16*w_4_73+16*w_4_78
3 4 1   0 4 2 5 1 3 2 3    -16*w_4_76+16*w_4_78
3 4 1   0 4 2 5 1 6 2 4    -16*w_4_71+16*w_4_78
3 4 1   0 4 2 5 2 3 1 5    16*w_4_78-16*w_4_75
3 4 1   0 4 1 5 2 3 2 5    16*w_4_79-16*w_4_75
3 4 1   0 4 2 5 1 3 2 4    16*w_4_79-16*w_4_73
3 4 1   0 4 1 5 2 6 2 4    16*w_4_79-16*w_4_71
3 4 1   0 4 2 5 2 3 1 3    16*w_4_79-16*w_4_76
3 4 1   0 4 1 5 2 4 2 3    -1/18-16*w_4_63+16*w_4_85
3 4 1   0 4 2 3 1 3 2 5    -16*w_4_91+16*w_4_85
3 4 1   0 4 2 3 1 6 2 4    -16*w_4_74+16*w_4_85
3 4 1   0 4 2 5 2 6 1 5    -16*w_4_62+16*w_4_85
3 4 1   0 4 1 5 2 4 2 4    -1/6-16*w_4_60+16*w_4_86
3 4 1   0 4 2 3 1 3 2 3    -16*w_4_105+16*w_4_86
3 4 1   0 4 2 3 1 4 2 4    16*w_4_86-16*w_4_83
3 4 1   0 4 2 5 1 4 2 4    -16*w_4_60+16*w_4_86
3 4 1   0 4 1 5 2 4 2 5    -16*w_4_61+16*w_4_87
3 4 1   0 4 2 3 1 3 2 4    16*w_4_87-16*w_4_84
3 4 1   0 4 1 5 2 6 2 3    16*w_4_92-16*w_4_72
3 4 1   0 4 2 5 2 6 1 3    16*w_4_92-16*w_4_72
3 4 1   0 4 2 3 1 4 2 3    16*w_4_106-16*w_4_84
3 4 1   0 4 2 5 1 4 2 5    -1/6+16*w_4_106-16*w_4_61
3 4 1   0 4 2 3 1 4 2 5    -16*w_4_74+16*w_4_107
3 4 1   0 4 2 5 1 4 2 3    -1/18-16*w_4_63+16*w_4_107
3 4 1   0 4 2 3 1 6 2 3    16*w_4_107-16*w_4_91
3 4 1   0 4 2 5 1 6 2 5    -1/6+16*w_4_107-16*w_4_62
3 4 1   0 4 2 5 1 6 2 3    16*w_4_118-16*w_4_72
3 4 1   0 1 2 3 2 4 3 4    16*w_4_16
3 4 1   0 1 2 3 2 6 4 5    16*w_4_23
3 4 1   0 1 2 5 2 4 3 4    16*w_4_40
3 4 1   0 1 2 5 2 6 3 4    16*w_4_41
3 4 1   0 1 2 5 2 6 3 5    16*w_4_42
# 1 1 1 
3 4 1   0 4 1 3 2 3 3 4    1/6+16*w_4_108+16*w_4_67
3 4 1   0 4 1 3 2 4 3 4    1/6-16*w_4_67+16*w_4_90
3 4 1   0 4 1 3 2 6 3 4    1/6+16*w_4_111
3 4 1   0 4 1 3 2 3 3 5    1/18-16*w_4_110+16*w_4_68
3 4 1   0 4 1 3 2 3 4 5    1/18-16*w_4_109+16*w_4_69
3 4 1   0 4 1 3 2 4 3 5    1/18+16*w_4_89+16*w_4_69
3 4 1   0 4 1 3 2 4 4 5    1/18+16*w_4_68+16*w_4_88
\end{verbatim}
\end{tabular}
\end{table}

\begin{table}
\centerline{\textsc{Table}~\ref{TableAssoc4} (part 5).}\label{TableAssoc4-part5}
\vskip 1em
\begin{tabular}{p{0.5\textwidth} | p{0.5\textwidth}}
\renewcommand{\baselinestretch}{0.7}
\tiny\begin{verbatim}
3 4 1   0 4 1 3 2 6 3 5    1/36+16*w_4_70-16*w_4_113
3 4 1   0 4 1 3 2 6 4 5    1/36-16*w_4_112+16*w_4_70
3 4 1   0 1 2 3 3 4 3 4    -1/6+8*w_4_34
3 4 1   0 1 2 3 3 4 4 5    1/6+16*w_4_36
3 4 1   0 1 2 5 3 4 3 4    -1/6+16*w_4_48
3 4 1   0 1 2 5 3 4 4 5    1/6+16*w_4_50
3 4 1   0 2 1 3 3 4 3 4    16*w_4_34
3 4 1   0 4 1 2 3 4 3 4    -1/6+8*w_4_34
3 4 1   0 2 1 3 3 4 3 5    32*w_4_35
3 4 1   0 4 1 2 3 4 4 5    -16*w_4_35
3 4 1   0 2 1 3 3 4 4 5    32*w_4_36
3 4 1   0 4 1 2 3 4 3 5    -1/6-16*w_4_36
3 4 1   0 2 1 3 3 6 3 5    16*w_4_37
3 4 1   0 4 1 2 4 6 4 5    8*w_4_37
3 4 1   0 2 1 3 3 6 4 5    32*w_4_38
3 4 1   0 4 1 2 3 6 4 5    16*w_4_38
3 4 1   0 2 1 3 4 6 4 5    16*w_4_39
3 4 1   0 4 1 2 3 6 3 5    8*w_4_39
3 4 1   0 2 1 5 3 4 3 4    32*w_4_48
3 4 1   0 4 3 5 1 2 3 5    -1/6+16*w_4_48
3 4 1   0 2 1 5 3 4 3 5    32*w_4_49
3 4 1   0 4 3 5 1 2 4 5    16*w_4_49
3 4 1   0 2 1 5 3 4 4 5    32*w_4_50
3 4 1   0 4 3 5 1 2 3 4    -1/6-16*w_4_50
3 4 1   0 2 1 5 3 6 3 4    32*w_4_51
3 4 1   0 4 5 6 1 2 3 5    -16*w_4_51
3 4 1   0 2 1 5 3 6 3 5    32*w_4_52
3 4 1   0 4 5 6 1 2 4 5    -16*w_4_52
3 4 1   0 2 1 5 3 6 4 5    32*w_4_53
3 4 1   0 4 5 6 1 2 3 4    16*w_4_53
3 4 1   0 2 1 5 4 6 3 4    32*w_4_54
3 4 1   0 4 3 5 3 6 1 2    -16*w_4_54
3 4 1   0 2 1 5 4 6 3 5    32*w_4_55
3 4 1   0 4 3 5 4 6 1 2    -16*w_4_55
3 4 1   0 4 1 5 2 3 3 4    16*w_4_80+16*w_4_81
3 4 1   0 4 2 5 1 3 3 5    -16*w_4_80+16*w_4_82
3 4 1   0 4 1 5 2 3 3 5    16*w_4_81+16*w_4_82
3 4 1   0 4 2 5 1 3 4 5    -16*w_4_80-16*w_4_81
3 4 1   0 4 1 5 2 3 4 5    -16*w_4_80+16*w_4_82
3 4 1   0 4 2 5 1 3 3 4    16*w_4_81+16*w_4_82
3 4 1   0 4 1 5 2 4 3 4    1/18+16*w_4_68+16*w_4_88
3 4 1   0 4 2 3 1 3 3 5    -16*w_4_110-16*w_4_88
3 4 1   0 4 1 5 2 4 3 5    1/18+16*w_4_89+16*w_4_69
3 4 1   0 4 2 3 1 3 4 5    -16*w_4_89-16*w_4_109
3 4 1   0 4 1 5 2 4 4 5    1/6-16*w_4_67+16*w_4_90
3 4 1   0 4 2 3 1 3 3 4    16*w_4_108+16*w_4_90
3 4 1   0 4 1 5 2 6 3 4    16*w_4_99+16*w_4_93
3 4 1   0 4 2 5 3 6 1 3    -16*w_4_93+16*w_4_119
3 4 1   0 4 1 5 2 6 3 5    16*w_4_102+16*w_4_94
3 4 1   0 4 2 5 4 6 1 3    -16*w_4_94+16*w_4_120
3 4 1   0 4 1 5 2 6 4 5    16*w_4_95-16*w_4_96
3 4 1   0 4 2 5 3 4 1 3    16*w_4_95-16*w_4_121
3 4 1   0 4 1 5 3 4 2 3    -16*w_4_121+16*w_4_96
3 4 1   0 4 3 5 1 3 2 5    16*w_4_95-16*w_4_96
3 4 1   0 4 1 5 3 4 2 4    16*w_4_103+16*w_4_97
3 4 1   0 4 3 5 1 3 2 3    16*w_4_122-16*w_4_97
3 4 1   0 4 1 5 3 4 2 5    16*w_4_98-16*w_4_124
3 4 1   0 4 3 5 1 3 2 4    16*w_4_123-16*w_4_98
3 4 1   0 4 1 5 3 6 2 3    16*w_4_99+16*w_4_119
3 4 1   0 4 5 6 1 3 2 5    16*w_4_99+16*w_4_93
3 4 1   0 4 1 5 3 6 2 4    32*w_4_100
3 4 1   0 4 5 6 1 3 2 3    16*w_4_100
3 4 1   0 4 1 5 3 6 2 5    -16*w_4_129+16*w_4_101
3 4 1   0 4 5 6 1 3 2 4    -16*w_4_128+16*w_4_101
3 4 1   0 4 1 5 4 6 2 3    16*w_4_102+16*w_4_120
3 4 1   0 4 3 5 2 6 1 3    16*w_4_102+16*w_4_94
3 4 1   0 4 1 5 4 6 2 4    16*w_4_103+16*w_4_97
3 4 1   0 4 3 5 2 3 1 3    16*w_4_122+16*w_4_103
3 4 1   0 4 1 5 4 6 2 5    -16*w_4_134+16*w_4_104
3 4 1   0 4 3 5 2 4 1 3    -16*w_4_133+16*w_4_104
3 4 1   0 4 2 3 1 4 3 4    16*w_4_108+16*w_4_90
3 4 1   0 4 2 5 1 4 4 5    -1/6-16*w_4_108-16*w_4_67
3 4 1   0 4 2 3 1 4 3 5    16*w_4_89+16*w_4_109
3 4 1   0 4 2 5 1 4 3 5    1/18-16*w_4_109+16*w_4_69
3 4 1   0 4 2 3 1 4 4 5    16*w_4_110+16*w_4_88
3 4 1   0 4 2 5 1 4 3 4    1/18-16*w_4_110+16*w_4_68
3 4 1   0 4 2 3 1 6 3 4    32*w_4_111
3 4 1   0 4 5 6 1 6 2 5    1/6+16*w_4_111
3 4 1   0 4 2 3 1 6 3 5    16*w_4_112-16*w_4_113
3 4 1   0 4 3 5 1 6 2 5    1/36-16*w_4_112+16*w_4_70
3 4 1   0 4 2 3 1 6 4 5    -16*w_4_112+16*w_4_113
3 4 1   0 4 3 5 2 6 1 5    1/36+16*w_4_70-16*w_4_113
3 4 1   0 4 2 5 1 6 3 4    16*w_4_99+16*w_4_119
3 4 1   0 4 5 6 1 6 2 3    -16*w_4_93+16*w_4_119
3 4 1   0 4 2 5 1 6 3 5    16*w_4_102+16*w_4_120
\end{verbatim}
&
\renewcommand{\baselinestretch}{0.7}
\tiny\begin{verbatim}
3 4 1   0 4 3 5 1 6 2 3    16*w_4_94-16*w_4_120
3 4 1   0 4 2 5 1 6 4 5    16*w_4_121-16*w_4_96
3 4 1   0 4 3 5 2 3 1 5    16*w_4_95-16*w_4_121
3 4 1   0 4 2 5 3 4 1 4    16*w_4_122+16*w_4_103
3 4 1   0 4 2 5 4 6 1 4    -16*w_4_122+16*w_4_97
3 4 1   0 4 2 5 3 4 1 5    16*w_4_123-16*w_4_124
3 4 1   0 4 5 6 1 6 2 4    16*w_4_123-16*w_4_98
3 4 1   0 4 3 5 2 3 1 4    16*w_4_123-16*w_4_124
3 4 1   0 4 5 6 1 4 2 5    16*w_4_98-16*w_4_124
3 4 1   0 4 2 5 3 6 1 5    16*w_4_128-16*w_4_129
3 4 1   0 4 3 5 1 6 2 4    -16*w_4_128+16*w_4_101
3 4 1   0 4 3 5 2 6 1 4    -16*w_4_129+16*w_4_101
3 4 1   0 4 5 6 1 4 2 3    16*w_4_128-16*w_4_129
3 4 1   0 4 2 5 4 6 1 5    -16*w_4_134+16*w_4_133
3 4 1   0 4 3 5 2 4 1 5    -16*w_4_133+16*w_4_104
3 4 1   0 4 3 5 1 4 2 3    16*w_4_134-16*w_4_133
3 4 1   0 4 3 5 1 4 2 5    -16*w_4_134+16*w_4_104
3 4 1   0 4 3 5 2 4 1 4    32*w_4_138
3 4 1   0 4 5 6 1 4 2 4    16*w_4_138
3 4 1   0 1 2 3 3 4 3 5    16*w_4_35
3 4 1   0 1 2 3 3 6 3 5    8*w_4_37
3 4 1   0 1 2 3 3 6 4 5    16*w_4_38
3 4 1   0 1 2 3 4 6 4 5    8*w_4_39
3 4 1   0 1 2 5 3 4 3 5    16*w_4_49
3 4 1   0 1 2 5 3 6 3 4    16*w_4_51
3 4 1   0 1 2 5 3 6 3 5    16*w_4_52
3 4 1   0 1 2 5 3 6 4 5    16*w_4_53
3 4 1   0 1 2 5 4 6 3 4    16*w_4_54
3 4 1   0 1 2 5 4 6 3 5    16*w_4_55
3 4 1   0 4 2 5 3 6 1 4    16*w_4_100
3 4 1   0 4 3 5 1 4 2 4    16*w_4_138
\end{verbatim}
\end{tabular}
\end{table}

\noindent
The table below contains the output of the command
\begin{verbatim}
    $ reduce_mod_jacobi assoc4_intermsof10_part100.txt
\end{verbatim}
as described in Implementation \ref{ImplReduceModJacobi}.

\begin{table}[h!]
\caption{Sample output of {\tt reduce\symbol{"5F}mod\symbol{"5F}jacobi}.}
\label{TblReduceModJacobi}
\begin{tabular}{p{0.5\textwidth} | p{0.5\textwidth}}
\renewcommand{\baselinestretch}{0.8}
\tiny\begin{verbatim}
3 4 1   0 1 0 3 2 6 3 4    -24+c_1_1221_211
3 4 1   0 4 1 3 2 6 3 4    -8+c_1_1240_111
3 4 1   0 1 0 3 2 3 4 5    -48+c_1_1221_211 
                              -c_1_513_211
3 4 1   0 1 0 3 2 4 3 5    24-c_1_1221_211
3 4 1   0 4 1 3 2 4 3 5    24-c_1_1240_111 
                              -c_1_540_111
3 4 1   0 1 2 3 3 4 4 5    -8-c_1_1228_111
3 4 1   0 1 2 5 3 4 3 4    -8-c_1_1228_111
3 4 1   0 2 0 3 1 4 3 5    24-c_1_1005_211
3 4 1   0 2 0 5 1 3 3 4    -24-c_1_516_211
3 4 1   0 2 0 3 1 6 3 4    -24+c_1_1005_211
3 4 1   0 2 0 5 3 6 1 3    24+c_1_516_211
3 4 1   0 2 1 3 2 3 4 5    48+c_1_1230_112
                              -c_1_1008_112
3 4 1   0 4 1 2 2 4 3 5    48-c_1_525_112
                              -c_1_1239_112
3 4 1   0 2 1 3 2 4 3 5    -24-c_1_1230_112
3 4 1   0 4 1 2 2 3 4 5    -24+c_1_1239_112
3 4 1   0 2 1 5 2 3 3 4    24-c_1_1008_112
3 4 1   0 4 2 5 1 2 3 5    -24+c_1_525_112
3 4 1   0 2 1 3 2 6 3 4    24+c_1_1230_112
3 4 1   0 4 1 2 2 6 3 4    -24+c_1_1239_112
3 4 1   0 2 1 5 3 6 2 3    -24+c_1_1008_112
3 4 1   0 4 5 6 1 2 2 5    -24+c_1_525_112
3 4 1   0 2 1 3 3 4 4 5    -16-c_1_1012_111
3 4 1   0 4 1 2 3 4 3 5    8+c_1_529_111
3 4 1   0 2 1 3 3 6 4 5    -16-c_1_1012_111
3 4 1   0 4 1 2 3 6 4 5    -8-c_1_529_111
3 4 1   0 2 1 5 3 4 3 4    -16-c_1_1012_111
3 4 1   0 4 3 5 1 2 3 5    -8-c_1_529_111
3 4 1   0 2 1 5 3 6 3 5    16+c_1_1012_111
3 4 1   0 4 5 6 1 2 4 5    -8-c_1_529_111
3 4 1   0 4 1 5 2 3 3 4    8-c_1_1023_111
3 4 1   0 4 2 5 1 3 3 5    -16+c_1_540_111
3 4 1   0 4 1 5 2 3 3 5    -8-c_1_538_111
3 4 1   0 4 2 5 1 3 4 5    -8+c_1_1021_111
3 4 1   0 4 1 5 2 3 4 5    -16+c_1_1242_111
3 4 1   0 4 2 5 1 3 3 4    -8-c_1_1245_111
3 4 1   0 4 1 5 2 4 3 5    24-c_1_536_111
                             -c_1_1242_111
3 4 1   0 4 2 3 1 3 4 5    -24-c_1_1245_111
                              +c_1_1019_111
3 4 1   0 4 1 5 2 6 3 4    -16+c_1_1242_111
3 4 1   0 4 2 5 3 6 1 3    8+c_1_1245_111
3 4 1   0 4 1 5 2 6 3 5    -8-c_1_538_111
3 4 1   0 4 2 5 4 6 1 3    -8+c_1_1021_111
3 4 1   0 4 1 5 3 4 2 5    -16+c_1_1242_111
3 4 1   0 4 3 5 1 3 2 4    8+c_1_1245_111
\end{verbatim}
&
\renewcommand{\baselinestretch}{0.8}
\tiny\begin{verbatim}
3 4 1   0 4 1 5 3 6 2 3    -8+c_1_1023_111
3 4 1   0 4 5 6 1 3 2 5    -16+c_1_540_111
3 4 1   0 4 1 5 3 6 2 4    32-c_1_1242_111
                             -c_1_540_111
3 4 1   0 4 5 6 1 3 2 3    16-c_1_1023_111
                             +c_1_1245_111
3 4 1   0 4 1 5 4 6 2 3    -16+c_1_1242_111
3 4 1   0 4 3 5 2 6 1 3    -8-c_1_1245_111
3 4 1   0 4 2 3 1 4 3 5    24+c_1_538_111
                            -c_1_1019_111
3 4 1   0 4 2 3 1 6 3 4    -16+c_1_1019_111
3 4 1   0 4 5 6 1 6 2 5    -8+c_1_536_111
3 4 1   0 4 2 5 1 6 3 4    -8+c_1_1021_111
3 4 1   0 4 5 6 1 6 2 3    8+c_1_538_111
3 4 1   0 4 2 5 1 6 3 5    -16+c_1_540_111
3 4 1   0 4 3 5 1 6 2 3    8-c_1_1023_111
3 4 1   0 4 2 5 3 4 1 5    -8+c_1_1021_111
3 4 1   0 4 5 6 1 6 2 4    8+c_1_538_111
3 4 1   0 4 3 5 2 3 1 4    -8+c_1_1023_111
3 4 1   0 4 5 6 1 4 2 5    -16+c_1_540_111
3 4 1   0 2 0 3 1 3 4 5    -48+c_1_1005_211-c_1_516_211
3 4 1   0 1 0 5 2 3 3 4    -24-c_1_513_211
3 4 1   0 1 0 5 3 6 2 3    24+c_1_513_211
3 4 1   0 1 2 3 3 6 4 5    -8-c_1_1228_111
3 4 1   0 1 2 5 3 6 3 5    8+c_1_1228_111
3 4 1   0 4 2 5 3 6 1 4    16+c_1_538_111-c_1_1021_111
3 4 1   0 4 1 3 2 3 4 5    -c_1_1023_111+c_1_1240_111
3 4 1   0 4 2 5 1 4 3 5    c_1_536_111-c_1_1021_111

3 4 1   0 1 2 3 0 3 5 4    c_1_513_211==-24
3 4 1   0 2 1 3 0 3 5 4    c_1_516_211==-24
3 4 1   1 2 2 3 0 3 5 4    c_1_525_112==24
3 4 1   1 2 3 5 0 3 5 4    c_1_529_111==-8
3 4 1   1 4 2 3 0 3 5 4    c_1_536_111==8
3 4 1   1 4 2 5 0 3 5 4    c_1_538_111==-8
3 4 1   1 5 2 3 0 3 5 4    c_1_540_111==16
3 4 1   0 2 0 3 1 3 5 4    c_1_1005_211==24
3 4 1   0 2 2 3 1 3 5 4    c_1_1008_112==24
3 4 1   0 2 3 5 1 3 5 4    c_1_1012_111==-16
3 4 1   0 4 2 3 1 3 5 4    c_1_1019_111==16
3 4 1   0 4 2 5 1 3 5 4    c_1_1021_111==8
3 4 1   0 5 2 3 1 3 5 4    c_1_1023_111==8
3 4 1   0 1 0 3 2 3 5 4    c_1_1221_211==24
3 4 1   0 1 3 5 2 3 5 4    c_1_1228_111==-8
3 4 1   0 2 1 3 2 3 5 4    c_1_1230_112==-24
3 4 1   0 4 1 2 2 3 5 4    c_1_1239_112==24
3 4 1   0 4 1 3 2 3 5 4    c_1_1240_111==8
3 4 1   0 4 1 5 2 3 5 4    c_1_1242_111==16
3 4 1   0 5 1 3 2 3 5 4    c_1_1245_111==-8
\end{verbatim}
\end{tabular}
\end{table}

\noindent
The first part of the output lists the graph series $S^{(1)}-\Diamond$, reduced modulo skew\/-\/symmetry, wherein the coefficients of $\Diamond$ are still undetermined.
The second part of the ouput (after the blank line) specifies the coefficients such that $S^{(1)}=\Diamond$.
Every coefficient in the second part is preceded by the encoding of the Leibniz graph that specifies a differential operator acting on the Jacobi identity.
Such a differential operator expands into a sum of graphs that can be read in the first part of the output.

\clearpage

\section{Gauge~transformation~that~removes $4$~master\/-\/parameters~out~of~$10$}
\label{AppGaugeEncoding}

Encodings of graphs (see Implementation \ref{DefEncoding} on p. \pageref{DefEncoding}) built over one sink vertex are followed by their coefficients, in the following table containing the gauge transformation which was claimed to exist in Theorem \ref{ThGauge}.

\begin{table}[h]
\caption{Gauge~transformation~that~removes~$4$~master\/-\/parameters~out~of~$10$.}
\label{TableGauge}
\renewcommand{\baselinestretch}{0.8}
\tiny\begin{verbatim}
h^0:
1 0 1                      1
h^4:
1 4 1   0 2 0 3 1 4 0 3    16*w_4_101
1 4 1   0 2 0 3 1 4 1 3    8*w_4_101
1 4 1   0 2 0 3 1 4 2 3    8*w_4_101
1 4 1   0 2 1 3 0 4 1 2    -8*w_4_101
1 4 1   0 2 1 3 0 4 2 3    8*w_4_101
1 4 1   0 2 1 3 1 4 0 2    -8*w_4_101
1 4 1   0 2 1 3 2 4 0 2    -8*w_4_101
1 4 1   0 2 0 3 0 4 1 3    -16*w_4_102
1 4 1   0 2 0 3 1 4 1 3    -8*w_4_102
1 4 1   0 2 0 3 2 4 1 2    -8*w_4_102
1 4 1   0 2 0 3 2 4 1 3    -16*w_4_102
1 4 1   0 2 1 3 0 4 1 2    -8*w_4_102
1 4 1   0 2 1 3 0 4 1 3    -8*w_4_102
1 4 1   0 2 0 3 0 4 1 2    16*w_4_119
1 4 1   0 2 0 3 1 4 1 2    16*w_4_119
1 4 1   0 2 0 3 1 4 1 3    8*w_4_119
1 4 1   0 2 0 3 2 4 1 2    8*w_4_119
1 4 1   0 2 1 3 0 4 1 2    8*w_4_119
1 4 1   0 2 3 4 0 4 1 2    -8*w_4_119
1 4 1   0 2 0 3 0 1 1 2    -32*w_4_125
1 4 1   0 2 0 3 1 2 1 2    16*w_4_125
1 4 1   0 2 0 3 1 2 1 3    -16*w_4_125
1 4 1   0 2 0 3 1 2 2 3    16*w_4_125
1 4 1   0 2 0 3 1 4 1 2    16*w_4_125
1 4 1   0 2 0 3 1 4 1 3    -16*w_4_125
1 4 1   0 2 0 3 1 4 2 3    16*w_4_125
\end{verbatim}
\end{table}


\begin{thebibliography}{77}\normalsize
\bibitem{AmmarChloupGuttUniversal3}
\by{Ammar M., Chloup V., Gutt S.} (2008) Universal Star Products,
\jour{Lett.\ Math.\ Phys.} \vol{84}:2--3, 199--215.

\bibitem{BanksPanzerPym}
\by{Banks P., Panzer E., Pym B.} (2018) Multiple zeta values in deformation quantization,
\jour{Preprint} \texttt{arXiv:1812.11649} \mbox{[math.QA]}.

\bibitem{GiNaC}
\by{Bauer C., Frink A., Kreckel R.} (2002) Introduction to the \texttt{GiNaC} Framework for Symbolic Computation within the \texttt{C++} Programming Language,
\jour{J.\ Symb.\ Comp.} \vol{33}, 1--12.
See also {\tt http://www.ginac.de}.

\bibitem{BFFLSI}
\by{Bayen F., Flato M., Fr{\o}nsdal C., Lichnerowicz A., Sternheimer D.} (1978) Deformation theory and quantization. I. Deformations of symplectic structures,
\jour{Ann.\ Phys.\ (N.\ Y.)} \vol{111}:1, 61--110.

\bibitem{BFFLSII}
\by{Bayen F., Flato M., Fr{\o}nsdal C., Lichnerowicz A., Sternheimer D.} (1978) Deformation theory and quantization. II. Physical applications,
\jour{Ann.\ Phys.\ (N.\ Y.)} \vol{111}:1, 111--151.

\bibitem{BenAmar}
\by{Ben Amar N.} (2007) A comparison between Rieffel's and Kontsevich's deformation quantizations for linear Poisson tensors,
\jour{Pac.\ J.\ Math.} \vol{229}:1, 1--24.

\bibitem{f16}
\by{Bouisaghouane A., Buring R., Kiselev A.\,V.} (2017) The Kontsevich tetrahedral flow revisited,
\jour{J.\ Geom.\ Phys.} \vol{119}, 272--285.
(\jour{Preprint} \texttt{arXiv:1608.01710} [q-alg]). 

\bibitem{tetra16}
\by{Bouisaghouane A., Kiselev A.\,V.} (2017)
Do the Kon\-tse\-vich tetrahedral flows preserve or destroy the space of Poisson bi\/-\/vectors\,?
\jour{J.~Phys.: Conf.\ Ser.} {\bf 804}, Paper~012008, 1--10.\ %
(\jour{Pre\-print} 
\texttt{arXiv:1609.06677} \mbox{[q-alg]})

\bibitem{kgs}
\by{Buring R.} Software package \texttt{kontsevich-graph-series-cpp}
, see link:\\
\texttt{https://github.com/rburing/kontsevich\symbol{"5F}graph\symbol{"5F}series-cpp}

\bibitem{OnlineList}
\by{Buring R.} List of integrands for the $149$ basic Kontsevich graphs at $\hbar^4$, see link:\\
\texttt{http://rburing.nl/kontsevich\symbol{"5F}graph\symbol{"5F}weight\symbol{"5F}integrands4.txt}

\bibitem{Kiev18}
\by{Buring R., Kiselev A.\,V.} (2019) Formality morphism as the mechanism of $\star$-product associativity: how it works.
\jour{Collected Works Inst.\ Math.\, Kiev} \vol{16}:1, 22--43.
(\jour{Preprint} \texttt{arXiv:1907.00639} [math.QA])

\bibitem{sqs15}
\by{Buring R., Kiselev A.\,V.} (2017) On the Kon\-tse\-vich $\star$-\/product asso\-ci\-a\-ti\-vi\-ty me\-cha\-n\-ism, \jour{
Physics of 
Particles and 
Nuclei Letters} \vol{14}:2, 403--407.\ (\jour{Preprint} \texttt{arXiv:1602.09036} [q-alg])

\bibitem{OrMorphism}
\by{Buring R., Kiselev A.\,V.} (2019) The 
orientation morphism: from graph cocycles to deformations of Poisson structures,
\jour{J.~Phys.\textup{:}\ Conf.\ Ser.} \vol{1194}
Proc.\ 32nd~Int.\ colloquium on Group\/-\/theoretical methods in Physics: \textsc{Group32} (9--13~July 2018, CVUT Prague, Czech Republic), Paper~012017, 10\,p.\ 
(\jour{Preprint} \texttt{arXiv:1811.07878} [math.CO])

\bibitem{Decin}
\by{Buring R., Kiselev A.\,V.} (2015) The table of weights for graphs with~$\leqslant 3$ internal vertices in Kontsevich's deformation quantization formula. 
(3rd~International workshop on
symmetries of discrete systems \& processes
, 3--7~August 2015, 
CVUT D\v{e}\v{c}\'\i{}n, Czech Republic), 
see Appendix~\ref{AppNumericalWeights} 
in this paper.

\bibitem{CF2000}
\by{Cattaneo A. S., Felder G.} (2000)
A~path integral approach to the Kontsevich quantization formula, 
\jour{Comm.\ Math.\ Phys.} \vol{212}:3, 591--611.

\bibitem{Dito}
\by{Dito G.} (1999)
Kontsevich star product on the dual of a Lie algebra,
\jour{Lett.\ Math.\ Phys.} \vol{4}, 291--306.

\bibitem{WillwacherFelderIrrationality}
\by{Felder G., Willwacher T.} (2010) On the (ir)rationality of Kontsevich weights,
\jour{Int.\ Math.\ Res.\ Not.} \vol{2010}:4, 701--716.

\bibitem{FelderShoikhet}
\by{Felder G., Shoikhet B.} (2000) Deformation quantization with traces,
\jour{Lett.\ Math.\ Phys.} \vol{53}, 75--86.

\bibitem{Gerstenhaber}
\by{Gerstenhaber M.} 
(1964) On the deformation of rings and algebras,
\jour{Ann. Math.}~
\vol{79}, 59--103.

\bibitem{Perelomov}
\by{Grabowski J., Marmo G., Perelomov A.\,M.} (1993)
Poisson structures: towards a classification,
\jour{Mod.\ 
Phys.\ Lett.} \vol{A8}:18, 1719--1733.

\bibitem{Kathotia}
\by{Kathotia V.} (1998) Kontsevich's universal formula for deformation quantization and the Campbell\/--\/Baker\/--\/Hausdorff formula,~I.
\jour{Preprint} \texttt{arXiv:9811174} (v2) \texttt{[math.QA]}

\bibitem{TwelveLect}
\by{Kiselev A.\,V.} (2012) The twelve lectures in the (non)\/commutative
geometry of 
dif\-fe\-ren\-ti\-al equations, 
\jour{Preprint} $\smash{\text{IH\'ES}}$/M/12/13 (Bures\/-\/sur\/-\/Yvette, 
France), 140\,p.

\bibitem{gvbv}
\by{Kiselev A. V.} (2013)
The geometry of variations in {B}atalin\/--\/{V}ilkovisky formalism,
\jour{J.~Phys.: Conf.\ Ser.} {\bf 474}, Paper~012024, 1--51.\ %
(\textit{Preprint} \texttt{arXiv:1312.1262} [math-ph])

\bibitem{sqs13}
\by{Kiselev A.\,V.} (2014) The Jacobi identity for graded\/-\/commutative variational Schouten bracket revisited,
\jour{Physics of 
Particles and 
Nuclei Letters} 
\vol{11}:7, 
950--953. 
(\jour{Preprint} \texttt{arXiv:1312.4140} [math-ph])

\bibitem{dq17}
\by{Kiselev A.\,V.} (2017) The deformation quantization mapping of
Poisson to associative structures in field theory,
\jour{Banach Center Publ.} \vol{113}
\textit{50th\:Sophus Lie Seminar}
219--242.
(\jour{Preprint} \texttt{arXiv:1705.01777} [q-alg])

\bibitem{prg15}
\by{Kiselev A.\,V.} (2016)
The right\/-\/hand side of the Jacobi identity: to be naught or not to be\,?
\jour{J.~Phys.\textup{:}\ Conf.\ Ser.} \vol{670} Proc.\ XXIII Int.\ conf.\
`Integrable Systems and Quantum Symmetries' (23--27 June 2015, CVUT Prague,
Czech Republic), Paper~012030, 1--17. (\jour{Preprint} \texttt{arXiv:1410.0173} 
[math-ph])

\bibitem{cycle16}
\by{Kiselev A.\,V.} (2017)
The calculus of multivectors on noncommutative jet spaces,
\jour{J.~Geom. Phys.} \vol{130}, 130--167.
(\jour{Pre\-print} \texttt{arXiv:1210.0726} [math.DG]) 

\bibitem{MK93}
\by{Kontsevich M.} (1993)
Formal (non)\/commutative symplectic geometry, 
\book{The Gel'fand Mathematical Seminars, 1990-1992}
(L.\,Corwin, I.\,Gelfand, and 
J.\,Le\-pow\-sky, eds),
Birk\-h\"au\-ser, Boston MA,~\mbox{173--187.}

\bibitem{MK94}
\by{Kontsevich M.} (1994)
Feynman diagrams and low\/-\/dimensional topology.
\book{First Europ.\ Congr.\ of Math.} \vol{2} (Paris, 1992), 
Progr.\ Math. \vol{120}, Birkh\"auser, Basel, 97--121. 

\bibitem{MKZurichICM}
\by{Kontsevich M.} (1995)
Homological algebra of mirror symmetry. 
\book{Proc.\ Intern. Congr. Math.}~\vol{1} 
(Z\"urich, 1994), 
Birkh\"auser, Basel, 120--139.

\bibitem{Ascona96}
\by{Kontsevich M.} (1997)
Formality conjecture. 
\book{Deformation theory and symplectic geometry} (Ascona 
1996, D.\,
Sternheimer, J.\,
Rawnsley and S.\,
Gutt, eds), 
Math.\ Phys.\ Stud.~\vol{20}, Kluwer Acad.\ Publ., Dordrecht, 139--156.

\bibitem{MK97}
\by{Kontsevich M.} (2003)
Deformation quantization of {P}oisson manifolds,
\jour{Lett.\ Math.\ Phys.}~\vol{66}:3, 157--216.
(\textit{Preprint} \texttt{q-alg/9709040})

\bibitem{VanhaeckePoisson}
\by{Laurent\/--\/Gengoux C.,\,
Picherau A.,\,
Vanhaecke P.\,
} (2013)
\book{Poisson structures.}
Gr\"und\-leh\-ren der mathematischen Wissenschaften~\vol{347},
Springer\/--\/Verlag, Berlin. 

\bibitem{PymPrivateComm}
\by{Panzer E., Pym B.} (2017)
Private communication. See also:\\
\texttt{https://www.mathematics.uni-bonn.de/veranstaltungskalender/17251}

\bibitem{PenkavaVanhaecke}
\by{Penkava M.,
Vanhaecke P.
} (2000)
Deformation Quantization of Polynomial Poisson Algebras,
\jour{J. Algebra}~\vol{227}:1, 365--393.
(\textit{Preprint} \texttt{arXiv:math/9804022})





\bibitem{Polyak}
\by{Polyak M.} (2003) Quantization of linear Poisson structures and degrees of maps,
\jour{Lett.\ Math.\ Phys.}~\vol{66}:1, 15--35.

\bibitem{Van}
\by{Vanhaecke P.} (1996) \book{Integrable systems in the realm of algebraic geometry}, 
Lect.\ Notes Math.~\vol{1638}, 
Springer\/--\/Verlag, Berlin. 

\bibitem{WillwacherObstruction}
\by{Willwacher T.} (2014) The obstruction to the existence of a loopless star product,
\jour{C. R. Math. Acad. Sci. Paris} \vol{352}:11, 881--883.

\end{thebibliography}
\end{document}